
\documentclass
[numbers=enddot,12pt,final,onecolumn,notitlepage,abstracton]{scrartcl}%
\usepackage[headsepline,footsepline,manualmark]{scrlayer-scrpage}
\usepackage[all,cmtip]{xy}
\usepackage{amsfonts}
\usepackage{amssymb}
\usepackage{framed}
\usepackage{amsmath}
\usepackage{comment}
\usepackage{color}
\usepackage[breaklinks]{hyperref}
\usepackage[sc]{mathpazo}
\usepackage[T1]{fontenc}
\usepackage{amsthm}
\usepackage{ytableau}
\usepackage{needspace}
\usepackage{allrunes}
\providecommand{\U}[1]{\protect\rule{.1in}{.1in}}
\theoremstyle{definition}
\newtheorem{theo}{Theorem}[section]
\newenvironment{theorem}[1][]
{\begin{theo}[#1]\begin{leftbar}}
{\end{leftbar}\end{theo}}
\newtheorem{lem}[theo]{Lemma}
\newenvironment{lemma}[1][]
{\begin{lem}[#1]\begin{leftbar}}
{\end{leftbar}\end{lem}}
\newtheorem{prop}[theo]{Proposition}
\newenvironment{proposition}[1][]
{\begin{prop}[#1]\begin{leftbar}}
{\end{leftbar}\end{prop}}
\newtheorem{defi}[theo]{Definition}
\newenvironment{definition}[1][]
{\begin{defi}[#1]\begin{leftbar}}
{\end{leftbar}\end{defi}}
\newtheorem{remk}[theo]{Remark}
\newenvironment{remark}[1][]
{\begin{remk}[#1]\begin{leftbar}}
{\end{leftbar}\end{remk}}
\newtheorem{coro}[theo]{Corollary}
\newenvironment{corollary}[1][]
{\begin{coro}[#1]\begin{leftbar}}
{\end{leftbar}\end{coro}}
\newtheorem{conv}[theo]{Convention}

\newtheorem{quest}[theo]{Question}
\newenvironment{question}[1][]
{\begin{quest}[#1]\begin{leftbar}}
{\end{leftbar}\end{quest}}
\newtheorem{warn}[theo]{Warning}

\newtheorem{conj}[theo]{Conjecture}

\newenvironment{statement}{\begin{quote}}{\end{quote}}

\newenvironment{verlong}{}{}
\newenvironment{vershort}{}{}
\newenvironment{noncompile}{}{}
\excludecomment{verlong}
\includecomment{vershort}
\excludecomment{noncompile}

\newcommand{\bk}{\mathbf{k}}

\newcommand{\tvi}{\left. \textarm{\tvimadur} \right.}
\newcommand{\bel}{\left. \textarm{\belgthor} \right.}

\let\sumnonlimits\sum
\let\prodnonlimits\prod
\renewcommand{\sum}{\sumnonlimits\limits}
\renewcommand{\prod}{\prodnonlimits\limits}
\setlength\textheight{22.5cm}
\setlength\textwidth{15cm}
\ihead{Dual immaculate creation operators}
\ohead{April 13, 2026}
\begin{document}

\title{Dual immaculate creation operators and a dendriform algebra structure on the
quasisymmetric functions}
\author{Darij Grinberg}
\date{version 8.2, June 19, 2026}
\maketitle

\begin{abstract}
The dual immaculate functions are a basis of the ring $\operatorname*{QSym}$
of quasisymmetric functions, and form one of the most natural analogues of the
Schur functions. The dual immaculate function corresponding to a composition
is a weighted generating function for immaculate tableaux in the same way as a
Schur function is for semistandard Young tableaux; an \textquotedblleft
immaculate tableau\textquotedblright\ is defined similarly to a semistandard
Young tableau, but the shape is a composition rather than a partition, and
only the first column is required to strictly increase (whereas the other
columns can be arbitrary; but each row has to weakly increase). Dual
immaculate functions have been introduced by Berg, Bergeron, Saliola, Serrano
and Zabrocki in arXiv:1208.5191, and have since been found to possess numerous
nontrivial properties.

In this note, we prove a conjecture of Mike Zabrocki which provides an
alternative construction for the dual immaculate functions in terms of certain
"vertex operators". The proof uses a dendriform structure on the ring
$\operatorname*{QSym}$; we discuss the relation of this structure to known
dendriform structures on the combinatorial Hopf algebras
$\operatorname*{FQSym}$ and $\operatorname*{WQSym}$.

\end{abstract}

\section{Introduction}

\begin{noncompile}
The main purpose of this note is to prove a conjecture by Mike Zabrocki on
dual immaculate quasi-symmetric functions.
\end{noncompile}

\begin{vershort}
The three most well-known combinatorial Hopf algebras that are defined over
any commutative ring $\mathbf{k}$ are the Hopf algebra of symmetric functions
(denoted $\operatorname*{Sym}$), the Hopf algebra of quasisymmetric functions
(denoted $\operatorname*{QSym}$), and that of noncommutative symmetric
functions (denoted $\operatorname*{NSym}$). The first of these three has been
studied for several decades, while the latter two are newer; we refer to
\cite[Chapters 4 and 6]{HGK} and \cite[Chapters 2 and 5]{Reiner} for
expositions of them\footnote{Historically, the origin of the noncommutative
symmetric functions is in \cite{NCSF1}, whereas the quasisymmetric functions
have been introduced in \cite{Gessel}. See also \cite[Section 7.19]%
{Stanley-EC2} specifically for the quasisymmetric functions and their
enumerative applications (although the Hopf algebra structure does not appear
in this source).}. All three of these Hopf algebras are known to carry
multiple algebraic structures, and have several bases of combinatorial and
algebraic significance. The Schur functions -- forming a basis of
$\operatorname*{Sym}$ -- are probably the most important of these bases; a
natural question is thus to seek similar bases for $\operatorname*{QSym}$ and
$\operatorname*{NSym}$.
\end{vershort}

\begin{verlong}
The three most well-known combinatorial Hopf algebras that are defined over
any commutative ring $\mathbf{k}$ are the Hopf algebra of symmetric functions,
the Hopf algebra of quasisymmetric functions, and that of noncommutative
symmetric functions. The first of these three Hopf algebras has been studied
for several decades, while the latter two are newer (the quasisymmetric
functions, for example, have been first defined by Ira M. Gessel in 1984); we
refer to \cite[Chapters 4 and 6]{HGK} and \cite[Chapters 2 and 5]{Reiner} for
expositions of them\footnote{Historically, the origin of the noncommutative
symmetric functions is in \cite{NCSF1}, whereas the quasisymmetric functions
have been introduced in \cite{Gessel}. See also \cite[Section 7.19]%
{Stanley-EC2} specifically for the quasisymmetric functions and their
enumerative applications (although the Hopf algebra structure does not appear
in this source).}. All three of these Hopf algebras are known to carry
multiple algebraic structures (such as additional products, skewing operators,
pairings etc.) and have several bases of combinatorial and algebraic
significance. The Schur functions -- forming a basis of the symmetric
functions -- are probably the most important of these bases (certainly the
most natural in terms of relations to representation theory and several other
applications); a natural question is thus to seek similar bases for
quasisymmetric and noncommutative symmetric functions.
\end{verlong}

\begin{vershort}
Several answers to this question have been suggested, but the simplest one
appears to be given in a 2013 paper by Berg, Bergeron, Saliola, Serrano and
Zabrocki \cite{BBSSZ}: They define the \textit{immaculate (noncommutative
symmetric) functions} (which form a basis of $\operatorname*{NSym}$) and the
\textit{dual immaculate (quasi-symmetric) functions} (which form a basis of
$\operatorname*{QSym}$). These two bases are mutually dual and satisfy
analogues of various properties of the Schur functions. Among these are a
Littlewood-Richardson rule \cite{BBSSZ14}, a Pieri rule \cite{BSOZ13}, and a
representation-theoretical interpretation \cite{BBSSZ13c}. The immaculate
functions can be defined by an analogue of the Jacobi-Trudi identity (see
\cite[Remark 3.28]{BBSSZ} for details), whereas the dual immaculate functions
can be defined as generating functions for \textquotedblleft immaculate
tableaux\textquotedblright\ in analogy to the Schur functions being generating
functions for semistandard tableaux (see Proposition \ref{prop.dualImm} below).
\end{vershort}

\begin{verlong}
Several answers to this question have been suggested, but the simplest one
appears to be given in a 2013 paper by Berg, Bergeron, Saliola, Serrano and
Zabrocki \cite{BBSSZ}: They define the \textit{immaculate (noncommutative
symmetric) functions} (which form a basis of the noncommutative symmetric
functions) and the \textit{dual immaculate (quasi-symmetric) functions} (which
form a basis of the quasisymmetric functions). These two bases are mutually
dual and satisfy analogues of various properties of the Schur basis (i.e., the
basis of the symmetric functions consisting of the Schur functions). Among
these properties are a Littlewood-Richardson rule \cite{BBSSZ14}, a Pieri rule
\cite{BSOZ13} (which is not a consequence of the Littlewood-Richardson rule),
and a representation-theoretical interpretation \cite{BBSSZ13c}. The
immaculate functions can be defined by an analogue of the Jacobi-Trudi
identity (see \cite[Remark 3.28]{BBSSZ} for details), whereas the dual
immaculate functions can be defined as generating functions for
\textquotedblleft immaculate tableaux\textquotedblright\ in analogy to the
Schur functions being generating functions for semistandard tableaux (see
Proposition \ref{prop.dualImm} below for details).
\end{verlong}

\begin{vershort}
The original definition of the immaculate functions (\cite[Definition
3.2]{BBSSZ}) is by applying a sequence of so-called \textit{noncommutative
Bernstein operators} to the constant power series $1\in\operatorname*{NSym}$.
Around 2013, Mike Zabrocki conjectured that the dual immaculate functions can
be obtained by a similar use of \textquotedblleft quasi-symmetric Bernstein
operators\textquotedblright. The purpose of this note is to prove this
conjecture (Corollary \ref{cor.zabrocki} below). Along the way, we define
certain new binary operations on $\operatorname*{QSym}$; two of them give rise
to a structure of a dendriform algebra \cite{EbrFar08}, which seems to be
interesting in its own right.
\end{vershort}

\begin{verlong}
The original definition of the immaculate functions (\cite[Definition
3.2]{BBSSZ}) is by applying a sequence of so-called \textit{noncommutative
Bernstein operators} to the constant power series $1$. Around 2013, Mike
Zabrocki conjectured that the dual immaculate functions can be obtained by a
similar use of \textquotedblleft quasi-symmetric Bernstein
operators\textquotedblright. The purpose of this note is to prove this
conjecture (Corollary \ref{cor.zabrocki} below). Along the way, we define
certain new binary operations on $\operatorname*{QSym}$ (the ring of
quasisymmetric functions); two of them give rise to a structure of a
dendriform algebra \cite{EbrFar08}, which seems to be interesting in its own right.
\end{verlong}

This note is organized as follows: In Section \ref{sect.qsym}, we recall basic
properties of quasisymmetric (and symmetric) functions and introduce the
notations that we shall use. In Section \ref{sect.dendri}, we define two
binary operations $\left.  \prec\right.  $ and $\bel $ on the power series
ring $\mathbf{k}\left[  \left[  x_{1},x_{2},x_{3},\ldots\right]  \right]  $
and show that they restrict to operations on $\operatorname*{QSym}$ which
interact with the Hopf algebra structure of $\operatorname*{QSym}$ in a useful
way. In Section \ref{sect.dualimmac}, we define the dual immaculate functions,
and show that this definition agrees with the one given in \cite[Remark
3.28]{BBSSZ}; we then give a combinatorial interpretation of dual immaculate
functions (which is not new, but has apparently never been explicitly stated).
In Section \ref{sect.zabrocki}, we prove Zabrocki's conjecture. In Section
\ref{sect.WQSym}, we discuss how our binary operations can be lifted to
noncommutative power series and restrict to operations on
$\operatorname*{WQSym}$, which are closely related to similar operations that
have appeared in the literature. In the final Section \ref{sect.epilogue}, we
ask some further questions.

\begin{vershort}
A detailed version of this note is available on the arXiv (as ancillary file
to \href{https://arxiv.org/abs/1410.0079}{preprint arXiv:1410.0079}); it is
longer and contains more details in some of the arguments.
\end{vershort}

\begin{verlong}
This note is available in two versions: a short one and a long one (with more
details, mainly in proofs). The former is available at \newline%
\url{https://www.cip.ifi.lmu.de/~grinberg/algebra/dimcreation.pdf} , the
latter at \newline%
\url{https://www.cip.ifi.lmu.de/~grinberg/algebra/dimcreation-long.pdf} . The
version you are currently reading is the long (detailed) one. Both versions
are compiled from the same sourcecode (the short one compiles by default; see
the comments at front of the TeX file for precise instructions to get the long
one). Both versions appear on the arXiv as
\href{https://arxiv.org/abs/1410.0079}{preprint arXiv:1410.0079} (the short
version being the regular PDF download, while the long version is an ancillary file).
\end{verlong}

This note has been published as:

\begin{quote}
Darij Grinberg, \textit{Dual Creation Operators and a Dendriform Algebra
Structure on the Quasisymmetric Functions}, Canad. J. Math. \textbf{69} (1),
2017, pp. 21--53, \url{https://doi.org/10.4153/CJM-2016-018-8} .
\end{quote}

\begin{vershort}
The published version differs insignificantly from the version you are
reading. (The former has editorial changes; the latter has some trivial
corrections and updated references.)
\end{vershort}

\begin{verlong}
The published version differs insignificantly from the above-mentioned short
version of this note. (The former has editorial changes; the latter has some
trivial corrections and updated references.)
\end{verlong}

\begin{noncompile}
A paper \cite{Gri-gammapart} which is currently being written might end up
containing alternative proofs to some of the results in the present notes. (It
also will give an exposition of the basic theory of quasisymmetric functions.)
\end{noncompile}

\subsection{Acknowledgments}

Mike Zabrocki kindly shared his conjecture with me during my visit to
York University, Toronto in March 2014. I am also grateful to Nantel
Bergeron for his invitation and hospitality. An anonymous referee made
numerous helpful remarks.

\section{\label{sect.qsym}Quasisymmetric functions}

We assume that the reader is familiar with the basics of the theory of
symmetric and quasisymmetric functions (as presented, e.g., in \cite[Chapters
4 and 6]{HGK} and \cite[Chapters 2 and 5]{Reiner}). However, let us define all
the notations that we need (not least because they are not consistent across
the literature). We shall try to have our notations match those used in
\cite[Section 2]{BBSSZ} as much as possible.

We use $\mathbb{N}$ to denote the set $\left\{  0,1,2,\ldots\right\}  $.

A \textit{composition} means a finite sequence of positive integers. For
instance, $\left(  2,3\right)  $ and $\left(  1,5,1\right)  $ are
compositions. The \textit{empty composition} (i.e., the empty sequence
$\left(  {}\right)  $) is denoted by $\varnothing$. We denote by
$\operatorname*{Comp}$ the set of all compositions. For every composition
$\alpha=\left(  \alpha_{1},\alpha_{2},\ldots,\alpha_{\ell}\right)  $, we
denote by $\left\vert \alpha\right\vert $ the \textit{size} of the composition
$\alpha$; this is the nonnegative integer $\alpha_{1}+\alpha_{2}+\cdots
+\alpha_{\ell}$. If $n\in\mathbb{N}$, then a \textit{composition of }$n$
simply means a composition having size $n$. A \textit{nonempty composition}
means a composition that is not empty (or, equivalently, that has size $>0$).

Let $\mathbf{k}$ be a commutative ring (which, for us, means a commutative
ring with unity). This $\mathbf{k}$ will stay fixed throughout the paper. We
shall define our symmetric and quasisymmetric functions over this commutative
ring $\mathbf{k}$.\ \ \ \ \footnote{We do not require anything from
$\mathbf{k}$ other than being a commutative ring. Some authors prefer to work
only over specific rings $\mathbf{k}$, such as $\mathbb{Z}$ or $\mathbb{Q}$
(for example, \cite{BBSSZ} always works over $\mathbb{Q}$). Usually, their
results (and often also their proofs) nevertheless are just as valid over
arbitrary $\mathbf{k}$. We see no reason to restrict our generality here.}
Every tensor sign $\otimes$ without a subscript should be understood to mean
$\otimes_{\mathbf{k}}$.

Let $x_{1},x_{2},x_{3},\ldots$ be countably many distinct indeterminates. We
let $\operatorname*{Mon}$ be the free abelian monoid on the set $\left\{
x_{1},x_{2},x_{3},\ldots\right\}  $ (written multiplicatively); it consists of
elements of the form $x_{1}^{a_{1}}x_{2}^{a_{2}}x_{3}^{a_{3}}\cdots$ for
finitely supported $\left(  a_{1},a_{2},a_{3},\ldots\right)  \in
\mathbb{N}^{\infty}$ (where \textquotedblleft finitely
supported\textquotedblright\ means that all but finitely many positive
integers $i$ satisfy $a_{i}=0$). A \textit{monomial} will mean an element of
$\operatorname*{Mon}$. Thus, monomials are combinatorial objects (without
coefficients), independent of $\mathbf{k}$.

We consider the $\mathbf{k}$-algebra $\mathbf{k}\left[  \left[  x_{1}%
,x_{2},x_{3},\ldots\right]  \right]  $ of (commutative) power series in
countably many distinct indeterminates $x_{1},x_{2},x_{3},\ldots$ over
$\mathbf{k}$. By abuse of notation, we shall identify every monomial
$x_{1}^{a_{1}}x_{2}^{a_{2}}x_{3}^{a_{3}}\cdots\in\operatorname*{Mon}$ with the
corresponding element $x_{1}^{a_{1}}\cdot x_{2}^{a_{2}}\cdot x_{3}^{a_{3}%
}\cdot\cdots$ of $\mathbf{k}\left[  \left[  x_{1},x_{2},x_{3},\ldots\right]
\right]  $ when necessary (e.g., when we speak of the sum of two monomials or
when we multiply a monomial with an element of $\mathbf{k}$); however,
monomials don't live in $\mathbf{k}\left[  \left[  x_{1},x_{2},x_{3}%
,\ldots\right]  \right]  $ per se\footnote{This is a technicality. Indeed, the
monomials $1$ and $x_{1}$ are distinct, but the corresponding elements $1$ and
$x_{1}$ of $\mathbf{k}\left[  \left[  x_{1},x_{2},x_{3},\ldots\right]
\right]  $ are identical when $\mathbf{k}=0$. So we could not regard the
monomials as lying in $\mathbf{k}\left[  \left[  x_{1},x_{2},x_{3}%
,\ldots\right]  \right]  $ by default.}.

The $\mathbf{k}$-algebra $\mathbf{k}\left[  \left[  x_{1},x_{2},x_{3}%
,\ldots\right]  \right]  $ is a topological $\mathbf{k}$-algebra; its topology
is the product topology\footnote{More precisely, this topology is defined as
follows (see also \cite[proof of Corollary 2.6.11]{Reiner}):
\par
We endow the ring $\mathbf{k}$ with the discrete topology. To define a
topology on the $\mathbf{k}$-algebra $\mathbf{k}\left[  \left[  x_{1}%
,x_{2},x_{3},\ldots\right]  \right]  $, we (temporarily) regard every power
series in $\mathbf{k}\left[  \left[  x_{1},x_{2},x_{3},\ldots\right]  \right]
$ as the family of its coefficients. Thus, $\mathbf{k}\left[  \left[
x_{1},x_{2},x_{3},\ldots\right]  \right]  $ becomes a product of infinitely
many copies of $\mathbf{k}$ (one for each monomial). This allows us to define
a product topology on $\mathbf{k}\left[  \left[  x_{1},x_{2},x_{3}%
,\ldots\right]  \right]  $. This product topology is the topology that we will
be using whenever we make statements about convergence in $\mathbf{k}\left[
\left[  x_{1},x_{2},x_{3},\ldots\right]  \right]  $ or write down infinite
sums of power series. A sequence $\left(  a_{n}\right)  _{n\in\mathbb{N}}$ of
power series converges to a power series $a$ with respect to this topology if
and only if for every monomial $\mathfrak{m}$, all sufficiently high
$n\in\mathbb{N}$ satisfy%
\[
\left(  \text{the coefficient of }\mathfrak{m}\text{ in }a_{n}\right)
=\left(  \text{the coefficient of }\mathfrak{m}\text{ in }a\right)  .
\]
\par
Note that this is \textbf{not} the topology obtained by taking the completion
of $\mathbf{k}\left[  x_{1},x_{2},x_{3},\ldots\right]  $ with respect to the
standard grading (in which all $x_{i}$ have degree $1$). Indeed, this
completion is not even the whole $\mathbf{k}\left[  \left[  x_{1},x_{2}%
,x_{3},\ldots\right]  \right]  $.}. The polynomial ring $\mathbf{k}\left[
x_{1},x_{2},x_{3},\ldots\right]  $ is a dense subset of $\mathbf{k}\left[
\left[  x_{1},x_{2},x_{3},\ldots\right]  \right]  $ with respect to this
topology. This allows us to prove certain identities in the $\mathbf{k}$-algebra
$\mathbf{k}\left[  \left[  x_{1},x_{2},x_{3},\ldots\right]  \right]  $ (such
as the associativity of multiplication, just to give a stupid example) by
first proving them in $\mathbf{k}\left[  x_{1},x_{2},x_{3},\ldots\right]  $
(that is, for polynomials), and then arguing that they follow by density in
the topological space
$\mathbf{k}\left[  \left[  x_{1},x_{2},x_{3},\ldots\right]  \right]  $.

If $\mathfrak{m}$ is a monomial, then $\operatorname*{Supp}\mathfrak{m}$ will
denote the subset
\[
\left\{  i\in\left\{  1,2,3,\ldots\right\}  \ \mid\ \text{the exponent with
which }x_{i}\text{ occurs in }\mathfrak{m}\text{ is }>0\right\}
\]
of $\left\{  1,2,3,\ldots\right\}  $; this subset is finite. The
\textit{degree} $\deg\mathfrak{m}$ of a monomial $\mathfrak{m}=x_{1}^{a_{1}%
}x_{2}^{a_{2}}x_{3}^{a_{3}}\cdots$ is defined to be $a_{1}+a_{2}+a_{3}%
+\cdots\in\mathbb{N}$.

A power series $P\in\mathbf{k}\left[  \left[  x_{1},x_{2},x_{3},\ldots\right]
\right]  $ is said to be \textit{bounded-degree} if there exists an
$N\in\mathbb{N}$ such that every monomial of degree $>N$ appears with
coefficient $0$ in $P$. Let $\mathbf{k}\left[  \left[  x_{1},x_{2}%
,x_{3},\ldots\right]  \right]  _{\operatorname*{bdd}}$ denote the $\mathbf{k}%
$-subalgebra of $\mathbf{k}\left[  \left[  x_{1},x_{2},x_{3},\ldots\right]
\right]  $ formed by the bounded-degree power series in $\mathbf{k}\left[
\left[  x_{1},x_{2},x_{3},\ldots\right]  \right]  $.

The $\mathbf{k}$\textit{-algebra of symmetric functions} over $\mathbf{k}$ is
defined as the $\mathbf{k}$-subalgebra of $\mathbf{k}\left[  \left[
x_{1},x_{2},x_{3},\ldots\right]  \right]  _{\operatorname*{bdd}}$ consisting
of all bounded-degree power series which are invariant under any permutation
of the indeterminates. This $\mathbf{k}$-subalgebra is denoted by
$\operatorname*{Sym}$. (Notice that $\operatorname*{Sym}$ is denoted $\Lambda$
in \cite{Reiner}.) As a $\mathbf{k}$-module, $\operatorname*{Sym}$ is known to
have several bases, such as the basis of complete homogeneous symmetric
functions $\left(  h_{\lambda}\right)  $ and that of the Schur functions
$\left(  s_{\lambda}\right)  $, both indexed by the integer partitions.

Two monomials $\mathfrak{m}$ and $\mathfrak{n}$ are said to be
\textit{pack-equivalent} if they have the form $\mathfrak{m}=x_{i_{1}}%
^{\alpha_{1}}x_{i_{2}}^{\alpha_{2}}\cdots x_{i_{\ell}}^{\alpha_{\ell}}$ and
$\mathfrak{n}=x_{j_{1}}^{\alpha_{1}}x_{j_{2}}^{\alpha_{2}}\cdots x_{j_{\ell}%
}^{\alpha_{\ell}}$ for some $\ell\in\mathbb{N}$, some positive integers
$\alpha_{1}$, $\alpha_{2}$, $\ldots$, $\alpha_{\ell}$, some positive integers
$i_{1}$, $i_{2}$, $\ldots$, $i_{\ell}$ satisfying $i_{1}<i_{2}<\cdots<i_{\ell
}$, and some positive integers $j_{1}$, $j_{2}$, $\ldots$, $j_{\ell}$
satisfying $j_{1}<j_{2}<\cdots<j_{\ell}$\ \ \ \ \footnote{For instance, the
monomial $x_{1}^{4} x_{2}^{2} x_{3} x_{7}^{6}$ is pack-equivalent to
$x_{2}^{4} x_{4}^{2} x_{5} x_{6}^{6}$, but not to $x_{2}^{2} x_{1}^{4} x_{3}
x_{7}^{6}$.}. A power series $P\in\mathbf{k}\left[  \left[  x_{1},x_{2}%
,x_{3},\ldots\right]  \right]  $ is said to be \textit{quasisymmetric} if any
two pack-equivalent monomials have equal coefficients in $P$. The $\mathbf{k}%
$\textit{-algebra of quasisymmetric functions} over $\mathbf{k}$ is defined as
the $\mathbf{k}$-subalgebra of $\mathbf{k}\left[  \left[  x_{1},x_{2}%
,x_{3},\ldots\right]  \right]  _{\operatorname*{bdd}}$ consisting of all
bounded-degree power series which are quasisymmetric. It is clear that
$\operatorname*{Sym}\subseteq\operatorname*{QSym}$.

For every composition $\alpha=\left(  \alpha_{1},\alpha_{2},\ldots
,\alpha_{\ell}\right)  $, the \textit{monomial quasisymmetric function}
$M_{\alpha}$ is defined by%
\[
M_{\alpha}=\sum_{1\leq i_{1}<i_{2}<\cdots<i_{\ell}}x_{i_{1}}^{\alpha_{1}%
}x_{i_{2}}^{\alpha_{2}}\cdots x_{i_{\ell}}^{\alpha_{\ell}}\in\mathbf{k}\left[
\left[  x_{1},x_{2},x_{3},\ldots\right]  \right]  _{\operatorname*{bdd}}.
\]
One easily sees that $M_{\alpha}\in\operatorname*{QSym}$ for every $\alpha
\in\operatorname*{Comp}$. It is well-known that $\left(  M_{\alpha}\right)
_{\alpha\in\operatorname*{Comp}}$ is a basis of the $\mathbf{k}$-module
$\operatorname*{QSym}$; this is the so-called \textit{monomial basis} of
$\operatorname*{QSym}$. Other bases of $\operatorname*{QSym}$ exist as well,
some of which we are going to encounter below.

It is well-known that the $\mathbf{k}$-algebras $\operatorname*{Sym}$ and
$\operatorname*{QSym}$ can be canonically endowed with Hopf algebra structures
such that $\operatorname*{Sym}$ is a Hopf subalgebra of $\operatorname*{QSym}%
$. We refer to \cite[Chapters 4 and 6]{HGK} and \cite[Chapters 2 and
5]{Reiner} for the definitions of these structures (and for a definition of
the notion of a Hopf algebra); at this point, let us merely state a few
properties. The comultiplication $\Delta:\operatorname*{QSym}\rightarrow
\operatorname*{QSym}\otimes\operatorname*{QSym}$ of $\operatorname*{QSym}$
satisfies%
\[
\Delta\left(  M_{\alpha}\right)  =\sum_{i=0}^{\ell}M_{\left(  \alpha
_{1},\alpha_{2},\ldots,\alpha_{i}\right)  }\otimes M_{\left(  \alpha
_{i+1},\alpha_{i+2},\ldots,\alpha_{\ell}\right)  }%
\]
for every $\alpha=\left(  \alpha_{1},\alpha_{2},\ldots,\alpha_{\ell}\right)
\in\operatorname*{Comp}$. The counit $\varepsilon:\operatorname*{QSym}%
\rightarrow\mathbf{k}$ of $\operatorname*{QSym}$ satisfies $\varepsilon\left(
M_{\alpha}\right)  =
\begin{cases}
1, & \text{if }\alpha=\varnothing;\\
0, & \text{if }\alpha\neq\varnothing
\end{cases}
\quad$ for every $\alpha\in\operatorname*{Comp}$.

We shall always use the notation $\Delta$ for the comultiplication of a Hopf
algebra, the notation $\varepsilon$ for the counit of a Hopf algebra, and the
notation $S$ for the antipode of a Hopf algebra. Occasionally we shall use
\textit{Sweedler's notation} for working with coproducts of elements of a Hopf
algebra\footnote{In a nutshell, Sweedler's notation (or, more precisely, the
special case of Sweedler's notation that we will use) consists in writing
$\sum_{\left(  c\right)  }c_{\left(  1\right)  }\otimes c_{\left(  2\right)
}$ for the tensor $\Delta\left(  c\right)  \in C\otimes C$, where $c$ is an
element of a $\mathbf{k}$-coalgebra $C$. The sum $\sum_{\left(  c\right)
}c_{\left(  1\right)  }\otimes c_{\left(  2\right)  }$ symbolizes a
representation of the tensor $\Delta\left(  c\right)  $ as a sum $\sum
_{i=1}^{N}c_{1,i}\otimes c_{2,i}$ of pure tensors; it allows us to manipulate
$\Delta\left(  c\right)  $ without having to explicitly introduce the $N$ and
the $c_{1,i}$ and the $c_{2,i}$. For instance, if $f:C\rightarrow\mathbf{k}$
is a $\mathbf{k}$-linear map, then we can write $\sum_{\left(  c\right)
}f\left(  c_{\left(  1\right)  }\right)  c_{\left(  2\right)  }$ for
$\sum_{i=1}^{N}f\left(  c_{1,i}\right)  c_{2,i}$. Of course, we need to be
careful not to use Sweedler's notation for terms which do depend on the
specific choice of the $N$ and the $c_{1,i}$ and the $c_{2,i}$; for instance,
we must not write $\sum_{\left(  c\right)  }c_{\left(  1\right)  }%
^{2}c_{\left(  2\right)  }$.}.

If $\alpha=\left(  \alpha_{1},\alpha_{2},\ldots,\alpha_{\ell}\right)  $ is a
composition of an $n\in\mathbb{N}$, then we define a subset $D\left(
\alpha\right)  $ of $\left\{  1,2,\ldots,n-1\right\}  $ by%
\[
D\left(  \alpha\right)  =\left\{  \alpha_{1},\alpha_{1}+\alpha_{2},\alpha
_{1}+\alpha_{2}+\alpha_{3},\ldots,\alpha_{1}+\alpha_{2}+\cdots+\alpha_{\ell
-1}\right\}  .
\]
This subset $D\left(  \alpha\right)  $ is called the \textit{set of partial
sums} of the composition $\alpha$; see \cite[Definition 5.1.10]{Reiner} for
its further properties. Most importantly, a composition $\alpha$ of size $n$
can be uniquely reconstructed from $n$ and $D\left(  \alpha\right)  $.

If $\alpha=\left(  \alpha_{1},\alpha_{2},\ldots,\alpha_{\ell}\right)  $ is a
composition of an $n\in\mathbb{N}$, then the \textit{fundamental
quasisymmetric function }$F_{\alpha}\in\mathbf{k}\left[  \left[  x_{1}%
,x_{2},x_{3},\ldots\right]  \right]  _{\operatorname*{bdd}}$ can be defined by%
\begin{equation}
F_{\alpha}=\sum_{\substack{i_{1}\leq i_{2}\leq\cdots\leq i_{n};\\i_{j}%
<i_{j+1}\text{ if }j\in D\left(  \alpha\right)  }}x_{i_{1}}x_{i_{2}}\cdots
x_{i_{n}}. \label{eq.F.def}%
\end{equation}
(This is only one of several possible definitions of $F_{\alpha}$. In
\cite[Definition 5.2.4]{Reiner}, the power series $F_{\alpha}$ is denoted by
$L_{\alpha}$ and defined differently; but \cite[Proposition 5.2.9]{Reiner}
proves the equivalence of this definition with ours.\footnote{In fact,
\cite[(5.2.3)]{Reiner} is exactly our equality (\ref{eq.F.def}).}) One can
easily see that $F_{\alpha}\in\operatorname*{QSym}$ for every $\alpha
\in\operatorname*{Comp}$. The family $\left(  F_{\alpha}\right)  _{\alpha
\in\operatorname*{Comp}}$ is a basis of the $\mathbf{k}$-module
$\operatorname*{QSym}$ as well; it is called the \textit{fundamental basis} of
$\operatorname*{QSym}$.

\begin{noncompile}
We use the notation $\left[  \mathcal{A}\right]  $ for $%
\begin{cases}
1, & \text{if }\mathcal{A}\text{ is true;}\\
0, & \text{if }\mathcal{A}\text{ is false}%
\end{cases}
\quad$ whenever $\mathcal{A}$ is a statement. [Currently we don't use this
notation in this note.]
\end{noncompile}

\section{\label{sect.dendri}Restricted-product operations}

We shall now define two binary operations on $\mathbf{k}\left[  \left[
x_{1},x_{2},x_{3},\ldots\right]  \right]  $.

\begin{definition}
\label{def.Dless}We define a binary operation $\left.  \prec\right.
:\mathbf{k}\left[  \left[  x_{1},x_{2},x_{3},\ldots\right]  \right]
\times\mathbf{k}\left[  \left[  x_{1},x_{2},x_{3},\ldots\right]  \right]
\rightarrow\mathbf{k}\left[  \left[  x_{1},x_{2},x_{3},\ldots\right]  \right]
$ (written in infix notation\footnotemark) by the requirements that it be
$\mathbf{k}$-bilinear and continuous with respect to the topology on
$\mathbf{k}\left[  \left[  x_{1},x_{2},x_{3},\ldots\right]  \right]  $ and
that it satisfy%
\begin{equation}
\mathfrak{m}\left.  \prec\right.  \mathfrak{n}=
\begin{cases}
\mathfrak{m}\cdot\mathfrak{n}, & \text{if }\min\left(  \operatorname*{Supp}%
\mathfrak{m}\right)  <\min\left(  \operatorname*{Supp}\mathfrak{n}\right)  ;\\
0, & \text{if }\min\left(  \operatorname*{Supp}\mathfrak{m}\right)  \geq
\min\left(  \operatorname*{Supp}\mathfrak{n}\right)
\end{cases}
\label{eq.def.Dless.monomial}%
\end{equation}
for any two monomials $\mathfrak{m}$ and $\mathfrak{n}$.
\end{definition}

\footnotetext{By this we mean that we write $a\left.  \prec\right.  b$ instead
of $\left.  \prec\right.  \left(  a,b\right)  $.}Some clarifications are in
order. First, we are using $\left.  \prec\right.  $ as an operation symbol
(rather than as a relation symbol as it is commonly used)\footnote{Of course,
the symbol has been chosen because it is reminiscent of the smaller symbol in
\textquotedblleft$\min\left(  \operatorname*{Supp}\mathfrak{m}\right)
<\min\left(  \operatorname*{Supp}\mathfrak{n}\right)  $\textquotedblright.}.
Second, we consider $\min\varnothing$ to be $\infty$, and this symbol $\infty$
is understood to be greater than every integer\footnote{but not greater than
itself}. Hence, $\mathfrak{m}\left.  \prec\right.  1=\mathfrak{m}$ for every
nonconstant monomial $\mathfrak{m}$, and $1\left.  \prec\right.
\mathfrak{m}=0$ for every monomial $\mathfrak{m}$.

Let us first see why the operation $\left.  \prec\right.  $ in Definition
\ref{def.Dless} is well-defined. Recall that the topology on $\mathbf{k}%
\left[  \left[  x_{1},x_{2},x_{3},\ldots\right]  \right]  $ is the product
topology. Hence, if $\left.  \prec\right.  $ is to be $\mathbf{k}$-bilinear
and continuous with respect to it, we must have%
\[
\left(  \sum_{\mathfrak{m}\in\operatorname*{Mon}}\lambda_{\mathfrak{m}%
}\mathfrak{m}\right)  \left.  \prec\right.  \left(  \sum_{\mathfrak{n}%
\in\operatorname*{Mon}}\mu_{\mathfrak{n}}\mathfrak{n}\right)  =\sum
_{\mathfrak{m}\in\operatorname*{Mon}}\sum_{\mathfrak{n}\in\operatorname*{Mon}%
}\lambda_{\mathfrak{m}}\mu_{\mathfrak{n}}\mathfrak{m}\left.  \prec\right.
\mathfrak{n}%
\]
for any families $\left(  \lambda_{\mathfrak{m}}\right)  _{\mathfrak{m}%
\in\operatorname*{Mon}}\in\mathbf{k}^{\operatorname*{Mon}}$ and $\left(
\mu_{\mathfrak{n}}\right)  _{\mathfrak{n}\in\operatorname*{Mon}}\in
\mathbf{k}^{\operatorname*{Mon}}$ of scalars. Combined with
(\ref{eq.def.Dless.monomial}), this uniquely determines $\left.  \prec\right.
$. Therefore, the binary operation $\left.  \prec\right.  $ satisfying the
conditions of Definition \ref{def.Dless} is unique (if it exists). But it also
exists, because if we define a binary operation $\left.  \prec\right.  $ on
$\mathbf{k}\left[  \left[  x_{1},x_{2},x_{3},\ldots\right]  \right]  $ by the
explicit formula%
\begin{align*}
&  \left(  \sum_{\mathfrak{m}\in\operatorname*{Mon}}\lambda_{\mathfrak{m}%
}\mathfrak{m}\right)  \left.  \prec\right.  \left(  \sum_{\mathfrak{n}%
\in\operatorname*{Mon}}\mu_{\mathfrak{n}}\mathfrak{n}\right)  =\sum
_{\substack{\left(  \mathfrak{m},\mathfrak{n}\right)  \in\operatorname*{Mon}%
\times\operatorname*{Mon}\text{;}\\\min\left(  \operatorname*{Supp}%
\mathfrak{m}\right)  <\min\left(  \operatorname*{Supp}\mathfrak{n}\right)
}}\lambda_{\mathfrak{m}}\mu_{\mathfrak{n}}\mathfrak{mn}\\
&  \ \ \ \ \ \ \ \ \ \ \text{for all }\left(  \lambda_{\mathfrak{m}}\right)
_{\mathfrak{m}\in\operatorname*{Mon}}\in\mathbf{k}^{\operatorname*{Mon}}\text{
and }\left(  \mu_{\mathfrak{n}}\right)  _{\mathfrak{n}\in\operatorname*{Mon}%
}\in\mathbf{k}^{\operatorname*{Mon}},
\end{align*}
then it clearly satisfies the conditions of Definition \ref{def.Dless} (and is well-defined).

The operation $\left.  \prec\right.  $ is not associative; however, it is part
of what is called a \textit{dendriform algebra} structure on $\mathbf{k}%
\left[  \left[  x_{1},x_{2},x_{3},\ldots\right]  \right]  $ (and on
$\operatorname*{QSym}$, as we shall see below). The following remark (which
will not be used until Section \ref{sect.WQSym}, and thus can be skipped by a
reader not familiar with dendriform algebras) provides some details:

\begin{remark}
\label{rmk.dendri}Let us define another binary operation $\left.
\succeq\right.  $ on $\mathbf{k}\left[  \left[  x_{1},x_{2},x_{3}%
,\ldots\right]  \right]  $ similarly to $\left.  \prec\right.  $ except that
we set%
\[
\mathfrak{m}\left.  \succeq\right.  \mathfrak{n}=
\begin{cases}
\mathfrak{m}\cdot\mathfrak{n}, & \text{if }\min\left(  \operatorname*{Supp}%
\mathfrak{m}\right)  \geq\min\left(  \operatorname*{Supp}\mathfrak{n}\right)
;\\
0, & \text{if }\min\left(  \operatorname*{Supp}\mathfrak{m}\right)
<\min\left(  \operatorname*{Supp}\mathfrak{n}\right)
\end{cases}
\quad.
\]
Then, the structure $\left(  \mathbf{k}\left[  \left[  x_{1},x_{2}%
,x_{3},\ldots\right]  \right]  ,\left.  \prec\right.  , \left.  \succeq
\right.  \right)  $ is a dendriform algebra augmented to satisfy
\cite[(15)]{EbrFar08}. In particular, any three elements $a$, $b$ and $c$ of
$\mathbf{k}\left[  \left[  x_{1},x_{2},x_{3},\ldots\right]  \right]  $ satisfy%
\begin{align*}
a\left.  \prec\right.  b+a \left.  \succeq\right.  b  &  =ab;\\
\left(  a\left.  \prec\right.  b\right)  \left.  \prec\right.  c  &  =a\left.
\prec\right.  \left(  bc\right)  ;\\
\left(  a\left.  \succeq\right.  b\right)  \left.  \prec\right.  c  &
=a\left.  \succeq\right.  \left(  b\left.  \prec\right.  c\right)  ;\\
a\left.  \succeq\right.  \left(  b\left.  \succeq\right.  c\right)   &
=\left(  ab\right)  \left.  \succeq\right.  c.
\end{align*}

\end{remark}

Now, we introduce another binary operation.

\begin{definition}
\label{def.belgthor}We define a binary operation $\bel :\mathbf{k}\left[
\left[  x_{1},x_{2},x_{3},\ldots\right]  \right]  \times\mathbf{k}\left[
\left[  x_{1},x_{2},x_{3},\ldots\right]  \right]  \rightarrow\mathbf{k}\left[
\left[  x_{1},x_{2},x_{3},\ldots\right]  \right]  $ (written in infix
notation) by the requirements that it be $\mathbf{k}$-bilinear and continuous
with respect to the topology on $\mathbf{k}\left[  \left[  x_{1},x_{2}%
,x_{3},\ldots\right]  \right]  $ and that it satisfy%
\[
\mathfrak{m} \bel \mathfrak{n}=
\begin{cases}
\mathfrak{m}\cdot\mathfrak{n}, & \text{if }\max\left(  \operatorname*{Supp}%
\mathfrak{m}\right)  \leq\min\left(  \operatorname*{Supp}\mathfrak{n}\right)
;\\
0, & \text{if }\max\left(  \operatorname*{Supp}\mathfrak{m}\right)
>\min\left(  \operatorname*{Supp}\mathfrak{n}\right)
\end{cases}
\]
for any two monomials $\mathfrak{m}$ and $\mathfrak{n}$.
\end{definition}

Here, $\max\varnothing$ is understood as $0$. The well-definedness of the
operation $\bel $ in Definition \ref{def.belgthor} is proven in the same way
as that of the operation $\left.  \prec\right.  $.

Let us make a simple observation which will not be used until Section
\ref{sect.WQSym}, but provides some context:

\begin{proposition}
\label{prop.belgthor.assoc}The binary operation $\bel $ is associative. It is
also unital (with $1$ serving as the unity).
\end{proposition}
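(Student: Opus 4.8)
The plan is to reduce the claim to the case of monomials. Since $\left.\textarm{\belgthor}\right.$ is defined to be $\mathbf{k}$-bilinear and continuous with respect to the product topology on $\mathbf{k}\left[\left[x_{1},x_{2},x_{3},\ldots\right]\right]$, and since the polynomial ring $\mathbf{k}\left[x_{1},x_{2},x_{3},\ldots\right]$ is a dense $\mathbf{k}$-subalgebra, it suffices to verify both associativity and unitality on monomials (a standard density argument: for fixed $a$, the maps $x \mapsto (a \left.\textarm{\belgthor}\right. x) \left.\textarm{\belgthor}\right. c$ and $x \mapsto a \left.\textarm{\belgthor}\right. (x \left.\textarm{\belgthor}\right. c)$ are continuous and $\mathbf{k}$-linear, hence agreement on the topological spanning set of monomials forces agreement everywhere, and similarly one sweeps the remaining two arguments). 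Unitality is immediate from the defining formula: for any monomial $\mathfrak{m}$ we have $\operatorname*{Supp} 1 = \varnothing$, so $\max\left(\operatorname*{Supp} 1\right) = 0 \leq \min\left(\operatorname*{Supp}\mathfrak{m}\right)$ (even when $\mathfrak{m} = 1$, since then the right side is $\infty$), whence $1 \left.\textarm{\belgthor}\right. \mathfrak{m} = 1 \cdot \mathfrak{m} = \mathfrak{m}$; and $\min\left(\operatorname*{Supp} 1\right) = \infty \geq \max\left(\operatorname*{Supp}\mathfrak{m}\right)$ always, so $\mathfrak{m} \left.\textarm{\belgthor}\right. 1 = \mathfrak{m}$.

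For associativity on monomials, I would fix three monomials $\mathfrak{a}$, $\mathfrak{b}$, $\mathfrak{c}$ and compare $\left(\mathfrak{a} \left.\textarm{\belgthor}\right. \mathfrak{b}\right) \left.\textarm{\belgthor}\right. \mathfrak{c}$ with $\mathfrak{a} \left.\textarm{\belgthor}\right. \left(\mathfrak{b} \left.\textarm{\belgthor}\right. \mathfrak{c}\right)$. The key combinatorial input is that, when the product $\mathfrak{a}\mathfrak{b}$ is a monomial whose support is nonempty, one has $\min\left(\operatorname*{Supp}(\mathfrak{a}\mathfrak{b})\right) = \min\left(\min\left(\operatorname*{Supp}\mathfrak{a}\right), \min\left(\operatorname*{Supp}\mathfrak{b}\right)\right)$ and $\max\left(\operatorname*{Supp}(\mathfrak{a}\mathfrak{b})\right) = \max\left(\max\left(\operatorname*{Supp}\mathfrak{a}\right), \max\left(\operatorname*{Supp}\mathfrak{b}\right)\right)$ — and these identities persist under the $\min = \infty$ / $\max = 0$ conventions for the empty support, in the sense needed. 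The claim is then that both triple products equal $\mathfrak{a}\mathfrak{b}\mathfrak{c}$ if $\max\left(\operatorname*{Supp}\mathfrak{a}\right) \leq \min\left(\operatorname*{Supp}\mathfrak{b}\right)$ \emph{and} $\max\left(\operatorname*{Supp}\mathfrak{b}\right) \leq \min\left(\operatorname*{Supp}\mathfrak{c}\right)$, and equal $0$ otherwise. I would verify the left side: $\left(\mathfrak{a} \left.\textarm{\belgthor}\right. \mathfrak{b}\right) \left.\textarm{\belgthor}\right. \mathfrak{c}$ is nonzero iff $\max\left(\operatorname*{Supp}\mathfrak{a}\right) \leq \min\left(\operatorname*{Supp}\mathfrak{b}\right)$ (so the inner product is $\mathfrak{a}\mathfrak{b}$, not $0$) and $\max\left(\operatorname*{Supp}(\mathfrak{a}\mathfrak{b})\right) \leq \min\left(\operatorname*{Supp}\mathfrak{c}\right)$; using the $\max$-identity above, the second condition becomes $\max\left(\max\left(\operatorname*{Supp}\mathfrak{a}\right), \max\left(\operatorname*{Supp}\mathfrak{b}\right)\right) \leq \min\left(\operatorname*{Supp}\mathfrak{c}\right)$, which in the presence of $\max\left(\operatorname*{Supp}\mathfrak{a}\right) \leq \min\left(\operatorname*{Supp}\mathfrak{b}\right) \leq \max\left(\operatorname*{Supp}\mathfrak{b}\right)$ is equivalent to just $\max\left(\operatorname*{Supp}\mathfrak{b}\right) \leq \min\left(\operatorname*{Supp}\mathfrak{c}\right)$. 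So the left side is nonzero (and then equals $\mathfrak{a}\mathfrak{b}\mathfrak{c}$) exactly under the symmetric pair of inequalities; a mirror-image computation using the $\min$-identity gives the same for the right side, and the two cases match.

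The only delicate point — and hence the main obstacle — is bookkeeping with the boundary conventions $\min\varnothing = \infty$ and $\max\varnothing = 0$ when one or more of $\mathfrak{a}, \mathfrak{b}, \mathfrak{c}$ equals the constant monomial $1$ (equivalently, has empty support), since then the $\min$/$\max$ of a product need not literally be the $\min$/$\max$ of the individual $\min$s/$\max$s as integers. I would handle this by first disposing of the cases where some factor is $1$ directly via unitality (already proven), reducing associativity to the case where all three of $\mathfrak{a}, \mathfrak{b}, \mathfrak{c}$ have nonempty support, where the $\min$/$\max$-of-product identities hold with honest integers and the argument above goes through cleanly. This case split keeps the conventions from ever causing trouble and makes the whole verification routine.
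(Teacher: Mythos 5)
Your proposal is correct and follows essentially the same route as the paper: reduce to monomials by bilinearity and continuity, check unitality directly from the $\max\varnothing=0$ and $\min\varnothing=\infty$ conventions, and verify associativity by showing both triple products equal $\mathfrak{a}\mathfrak{b}\mathfrak{c}$ under the same set of support inequalities and vanish otherwise. The only cosmetic difference is that the paper phrases the common condition as three inequalities (which handles empty supports uniformly), whereas you use two inequalities plus a preliminary case split on factors equal to $1$; both versions are sound.
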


\begin{vershort}
\begin{proof}
[Proof of Proposition \ref{prop.belgthor.assoc}.]We shall only sketch the
proof; see the detailed version for more details.

In order to show that $\bel $ is associative, it suffices to prove that
$\left(  \mathfrak{m} \bel \mathfrak{n}\right)  \bel \mathfrak{p}=\mathfrak{m}
\bel \left(  \mathfrak{n} \bel \mathfrak{p}\right)  $ for any three monomials
$\mathfrak{m}$, $\mathfrak{n}$ and $\mathfrak{p}$ (since $\bel $ is bilinear).
But this follows from observing that both $\left(  \mathfrak{m}
\bel \mathfrak{n}\right)  \bel \mathfrak{p}$ and $\mathfrak{m} \bel \left(
\mathfrak{n} \bel \mathfrak{p}\right)  $ are equal to $\mathfrak{mnp}$ if the
three inequalities $\max\left(  \operatorname*{Supp}\mathfrak{m}\right)
\leq\min\left(  \operatorname*{Supp}\mathfrak{n}\right)  $ and $\max\left(
\operatorname*{Supp}\mathfrak{m}\right)  \leq\min\left(  \operatorname*{Supp}%
\mathfrak{p}\right)  $ and $\max\left(  \operatorname*{Supp}\mathfrak{n}%
\right)  \leq\min\left(  \operatorname*{Supp}\mathfrak{p}\right)  $ hold, and
equal to $0$ otherwise.

The proof of the unitality of $\bel $ is similar.
\end{proof}
\end{vershort}

\begin{verlong}
\begin{proof}
[Proof of Proposition \ref{prop.belgthor.assoc}.]Let us first show that
$\bel $ is associative.

In order to show this, we must prove that
\begin{equation}
\left(  a \bel b\right)  \bel c=a \bel \left(  b \bel c\right)
\label{pf.prop.belgthor.assoc.assoc}%
\end{equation}
for any three elements $a$, $b$ and $c$ of $\mathbf{k}\left[  \left[
x_{1},x_{2},x_{3},\ldots\right]  \right]  $.

But if $\mathfrak{m}$, $\mathfrak{n}$ and $\mathfrak{p}$ are three monomials,
then the definition of $\bel$ readily shows that%
\begin{align*}
\left(  \mathfrak{m}\bel\mathfrak{n}\right)  \bel\mathfrak{p}  &  =%
\begin{cases}
\mathfrak{mnp}, & \text{if }\max\left(  \operatorname*{Supp}\mathfrak{m}%
\right)  \leq\min\left(  \operatorname*{Supp}\mathfrak{n}\right) \\
& \ \ \ \ \ \ \ \ \ \ \ \text{ and }\max\left(  \operatorname*{Supp}\left(
\mathfrak{mn}\right)  \right)  \leq\min\left(  \operatorname*{Supp}%
\mathfrak{p}\right)  ;\\
0, & \text{otherwise}%
\end{cases}
\\
&  =%
\begin{cases}
\mathfrak{mnp}, & \text{if }\max\left(  \operatorname*{Supp}\mathfrak{m}%
\right)  \leq\min\left(  \operatorname*{Supp}\mathfrak{n}\right) \\
& \ \ \ \ \ \ \ \ \ \ \ \text{ and }\max\left(  \left(  \operatorname*{Supp}%
\mathfrak{m}\right)  \cup\left(  \operatorname*{Supp}\mathfrak{n}\right)
\right)  \leq\min\left(  \operatorname*{Supp}\mathfrak{p}\right)  ;\\
0, & \text{otherwise}%
\end{cases}
\\
&  \ \ \ \ \ \ \ \ \ \ \left(  \text{since }\operatorname*{Supp}\left(
\mathfrak{mn}\right)  =\left(  \operatorname*{Supp}\mathfrak{m}\right)
\cup\left(  \operatorname*{Supp}\mathfrak{n}\right)  \right)
\end{align*}
and%
\begin{align*}
\mathfrak{m}\bel\left(  \mathfrak{n}\bel\mathfrak{p}\right)   &  =%
\begin{cases}
\mathfrak{mnp}, & \text{if }\max\left(  \operatorname*{Supp}\mathfrak{n}%
\right)  \leq\min\left(  \operatorname*{Supp}\mathfrak{p}\right) \\
& \ \ \ \ \ \ \ \ \ \ \ \text{ and }\max\left(  \operatorname*{Supp}%
\mathfrak{m}\right)  \leq\min\left(  \operatorname*{Supp}\left(
\mathfrak{np}\right)  \right)  ;\\
0, & \text{otherwise}%
\end{cases}
\\
&  =%
\begin{cases}
\mathfrak{mnp}, & \text{if }\max\left(  \operatorname*{Supp}\mathfrak{n}%
\right)  \leq\min\left(  \operatorname*{Supp}\mathfrak{p}\right) \\
& \ \ \ \ \ \ \ \ \ \ \ \text{ and }\max\left(  \operatorname*{Supp}%
\mathfrak{m}\right)  \leq\min\left(  \left(  \operatorname*{Supp}%
\mathfrak{n}\right)  \cup\left(  \operatorname*{Supp}\mathfrak{p}\right)
\right)  ;\\
0, & \text{otherwise}%
\end{cases}
\\
&  \ \ \ \ \ \ \ \ \ \ \left(  \text{since }\operatorname*{Supp}\left(
\mathfrak{np}\right)  =\left(  \operatorname*{Supp}\mathfrak{n}\right)
\cup\left(  \operatorname*{Supp}\mathfrak{p}\right)  \right)  ;
\end{align*}
thus, $\left(  \mathfrak{m}\bel\mathfrak{n}\right)  \bel\mathfrak{p}%
=\mathfrak{m}\bel\left(  \mathfrak{n}\bel\mathfrak{p}\right)  $ (since it is
straightforward to check that the condition \newline$\left(  \max\left(
\operatorname*{Supp}\mathfrak{m}\right)  \leq\min\left(  \operatorname*{Supp}%
\mathfrak{n}\right)  \text{ and }\max\left(  \left(  \operatorname*{Supp}%
\mathfrak{m}\right)  \cup\left(  \operatorname*{Supp}\mathfrak{n}\right)
\right)  \leq\min\left(  \operatorname*{Supp}\mathfrak{p}\right)  \right)  $
is equivalent to the condition \newline$\left(  \max\left(
\operatorname*{Supp}\mathfrak{n}\right)  \leq\min\left(  \operatorname*{Supp}%
\mathfrak{p}\right)  \text{ and }\max\left(  \operatorname*{Supp}%
\mathfrak{m}\right)  \leq\min\left(  \left(  \operatorname*{Supp}%
\mathfrak{n}\right)  \cup\left(  \operatorname*{Supp}\mathfrak{p}\right)
\right)  \right)  $\ \ \ \ \footnote{Indeed, both conditions are equivalent to
\newline$\left(  \max\left(  \operatorname*{Supp}\mathfrak{m}\right)  \leq
\min\left(  \operatorname*{Supp}\mathfrak{n}\right)  \text{ and }\max\left(
\operatorname*{Supp}\mathfrak{m}\right)  \leq\min\left(  \operatorname*{Supp}%
\mathfrak{p}\right)  \text{ and }\max\left(  \operatorname*{Supp}%
\mathfrak{n}\right)  \leq\min\left(  \operatorname*{Supp}\mathfrak{p}\right)
\right)  $.}). In other words, the equality
(\ref{pf.prop.belgthor.assoc.assoc}) holds when $a$, $b$ and $c$ are
monomials. Thus, this equality also holds whenever $a$, $b$ and $c$ are
polynomials (since it is $\mathbf{k}$-linear in $a$, $b$ and $c$), and
consequently also holds whenever $a$, $b$ and $c$ are power series (since it
is continuous in $a$, $b$ and $c$). This proves that $\bel$ is associative.

The proof of the fact that $\bel $ is unital (with unity $1$) is similar and
left to the reader. Proposition \ref{prop.belgthor.assoc} is thus shown.
\end{proof}
\end{verlong}

Here is another property of $\bel $ that will not be used until Section
\ref{sect.WQSym}:

\begin{proposition}
\label{prop.QSym.closed}Every $a\in\operatorname*{QSym}$ and $b\in
\operatorname*{QSym}$ satisfy $a\left.  \prec\right.  b\in\operatorname*{QSym}%
$ and $a \bel b\in\operatorname*{QSym}$.
\end{proposition}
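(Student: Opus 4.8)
The plan is to reduce to the monomial basis and then to a short monomial-counting argument. Every element of $\operatorname*{QSym}$ is a \emph{finite} $\mathbf{k}$-linear combination of the monomial quasisymmetric functions $M_{\gamma}$ (each $M_{\gamma}$ is homogeneous of degree $\left\vert \gamma\right\vert $, every element of $\operatorname*{QSym}$ is bounded-degree, and for each $n\in\mathbb{N}$ there are only finitely many compositions of $n$), and $\left. \prec\right. $ and $\left. \textarm{\belgthor}\right. $ are $\mathbf{k}$-bilinear; hence it suffices to prove $M_{\alpha}\left. \prec\right. M_{\beta}\in\operatorname*{QSym}$ and $M_{\alpha}\left. \textarm{\belgthor}\right. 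M_{\beta}\in\operatorname*{QSym}$ for any two compositions $\alpha=\left( \alpha_{1},\ldots,\alpha_{\ell}\right) $ and $\beta=\left( \beta_{1},\ldots,\beta_{m}\right) $. Both power series are homogeneous of degree $\left\vert \alpha\right\vert +\left\vert \beta\right\vert $, hence bounded-degree, so only quasisymmetry is at issue.

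To establish quasisymmetry I would show that each of these power series is a $\mathbf{k}$-linear combination of the $M_{\gamma}$; equivalently, that for every composition $\gamma$, all monomials of ``shape $\gamma$'' occur in it with one and the same coefficient. Fix a monomial $\mathfrak{p}=x_{k_{1}}^{\gamma_{1}}x_{k_{2}}^{\gamma_{2}}\cdots x_{k_{r}}^{\gamma_{r}}$ with $k_{1}<k_{2}<\cdots<k_{r}$; the coefficient of $\mathfrak{p}$ in $M_{\alpha}\left. \prec\right. M_{\beta}$ is the number of factorizations $\mathfrak{p}=\mathfrak{m}\mathfrak{n}$ with $\mathfrak{m}$ a monomial of $M_{\alpha}$, $\mathfrak{n}$ a monomial of $M_{\beta}$, and $\min\left( \operatorname*{Supp}\mathfrak{m}\right) <\min\left( \operatorname*{Supp}\mathfrak{n}\right) $. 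The key observation is that in any such factorization the least-indexed variable $x_{k_{1}}$ must lie in $\operatorname*{Supp}\mathfrak{m}$ and not in $\operatorname*{Supp}\mathfrak{n}$ (otherwise $\min\left( \operatorname*{Supp}\mathfrak{n}\right) \leq k_{1}=\min\left( \operatorname*{Supp}\mathfrak{m}\right) $), and, $\mathfrak{m}$ being a monomial of $M_{\alpha}$, its least variable carries exponent $\alpha_{1}$; thus $\gamma_{1}=\alpha_{1}$ and the factor $x_{k_{1}}^{\alpha_{1}}$ is entirely consumed by $\mathfrak{m}$. The remaining choices --- how to distribute the variables $x_{k_{2}},\ldots,x_{k_{r}}$ (and, for a variable occurring in both $\mathfrak{m}$ and $\mathfrak{n}$, how to split its exponent) so as to produce, in increasing order of index, the shapes $\left( \alpha_{2},\ldots,\alpha_{\ell}\right) $ for $\mathfrak{m}$ and $\left( \beta_{1},\ldots,\beta_{m}\right) $ for $\mathfrak{n}$ --- depend only on the sequence $\left( \gamma_{2},\ldots,\gamma_{r}\right) $ and not on the actual indices $k_{i}$. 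Hence the coefficient of $\mathfrak{p}$ in $M_{\alpha}\left. \prec\right. M_{\beta}$ depends only on $\gamma$, which is precisely what quasisymmetry requires. (Equivalently: $M_{\alpha}\left. \prec\right. M_{\beta}$ is the part of the quasi-shuffle expansion of $M_{\alpha}M_{\beta}$ formed by the terms whose first part comes from $\alpha$ alone, and $M_{\alpha}\left. \succeq\right. M_{\beta}$ is the complementary part, so that the two add up to $M_{\alpha}M_{\beta}$, in accordance with Remark~\ref{rmk.dendri}.)

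For $M_{\alpha}\left. \textarm{\belgthor}\right. M_{\beta}$ the analysis is even shorter and gives an explicit formula. In a factorization $\mathfrak{p}=\mathfrak{m}\mathfrak{n}$ with $\mathfrak{m}$ of shape $\alpha$, $\mathfrak{n}$ of shape $\beta$, and $\max\left( \operatorname*{Supp}\mathfrak{m}\right) \leq\min\left( \operatorname*{Supp}\mathfrak{n}\right) $, the variables of $\mathfrak{m}$ must form an initial run $x_{k_{1}},\ldots,x_{k_{s}}$ among the variables of $\mathfrak{p}$ and those of $\mathfrak{n}$ a final run $x_{k_{s}},\ldots,x_{k_{r}}$, overlapping in at most the single variable $x_{k_{s}}$; together with the shape conditions this leaves exactly two options --- either $\mathfrak{m}$ and $\mathfrak{n}$ have disjoint supports and $\gamma=\left( \alpha_{1},\ldots,\alpha_{\ell},\beta_{1},\ldots,\beta_{m}\right) $, or they share $x_{k_{s}}$ with exponent split $\alpha_{\ell}+\beta_{1}$ and $\gamma=\left( \alpha_{1},\ldots,\alpha_{\ell-1},\alpha_{\ell}+\beta_{1},\beta_{2},\ldots,\beta_{m}\right) $ --- and in both cases the count is independent of the $k_{i}$. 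In fact this yields $M_{\alpha}\left. \textarm{\belgthor}\right. M_{\beta}=M_{\left( \alpha_{1},\ldots,\alpha_{\ell},\beta_{1},\ldots,\beta_{m}\right) }+M_{\left( \alpha_{1},\ldots,\alpha_{\ell-1},\alpha_{\ell}+\beta_{1},\beta_{2},\ldots,\beta_{m}\right) }$, with the obvious degenerations when $\alpha$ or $\beta$ is empty (when one of the two terms disappears); either way this lies in $\operatorname*{QSym}$.

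The one point that deserves genuine care is the bookkeeping invoked in the second paragraph --- namely, that the number of admissible factorizations of a monomial is unaffected when its variables are relabelled by an increasing bijection. This is routine but should be written out; a clean way is to first record and prove (by precisely this monomial-counting method) the quasi-shuffle product formula $M_{\alpha}M_{\beta}=\sum_{\gamma}M_{\gamma}$, the sum ranging over overlapping shuffles of $\alpha$ and $\beta$, and then to note that $\left. \prec\right. $, $\left. \succeq\right. $ and $\left. \textarm{\belgthor}\right. $ merely extract, respectively, the summands whose first part avoids $\beta$, the summands whose first part involves $\beta$, and the summands that are concatenations or single end-merges.
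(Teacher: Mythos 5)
Your proposal is correct and follows essentially the same route as the paper: reduce by bilinearity to the monomial basis, interpret the coefficient of a monomial in $M_{\alpha}\left.\prec\right. M_{\beta}$ as a count of admissible factorizations, and observe that this count depends only on the shape (Parikh composition) of the monomial --- the relabelling-invariance step you flag as needing care is exactly what the paper formalizes with its bijection between ``$\gamma$-smaps'' and factorization pairs. Your explicit two-term formula for the second operation likewise matches the formula $M_{\left[\alpha,\beta\right]}+M_{\alpha\odot\beta}$ that the paper records as the shortcut for that case.
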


\begin{vershort}
For example, we can explicitly describe the operation $\bel $ on the monomial
basis $\left(  M_{\gamma}\right)  _{\gamma\in\operatorname*{Comp}}$ of
$\operatorname*{QSym}$. Namely, any two nonempty compositions $\alpha$ and
$\beta$ satisfy $M_{\alpha} \bel M_{\beta}=M_{\left[  \alpha,\beta\right]
}+M_{\alpha\odot\beta}$, where $\left[  \alpha,\beta\right]  $ and
$\alpha\odot\beta$ are two compositions defined by%
\begin{align*}
\left[  \left(  \alpha_{1},\alpha_{2},\ldots,\alpha_{\ell}\right)  ,\left(
\beta_{1},\beta_{2},\ldots,\beta_{m}\right)  \right]   &  =\left(  \alpha
_{1},\alpha_{2},\ldots,\alpha_{\ell},\beta_{1},\beta_{2},\ldots,\beta
_{m}\right)  ;\\
\left(  \alpha_{1},\alpha_{2},\ldots,\alpha_{\ell}\right)  \odot\left(
\beta_{1},\beta_{2},\ldots,\beta_{m}\right)   &  =\left(  \alpha_{1}%
,\alpha_{2},\ldots,\alpha_{\ell-1},\alpha_{\ell}+\beta_{1},\beta_{2},\beta
_{3},\ldots,\beta_{m}\right)  .
\end{align*}
If one of $\alpha$ and $\beta$ is empty, then $M_{\alpha} \bel M_{\beta
}=M_{\left[  \alpha,\beta\right]  }$.
\end{vershort}

\begin{verlong}
For example, we can explicitly describe the operation $\bel $ on the monomial
basis $\left(  M_{\gamma}\right)  _{\gamma\in\operatorname*{Comp}}$ of
$\operatorname*{QSym}$. Namely, any two nonempty compositions $\alpha$ and
$\beta$ satisfy $M_{\alpha} \bel M_{\beta}=M_{\left[  \alpha,\beta\right]
}+M_{\alpha\odot\beta}$, where $\left[  \alpha,\beta\right]  $ and
$\alpha\odot\beta$ are two compositions defined by%
\begin{align*}
\left[  \left(  \alpha_{1},\alpha_{2},\ldots,\alpha_{\ell}\right)  ,\left(
\beta_{1},\beta_{2},\ldots,\beta_{m}\right)  \right]   &  =\left(  \alpha
_{1},\alpha_{2},\ldots,\alpha_{\ell},\beta_{1},\beta_{2},\ldots,\beta
_{m}\right)  ;\\
\left(  \alpha_{1},\alpha_{2},\ldots,\alpha_{\ell}\right)  \odot\left(
\beta_{1},\beta_{2},\ldots,\beta_{m}\right)   &  =\left(  \alpha_{1}%
,\alpha_{2},\ldots,\alpha_{\ell-1},\alpha_{\ell}+\beta_{1},\beta_{2},\beta
_{3},\ldots,\beta_{m}\right)  .
\end{align*}
\footnote{What we call $\left[  \alpha,\beta\right]  $ is denoted by
$\alpha\cdot\beta$ in \cite[before Proposition 5.1.7]{Reiner}.} If one of
$\alpha$ and $\beta$ is empty, then $M_{\alpha} \bel M_{\beta}=M_{\left[
\alpha,\beta\right]  }$.
\end{verlong}

Proposition \ref{prop.QSym.closed} can reasonably be called obvious; the below
proof owes its length mainly to the difficulty of formalizing the intuition.

\begin{proof}
[Proof of Proposition \ref{prop.QSym.closed}.]We shall first introduce a few
more notations.

\begin{vershort}
If $\mathfrak{m}$ is a monomial, then the \textit{Parikh composition} of
$\mathfrak{m}$ is defined as follows: Write $\mathfrak{m}$ in the form
$\mathfrak{m}=x_{i_{1}}^{\alpha_{1}}x_{i_{2}}^{\alpha_{2}}\cdots x_{i_{\ell}%
}^{\alpha_{\ell}}$ for some $\ell\in\mathbb{N}$, some positive integers
$\alpha_{1}$, $\alpha_{2}$, $\ldots$, $\alpha_{\ell}$, and some positive
integers $i_{1}$, $i_{2}$, $\ldots$, $i_{\ell}$ satisfying $i_{1}<i_{2}%
<\cdots<i_{\ell}$. Notice that this way of writing $\mathfrak{m}$ is unique.
Then, the Parikh composition of $\mathfrak{m}$ is defined to be the
composition $\left(  \alpha_{1},\alpha_{2},\ldots,\alpha_{\ell}\right)  $.
\end{vershort}

\begin{verlong}
If $\mathfrak{m}$ is a monomial, then the \textit{Parikh composition} of
$\mathfrak{m}$ is defined as follows: Write $\mathfrak{m}$ in the form
$\mathfrak{m}=x_{i_{1}}^{\alpha_{1}}x_{i_{2}}^{\alpha_{2}}\cdots x_{i_{\ell}%
}^{\alpha_{\ell}}$ for some $\ell\in\mathbb{N}$, some positive integers
$\alpha_{1}$, $\alpha_{2}$, $\ldots$, $\alpha_{\ell}$, and some positive
integers $i_{1}$, $i_{2}$, $\ldots$, $i_{\ell}$ satisfying $i_{1}<i_{2}%
<\cdots<i_{\ell}$. (Notice that this way of writing $\mathfrak{m}$ is unique.)
Then, the Parikh composition of $\mathfrak{m}$ is defined to be the
composition $\left(  \alpha_{1},\alpha_{2},\ldots,\alpha_{\ell}\right)  $.
\end{verlong}

We denote by $\operatorname*{Parikh}\mathfrak{m}$ the Parikh composition of a
monomial $\mathfrak{m}$. Now, it is easy to see that the definition of a
monomial quasisymmetric function $M_{\alpha}$ can be rewritten as follows: For
every $\alpha\in\operatorname*{Comp}$, we have%
\begin{equation}
M_{\alpha}=\sum_{\substack{\mathfrak{m}\in\operatorname*{Mon}%
;\\\operatorname*{Parikh}\mathfrak{m}=\alpha}}\mathfrak{m}.
\label{pf.prop.QSym.closed.Malpha-via-Parikh}%
\end{equation}
(Indeed, for any given composition $\alpha=\left(  \alpha_{1},\alpha
_{2},\ldots,\alpha_{\ell}\right)  $, the monomials $\mathfrak{m}$ satisfying
$\operatorname*{Parikh}\mathfrak{m}=\alpha$ are precisely the monomials of the
form $x_{i_{1}}^{\alpha_{1}}x_{i_{2}}^{\alpha_{2}}\cdots x_{i_{\ell}}%
^{\alpha_{\ell}}$ with $i_{1}$, $i_{2}$, $\ldots$, $i_{\ell}$ being positive
integers satisfying $i_{1}<i_{2}<\cdots<i_{\ell}$.)

Now, pack-equivalent monomials can be characterized as follows: Two monomials
$\mathfrak{m}$ and $\mathfrak{n}$ are pack-equivalent if and only if they have
the same Parikh composition.

Now, we come to the proof of Proposition \ref{prop.QSym.closed}.

Let us first fix two compositions $\alpha$ and $\beta$. We shall prove that
$M_{\alpha}\left.  \prec\right.  M_{\beta}\in\operatorname*{QSym}$.

Write the compositions $\alpha$ and $\beta$ as $\alpha=\left(  \alpha
_{1},\alpha_{2},\ldots,\alpha_{\ell}\right)  $ and $\beta=\left(  \beta
_{1},\beta_{2},\ldots,\beta_{m}\right)  $. Let $\mathcal{S}_{0}$ denote the
$\ell$-element set $\left\{  0\right\}  \times\left\{  1,2,\ldots
,\ell\right\}  $. Let $\mathcal{S}_{1}$ denote the $m$-element set $\left\{
1\right\}  \times\left\{  1,2,\ldots,m\right\}  $. Let $\mathcal{S}$ denote
the $\left(  \ell+m\right)  $-element set $\mathcal{S}_{0}\cup\mathcal{S}_{1}%
$. Let $\operatorname*{inc}\nolimits_{0}:\left\{  1,2,\ldots,\ell\right\}
\rightarrow\mathcal{S}$ be the map which sends every $p\in\left\{
1,2,\ldots,\ell\right\}  $ to $\left(  0,p\right)  \in\mathcal{S}_{0}%
\subseteq\mathcal{S}$. Let $\operatorname*{inc}\nolimits_{1}:\left\{
1,2,\ldots,m\right\}  \rightarrow\mathcal{S}$ be the map which sends every
$q\in\left\{  1,2,\ldots,m\right\}  $ to $\left(  1,q\right)  \in
\mathcal{S}_{1}\subseteq\mathcal{S}$. Define a map $\rho:\mathcal{S}%
\rightarrow\left\{  1,2,3,\ldots\right\}  $ by setting%
\begin{align*}
\rho\left(  0,p\right)   &  =\alpha_{p}\ \ \ \ \ \ \ \ \ \ \text{for all }%
p\in\left\{  1,2,\ldots,\ell\right\}  ;\\
\rho\left(  1,q\right)   &  =\beta_{q}\ \ \ \ \ \ \ \ \ \ \text{for all }%
q\in\left\{  1,2,\ldots,m\right\}  .
\end{align*}

For every composition $\gamma=\left(  \gamma_{1},\gamma_{2},\ldots,\gamma
_{n}\right)  $, we define a $\gamma$\textit{-smap} to be a map $f:\mathcal{S}%
\rightarrow\left\{  1,2,\ldots,n\right\}  $ satisfying the following three properties:

\begin{itemize}
\item The maps $f\circ\operatorname*{inc}\nolimits_{0}$ and $f\circ
\operatorname*{inc}\nolimits_{1}$ are strictly increasing.

\item We have\footnote{Keep in mind that we set $\min\varnothing=\infty$.}
$\min\left(  f\left(  \mathcal{S}_{0}\right)  \right)  <\min\left(  f\left(
\mathcal{S}_{1}\right)  \right)  $.

\item Every $u\in\left\{  1,2,\ldots,n\right\}  $ satisfies%
\[
\sum_{s\in f^{-1}\left(  u\right)  }\rho\left(  s\right)  =\gamma_{u}.
\]

\end{itemize}

These three properties will be called the three \textit{defining properties}
of a $\gamma$-smap.

Now, we make the following claim:

\begin{statement}
\textit{Claim 1:} Let $\mathfrak{q}$ be any monomial. Let $\gamma$ be the
Parikh composition of $\mathfrak{q}$. The coefficient of $\mathfrak{q}$ in
$M_{\alpha}\left.  \prec\right.  M_{\beta}$ equals the number of all $\gamma$-smaps.
\end{statement}

\begin{vershort}
\textit{Proof of Claim 1:} We shall give a brief outline of this proof; for
more details, we refer to the detailed version of this note.

Write the composition $\gamma$ in the form $\gamma=\left(  \gamma_{1}%
,\gamma_{2},\ldots,\gamma_{n}\right)  $. Write the monomial $\mathfrak{q}$ in
the form $\mathfrak{q}=x_{k_{1}}^{\gamma_{1}}x_{k_{2}}^{\gamma_{2}}\cdots
x_{k_{n}}^{\gamma_{n}}$ for some positive integers $k_{1}$, $k_{2}$, $\ldots$,
$k_{n}$ satisfying $k_{1}<k_{2}<\cdots<k_{n}$. (This is possible because
$\left(  \gamma_{1},\gamma_{2},\ldots,\gamma_{n}\right)  =\gamma$ is the
Parikh composition of $\mathfrak{q}$.) Then, $\operatorname*{Supp}%
\mathfrak{q}=\left\{  k_{1},k_{2},\ldots,k_{n}\right\}  $.

From (\ref{pf.prop.QSym.closed.Malpha-via-Parikh}), we get $M_{\alpha}%
=\sum_{\substack{\mathfrak{m}\in\operatorname*{Mon};\\\operatorname*{Parikh}%
\mathfrak{m}=\alpha}}\mathfrak{m}$. Similarly, $M_{\beta}=\sum
_{\substack{\mathfrak{n}\in\operatorname*{Mon};\\\operatorname*{Parikh}%
\mathfrak{n}=\beta}}\mathfrak{n}$. Hence,%
\begin{align*}
&  M_{\alpha}\left.  \prec\right.  M_{\beta}\\
&  =\left(  \sum_{\substack{\mathfrak{m}\in\operatorname*{Mon}%
;\\\operatorname*{Parikh}\mathfrak{m}=\alpha}}\mathfrak{m}\right)  \left.
\prec\right.  \left(  \sum_{\substack{\mathfrak{n}\in\operatorname*{Mon}%
;\\\operatorname*{Parikh}\mathfrak{n}=\beta}}\mathfrak{n}\right)
=\sum_{\substack{\left(  \mathfrak{m},\mathfrak{n}\right)  \in
\operatorname*{Mon}\times\operatorname*{Mon};\\\operatorname*{Parikh}%
\mathfrak{m}=\alpha;\\\operatorname*{Parikh}\mathfrak{n}=\beta;\\\min\left(
\operatorname*{Supp}\mathfrak{m}\right)  <\min\left(  \operatorname*{Supp}%
\mathfrak{n}\right)  }}\mathfrak{mn}%
\end{align*}
(by the explicit formula for $\left.  \prec\right.  $). Thus, the coefficient
of $\mathfrak{q}$ in $M_{\alpha}\left.  \prec\right.  M_{\beta}$ equals the
number of all pairs $\left(  \mathfrak{m},\mathfrak{n}\right)  \in
\operatorname*{Mon}\times\operatorname*{Mon}$ such that
$\operatorname*{Parikh}\mathfrak{m}=\alpha$, $\operatorname*{Parikh}%
\mathfrak{n}=\beta$, $\min\left(  \operatorname*{Supp}\mathfrak{m}\right)
<\min\left(  \operatorname*{Supp}\mathfrak{n}\right)  $ and $\mathfrak{mn}%
=\mathfrak{q}$. These pairs shall be called $\mathfrak{q}$\textit{-spairs}.

Now, we shall construct a bijection $\Phi$ from the set of all $\gamma$-smaps
to the set of all $\mathfrak{q}$-spairs. This is a simple exercise in
re-encoding data, so we leave the details to the reader (they can be found in
the detailed version of this note). Let us just state how the bijection and
its inverse are defined:

\begin{itemize}
\item If $f:\mathcal{S}\rightarrow\left\{  1,2,\ldots,n\right\}  $ is a
$\gamma$-smap, then the $\mathfrak{q}$-spair $\Phi\left(  f\right)  $ is
defined to be $\left(  \prod_{p=1}^{\ell}x_{k_{f\left(  0,p\right)  }}%
^{\alpha_{p}},\prod_{q=1}^{m}x_{k_{f\left(  1,q\right)  }}^{\beta_{q}}\right)
$.

\item If $\left(  \mathfrak{m},\mathfrak{n}\right)  $ is a $\mathfrak{q}%
$-spair, then the $\gamma$-smap $\Phi^{-1}\left(  \mathfrak{m},\mathfrak{n}%
\right)  $ is defined as follows: Write the monomial $\mathfrak{m}$ in the
form $\mathfrak{m}=x_{k_{u_{1}}}^{\alpha_{1}}x_{k_{u_{2}}}^{\alpha_{2}}\cdots
x_{k_{u_{\ell}}}^{\alpha_{\ell}}$ for some elements $1\leq u_{1}<u_{2}%
<\cdots<u_{\ell}\leq n$. (This is possible since $\operatorname*{Supp}%
\mathfrak{m}\subseteq\operatorname*{Supp}\mathfrak{q}=\left\{  k_{1}%
,k_{2},\ldots,k_{n}\right\}  $ and $\operatorname*{Parikh}\mathfrak{m}=\alpha
$.) Similarly, write the monomial $\mathfrak{n}$ in the form $\mathfrak{n}%
=x_{k_{v_{1}}}^{\beta_{1}}x_{k_{v_{2}}}^{\beta_{2}}\cdots x_{k_{v_{m}}}%
^{\beta_{m}}$ for some elements $1\leq v_{1}<v_{2}<\cdots<v_{m}\leq n$. Now,
the $\gamma$-smap $\Phi^{-1}\left(  \mathfrak{m},\mathfrak{n}\right)  $ is
defined as the map $f:\mathcal{S}\rightarrow\left\{  1,2,\ldots,n\right\}  $
which sends every $\left(  0,p\right)  $ to $u_{p}$ and every $\left(
1,q\right)  $ to $v_{q}$.
\end{itemize}

This bijection $\Phi$ shows that the number of all $\mathfrak{q}$-spairs
equals the number of all $\gamma$-smaps. Since the coefficient of
$\mathfrak{q}$ in $M_{\alpha}\left.  \prec\right.  M_{\beta}$ equals the
former number, it thus must equal the latter number. This proves Claim 1.
\end{vershort}

\begin{verlong}
\textit{Proof of Claim 1:} Write the composition $\gamma$ in the form
$\gamma=\left(  \gamma_{1},\gamma_{2},\ldots,\gamma_{n}\right)  $. Write the
monomial $\mathfrak{q}$ in the form $\mathfrak{q}=x_{k_{1}}^{\gamma_{1}%
}x_{k_{2}}^{\gamma_{2}}\cdots x_{k_{n}}^{\gamma_{n}}$ for some positive
integers $k_{1}$, $k_{2}$, $\ldots$, $k_{n}$ satisfying $k_{1}<k_{2}%
<\cdots<k_{n}$. (This is possible because $\left(  \gamma_{1},\gamma
_{2},\ldots,\gamma_{n}\right)  =\gamma$ is the Parikh composition of
$\mathfrak{q}$.) Then, $\operatorname*{Supp}\mathfrak{q}=\left\{  k_{1}%
,k_{2},\ldots,k_{n}\right\}  $.

From (\ref{pf.prop.QSym.closed.Malpha-via-Parikh}), we get $M_{\alpha}%
=\sum_{\substack{\mathfrak{m}\in\operatorname*{Mon};\\\operatorname*{Parikh}%
\mathfrak{m}=\alpha}}\mathfrak{m}$. Similarly, $M_{\beta}=\sum
_{\substack{\mathfrak{n}\in\operatorname*{Mon};\\\operatorname*{Parikh}%
\mathfrak{n}=\beta}}\mathfrak{n}$. Hence,%
\begin{align*}
&  M_{\alpha}\left.  \prec\right.  M_{\beta}\\
&  =\left(  \sum_{\substack{\mathfrak{m}\in\operatorname*{Mon}%
;\\\operatorname*{Parikh}\mathfrak{m}=\alpha}}\mathfrak{m}\right)  \left.
\prec\right.  \left(  \sum_{\substack{\mathfrak{n}\in\operatorname*{Mon}%
;\\\operatorname*{Parikh}\mathfrak{n}=\beta}}\mathfrak{n}\right) \\
&  =\sum_{\substack{\mathfrak{m}\in\operatorname*{Mon}%
;\\\operatorname*{Parikh}\mathfrak{m}=\alpha}}\sum_{\substack{\mathfrak{n}%
\in\operatorname*{Mon};\\\operatorname*{Parikh}\mathfrak{n}=\beta
}}\underbrace{\mathfrak{m}\left.  \prec\right.  \mathfrak{n}}_{\substack{=
\begin{cases}
\mathfrak{mn}, & \text{if }\min\left(  \operatorname*{Supp}\mathfrak{m}%
\right)  <\min\left(  \operatorname*{Supp}\mathfrak{n}\right)  ;\\
0, & \text{if }\min\left(  \operatorname*{Supp}\mathfrak{m}\right)  \geq
\min\left(  \operatorname*{Supp}\mathfrak{n}\right)
\end{cases}
\\\text{(by the definition of }\left.  \prec\right.  \text{ on monomials)}}}\\
&  \ \ \ \ \ \ \ \ \ \ \left(  \text{since the operation }\left.
\prec\right.  \text{ is }\mathbf{k}\text{-bilinear and continuous}\right) \\
&  =\sum_{\substack{\mathfrak{m}\in\operatorname*{Mon}%
;\\\operatorname*{Parikh}\mathfrak{m}=\alpha}}\sum_{\substack{\mathfrak{n}%
\in\operatorname*{Mon};\\\operatorname*{Parikh}\mathfrak{n}=\beta}}
\begin{cases}
\mathfrak{mn}, & \text{if }\min\left(  \operatorname*{Supp}\mathfrak{m}%
\right)  <\min\left(  \operatorname*{Supp}\mathfrak{n}\right)  ;\\
0, & \text{if }\min\left(  \operatorname*{Supp}\mathfrak{m}\right)  \geq
\min\left(  \operatorname*{Supp}\mathfrak{n}\right)
\end{cases}
\\
&  =\sum_{\substack{\left(  \mathfrak{m},\mathfrak{n}\right)  \in
\operatorname*{Mon}\times\operatorname*{Mon};\\\operatorname*{Parikh}%
\mathfrak{m}=\alpha;\\\operatorname*{Parikh}\mathfrak{n}=\beta;\\\min\left(
\operatorname*{Supp}\mathfrak{m}\right)  <\min\left(  \operatorname*{Supp}%
\mathfrak{n}\right)  }}\mathfrak{mn}.
\end{align*}
Thus, the coefficient of $\mathfrak{q}$ in $M_{\alpha}\left.  \prec\right.
M_{\beta}$ equals the number of all pairs $\left(  \mathfrak{m},\mathfrak{n}%
\right)  \in\operatorname*{Mon}\times\operatorname*{Mon}$ such that
$\operatorname*{Parikh}\mathfrak{m}=\alpha$, $\operatorname*{Parikh}%
\mathfrak{n}=\beta$, $\min\left(  \operatorname*{Supp}\mathfrak{m}\right)
<\min\left(  \operatorname*{Supp}\mathfrak{n}\right)  $ and $\mathfrak{mn}%
=\mathfrak{q}$. These pairs shall be called \textit{spairs}. (The concept of a
spair depends on $\mathfrak{q}$; we nevertheless omit $\mathfrak{q}$ from the
notation, since we regard $\mathfrak{q}$ as fixed.)

Now, we shall construct a bijection between the $\gamma$-smaps and the spairs.

Indeed, we first define a map $\Phi$ from the set of $\gamma$-smaps to the set
of spairs as follows: Let $f:\mathcal{S}\rightarrow\left\{  1,2,\ldots
,n\right\}  $ be a $\gamma$-smap. Then, $\Phi\left(  f\right)  $ is defined to
be the spair%
\[
\left(  \prod_{p=1}^{\ell}x_{k_{f\left(  0,p\right)  }}^{\alpha_{p}}%
,\prod_{q=1}^{m}x_{k_{f\left(  1,q\right)  }}^{\beta_{q}}\right)  .
\]
\footnote{This is a well-defined spair, for the following reasons:
\par
\begin{itemize}
\item The first defining property of a $\gamma$-smap can be rewritten as
\textquotedblleft$f\left(  0,1\right)  <f\left(  0,2\right)  <\cdots<f\left(
0,\ell\right)  $ and $f\left(  1,1\right)  <f\left(  1,2\right)
<\cdots<f\left(  1,m\right)  $\textquotedblright. Combined with $k_{1}%
<k_{2}<\cdots<k_{n}$, this shows that $k_{f\left(  0,1\right)  }<k_{f\left(
0,2\right)  }<\cdots<k_{f\left(  0,\ell\right)  }$ and $k_{f\left(
1,1\right)  }<k_{f\left(  1,2\right)  }<\cdots<k_{f\left(  1,m\right)  }$.
Hence, $\operatorname*{Parikh}\left(  \prod_{p=1}^{\ell}x_{k_{f\left(
0,p\right)  }}^{\alpha_{p}}\right)  =\alpha$ and $\operatorname*{Parikh}%
\left(  \prod_{q=1}^{m}x_{k_{f\left(  1,q\right)  }}^{\beta_{q}}\right)
=\beta$.
\par
\item The second defining property of a $\gamma$-smap shows that $\min\left(
f\left(  \mathcal{S}_{0}\right)  \right)  <\min\left(  f\left(  \mathcal{S}%
_{1}\right)  \right)  $, so that $k_{\min\left(  f\left(  \mathcal{S}%
_{0}\right)  \right)  }<k_{\min\left(  f\left(  \mathcal{S}_{1}\right)
\right)  }$ (since $k_{1}<k_{2}<\cdots<k_{n}$). But $\operatorname*{Supp}%
\left(  \prod_{p=1}^{\ell}x_{k_{f\left(  0,p\right)  }}^{\alpha_{p}}\right)
=\left\{  k_{f\left(  s\right)  }\ \mid\ s\in\mathcal{S}_{0}\right\}  $ and
thus $\min\left(  \operatorname*{Supp}\left(  \prod_{p=1}^{\ell}x_{k_{f\left(
0,p\right)  }}^{\alpha_{p}}\right)  \right)  =\min\left\{  k_{f\left(
s\right)  }\ \mid\ s\in\mathcal{S}_{0}\right\}  =k_{\min\left(  f\left(
\mathcal{S}_{0}\right)  \right)  }$ (since $k_{1}<k_{2}<\cdots<k_{n}$).
Similarly, $\min\left(  \operatorname*{Supp}\left(  \prod_{q=1}^{m}%
x_{k_{f\left(  1,q\right)  }}^{\beta_{q}}\right)  \right)  =k_{\min\left(
f\left(  \mathcal{S}_{1}\right)  \right)  }$. Hence,%
\[
\min\left(  \operatorname*{Supp}\left(  \prod_{p=1}^{\ell}x_{k_{f\left(
0,p\right)  }}^{\alpha_{p}}\right)  \right)  =k_{\min\left(  f\left(
\mathcal{S}_{0}\right)  \right)  }<k_{\min\left(  f\left(  \mathcal{S}%
_{1}\right)  \right)  }=\min\left(  \operatorname*{Supp}\left(  \prod
_{q=1}^{m}x_{k_{f\left(  1,q\right)  }}^{\beta_{q}}\right)  \right)  .
\]
\par
\item The third defining property of a $\gamma$-smap shows that $\sum_{s\in
f^{-1}\left(  u\right)  }\rho\left(  s\right)  =\gamma_{u}$ for every
$u\in\left\{  1,2,\ldots,n\right\}  $. Now, every $p\in\left\{  1,2,\ldots
,\ell\right\}  $ satisfies $\alpha_{p}=\rho\left(  0,p\right)  $. Hence,
$\prod_{p=1}^{\ell}x_{k_{f\left(  0,p\right)  }}^{\alpha_{p}}=\prod
_{p=1}^{\ell}x_{k_{f\left(  0,p\right)  }}^{\rho\left(  0,p\right)  }%
=\prod_{s\in\mathcal{S}_{0}}x_{k_{f\left(  s\right)  }}^{\rho\left(  s\right)
}$. Similarly, $\prod_{q=1}^{m}x_{k_{f\left(  1,q\right)  }}^{\beta_{q}}%
=\prod_{s\in\mathcal{S}_{1}}x_{k_{f\left(  s\right)  }}^{\rho\left(  s\right)
}$. Multiplying these two identities, we obtain%
\begin{align*}
\left(  \prod_{p=1}^{\ell}x_{k_{f\left(  0,p\right)  }}^{\alpha_{p}}\right)
\left(  \prod_{q=1}^{m}x_{k_{f\left(  1,q\right)  }}^{\beta_{q}}\right)   &
=\left(  \prod_{s\in\mathcal{S}_{0}}x_{k_{f\left(  s\right)  }}^{\rho\left(
s\right)  }\right)  \left(  \prod_{s\in\mathcal{S}_{1}}x_{k_{f\left(
s\right)  }}^{\rho\left(  s\right)  }\right)  =\prod_{s\in\mathcal{S}%
}x_{k_{f\left(  s\right)  }}^{\rho\left(  s\right)  }=\prod_{u=1}^{n}%
\prod_{s\in f^{-1}\left(  u\right)  }\underbrace{x_{k_{f\left(  s\right)  }%
}^{\rho\left(  s\right)  }}_{\substack{=x_{k_{u}}^{\rho\left(  s\right)
}\\\text{(since }f\left(  s\right)  =u\text{)}}}\\
&  =\prod_{u=1}^{n}\underbrace{\prod_{s\in f^{-1}\left(  u\right)  }x_{k_{u}%
}^{\rho\left(  s\right)  }}_{\substack{=x_{k_{u}}^{\gamma_{u}}\\\text{(since
}\sum_{s\in f^{-1}\left(  u\right)  }\rho\left(  s\right)  =\gamma_{u}%
\text{)}}}=\prod_{u=1}^{n}x_{k_{u}}^{\gamma_{u}}=x_{k_{1}}^{\gamma_{1}%
}x_{k_{2}}^{\gamma_{2}}\cdots x_{k_{n}}^{\gamma_{n}}=\mathfrak{q}.
\end{align*}
\end{itemize}
}

Conversely, we define a map $\Psi$ from the set of spairs to the set of
$\gamma$-smaps as follows: Let $\left(  \mathfrak{m},\mathfrak{n}\right)  $ be
a spair. Then, we write the monomial $\mathfrak{m}$ in the form $\mathfrak{m}%
=x_{i_{1}}^{\alpha_{1}}x_{i_{2}}^{\alpha_{2}}\cdots x_{i_{\ell}}^{\alpha
_{\ell}}$ for some positive integers $i_{1}$, $i_{2}$, $\ldots$, $i_{\ell}$
satisfying $i_{1}<i_{2}<\cdots<i_{\ell}$ (this is possible since
$\operatorname*{Parikh}\mathfrak{m}=\alpha$), and we write the monomial
$\mathfrak{n}$ in the form $\mathfrak{n}=x_{j_{1}}^{\beta_{1}}x_{j_{2}}%
^{\beta_{2}}\cdots x_{j_{m}}^{\beta_{m}}$ for some positive integers $j_{1}$,
$j_{2}$, $\ldots$, $j_{m}$ satisfying $j_{1}<j_{2}<\cdots<j_{m}$ (this is
possible since $\operatorname*{Parikh}\mathfrak{n}=\beta$). Of course,
$\operatorname*{Supp}\mathfrak{m}=\left\{  i_{1},i_{2},\ldots,i_{\ell
}\right\}  $ and $\operatorname*{Supp}\mathfrak{n}=\left\{  j_{1},j_{2}%
,\ldots,j_{m}\right\}  $, so that $\min\left\{  i_{1},i_{2},\ldots,i_{\ell
}\right\}  <\min\left\{  j_{1},j_{2},\ldots,j_{m}\right\}  $ (since
$\min\left(  \operatorname*{Supp}\mathfrak{m}\right)  <\min\left(
\operatorname*{Supp}\mathfrak{n}\right)  $).

Now, we define a map $f:\mathcal{S}\rightarrow\left\{  1,2,\ldots,n\right\}  $
as follows:

\begin{itemize}
\item For every $p\in\left\{  1,2,\ldots,\ell\right\}  $, we let $f\left(
0,p\right)  $ be the unique $r\in\left\{  1,2,\ldots,n\right\}  $ such that
$i_{p}=k_{r}$.\ \ \ \ \footnote{To prove that this is well-defined, we need to
show that this $r$ exists and is unique. The uniqueness of $r$ is obvious
(since $k_{1}<k_{2}<\cdots<k_{n}$). To prove its existence, we notice that
$i_{p}\in\operatorname*{Supp}\mathfrak{m}$ (since $\mathfrak{m}=x_{i_{1}%
}^{\alpha_{1}}x_{i_{2}}^{\alpha_{2}}\cdots x_{i_{\ell}}^{\alpha_{\ell}}$ and
$\alpha_{p}>0$) and thus $i_{p}\in\operatorname*{Supp}\mathfrak{m}%
\subseteq\operatorname*{Supp}\underbrace{\left(  \mathfrak{mn}\right)
}_{=\mathfrak{q}}=\operatorname*{Supp}\mathfrak{q}=\left\{  k_{1},k_{2}%
,\ldots,k_{n}\right\}  $.}

\item For every $q\in\left\{  1,2,\ldots,m\right\}  $, we let $f\left(
1,q\right)  $ be the unique $r\in\left\{  1,2,\ldots,n\right\}  $ such that
$j_{q}=k_{r}$.\ \ \ \ \footnote{This is again well-defined, for similar
reasons as the $r$ in the definition of $f\left(  0,p\right)  $.}
\end{itemize}

It is now straightforward to show that $f$ is a $\gamma$%
-smap.\footnote{Indeed:
\par
\begin{itemize}
\item The first defining property of a $\gamma$-smap holds. (\textit{Proof:}
Let us show that $f\circ\operatorname*{inc}\nolimits_{0}$ is strictly
increasing (the proof for $f\circ\operatorname*{inc}\nolimits_{1}$ is
similar). Assume it is not. Then there exist some $p,p^{\prime}\in\left\{
1,2,\ldots,\ell\right\}  $ satisfying $p<p^{\prime}$ and $\left(
f\circ\operatorname*{inc}\nolimits_{0}\right)  \left(  p\right)  \geq\left(
f\circ\operatorname*{inc}\nolimits_{0}\right)  \left(  p^{\prime}\right)  $.
Consider these $p,p^{\prime}$. We have $p<p^{\prime}$, and therefore
$i_{p}<i_{p^{\prime}}$ (since $i_{1}<i_{2}<\cdots<i_{\ell}$). But $\left(
f\circ\operatorname*{inc}\nolimits_{0}\right)  \left(  p\right)  \geq\left(
f\circ\operatorname*{inc}\nolimits_{0}\right)  \left(  p^{\prime}\right)  $,
and thus $k_{\left(  f\circ\operatorname*{inc}\nolimits_{0}\right)  \left(
p\right)  }\geq k_{\left(  f\circ\operatorname*{inc}\nolimits_{0}\right)
\left(  p^{\prime}\right)  }$ (since $k_{1}<k_{2}<\cdots<k_{n}$). Since
$k_{\left(  f\circ\operatorname*{inc}\nolimits_{0}\right)  \left(  p\right)
}=k_{f\left(  0,p\right)  }=i_{p}$ (by the definition of $f\left(  0,p\right)
$) and similarly $k_{\left(  f\circ\operatorname*{inc}\nolimits_{0}\right)
\left(  p^{\prime}\right)  }=i_{p^{\prime}}$, this rewrites as $i_{p}\geq
i_{p^{\prime}}$. This contradicts $i_{p}<i_{p^{\prime}}$. This contradiction
completes the proof.)
\par
\item The second defining property of a $\gamma$-smap holds. (\textit{Proof:}
We WLOG assume that $\ell$ and $m$ are positive, since the other case is
straightforward. We have $i_{1}<i_{2}<\cdots<i_{\ell}$. In other words,
$k_{f\left(  0,1\right)  }<k_{f\left(  0,2\right)  }<\cdots<k_{f\left(
0,\ell\right)  }$ (since $k_{f\left(  0,p\right)  }=i_{p}$ for every
$p\in\left\{  1,2,\ldots,\ell\right\}  $). Hence, $f\left(  0,1\right)
<f\left(  0,2\right)  <\cdots<f\left(  0,\ell\right)  $ (since $k_{1}%
<k_{2}<\cdots<k_{n}$). Hence, $\min\left(  f\left(  \mathcal{S}_{0}\right)
\right)  =f\left(  0,1\right)  $. Similarly, $\min\left(  f\left(
\mathcal{S}_{1}\right)  \right)  =f\left(  1,1\right)  $. But from
$i_{1}<i_{2}<\cdots<i_{\ell}$, we obtain $i_{1}=\min\left\{  i_{1}%
,i_{2},\ldots,i_{\ell}\right\}  $; similarly, $j_{1}=\min\left\{  j_{1}%
,j_{2},\ldots,j_{m}\right\}  $. Hence, $k_{f\left(  0,1\right)  }=i_{1}%
=\min\left\{  i_{1},i_{2},\ldots,i_{\ell}\right\}  <\min\left\{  j_{1}%
,j_{2},\ldots,j_{m}\right\}  =j_{1}=k_{f\left(  1,1\right)  }$, so that
$f\left(  0,1\right)  <f\left(  1,1\right)  $ (since $k_{1}<k_{2}<\cdots
<k_{n}$). Hence, $\min\left(  f\left(  \mathcal{S}_{0}\right)  \right)
=f\left(  0,1\right)  <f\left(  1,1\right)  =\min\left(  f\left(
\mathcal{S}_{1}\right)  \right)  $, qed.)
\par
\item The third defining property of a $\gamma$-smap holds. (\textit{Proof:}
We have%
\[
\mathfrak{m}=x_{i_{1}}^{\alpha_{1}}x_{i_{2}}^{\alpha_{2}}\cdots x_{i_{\ell}%
}^{\alpha_{\ell}}=\prod_{p=1}^{\ell}\underbrace{x_{i_{p}}^{\alpha_{p}}%
}_{\substack{=x_{k_{f\left(  0,p\right)  }}^{\rho\left(  0,p\right)
}\\\text{(since }\alpha_{p}=\rho\left(  0,p\right)  \\\text{and }%
i_{p}=k_{f\left(  0,p\right)  }\text{)}}}=\prod_{p=1}^{\ell}x_{k_{f\left(
0,p\right)  }}^{\rho\left(  0,p\right)  }=\prod_{s\in\mathcal{S}_{0}%
}x_{k_{f\left(  s\right)  }}^{\rho\left(  s\right)  }%
\]
and similarly $\mathfrak{n}=\prod_{s\in\mathcal{S}_{1}}x_{k_{f\left(
s\right)  }}^{\rho\left(  s\right)  }$. Hence,%
\[
\mathfrak{mn}=\left(  \prod_{s\in\mathcal{S}_{0}}x_{k_{f\left(  s\right)  }%
}^{\rho\left(  s\right)  }\right)  \left(  \prod_{s\in\mathcal{S}_{1}%
}x_{k_{f\left(  s\right)  }}^{\rho\left(  s\right)  }\right)  =\prod
_{s\in\mathcal{S}}x_{k_{f\left(  s\right)  }}^{\rho\left(  s\right)  }
\qquad\qquad\left(  \text{since } \mathcal{S} = \mathcal{S}_{0} \cup
\mathcal{S}_{1} \text{ and } \mathcal{S}_{0} \cap\mathcal{S}_{1} =
\varnothing\right)  .
\]
Thus, $\prod_{s\in\mathcal{S}}x_{k_{f\left(  s\right)  }}^{\rho\left(
s\right)  }=\mathfrak{mn}=\mathfrak{q}=x_{k_{1}}^{\gamma_{1}}x_{k_{2}}%
^{\gamma_{2}}\cdots x_{k_{n}}^{\gamma_{n}}$. Now, for any $u\in\left\{
1,2,\ldots,n\right\}  $, the exponent of $x_{k_{u}}$ on the left hand side of
this equality is $\sum_{s\in f^{-1}\left(  u\right)  }\rho\left(  s\right)  $
(since $k_{1}<k_{2}<\cdots<k_{n}$), whereas the exponent of $x_{k_{u}}$ on the
right hand side is $\gamma_{u}$. Comparing these coefficients, we find
$\sum_{s\in f^{-1}\left(  u\right)  }\rho\left(  s\right)  =\gamma_{u}$.)
\end{itemize}
} We define $\Psi\left(  \mathfrak{m},\mathfrak{n}\right)  $ to be this
$\gamma$-smap $f$.

We thus have defined a map $\Phi$ from the set of $\gamma$-smaps to the set of
spairs, and a map $\Psi$ from the set of spairs to the set of $\gamma$-smaps.
It is straightforward to see that these two maps $\Phi$ and $\Psi$ are
mutually inverse, and thus $\Phi$ is a bijection. We thus have found a
bijection between the set of $\gamma$-smaps and the set of spairs.
Consequently, the number of all $\gamma$-smaps equals the number of all spairs.

Now, recall that the coefficient of $\mathfrak{q}$ in $M_{\alpha}\left.
\prec\right.  M_{\beta}$ equals the number of all spairs. Hence, the
coefficient of $\mathfrak{q}$ in $M_{\alpha}\left.  \prec\right.  M_{\beta}$
equals the number of all $\gamma$-smaps (since the number of all $\gamma
$-smaps equals the number of all spairs). In other words, Claim 1 is proven.
\end{verlong}

Claim 1 shows that the coefficient of a monomial $\mathfrak{q}$ in $M_{\alpha
}\left.  \prec\right.  M_{\beta}$ depends not on $\mathfrak{q}$ but only on
the Parikh composition of $\mathfrak{q}$. Thus, any two pack-equivalent
monomials have equal coefficients in $M_{\alpha}\left.  \prec\right.
M_{\beta}$ (since any two pack-equivalent monomials have the same Parikh
composition). In other words, the power series $M_{\alpha}\left.
\prec\right.  M_{\beta}$ is quasisymmetric. Since $M_{\alpha}\left.
\prec\right.  M_{\beta}\in\mathbf{k}\left[  \left[  x_{1},x_{2},x_{3}%
,\ldots\right]  \right]  _{\operatorname*{bdd}}$, this yields that $M_{\alpha
}\left.  \prec\right.  M_{\beta}\in\operatorname*{QSym}$.

[At this point, let us remark that we can give an explicit formula for
$M_{\alpha}\left.  \prec\right.  M_{\beta}$: Namely,%
\begin{equation}
M_{\alpha}\left.  \prec\right.  M_{\beta}=\sum_{\gamma\in\operatorname*{Comp}%
}\mathfrak{s}_{\alpha,\beta}^{\gamma}M_{\gamma},
\label{pf.QSym.closed.eq.rmk.smaps.1}%
\end{equation}
where $\mathfrak{s}_{\alpha,\beta}^{\gamma}$ is the number of all $\gamma
$-smaps. Indeed, for every monomial $\mathfrak{q}$, the coefficient of
$\mathfrak{q}$ on the left-hand side of (\ref{pf.QSym.closed.eq.rmk.smaps.1})
equals $\mathfrak{s}_{\alpha,\beta}^{\gamma}$ where $\gamma$ is the Parikh
composition of $\mathfrak{q}$ (because of Claim 1), whereas the coefficient of
$\mathfrak{q}$ on the right-hand side of (\ref{pf.QSym.closed.eq.rmk.smaps.1})
also equals $\mathfrak{s}_{\alpha,\beta}^{\gamma}$ (for obvious reasons).
Hence, every monomial has equal coefficients on the two sides of
(\ref{pf.QSym.closed.eq.rmk.smaps.1}), and so
(\ref{pf.QSym.closed.eq.rmk.smaps.1}) holds. Of course,
(\ref{pf.QSym.closed.eq.rmk.smaps.1}) again proves that $M_{\alpha}\left.
\prec\right.  M_{\beta}\in\operatorname*{QSym}$, since the sum $\sum
_{\gamma\in\operatorname*{Comp}}\mathfrak{s}_{\alpha,\beta}^{\gamma}M_{\gamma
}$ has only finitely many nonzero addends (indeed, $\gamma$-smaps can only
exist if $\left\vert \gamma\right\vert \leq\left\vert \alpha\right\vert
+\left\vert \beta\right\vert $).]

Now, let us forget that we fixed $\alpha$ and $\beta$. We thus have shown that
every two compositions $\alpha$ and $\beta$ satisfy $M_{\alpha}\left.
\prec\right.  M_{\beta}\in\operatorname*{QSym}$.

\begin{vershort}
Since $\left(  M_{\alpha}\right)  _{\alpha\in\operatorname*{Comp}}$ is a basis
of $\operatorname*{QSym}$ (and since $\left.  \prec\right.  $ is $\mathbf{k}%
$-bilinear), this shows that $a\left.  \prec\right.  b\in\operatorname*{QSym}$
for every $a\in\operatorname*{QSym}$ and $b\in\operatorname*{QSym}$. The proof
of $a \bel b\in\operatorname*{QSym}$ is similar\footnote{Alternatively, of
course, $a \bel b\in\operatorname*{QSym}$ can be checked using the formula
$M_{\alpha} \bel M_{\beta}=M_{\left[  \alpha,\beta\right]  }+M_{\alpha
\odot\beta}$ (which is easily proven). However, there is no such simple proof
for $a\left.  \prec\right.  b\in\operatorname*{QSym}$.}.
\end{vershort}

\begin{verlong}
Now, let $a\in\operatorname*{QSym}$ and $b\in\operatorname*{QSym}$. We shall
only prove that $a\left.  \prec\right.  b\in\operatorname*{QSym}$ (since the
proof of $a \bel b\in\operatorname*{QSym}$ is very
similar\footnote{Alternatively, of course, $a \bel b\in\operatorname*{QSym}$
can be checked using the formula $M_{\alpha} \bel M_{\beta}=M_{\left[
\alpha,\beta\right]  }+M_{\alpha\odot\beta}$ (which is easily proven).
However, there is no such simple proof for $a\left.  \prec\right.
b\in\operatorname*{QSym}$.}).

The statement that we need to prove ($a\left.  \prec\right.  b\in
\operatorname*{QSym}$) is $\mathbf{k}$-linear in each of $a$ and $b$. Hence,
we can WLOG assume that both $a$ and $b$ are elements of the monomial basis of
$\operatorname*{QSym}$. Assume this. Thus, $a=M_{\alpha}$ and $b=M_{\beta}$
for some compositions $\alpha$ and $\beta$. Consider these $\alpha$ and
$\beta$. Now, as we know, $M_{\alpha}\left.  \prec\right.  M_{\beta}%
\in\operatorname*{QSym}$, so that $\underbrace{a}_{=M_{\alpha}}\left.
\prec\right.  \underbrace{b}_{=M_{\beta}}=M_{\alpha}\left.  \prec\right.
M_{\beta}\in\operatorname*{QSym}$. This completes our proof of Proposition
\ref{prop.QSym.closed}.
\end{verlong}
\end{proof}

\Needspace{6pc}

\begin{remark}
\label{rmk.QSym.closed}The proof of Proposition \ref{prop.QSym.closed} given
above actually yields a combinatorial formula for $M_{\alpha}\left.
\prec\right.  M_{\beta}$ whenever $\alpha$ and $\beta$ are two compositions.
Namely, let $\alpha$ and $\beta$ be two compositions. Then,%
\begin{equation}
M_{\alpha}\left.  \prec\right.  M_{\beta}=\sum_{\gamma\in\operatorname*{Comp}%
}\mathfrak{s}_{\alpha,\beta}^{\gamma}M_{\gamma}, \label{eq.rmk.smaps.1}%
\end{equation}
where $\mathfrak{s}_{\alpha,\beta}^{\gamma}$ is the number of all smaps
$\left(  \alpha,\beta\right)  \rightarrow\gamma$. Here a \textit{smap}
$\left(  \alpha,\beta\right)  \rightarrow\gamma$ means what was called a
$\gamma$-smap in the above proof of Proposition \ref{prop.QSym.closed}.

This is similar to the well-known formula for $M_{\alpha}M_{\beta}$ (see, for
example, \cite[Proposition 5.1.3]{Reiner}) which (translated into our
language) states that
\begin{equation}
M_{\alpha}M_{\beta}=\sum_{\gamma\in\operatorname*{Comp}}\mathfrak{t}%
_{\alpha,\beta}^{\gamma}M_{\gamma}, \label{eq.rmk.smaps.2}%
\end{equation}
where $\mathfrak{t}_{\alpha,\beta}^{\gamma}$ is the number of all overlapping
shuffles $\left(  \alpha,\beta\right)  \rightarrow\gamma$. Here, the
\textit{overlapping shuffles} $\left(  \alpha,\beta\right)  \rightarrow\gamma$
are defined in the same way as the $\gamma$-smaps, with the only difference
that the second of the three properties that define a $\gamma$-smap (namely,
the property $\min\left(  f\left(  \mathcal{S}_{0}\right)  \right)
<\min\left(  f\left(  \mathcal{S}_{1}\right)  \right)  $) is omitted. Needless
to say, (\ref{eq.rmk.smaps.2}) can be proven similarly to our proof of
(\ref{eq.rmk.smaps.1}) above.
\end{remark}

Here is a somewhat nontrivial property of $\bel $ and $\left.  \prec\right.  $:

\begin{theorem}
\label{thm.beldend}Let $S$ denote the antipode of the Hopf algebra
$\operatorname*{QSym}$. Let us use Sweedler's notation $\sum_{\left(
b\right)  }b_{\left(  1\right)  }\otimes b_{\left(  2\right)  }$ for
$\Delta\left(  b\right)  $, where $b$ is any element of $\operatorname*{QSym}%
$. Then,%
\[
\sum_{\left(  b\right)  }\left(  S\left(  b_{\left(  1\right)  }\right)
\bel a\right)  b_{\left(  2\right)  }=a\left.  \prec\right.  b
\]
for any $a\in\mathbf{k}\left[  \left[  x_{1},x_{2},x_{3},\ldots\right]
\right]  $ and $b\in\operatorname*{QSym}$.
\end{theorem}

\begin{proof}
[Proof of Theorem \ref{thm.beldend}.]Let $a\in\mathbf{k}\left[  \left[
x_{1},x_{2},x_{3},\ldots\right]  \right]  $. We can WLOG assume that $a$ is a
monomial (because all operations in sight are $\mathbf{k}$-linear and
continuous). So assume this. That is, $a=\mathfrak{n}$ for some monomial
$\mathfrak{n}$. Consider this $\mathfrak{n}$. Let $k=\min\left(
\operatorname*{Supp}\mathfrak{n}\right)  $. Notice that $k\in\left\{
1,2,3,\ldots\right\}  \cup\left\{  \infty\right\}  $.

(Some remarks about $\infty$ are in order. We use $\infty$ as an object which
is greater than every integer. We will use summation signs like $\sum_{1\leq
i_{1}<i_{2}<\cdots<i_{\ell}\leq k}$ and $\sum_{k<i_{1}<i_{2}<\cdots<i_{\ell}}$
in the following. Both of these summation signs range over $\left(
i_{1},i_{2},\ldots,i_{\ell}\right)  \in\left\{  1,2,3,\ldots\right\}  ^{\ell}$
satisfying certain conditions ($1\leq i_{1}<i_{2}<\cdots<i_{\ell}\leq k$ in
the first case, and $k<i_{1}<i_{2}<\cdots<i_{\ell}$ in the second case). In
particular, none of the $i_{1},i_{2},\ldots,i_{\ell}$ is allowed to be
$\infty$ (unlike $k$). So the summation $\sum_{1\leq i_{1}<i_{2}%
<\cdots<i_{\ell}\leq k}$ is identical to $\sum_{1\leq i_{1}<i_{2}%
<\cdots<i_{\ell}}$ when $k=\infty$, whereas the summation $\sum_{k<i_{1}%
<i_{2}<\cdots<i_{\ell}}$ is empty when $k=\infty$ unless $\ell=0$. (If
$\ell=0$, then the summation $\sum_{k<i_{1}<i_{2}<\cdots<i_{\ell}}$ ranges
over the empty $0$-tuple, no matter what $k$ is.)

We shall also use an additional symbol $\infty+1$, which is understood to be
greater than every element of $\left\{  1,2,3,\ldots\right\}  \cup\left\{
\infty\right\}  $.)

\begin{verlong}
Every composition $\alpha=\left(  \alpha_{1},\alpha_{2},\ldots,\alpha_{\ell
}\right)  $ satisfies%
\begin{equation}
a\left.  \prec\right.  M_{\alpha}=\left(  \sum_{k<i_{1}<i_{2}< \cdots<i_{\ell
}}x_{i_{1}}^{\alpha_{1}}x_{i_{2}}^{\alpha_{2}} \cdots x_{i_{\ell}}%
^{\alpha_{\ell}}\right)  \cdot a \label{pf.thm.beldend.a-Dless-Malpha}%
\end{equation}
\footnote{\textit{Proof of (\ref{pf.thm.beldend.a-Dless-Malpha}):} Let
$\alpha=\left(  \alpha_{1},\alpha_{2},\ldots,\alpha_{\ell}\right)  $ be a
composition. The definition of $M_{\alpha}$ yields $M_{\alpha}=\sum_{1\leq
i_{1}<i_{2}< \cdots<i_{\ell}}x_{i_{1}}^{\alpha_{1}}x_{i_{2}}^{\alpha_{2}}
\cdots x_{i_{\ell}}^{\alpha_{\ell}}$. Combined with $a=\mathfrak{n}$, this
yields%
\begin{align*}
a\left.  \prec\right.  M_{\alpha}  &  =\mathfrak{n}\left.  \prec\right.
\left(  \sum_{1\leq i_{1}<i_{2}< \cdots<i_{\ell}}x_{i_{1}}^{\alpha_{1}%
}x_{i_{2}}^{\alpha_{2}} \cdots x_{i_{\ell}}^{\alpha_{\ell}}\right) \\
&  =\sum_{1\leq i_{1}<i_{2}< \cdots<i_{\ell}}\underbrace{\mathfrak{n}\left.
\prec\right.  \left(  x_{i_{1}}^{\alpha_{1}}x_{i_{2}}^{\alpha_{2}} \cdots
x_{i_{\ell}}^{\alpha_{\ell}}\right)  }_{\substack{=
\begin{cases}
\mathfrak{n}\cdot x_{i_{1}}^{\alpha_{1}}x_{i_{2}}^{\alpha_{2}} \cdots
x_{i_{\ell}}^{\alpha_{\ell}}, & \text{if }\min\left(  \operatorname*{Supp}%
\mathfrak{n}\right)  <\min\left\{  i_{1},i_{2},\ldots,i_{\ell}\right\}  ;\\
0, & \text{if }\min\left(  \operatorname*{Supp}\mathfrak{n}\right)  \geq
\min\left\{  i_{1},i_{2},\ldots,i_{\ell}\right\}
\end{cases}
\\\text{(by the definition of }\left.  \prec\right.  \text{ on monomials)}}}\\
&  \ \ \ \ \ \ \ \ \ \ \left(  \text{since }\left.  \prec\right.  \text{ is
}\mathbf{k}\text{-bilinear and continuous}\right) \\
&  =\sum_{1\leq i_{1}<i_{2}< \cdots<i_{\ell}}
\begin{cases}
\mathfrak{n}\cdot x_{i_{1}}^{\alpha_{1}}x_{i_{2}}^{\alpha_{2}} \cdots
x_{i_{\ell}}^{\alpha_{\ell}}, & \text{if }\min\left(  \operatorname*{Supp}%
\mathfrak{n}\right)  <\min\left\{  i_{1},i_{2},\ldots,i_{\ell}\right\}  ;\\
0, & \text{if }\min\left(  \operatorname*{Supp}\mathfrak{n}\right)  \geq
\min\left\{  i_{1},i_{2},\ldots,i_{\ell}\right\}
\end{cases}
\\
&  =\underbrace{\sum_{\substack{1\leq i_{1}<i_{2}< \cdots<i_{\ell}%
;\\\min\left(  \operatorname*{Supp}\mathfrak{n}\right)  <\min\left\{
i_{1},i_{2},\ldots,i_{\ell}\right\}  }}}_{\substack{=\sum_{\min\left(
\operatorname*{Supp}\mathfrak{n}\right)  <i_{1}<i_{2}< \cdots<i_{\ell}}%
\\=\sum_{k<i_{1}<i_{2}< \cdots<i_{\ell}}\\\text{(since }\min\left(
\operatorname*{Supp}\mathfrak{n}\right)  =k\text{)}}}\underbrace{\mathfrak{n}%
}_{=a}\cdot x_{i_{1}}^{\alpha_{1}}x_{i_{2}}^{\alpha_{2}} \cdots x_{i_{\ell}%
}^{\alpha_{\ell}}=\sum_{k<i_{1}<i_{2}< \cdots<i_{\ell}}a\cdot x_{i_{1}%
}^{\alpha_{1}}x_{i_{2}}^{\alpha_{2}} \cdots x_{i_{\ell}}^{\alpha_{\ell}}\\
&  =\left(  \sum_{k<i_{1}<i_{2}< \cdots<i_{\ell}}x_{i_{1}}^{\alpha_{1}%
}x_{i_{2}}^{\alpha_{2}} \cdots x_{i_{\ell}}^{\alpha_{\ell}}\right)  \cdot a.
\end{align*}
This proves (\ref{pf.thm.beldend.a-Dless-Malpha}).}.

Let us define a map $\mathfrak{B}_{k}:\mathbf{k}\left[  \left[  x_{1}%
,x_{2},x_{3},\ldots\right]  \right]  \rightarrow\mathbf{k}\left[  \left[
x_{1},x_{2},x_{3},\ldots\right]  \right]  $ by%
\[
\mathfrak{B}_{k}\left(  p\right)  =p\left(  x_{1},x_{2},\ldots,x_{k}%
,0,0,0,\ldots\right)  \ \ \ \ \ \ \ \ \ \ \text{for every }p\in\mathbf{k}%
\left[  \left[  x_{1},x_{2},x_{3},\ldots\right]  \right]
\]
(where $p\left(  x_{1},x_{2},\ldots,x_{k},0,0,0,\ldots\right)  $ has to be
understood as $p\left(  x_{1},x_{2},x_{3},\ldots\right)  =p$ when $k=\infty$).
Then, $\mathfrak{B}_{k}$ is an evaluation map (in an appropriate sense) and
thus a continuous $\mathbf{k}$-algebra homomorphism. Any monomial
$\mathfrak{m}$ satisfies%
\begin{equation}
\mathfrak{B}_{k}\left(  \mathfrak{m}\right)  =
\begin{cases}
\mathfrak{m}, & \text{if }\max\left(  \operatorname*{Supp}\mathfrak{m}\right)
\leq k;\\
0, & \text{if }\max\left(  \operatorname*{Supp}\mathfrak{m}\right)  >k
\end{cases}
\label{pf.thm.beldend.monom}%
\end{equation}
\footnote{\textit{Proof.} Let $\mathfrak{m}$ be a monomial. Then,%
\begin{align*}
\mathfrak{B}_{k}\left(  \mathfrak{m}\right)   &  =\mathfrak{m}\left(
x_{1},x_{2},\ldots,x_{k},0,0,0,\ldots\right)  \ \ \ \ \ \ \ \ \ \ \left(
\text{by the definition of }\mathfrak{B}_{k}\right) \\
&  =\left(  \text{the result of replacing the indeterminates }x_{k+1}%
,x_{k+2},x_{k+3},\ldots\text{ by }0\text{ in }\mathfrak{m}\right) \\
&  =
\begin{cases}
\mathfrak{m}, & \text{if none of the indeterminates }x_{k+1},x_{k+2}%
,x_{k+3},\ldots\text{ appears in }\mathfrak{m}\text{;}\\
0, & \text{if some of the indeterminates }x_{k+1},x_{k+2},x_{k+3},\ldots\text{
appear in }\mathfrak{m}%
\end{cases}
\\
&  =
\begin{cases}
\mathfrak{m}, & \text{if }\max\left(  \operatorname*{Supp}\mathfrak{m}\right)
\leq k;\\
0, & \text{if }\max\left(  \operatorname*{Supp}\mathfrak{m}\right)  >k
\end{cases}
\end{align*}
(because none of the indeterminates $x_{k+1},x_{k+2},x_{k+3},\ldots$ appears
in $\mathfrak{m}$ if and only if $\max\left(  \operatorname*{Supp}%
\mathfrak{m}\right)  \leq k$). This proves (\ref{pf.thm.beldend.monom}).}. Any
$p\in\mathbf{k}\left[  \left[  x_{1},x_{2},x_{3},\ldots\right]  \right]  $
satisfies%
\begin{equation}
p \bel a=a\cdot\mathfrak{B}_{k}\left(  p\right)  \label{pf.thm.beldend.B}%
\end{equation}
\footnote{\textit{Proof of (\ref{pf.thm.beldend.B}):} Fix $p\in\mathbf{k}%
\left[  \left[  x_{1},x_{2},x_{3},\ldots\right]  \right]  $. Since the
equality (\ref{pf.thm.beldend.B}) is $\mathbf{k}$-linear and continuous in
$p$, we can WLOG assume that $p$ is a monomial. Assume this. Hence,
$p=\mathfrak{m}$ for some monomial $\mathfrak{m}$. Consider this
$\mathfrak{m}$. We have%
\begin{equation}
\mathfrak{B}_{k}\left(  \underbrace{p}_{=\mathfrak{m}}\right)  =\mathfrak{B}%
_{k}\left(  \mathfrak{m}\right)  =
\begin{cases}
\mathfrak{m}, & \text{if }\max\left(  \operatorname*{Supp}\mathfrak{m}\right)
\leq k;\\
0, & \text{if }\max\left(  \operatorname*{Supp}\mathfrak{m}\right)  >k
\end{cases}
\label{pf.thm.beldend.B.pf.2}%
\end{equation}
(by (\ref{pf.thm.beldend.monom})). Now,%
\begin{align*}
\underbrace{p}_{=\mathfrak{m}} \bel \underbrace{a}_{=\mathfrak{n}}  &
=\mathfrak{m} \bel \mathfrak{n}=
\begin{cases}
\mathfrak{m}\cdot\mathfrak{n}, & \text{if }\max\left(  \operatorname*{Supp}%
\mathfrak{m}\right)  \leq\min\left(  \operatorname*{Supp}\mathfrak{n}\right)
;\\
0, & \text{if }\max\left(  \operatorname*{Supp}\mathfrak{m}\right)
>\min\left(  \operatorname*{Supp}\mathfrak{n}\right)
\end{cases}
\\
&  \ \ \ \ \ \ \ \ \ \ \left(  \text{by the definition of } \bel \right) \\
&  =
\begin{cases}
\mathfrak{m}\cdot\mathfrak{n}, & \text{if }\max\left(  \operatorname*{Supp}%
\mathfrak{m}\right)  \leq k;\\
0, & \text{if }\max\left(  \operatorname*{Supp}\mathfrak{m}\right)  >k
\end{cases}
\ \ \ \ \ \ \ \ \ \ \left(  \text{since }\min\left(  \operatorname*{Supp}%
\mathfrak{n}\right)  =k\right) \\
&  =\underbrace{\mathfrak{n}}_{=a}\cdot\underbrace{
\begin{cases}
\mathfrak{m}, & \text{if }\max\left(  \operatorname*{Supp}\mathfrak{m}\right)
\leq k;\\
0, & \text{if }\max\left(  \operatorname*{Supp}\mathfrak{m}\right)  >k
\end{cases}
}_{\substack{=\mathfrak{B}_{k}\left(  p\right)  \\\text{(by
(\ref{pf.thm.beldend.B.pf.2}))}}}\\
&  =a\cdot\mathfrak{B}_{k}\left(  p\right)  .
\end{align*}
This proves (\ref{pf.thm.beldend.B}).}. Also, every composition $\alpha
=\left(  \alpha_{1},\alpha_{2},\ldots,\alpha_{\ell}\right)  $ satisfies%
\begin{equation}
\mathfrak{B}_{k}\left(  M_{\alpha}\right)  =\sum_{1\leq i_{1}<i_{2}%
<\cdots<i_{\ell}\leq k}x_{i_{1}}^{\alpha_{1}}x_{i_{2}}^{\alpha_{2}}\cdots
x_{i_{\ell}}^{\alpha_{\ell}} \label{pf.thm.beldend.Malpha-bel-a}%
\end{equation}
\footnote{\textit{Proof of (\ref{pf.thm.beldend.Malpha-bel-a}):} Let
$\alpha=\left(  \alpha_{1},\alpha_{2},\ldots,\alpha_{\ell}\right)  $ be a
composition. The definition of $M_{\alpha}$ yields $M_{\alpha}=\sum_{1\leq
i_{1}<i_{2}<\cdots<i_{\ell}}x_{i_{1}}^{\alpha_{1}}x_{i_{2}}^{\alpha_{2}}\cdots
x_{i_{\ell}}^{\alpha_{\ell}}$. Applying the map $\mathfrak{B}_{k}$ to both
sides of this equality, we obtain%
\begin{align*}
\mathfrak{B}_{k}\left(  M_{\alpha}\right)   &  =\mathfrak{B}_{k}\left(
\sum_{1\leq i_{1}<i_{2}<\cdots<i_{\ell}}x_{i_{1}}^{\alpha_{1}}x_{i_{2}%
}^{\alpha_{2}}\cdots x_{i_{\ell}}^{\alpha_{\ell}}\right) \\
&  =\sum_{1\leq i_{1}<i_{2}<\cdots<i_{\ell}}\underbrace{\mathfrak{B}%
_{k}\left(  x_{i_{1}}^{\alpha_{1}}x_{i_{2}}^{\alpha_{2}}\cdots x_{i_{\ell}%
}^{\alpha_{\ell}}\right)  }_{\substack{=
\begin{cases}
x_{i_{1}}^{\alpha_{1}}x_{i_{2}}^{\alpha_{2}}\cdots x_{i_{\ell}}^{\alpha_{\ell
}}, & \text{if }\max\left(  \operatorname*{Supp}\left(  x_{i_{1}}^{\alpha_{1}%
}x_{i_{2}}^{\alpha_{2}}\cdots x_{i_{\ell}}^{\alpha_{\ell}}\right)  \right)
\leq k;\\
0, & \text{if }\max\left(  \operatorname*{Supp}\left(  x_{i_{1}}^{\alpha_{1}%
}x_{i_{2}}^{\alpha_{2}}\cdots x_{i_{\ell}}^{\alpha_{\ell}}\right)  \right)  >k
\end{cases}
\\\text{(by (\ref{pf.thm.beldend.monom}), applied to }\mathfrak{m}=x_{i_{1}%
}^{\alpha_{1}}x_{i_{2}}^{\alpha_{2}}\cdots x_{i_{\ell}}^{\alpha_{\ell}%
}\text{)}}}\\
&  \ \ \ \ \ \ \ \ \ \ \left(  \text{since }\mathfrak{B}_{k}\text{ is
}\mathbf{k}\text{-linear and continuous}\right) \\
&  =\sum_{1\leq i_{1}<i_{2}<\cdots<i_{\ell}}\underbrace{
\begin{cases}
x_{i_{1}}^{\alpha_{1}}x_{i_{2}}^{\alpha_{2}}\cdots x_{i_{\ell}}^{\alpha_{\ell
}}, & \text{if }\max\left(  \operatorname*{Supp}\left(  x_{i_{1}}^{\alpha_{1}%
}x_{i_{2}}^{\alpha_{2}}\cdots x_{i_{\ell}}^{\alpha_{\ell}}\right)  \right)
\leq k;\\
0, & \text{if }\max\left(  \operatorname*{Supp}\left(  x_{i_{1}}^{\alpha_{1}%
}x_{i_{2}}^{\alpha_{2}}\cdots x_{i_{\ell}}^{\alpha_{\ell}}\right)  \right)  >k
\end{cases}
}_{\substack{=
\begin{cases}
x_{i_{1}}^{\alpha_{1}}x_{i_{2}}^{\alpha_{2}}\cdots x_{i_{\ell}}^{\alpha_{\ell
}}, & \text{if }\max\left\{  i_{1},i_{2},\ldots,i_{\ell}\right\}  \leq k;\\
0, & \text{if }\max\left\{  i_{1},i_{2},\ldots,i_{\ell}\right\}  >k
\end{cases}
\\\text{(since }\operatorname*{Supp}\left(  x_{i_{1}}^{\alpha_{1}}x_{i_{2}%
}^{\alpha_{2}}\cdots x_{i_{\ell}}^{\alpha_{\ell}}\right)  =\left\{
i_{1},i_{2},\ldots,i_{\ell}\right\}  \text{)}}}\\
&  =\sum_{1\leq i_{1}<i_{2}<\cdots<i_{\ell}}
\begin{cases}
x_{i_{1}}^{\alpha_{1}}x_{i_{2}}^{\alpha_{2}}\cdots x_{i_{\ell}}^{\alpha_{\ell
}}, & \text{if }\max\left\{  i_{1},i_{2},\ldots,i_{\ell}\right\}  \leq k;\\
0, & \text{if }\max\left\{  i_{1},i_{2},\ldots,i_{\ell}\right\}  >k
\end{cases}
\\
&  =\underbrace{\sum_{\substack{1\leq i_{1}<i_{2}<\cdots<i_{\ell}%
;\\\max\left\{  i_{1},i_{2},\ldots,i_{\ell}\right\}  \leq k}}}_{=\sum_{1\leq
i_{1}<i_{2}<\cdots<i_{\ell}\leq k}}x_{i_{1}}^{\alpha_{1}}x_{i_{2}}^{\alpha
_{2}}\cdots x_{i_{\ell}}^{\alpha_{\ell}}=\sum_{1\leq i_{1}<i_{2}%
<\cdots<i_{\ell}\leq k}x_{i_{1}}^{\alpha_{1}}x_{i_{2}}^{\alpha_{2}}\cdots
x_{i_{\ell}}^{\alpha_{\ell}}.
\end{align*}
This proves (\ref{pf.thm.beldend.Malpha-bel-a}).}.

We shall use one further obvious observation: If $i_{1},i_{2},\ldots,i_{\ell}$
are some positive integers satisfying $i_{1}<i_{2}<\cdots<i_{\ell}$, then%
\begin{equation}
\text{there exists exactly one }j\in\left\{  0,1,\ldots,\ell\right\}  \text{
satisfying }i_{j}\leq k<i_{j+1} \label{pf.thm.beldend.exactlyonej}%
\end{equation}
(where $i_{0}$ is to be understood as $1$, and $i_{\ell+1}$ as $\infty+1$).

Let us now notice that every $f\in\operatorname*{QSym}$ satisfies%
\begin{equation}
af=\sum_{\left(  f\right)  }\mathfrak{B}_{k}\left(  f_{\left(  1\right)
}\right)  \left(  a\left.  \prec\right.  f_{\left(  2\right)  }\right)  .
\label{pf.thm.beldend.mainlem}%
\end{equation}

\textit{Proof of (\ref{pf.thm.beldend.mainlem}):} Both sides of the equality
(\ref{pf.thm.beldend.mainlem}) are $\mathbf{k}$-linear in $f$. Hence, it is
enough to check (\ref{pf.thm.beldend.mainlem}) on the basis $\left(
M_{\gamma}\right)  _{\gamma\in\operatorname*{Comp}}$ of $\operatorname*{QSym}%
$, that is, to prove that (\ref{pf.thm.beldend.mainlem}) holds whenever
$f=M_{\gamma}$ for some $\gamma\in\operatorname*{Comp}$. In other words, it is
enough to show that
\[
aM_{\gamma}=\sum_{\left(  M_{\gamma}\right)  }\mathfrak{B}_{k}\left(  \left(
M_{\gamma}\right)  _{\left(  1\right)  }\right)  \cdot\left(  a\left.
\prec\right.  \left(  M_{\gamma}\right)  _{\left(  2\right)  }\right)
\ \ \ \ \ \ \ \ \ \ \text{for every }\gamma\in\operatorname*{Comp}.
\]
But this is easily done: Let $\gamma\in\operatorname*{Comp}$. Write $\gamma$
in the form $\gamma=\left(  \gamma_{1},\gamma_{2},\ldots,\gamma_{\ell}\right)
$. Then,%
\begin{align*}
&  \sum_{\left(  M_{\gamma}\right)  }\mathfrak{B}_{k}\left(  \left(
M_{\gamma}\right)  _{\left(  1\right)  }\right)  \cdot\left(  a\left.
\prec\right.  \left(  M_{\gamma}\right)  _{\left(  2\right)  }\right) \\
&  =\sum_{j=0}^{\ell}\underbrace{\mathfrak{B}_{k}\left(  M_{\left(  \gamma
_{1},\gamma_{2},\ldots,\gamma_{j}\right)  }\right)  }_{\substack{=\sum_{1\leq
i_{1}<i_{2}<\cdots<i_{j}\leq k}x_{i_{1}}^{\gamma_{1}}x_{i_{2}}^{\gamma_{2}%
}\cdots x_{i_{j}}^{\gamma_{j}}\\\text{(by (\ref{pf.thm.beldend.Malpha-bel-a}%
))}}}\cdot\underbrace{\left(  a\left.  \prec\right.  M_{\left(  \gamma
_{j+1},\gamma_{j+2},\ldots,\gamma_{\ell}\right)  }\right)  }%
_{\substack{=\left(  \sum_{k<i_{1}<i_{2}<\cdots<i_{\ell-j}}x_{i_{1}}%
^{\gamma_{j+1}}x_{i_{2}}^{\gamma_{j+2}}\cdots x_{i_{\ell-j}}^{\gamma_{\ell}%
}\right)  \cdot a\\\text{(by (\ref{pf.thm.beldend.a-Dless-Malpha}))}}}\\
&  \ \ \ \ \ \ \ \ \ \ \left(  \text{since }\sum_{\left(  M_{\gamma}\right)
}\left(  M_{\gamma}\right)  _{\left(  1\right)  }\otimes\left(  M_{\gamma
}\right)  _{\left(  2\right)  }=\Delta\left(  M_{\gamma}\right)  =\sum
_{j=0}^{\ell}M_{\left(  \gamma_{1},\gamma_{2},\ldots,\gamma_{j}\right)
}\otimes M_{\left(  \gamma_{j+1},\gamma_{j+2},\ldots,\gamma_{\ell}\right)
}\right) \\
&  =\sum_{j=0}^{\ell}\left(  \sum_{1\leq i_{1}<i_{2}<\cdots<i_{j}\leq
k}x_{i_{1}}^{\gamma_{1}}x_{i_{2}}^{\gamma_{2}}\cdots x_{i_{j}}^{\gamma_{j}%
}\right)  \underbrace{\left(  \sum_{k<i_{1}<i_{2}<\cdots<i_{\ell-j}}x_{i_{1}%
}^{\gamma_{j+1}}x_{i_{2}}^{\gamma_{j+2}}\cdots x_{i_{\ell-j}}^{\gamma_{\ell}%
}\right)  }_{\substack{=\sum_{k<i_{j+1}<i_{j+2}<\cdots<i_{\ell}}x_{i_{j+1}%
}^{\gamma_{j+1}}x_{i_{j+2}}^{\gamma_{j+2}}\cdots x_{i_{\ell}}^{\gamma_{\ell}%
}\\\text{(here, we have renamed the summation index}\\\left(  i_{1}%
,i_{2},\ldots,i_{\ell-j}\right)  \text{ as }\left(  i_{j+1},i_{j+2}%
,\ldots,i_{\ell}\right)  \text{)}}}\cdot a\\
&  =\sum_{j=0}^{\ell}\left(  \sum_{1\leq i_{1}<i_{2}<\cdots<i_{j}\leq
k}x_{i_{1}}^{\gamma_{1}}x_{i_{2}}^{\gamma_{2}}\cdots x_{i_{j}}^{\gamma_{j}%
}\right)  \left(  \sum_{k<i_{j+1}<i_{j+2}<\cdots<i_{\ell}}x_{i_{j+1}}%
^{\gamma_{j+1}}x_{i_{j+2}}^{\gamma_{j+2}}\cdots x_{i_{\ell}}^{\gamma_{\ell}%
}\right)  \cdot a\\
&  =\underbrace{\sum_{j=0}^{\ell}\sum_{1\leq i_{1}<i_{2}<\cdots<i_{j}\leq
k}\sum_{k<i_{j+1}<i_{j+2}<\cdots<i_{\ell}}}_{\substack{=\sum_{1\leq
i_{1}<i_{2}<\cdots<i_{\ell}}\sum_{\substack{j\in\left\{  0,1,\ldots
,\ell\right\}  ;\\i_{j}\leq k<i_{j+1}}}\\\text{(where }i_{0}\text{ is to be
understood as }1\text{, and }i_{\ell+1}\text{ as }\infty+1\text{)}%
}}\underbrace{\left(  x_{i_{1}}^{\gamma_{1}}x_{i_{2}}^{\gamma_{2}}\cdots
x_{i_{j}}^{\gamma_{j}}\right)  \left(  x_{i_{j+1}}^{\gamma_{j+1}}x_{i_{j+2}%
}^{\gamma_{j+2}}\cdots x_{i_{\ell}}^{\gamma_{\ell}}\right)  }_{=x_{i_{1}%
}^{\gamma_{1}}x_{i_{2}}^{\gamma_{2}}\cdots x_{i_{\ell}}^{\gamma_{\ell}}}\cdot
a\\
&  =\sum_{1\leq i_{1}<i_{2}<\cdots<i_{\ell}}\underbrace{\sum_{\substack{j\in
\left\{  0,1,\ldots,\ell\right\}  ;\\i_{j}\leq k<i_{j+1}}}x_{i_{1}}%
^{\gamma_{1}}x_{i_{2}}^{\gamma_{2}}\cdots x_{i_{\ell}}^{\gamma_{\ell}}%
}_{\substack{\text{this sum has precisely one addend,}\\\text{(because of
(\ref{pf.thm.beldend.exactlyonej})),}\\\text{and thus equals }x_{i_{1}%
}^{\gamma_{1}}x_{i_{2}}^{\gamma_{2}}\cdots x_{i_{\ell}}^{\gamma_{\ell}}}}\cdot
a=\underbrace{\sum_{1\leq i_{1}<i_{2}<\cdots<i_{\ell}}x_{i_{1}}^{\gamma_{1}%
}x_{i_{2}}^{\gamma_{2}}\cdots x_{i_{\ell}}^{\gamma_{\ell}}}_{=M_{\gamma}}\cdot
a\\
&  =M_{\gamma}\cdot a=aM_{\gamma},
\end{align*}
qed. Thus, (\ref{pf.thm.beldend.mainlem}) is proven.

Now, every $b\in\operatorname*{QSym}$ satisfies%
\begin{align*}
&  \sum_{\left(  b\right)  }\underbrace{\left(  S\left(  b_{\left(  1\right)
}\right)  \bel a\right)  }_{\substack{=a\cdot\mathfrak{B}_{k}\left(  S\left(
b_{\left(  1\right)  }\right)  \right)  \\\text{(by (\ref{pf.thm.beldend.B}),
applied to }p=S\left(  b_{\left(  1\right)  }\right)  \text{)}}}b_{\left(
2\right)  }\\
&  =\sum_{\left(  b\right)  }a\cdot\mathfrak{B}_{k}\left(  S\left(  b_{\left(
1\right)  }\right)  \right)  b_{\left(  2\right)  }=\sum_{\left(  b\right)
}\mathfrak{B}_{k}\left(  S\left(  b_{\left(  1\right)  }\right)  \right)
\cdot\underbrace{ab_{\left(  2\right)  }}_{\substack{=\sum_{\left(  b_{\left(
2\right)  }\right)  }\mathfrak{B}_{k}\left(  \left(  b_{\left(  2\right)
}\right)  _{\left(  1\right)  }\right)  \left(  a\left.  \prec\right.  \left(
b_{\left(  2\right)  }\right)  _{\left(  2\right)  }\right)  \\\text{(by
(\ref{pf.thm.beldend.mainlem}), applied to }f=b_{\left(  2\right)  }\text{)}%
}}\\
&  =\sum_{\left(  b\right)  }\mathfrak{B}_{k}\left(  S\left(  b_{\left(
1\right)  }\right)  \right)  \left(  \sum_{\left(  b_{\left(  2\right)
}\right)  }\mathfrak{B}_{k}\left(  \left(  b_{\left(  2\right)  }\right)
_{\left(  1\right)  }\right)  \left(  a\left.  \prec\right.  \left(
b_{\left(  2\right)  }\right)  _{\left(  2\right)  }\right)  \right) \\
&  =\sum_{\left(  b\right)  }\sum_{\left(  b_{\left(  2\right)  }\right)
}\mathfrak{B}_{k}\left(  S\left(  b_{\left(  1\right)  }\right)  \right)
\mathfrak{B}_{k}\left(  \left(  b_{\left(  2\right)  }\right)  _{\left(
1\right)  }\right)  \left(  a\left.  \prec\right.  \left(  b_{\left(
2\right)  }\right)  _{\left(  2\right)  }\right) \\
&  =\sum_{\left(  b\right)  }\underbrace{\sum_{\left(  b_{\left(  1\right)
}\right)  }\mathfrak{B}_{k}\left(  S\left(  \left(  b_{\left(  1\right)
}\right)  _{\left(  1\right)  }\right)  \right)  \mathfrak{B}_{k}\left(
\left(  b_{\left(  1\right)  }\right)  _{\left(  2\right)  }\right)
}_{\substack{=\mathfrak{B}_{k}\left(  \sum_{\left(  b_{\left(  1\right)
}\right)  }S\left(  \left(  b_{\left(  1\right)  }\right)  _{\left(  1\right)
}\right)  \cdot\left(  b_{\left(  1\right)  }\right)  _{\left(  2\right)
}\right)  \\\text{(since }\mathfrak{B}_{k}\text{ is a }\mathbf{k}%
\text{-algebra homomorphism)}}}\left(  a\left.  \prec\right.  b_{\left(
2\right)  }\right) \\
&  \ \ \ \ \ \ \ \ \ \ \left(
\begin{array}
[c]{c}%
\text{since the coassociativity of }\Delta\text{ yields}\\
\sum_{\left(  b\right)  }\sum_{\left(  b_{\left(  2\right)  }\right)
}b_{\left(  1\right)  }\otimes\left(  b_{\left(  2\right)  }\right)  _{\left(
1\right)  }\otimes\left(  b_{\left(  2\right)  }\right)  _{\left(  2\right)
}=\sum_{\left(  b\right)  }\sum_{\left(  b_{\left(  1\right)  }\right)
}\left(  b_{\left(  1\right)  }\right)  _{\left(  1\right)  }\otimes\left(
b_{\left(  1\right)  }\right)  _{\left(  2\right)  }\otimes b_{\left(
2\right)  }%
\end{array}
\right) \\
&  =\sum_{\left(  b\right)  }\mathfrak{B}_{k}\left(  \underbrace{\sum_{\left(
b_{\left(  1\right)  }\right)  }S\left(  \left(  b_{\left(  1\right)
}\right)  _{\left(  1\right)  }\right)  \left(  b_{\left(  1\right)  }\right)
_{\left(  2\right)  }}_{\substack{=\varepsilon\left(  b_{\left(  1\right)
}\right)  \\\text{(by one of the defining equations of the antipode)}%
}}\right)  \left(  a\left.  \prec\right.  b_{\left(  2\right)  }\right) \\
&  =\sum_{\left(  b\right)  }\underbrace{\mathfrak{B}_{k}\left(
\varepsilon\left(  b_{\left(  1\right)  }\right)  \right)  }%
_{\substack{=\varepsilon\left(  b_{\left(  1\right)  }\right)  \\\text{(since
}\mathfrak{B}_{k}\text{ is a }\mathbf{k}\text{-algebra}\\\text{homomorphism,
and}\\\varepsilon\left(  b_{\left(  1\right)  }\right)  \in\mathbf{k}\text{ is
a scalar)}}}\left(  a\left.  \prec\right.  b_{\left(  2\right)  }\right)
=\sum_{\left(  b\right)  }\varepsilon\left(  b_{\left(  1\right)  }\right)
\cdot\left(  a\left.  \prec\right.  b_{\left(  2\right)  }\right) \\
&  =\sum_{\left(  b\right)  }a\left.  \prec\right.  \left(  \varepsilon\left(
b_{\left(  1\right)  }\right)  b_{\left(  2\right)  }\right)  =a\left.
\prec\right.  \underbrace{\left(  \sum_{\left(  b\right)  }\varepsilon\left(
b_{\left(  1\right)  }\right)  b_{\left(  2\right)  }\right)  }_{=b}=a\left.
\prec\right.  b.
\end{align*}

\end{verlong}

\begin{vershort}
Using the definitions of $\left.  \prec\right.  $ and $M_{\alpha}$ (and
recalling that $a=\mathfrak{n}$ has $\min\left(  \operatorname*{Supp}%
\mathfrak{n}\right)  =k$), it is now straightforward to check that every
composition $\alpha=\left(  \alpha_{1},\alpha_{2},\ldots,\alpha_{\ell}\right)
$ satisfies%
\begin{equation}
a\left.  \prec\right.  M_{\alpha}=\left(  \sum_{k<i_{1}<i_{2}<\cdots<i_{\ell}%
}x_{i_{1}}^{\alpha_{1}}x_{i_{2}}^{\alpha_{2}}\cdots x_{i_{\ell}}^{\alpha
_{\ell}}\right)  \cdot a. \label{pf.thm.beldend.short.a-Dless-Malpha}%
\end{equation}

Let us define a map $\mathfrak{B}_{k}:\mathbf{k}\left[  \left[  x_{1}%
,x_{2},x_{3},\ldots\right]  \right]  \rightarrow\mathbf{k}\left[  \left[
x_{1},x_{2},x_{3},\ldots\right]  \right]  $ by%
\[
\mathfrak{B}_{k}\left(  p\right)  =p\left(  x_{1},x_{2},\ldots,x_{k}%
,0,0,0,\ldots\right)  \ \ \ \ \ \ \ \ \ \ \text{for every }p\in\mathbf{k}%
\left[  \left[  x_{1},x_{2},x_{3},\ldots\right]  \right]
\]
(where $p\left(  x_{1},x_{2},\ldots,x_{k},0,0,0,\ldots\right)  $ has to be
understood as $p\left(  x_{1},x_{2},x_{3},\ldots\right)  =p$ when $k=\infty$).
Then, $\mathfrak{B}_{k}$ is an evaluation map (in an appropriate sense) and
thus a continuous $\mathbf{k}$-algebra homomorphism. Clearly, any monomial
$\mathfrak{m}$ satisfies%
\begin{equation}
\mathfrak{B}_{k}\left(  \mathfrak{m}\right)  =
\begin{cases}
\mathfrak{m}, & \text{if }\max\left(  \operatorname*{Supp}\mathfrak{m}\right)
\leq k;\\
0, & \text{if }\max\left(  \operatorname*{Supp}\mathfrak{m}\right)  >k
\end{cases}
\quad. \label{pf.thm.beldend.short.monom}%
\end{equation}
Using this (and the definition of $\bel $), we see that any $p\in
\mathbf{k}\left[  \left[  x_{1},x_{2},x_{3},\ldots\right]  \right]  $
satisfies%
\begin{equation}
p \bel a=a\cdot\mathfrak{B}_{k}\left(  p\right)
\label{pf.thm.beldend.short.B}%
\end{equation}
(indeed, this is trivial to check for $p$ being a monomial, and thus follows
by linearity and continuity for all $p$). Also, every composition $\alpha=\left(  \alpha
_{1},\alpha_{2},\ldots,\alpha_{\ell}\right)  $ satisfies%
\begin{equation}
\mathfrak{B}_{k}\left(  M_{\alpha}\right)  =\sum_{1\leq i_{1}<i_{2}%
<\cdots<i_{\ell}\leq k}x_{i_{1}}^{\alpha_{1}}x_{i_{2}}^{\alpha_{2}}\cdots
x_{i_{\ell}}^{\alpha_{\ell}} \label{pf.thm.beldend.short.Malpha-bel-a}%
\end{equation}
(as follows easily from the definitions of $\mathfrak{B}_{k}$ and $M_{\alpha}$).

Let us now notice that every $f\in\operatorname*{QSym}$ satisfies%
\begin{equation}
af=\sum_{\left(  f\right)  }\mathfrak{B}_{k}\left(  f_{\left(  1\right)
}\right)  \left(  a\left.  \prec\right.  f_{\left(  2\right)  }\right)  .
\label{pf.thm.beldend.short.mainlem}%
\end{equation}

\textit{Proof of (\ref{pf.thm.beldend.short.mainlem}):} Both sides of the
equality (\ref{pf.thm.beldend.short.mainlem}) are $\mathbf{k}$-linear in $f$.
Hence, it is enough to check (\ref{pf.thm.beldend.short.mainlem}) on the basis
$\left(  M_{\gamma}\right)  _{\gamma\in\operatorname*{Comp}}$ of
$\operatorname*{QSym}$, that is, to prove that
(\ref{pf.thm.beldend.short.mainlem}) holds whenever $f=M_{\gamma}$ for some
$\gamma\in\operatorname*{Comp}$. In other words, it is enough to show that
\[
aM_{\gamma}=\sum_{\left(  M_{\gamma}\right)  }\mathfrak{B}_{k}\left(  \left(
M_{\gamma}\right)  _{\left(  1\right)  }\right)  \cdot\left(  a\left.
\prec\right.  \left(  M_{\gamma}\right)  _{\left(  2\right)  }\right)
\ \ \ \ \ \ \ \ \ \ \text{for every }\gamma\in\operatorname*{Comp}.
\]
But this is easily done: Let $\gamma\in\operatorname*{Comp}$. Write $\gamma$
in the form $\gamma=\left(  \gamma_{1},\gamma_{2},\ldots,\gamma_{\ell}\right)
$. Then,%
\begin{align*}
&  \sum_{\left(  M_{\gamma}\right)  }\mathfrak{B}_{k}\left(  \left(
M_{\gamma}\right)  _{\left(  1\right)  }\right)  \cdot\left(  a\left.
\prec\right.  \left(  M_{\gamma}\right)  _{\left(  2\right)  }\right) \\
&  =\sum_{j=0}^{\ell}\underbrace{\mathfrak{B}_{k}\left(  M_{\left(  \gamma
_{1},\gamma_{2},\ldots,\gamma_{j}\right)  }\right)  }_{\substack{=\sum_{1\leq
i_{1}<i_{2}<\cdots<i_{j}\leq k}x_{i_{1}}^{\gamma_{1}}x_{i_{2}}^{\gamma_{2}%
}\cdots x_{i_{j}}^{\gamma_{j}}\\\text{(by
(\ref{pf.thm.beldend.short.Malpha-bel-a}))}}}\cdot\underbrace{\left(  a\left.
\prec\right.  M_{\left(  \gamma_{j+1},\gamma_{j+2},\ldots,\gamma_{\ell
}\right)  }\right)  }_{\substack{=\left(  \sum_{k<i_{1}<i_{2}<\cdots
<i_{\ell-j}}x_{i_{1}}^{\gamma_{j+1}}x_{i_{2}}^{\gamma_{j+2}}\cdots
x_{i_{\ell-j}}^{\gamma_{\ell}}\right)  \cdot a\\\text{(by
(\ref{pf.thm.beldend.short.a-Dless-Malpha}))}}}\\
&  \ \ \ \ \ \ \ \ \ \ \left(  \text{since }\sum_{\left(  M_{\gamma}\right)
}\left(  M_{\gamma}\right)  _{\left(  1\right)  }\otimes\left(  M_{\gamma
}\right)  _{\left(  2\right)  }=\Delta\left(  M_{\gamma}\right)  =\sum
_{j=0}^{\ell}M_{\left(  \gamma_{1},\gamma_{2},\ldots,\gamma_{j}\right)
}\otimes M_{\left(  \gamma_{j+1},\gamma_{j+2},\ldots,\gamma_{\ell}\right)
}\right) \\
&  =\sum_{j=0}^{\ell}\left(  \sum_{1\leq i_{1}<i_{2}<\cdots<i_{j}\leq
k}x_{i_{1}}^{\gamma_{1}}x_{i_{2}}^{\gamma_{2}}\cdots x_{i_{j}}^{\gamma_{j}%
}\right)  \underbrace{\left(  \sum_{k<i_{1}<i_{2}<\cdots<i_{\ell-j}}x_{i_{1}%
}^{\gamma_{j+1}}x_{i_{2}}^{\gamma_{j+2}}\cdots x_{i_{\ell-j}}^{\gamma_{\ell}%
}\right)  }_{\substack{=\sum_{k<i_{j+1}<i_{j+2}<\cdots<i_{\ell}}x_{i_{j+1}%
}^{\gamma_{j+1}}x_{i_{j+2}}^{\gamma_{j+2}}\cdots x_{i_{\ell}}^{\gamma_{\ell}}%
}}\cdot a\\
&  =\sum_{j=0}^{\ell}\left(  \sum_{1\leq i_{1}<i_{2}<\cdots<i_{j}\leq
k}x_{i_{1}}^{\gamma_{1}}x_{i_{2}}^{\gamma_{2}}\cdots x_{i_{j}}^{\gamma_{j}%
}\right)  \left(  \sum_{k<i_{j+1}<i_{j+2}<\cdots<i_{\ell}}x_{i_{j+1}}%
^{\gamma_{j+1}}x_{i_{j+2}}^{\gamma_{j+2}}\cdots x_{i_{\ell}}^{\gamma_{\ell}%
}\right)  \cdot a\\
&  =\underbrace{\sum_{j=0}^{\ell}\sum_{1\leq i_{1}<i_{2}<\cdots<i_{j}\leq
k}\sum_{k<i_{j+1}<i_{j+2}<\cdots<i_{\ell}}}_{\substack{=\sum_{1\leq
i_{1}<i_{2}<\cdots<i_{\ell}}\sum_{\substack{j\in\left\{  0,1,\ldots
,\ell\right\}  ;\\i_{j}\leq k<i_{j+1}}}\\\text{(where }i_{0}\text{ is to be
understood as }1\text{, and }i_{\ell+1}\text{ as }\infty+1\text{)}%
}}\underbrace{\left(  x_{i_{1}}^{\gamma_{1}}x_{i_{2}}^{\gamma_{2}}\cdots
x_{i_{j}}^{\gamma_{j}}\right)  \left(  x_{i_{j+1}}^{\gamma_{j+1}}x_{i_{j+2}%
}^{\gamma_{j+2}}\cdots x_{i_{\ell}}^{\gamma_{\ell}}\right)  }_{=x_{i_{1}%
}^{\gamma_{1}}x_{i_{2}}^{\gamma_{2}}\cdots x_{i_{\ell}}^{\gamma_{\ell}}}\cdot
a\\
&  =\sum_{1\leq i_{1}<i_{2}<\cdots<i_{\ell}}\underbrace{\sum_{\substack{j\in
\left\{  0,1,\ldots,\ell\right\}  ;\\i_{j}\leq k<i_{j+1}}}x_{i_{1}}%
^{\gamma_{1}}x_{i_{2}}^{\gamma_{2}}\cdots x_{i_{\ell}}^{\gamma_{\ell}}%
}_{\substack{\text{this sum has precisely one addend,}\\\text{and thus equals
}x_{i_{1}}^{\gamma_{1}}x_{i_{2}}^{\gamma_{2}}\cdots x_{i_{\ell}}^{\gamma
_{\ell}}}}\cdot a=\underbrace{\sum_{1\leq i_{1}<i_{2}<\cdots<i_{\ell}}%
x_{i_{1}}^{\gamma_{1}}x_{i_{2}}^{\gamma_{2}}\cdots x_{i_{\ell}}^{\gamma_{\ell
}}}_{=M_{\gamma}}\cdot a\\
&  =M_{\gamma}\cdot a=aM_{\gamma},
\end{align*}
qed. Thus, (\ref{pf.thm.beldend.short.mainlem}) is proven.

Now, every $b\in\operatorname*{QSym}$ satisfies%
\begin{align*}
&  \sum_{\left(  b\right)  }\underbrace{\left(  S\left(  b_{\left(  1\right)
}\right)  \bel a\right)  }_{\substack{=a\cdot\mathfrak{B}_{k}\left(  S\left(
b_{\left(  1\right)  }\right)  \right)  \\\text{(by
(\ref{pf.thm.beldend.short.B}), applied to }p=S\left(  b_{\left(  1\right)
}\right)  \text{)}}}b_{\left(  2\right)  }\\
&  =\sum_{\left(  b\right)  }a\cdot\mathfrak{B}_{k}\left(  S\left(  b_{\left(
1\right)  }\right)  \right)  b_{\left(  2\right)  }=\sum_{\left(  b\right)
}\mathfrak{B}_{k}\left(  S\left(  b_{\left(  1\right)  }\right)  \right)
\cdot\underbrace{ab_{\left(  2\right)  }}_{\substack{=\sum_{\left(  b_{\left(
2\right)  }\right)  }\mathfrak{B}_{k}\left(  \left(  b_{\left(  2\right)
}\right)  _{\left(  1\right)  }\right)  \left(  a\left.  \prec\right.  \left(
b_{\left(  2\right)  }\right)  _{\left(  2\right)  }\right)  \\\text{(by
(\ref{pf.thm.beldend.short.mainlem}), applied to }f=b_{\left(  2\right)
}\text{)}}}\\
&  =\sum_{\left(  b\right)  }\mathfrak{B}_{k}\left(  S\left(  b_{\left(
1\right)  }\right)  \right)  \left(  \sum_{\left(  b_{\left(  2\right)
}\right)  }\mathfrak{B}_{k}\left(  \left(  b_{\left(  2\right)  }\right)
_{\left(  1\right)  }\right)  \left(  a\left.  \prec\right.  \left(
b_{\left(  2\right)  }\right)  _{\left(  2\right)  }\right)  \right) \\
&  =\sum_{\left(  b\right)  }\sum_{\left(  b_{\left(  2\right)  }\right)
}\mathfrak{B}_{k}\left(  S\left(  b_{\left(  1\right)  }\right)  \right)
\mathfrak{B}_{k}\left(  \left(  b_{\left(  2\right)  }\right)  _{\left(
1\right)  }\right)  \left(  a\left.  \prec\right.  \left(  b_{\left(
2\right)  }\right)  _{\left(  2\right)  }\right) \\
&  =\sum_{\left(  b\right)  }\underbrace{\sum_{\left(  b_{\left(  1\right)
}\right)  }\mathfrak{B}_{k}\left(  S\left(  \left(  b_{\left(  1\right)
}\right)  _{\left(  1\right)  }\right)  \right)  \mathfrak{B}_{k}\left(
\left(  b_{\left(  1\right)  }\right)  _{\left(  2\right)  }\right)
}_{\substack{=\mathfrak{B}_{k}\left(  \sum_{\left(  b_{\left(  1\right)
}\right)  }S\left(  \left(  b_{\left(  1\right)  }\right)  _{\left(  1\right)
}\right)  \cdot\left(  b_{\left(  1\right)  }\right)  _{\left(  2\right)
}\right)  \\\text{(since }\mathfrak{B}_{k}\text{ is a }\mathbf{k}%
\text{-algebra homomorphism)}}}\left(  a\left.  \prec\right.  b_{\left(
2\right)  }\right) \\
&  \ \ \ \ \ \ \ \ \ \ \left(
\begin{array}
[c]{c}%
\text{since the coassociativity of }\Delta\text{ yields}\\
\sum_{\left(  b\right)  }\sum_{\left(  b_{\left(  2\right)  }\right)
}b_{\left(  1\right)  }\otimes\left(  b_{\left(  2\right)  }\right)  _{\left(
1\right)  }\otimes\left(  b_{\left(  2\right)  }\right)  _{\left(  2\right)
}=\sum_{\left(  b\right)  }\sum_{\left(  b_{\left(  1\right)  }\right)
}\left(  b_{\left(  1\right)  }\right)  _{\left(  1\right)  }\otimes\left(
b_{\left(  1\right)  }\right)  _{\left(  2\right)  }\otimes b_{\left(
2\right)  }%
\end{array}
\right) \\
&  =\sum_{\left(  b\right)  }\mathfrak{B}_{k}\left(  \underbrace{\sum_{\left(
b_{\left(  1\right)  }\right)  }S\left(  \left(  b_{\left(  1\right)
}\right)  _{\left(  1\right)  }\right)  \left(  b_{\left(  1\right)  }\right)
_{\left(  2\right)  }}_{\substack{=\varepsilon\left(  b_{\left(  1\right)
}\right)  \\\text{(by one of the defining equations of the antipode)}%
}}\right)  \left(  a\left.  \prec\right.  b_{\left(  2\right)  }\right) \\
&  =\sum_{\left(  b\right)  }\underbrace{\mathfrak{B}_{k}\left(
\varepsilon\left(  b_{\left(  1\right)  }\right)  \right)  }%
_{\substack{=\varepsilon\left(  b_{\left(  1\right)  }\right)  \\\text{(since
}\mathfrak{B}_{k}\text{ is a }\mathbf{k}\text{-algebra}\\\text{homomorphism,
and}\\\varepsilon\left(  b_{\left(  1\right)  }\right)  \in\mathbf{k}\text{ is
a scalar)}}}\left(  a\left.  \prec\right.  b_{\left(  2\right)  }\right)
=\sum_{\left(  b\right)  }\varepsilon\left(  b_{\left(  1\right)  }\right)
\cdot\left(  a\left.  \prec\right.  b_{\left(  2\right)  }\right) \\
&  =\sum_{\left(  b\right)  }a\left.  \prec\right.  \left(  \varepsilon\left(
b_{\left(  1\right)  }\right)  b_{\left(  2\right)  }\right)  =a\left.
\prec\right.  \underbrace{\left(  \sum_{\left(  b\right)  }\varepsilon\left(
b_{\left(  1\right)  }\right)  b_{\left(  2\right)  }\right)  }_{=b}=a\left.
\prec\right.  b.
\end{align*}

\end{vershort}

This proves Theorem \ref{thm.beldend}.
\end{proof}

\begin{noncompile}
Here is another proof of Theorem \ref{thm.beldend}, which is more or less the
same as the above but using a slightly different language.

\begin{proof}
[Second proof of Theorem \ref{thm.beldend} (sketched).]We can WLOG assume that
$a$ is a monomial (because of the $\mathbf{k}$-linearity and continuity of all
operations in sight). So assume this. Let $i=\min\left(  \operatorname*{Supp}%
a\right)  $. Then, it is easy to see that every $f\in\operatorname*{QSym}$
satisfies $f \bel a=f\left(  x_{1},x_{2},\ldots,x_{i}\right)  \cdot a$ (where
we evaluate quasisymmetric functions on variables in the usual way, and for
$i=\infty$ we simply regard $f\left(  x_{1},x_{2},\ldots,x_{i}\right)  $ as
meaning $f\left(  x_{1},x_{2},x_{3},\ldots\right)  =f$) and $a\left.
\prec\right.  f=a\cdot f\left(  x_{i+1},x_{i+2},x_{i+3},\ldots\right)  $
(which is supposed to mean $\varepsilon\left(  f\right)  $ if $i=\infty$).
Hence, the equality in question, $\sum_{\left(  b\right)  }\left(  S\left(
b_{\left(  1\right)  }\right)  \bel a\right)  b_{\left(  2\right)  }=a\left.
\prec\right.  b$, rewrites as%
\[
\sum_{\left(  b\right)  }\left(  S\left(  b_{\left(  1\right)  }\right)
\right)  \left(  x_{1},x_{2},\ldots,x_{i}\right)  \cdot a\cdot b_{\left(
2\right)  }=a\cdot b\left(  x_{i+1},x_{i+2},x_{i+3},\ldots\right)  .
\]
This will immediately follow if we can show%
\begin{equation}
\sum_{\left(  b\right)  }\left(  S\left(  b_{\left(  1\right)  }\right)
\right)  \left(  x_{1},x_{2},\ldots,x_{i}\right)  \cdot b_{\left(  2\right)
}=b\left(  x_{i+1},x_{i+2},x_{i+3},\ldots\right)  . \label{pf.beldend.2}%
\end{equation}
So we need to prove (\ref{pf.beldend.2}).

Every $f\in\operatorname*{QSym}$ satisfies%
\begin{equation}
f=\sum_{\left(  f\right)  }f_{\left(  1\right)  }\left(  x_{1},x_{2}%
,\ldots,x_{i}\right)  \cdot f_{\left(  2\right)  }\left(  x_{i+1}%
,x_{i+2},x_{i+3},\ldots\right)  . \label{pf.beldend.4}%
\end{equation}
\footnote{In fact, this follows from the fact that the ordered alphabet
$\left(  x_{1},x_{2},x_{3},\ldots\right)  $ can be regarded as the sum of the
ordered alphabets $\left(  x_{1},x_{2},\ldots,x_{i}\right)  $ and $\left(
x_{i+1},x_{i+2},x_{i+3},\ldots\right)  $. But there is an alternative proof,
which is probably faster done than the use of alphabets is justified:
\par
It is clearly enough to check (\ref{pf.beldend.4}) on the basis $\left(
M_{\gamma}\right)  _{\gamma\in\operatorname*{Comp}}$ of $\operatorname*{QSym}%
$, that is, to prove that (\ref{pf.beldend.4}) holds whenever $f=M_{\gamma}$
for some $\gamma\in\operatorname*{Comp}$. In other words, it is enough to show
that
\[
M_{\gamma}=\sum_{\left(  M_{\gamma}\right)  }\left(  M_{\gamma}\right)
_{\left(  1\right)  }\left(  x_{1},x_{2},\ldots,x_{i}\right)  \cdot\left(
M_{\gamma}\right)  _{\left(  2\right)  }\left(  x_{i+1},x_{i+2},x_{i+3}%
,\ldots\right)  \ \ \ \ \ \ \ \ \ \ \text{for every }\gamma\in
\operatorname*{Comp}.
\]
But this is easily done: Let $\gamma\in\operatorname*{Comp}$. Write $\gamma$
in the form $\gamma=\left(  \gamma_{1},\gamma_{2},\ldots,\gamma_{\ell}\right)
$. Then,%
\begin{align*}
&  \sum_{\left(  M_{\gamma}\right)  }\left(  M_{\gamma}\right)  _{\left(
1\right)  }\left(  x_{1},x_{2},\ldots,x_{i}\right)  \cdot\left(  M_{\gamma
}\right)  _{\left(  2\right)  }\left(  x_{i+1},x_{i+2},x_{i+3},\ldots\right)
\\
&  =\sum_{j=0}^{\ell}\underbrace{M_{\left(  \gamma_{1},\gamma_{2}%
,\ldots,\gamma_{j}\right)  }\left(  x_{1},x_{2},\ldots,x_{i}\right)  }%
_{=\sum_{1\leq u_{1}<u_{2}<\cdots<u_{j}\leq i}x_{u_{1}}^{\gamma_{1}}x_{u_{2}%
}^{\gamma_{2}}\cdots x_{u_{j}}^{\gamma_{j}}}\cdot\underbrace{M_{\left(
\gamma_{j+1},\gamma_{j+2},\ldots,\gamma_{\ell}\right)  }\left(  x_{i+1}%
,x_{i+2},x_{i+3},\ldots\right)  }_{\substack{=\sum_{1\leq u_{1}<u_{2}%
<\cdots<u_{\ell-j}}x_{i+u_{1}}^{\gamma_{j+1}}x_{i+u_{2}}^{\gamma_{j+2}}\cdots
x_{i+u_{\ell-j}}^{\gamma_{\ell}}\\=\sum_{i+1\leq u_{1}<u_{2}<\cdots<u_{\ell
-j}}x_{u_{1}}^{\gamma_{j+1}}x_{u_{2}}^{\gamma_{j+2}}\cdots x_{u_{\ell-j}%
}^{\gamma_{\ell}}\\=\sum_{i+1\leq u_{j+1}<u_{j+2}<\cdots<u_{\ell}}x_{u_{j+1}%
}^{\gamma_{j+1}}x_{u_{j+2}}^{\gamma_{j+2}}\cdots x_{u_{\ell}}^{\gamma_{\ell}}%
}}\\
&  \ \ \ \ \ \ \ \ \ \ \left(  \text{since }\Delta\left(  M_{\gamma}\right)
=\sum_{j=0}^{\ell}M_{\left(  \gamma_{1},\gamma_{2},\ldots,\gamma_{j}\right)
}\otimes M_{\left(  \gamma_{j+1},\gamma_{j+2},\ldots,\gamma_{\ell}\right)
}\right) \\
&  =\sum_{j=0}^{\ell}\left(  \sum_{1\leq u_{1}<u_{2}<\cdots<u_{j}\leq
i}x_{u_{1}}^{\gamma_{1}}x_{u_{2}}^{\gamma_{2}}\cdots x_{u_{j}}^{\gamma_{j}%
}\right)  \left(  \sum_{i+1\leq u_{j+1}<u_{j+2}<\cdots<u_{\ell}}x_{u_{j+1}%
}^{\gamma_{j+1}}x_{u_{j+2}}^{\gamma_{j+2}}\cdots x_{u_{\ell}}^{\gamma_{\ell}%
}\right) \\
&  =\underbrace{\sum_{j=0}^{\ell}\sum_{1\leq u_{1}<u_{2}<\cdots<u_{j}\leq
i}\sum_{i+1\leq u_{j+1}<u_{j+2}<\cdots<u_{\ell}}}_{=\sum_{1\leq u_{1}%
<u_{2}<\cdots<u_{\ell}}\sum_{\substack{j\in\left\{  0,1,\ldots,\ell\right\}
;\\u_{j}\leq i<u_{j+1}}}}\left(  x_{u_{1}}^{\gamma_{1}}x_{u_{2}}^{\gamma_{2}%
}\cdots x_{u_{j}}^{\gamma_{j}}\right)  \left(  x_{u_{j+1}}^{\gamma_{j+1}%
}x_{u_{j+2}}^{\gamma_{j+2}}\cdots x_{u_{\ell}}^{\gamma_{\ell}}\right) \\
&  =\sum_{1\leq u_{1}<u_{2}<\cdots<u_{\ell}}\sum_{\substack{j\in\left\{
0,1,\ldots,\ell\right\}  ;\\u_{j}\leq i<u_{j+1}}}\underbrace{\left(  x_{u_{1}%
}^{\gamma_{1}}x_{u_{2}}^{\gamma_{2}}\cdots x_{u_{j}}^{\gamma_{j}}\right)
\left(  x_{u_{j+1}}^{\gamma_{j+1}}x_{u_{j+2}}^{\gamma_{j+2}}\cdots x_{u_{\ell
}}^{\gamma_{\ell}}\right)  }_{=x_{u_{1}}^{\gamma_{1}}x_{u_{2}}^{\gamma_{2}%
}\cdots x_{u_{\ell}}^{\gamma_{\ell}}}\\
&  =\sum_{1\leq u_{1}<u_{2}<\cdots<u_{\ell}}\underbrace{\sum_{\substack{j\in
\left\{  0,1,\ldots,\ell\right\}  ;\\u_{j}\leq i<u_{j+1}}}x_{u_{1}}%
^{\gamma_{1}}x_{u_{2}}^{\gamma_{2}}\cdots x_{u_{\ell}}^{\gamma_{\ell}}%
}_{\text{this sum has precisely one addend}}=\sum_{1\leq u_{1}<u_{2}%
<\cdots<u_{\ell}}x_{u_{1}}^{\gamma_{1}}x_{u_{2}}^{\gamma_{2}}\cdots
x_{u_{\ell}}^{\gamma_{\ell}}=M_{\gamma},
\end{align*}
qed.}

Now, (\ref{pf.beldend.2}) becomes%
\begin{align*}
&  \sum_{\left(  b\right)  }\left(  S\left(  b_{\left(  1\right)  }\right)
\right)  \left(  x_{1},x_{2},\ldots,x_{i}\right)  \cdot\underbrace{b_{\left(
2\right)  }}_{\substack{=\sum_{\left(  b_{\left(  2\right)  }\right)  }\left(
b_{\left(  2\right)  }\right)  _{\left(  1\right)  }\left(  x_{1},x_{2}%
,\ldots,x_{i}\right)  \cdot\left(  b_{\left(  2\right)  }\right)  _{\left(
2\right)  }\left(  x_{i+1},x_{i+2},x_{i+3},\ldots\right)  \\\text{(by
(\ref{pf.beldend.4}))}}}\\
&  =\sum_{\left(  b\right)  }\sum_{\left(  b_{\left(  2\right)  }\right)
}\left(  S\left(  b_{\left(  1\right)  }\right)  \right)  \left(  x_{1}%
,x_{2},\ldots,x_{i}\right)  \cdot\left(  b_{\left(  2\right)  }\right)
_{\left(  1\right)  }\left(  x_{1},x_{2},\ldots,x_{i}\right)  \cdot\left(
b_{\left(  2\right)  }\right)  _{\left(  2\right)  }\left(  x_{i+1}%
,x_{i+2},x_{i+3},\ldots\right) \\
&  =\sum_{\left(  b\right)  }\underbrace{\sum_{\left(  b_{\left(  1\right)
}\right)  }\left(  S\left(  \left(  b_{\left(  1\right)  }\right)  _{\left(
1\right)  }\right)  \right)  \left(  x_{1},x_{2},\ldots,x_{i}\right)
\cdot\left(  b_{\left(  1\right)  }\right)  _{\left(  2\right)  }\left(
x_{1},x_{2},\ldots,x_{i}\right)  }_{=\left(  \sum_{\left(  b_{\left(
1\right)  }\right)  }S\left(  \left(  b_{\left(  1\right)  }\right)  _{\left(
1\right)  }\right)  \cdot\left(  b_{\left(  1\right)  }\right)  _{\left(
2\right)  }\right)  \left(  x_{1},x_{2},\ldots,x_{i}\right)  }\cdot b_{\left(
2\right)  }\left(  x_{i+1},x_{i+2},x_{i+3},\ldots\right) \\
&  \ \ \ \ \ \ \ \ \ \ \left(  \text{by the coassociativity of }\Delta\right)
\\
&  =\sum_{\left(  b\right)  }\underbrace{\left(  \sum_{\left(  b_{\left(
1\right)  }\right)  }S\left(  \left(  b_{\left(  1\right)  }\right)  _{\left(
1\right)  }\right)  \cdot\left(  b_{\left(  1\right)  }\right)  _{\left(
2\right)  }\right)  }_{\substack{=\varepsilon\left(  b_{\left(  1\right)
}\right)  \\\text{(by one of the defining equations of the antipode)}}}\left(
x_{1},x_{2},\ldots,x_{i}\right)  \cdot b_{\left(  2\right)  }\left(
x_{i+1},x_{i+2},x_{i+3},\ldots\right) \\
&  =\sum_{\left(  b\right)  }\underbrace{\left(  \varepsilon\left(  b_{\left(
1\right)  }\right)  \right)  \left(  x_{1},x_{2},\ldots,x_{i}\right)
}_{=\varepsilon\left(  b_{\left(  1\right)  }\right)  }\cdot b_{\left(
2\right)  }\left(  x_{i+1},x_{i+2},x_{i+3},\ldots\right) \\
&  =\sum_{\left(  b\right)  }\varepsilon\left(  b_{\left(  1\right)  }\right)
\cdot b_{\left(  2\right)  }\left(  x_{i+1},x_{i+2},x_{i+3},\ldots\right)
=\underbrace{\left(  \sum_{\left(  b\right)  }\varepsilon\left(  b_{\left(
1\right)  }\right)  \cdot b_{\left(  2\right)  }\right)  }%
_{\substack{=b\\\text{(by the coalgebra axioms)}}}\left(  x_{i+1}%
,x_{i+2},x_{i+3},\ldots\right) \\
&  =b\left(  x_{i+1},x_{i+2},x_{i+3},\ldots\right)  .
\end{align*}
This proves (\ref{pf.beldend.2}) and thus Theorem \ref{thm.beldend}. (A
different proof might appear in \cite{Gri-gammapart}.)
\end{proof}
\end{noncompile}

Let us connect the $\bel $ operation with the fundamental basis of
$\operatorname*{QSym}$:

\begin{proposition}
\label{prop.bel.F}For any two compositions $\alpha$ and $\beta$, define a
composition $\alpha\odot\beta$ as follows:

-- If $\alpha$ is empty, then set $\alpha\odot\beta=\beta$.

-- Otherwise, if $\beta$ is empty, then set $\alpha\odot\beta=\alpha$.

-- Otherwise, define $\alpha\odot\beta$ as $\left(  \alpha_{1},\alpha
_{2},\ldots,\alpha_{\ell-1},\alpha_{\ell}+\beta_{1},\beta_{2},\beta_{3}%
,\ldots,\beta_{m}\right)  $, where $\alpha$ is written as $\alpha=\left(
\alpha_{1},\alpha_{2},\ldots,\alpha_{\ell}\right)  $ and where $\beta$ is
written as $\beta=\left(  \beta_{1},\beta_{2},\ldots,\beta_{m}\right)  $.

Then, any two compositions $\alpha$ and $\beta$ satisfy%
\[
F_{\alpha} \bel F_{\beta}=F_{\alpha\odot\beta}.
\]

\end{proposition}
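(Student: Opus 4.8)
The plan is to prove the identity by comparing, for an arbitrary monomial $\mathfrak{q}$, its coefficient on the two sides. First I would use the monomial expansion of the fundamental quasisymmetric functions that is built into their definition: by~(\ref{eq.F.def}), for a composition $\alpha$ of $n$ the series $F_{\alpha}$ is the sum of all monomials $x_{i_{1}}x_{i_{2}}\cdots x_{i_{n}}$ with $i_{1}\leq i_{2}\leq\cdots\leq i_{n}$ satisfying $i_{t}<i_{t+1}$ whenever $t\in D(\alpha)$. In particular every monomial occurs with coefficient $0$ or $1$ in $F_{\alpha}$, and the monomials of $F_{\alpha}$ are exactly the degree-$n$ monomials whose (unique) weakly increasing factorization has a strict ascent at each position in $D(\alpha)$.

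Next I would expand $F_{\alpha}\left.\textarm{\belgthor}\right.F_{\beta}$ using the $\mathbf{k}$-bilinearity and continuity of $\left.\textarm{\belgthor}\right.$ together with the rule of Definition~\ref{def.belgthor} (the same device as in the proof of Proposition~\ref{prop.QSym.closed}): the coefficient of $\mathfrak{q}$ in $F_{\alpha}\left.\textarm{\belgthor}\right.F_{\beta}$ is the number of pairs of monomials $(\mathfrak{m},\mathfrak{n})$ with $\mathfrak{m}$ a monomial of $F_{\alpha}$, $\mathfrak{n}$ a monomial of $F_{\beta}$, $\max(\operatorname*{Supp}\mathfrak{m})\leq\min(\operatorname*{Supp}\mathfrak{n})$, and $\mathfrak{m}\mathfrak{n}=\mathfrak{q}$. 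Writing $n=|\alpha|$, $p=|\beta|$ and $\mathfrak{q}=x_{k_{1}}x_{k_{2}}\cdots x_{k_{n+p}}$ with $k_{1}\leq k_{2}\leq\cdots\leq k_{n+p}$, I would observe that such a pair is uniquely determined whenever it exists: $\mathfrak{m}$ must have degree $n$, and the support inequality forces every variable of $\mathfrak{m}$ to precede every variable of $\mathfrak{n}$, so necessarily $\mathfrak{m}=x_{k_{1}}\cdots x_{k_{n}}$ and $\mathfrak{n}=x_{k_{n+1}}\cdots x_{k_{n+p}}$. Hence the coefficient of $\mathfrak{q}$ in $F_{\alpha}\left.\textarm{\belgthor}\right.F_{\beta}$ is $1$ exactly when $x_{k_{1}}\cdots x_{k_{n}}$ is a monomial of $F_{\alpha}$ and $x_{k_{n+1}}\cdots x_{k_{n+p}}$ is a monomial of $F_{\beta}$, i.e. exactly when the chain $k_{1}\leq\cdots\leq k_{n+p}$ has a strict ascent at every $t\in D(\alpha)$ and at every index of the form $n+s$ with $s\in D(\beta)$.

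The last step is the identification $D(\alpha\odot\beta)=D(\alpha)\cup\bigl(n+D(\beta)\bigr)$ (as subsets of $\{1,\dots,n+p-1\}$) when $\alpha$ and $\beta$ are both nonempty, which I would verify by writing out the partial sums of $\alpha\odot\beta$: the key point is that merging $\alpha_{\ell}$ with $\beta_{1}$ into a single part of $\alpha\odot\beta$ is precisely what keeps the junction value $n$ out of the set of partial sums, while $D(\alpha)\subseteq\{1,\dots,n-1\}$ and $n+D(\beta)\subseteq\{n+1,\dots,n+p-1\}$ are automatically disjoint. Feeding this into the monomial expansion of $F_{\alpha\odot\beta}$ shows that its monomials are exactly the ones found in the previous paragraph, so every monomial has equal coefficients on the two sides and the identity holds when $\alpha$ and $\beta$ are both nonempty. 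The degenerate cases are immediate from the conventions $\max\varnothing=0$ and $\min\varnothing=\infty$: if $\alpha=\varnothing$ then $F_{\alpha}=1$ and $1\left.\textarm{\belgthor}\right.\mathfrak{n}=\mathfrak{n}$ for every monomial $\mathfrak{n}$, so $F_{\varnothing}\left.\textarm{\belgthor}\right.F_{\beta}=F_{\beta}=F_{\varnothing\odot\beta}$, and the case $\beta=\varnothing$ is symmetric. The only real obstacle I anticipate is bookkeeping: getting $D(\alpha\odot\beta)$ exactly right and confirming that the uniqueness of the factorization $\mathfrak{q}=\mathfrak{m}\mathfrak{n}$ makes the coefficient comparison an exact equality rather than merely an inequality; everything else is routine.
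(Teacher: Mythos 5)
Your proposal is correct and follows essentially the same route as the paper: both expand $F_{\alpha}$ and $F_{\beta}$ via the monomial definition (\ref{eq.F.def}), apply the rule for $\left.\textarm{\belgthor}\right.$ on pairs of monomials so that exactly the terms with a weak ascent at the junction survive, and conclude via the identity $D\left(\alpha\odot\beta\right)=D\left(\alpha\right)\cup\left(D\left(\beta\right)+\left\vert\alpha\right\vert\right)$, with the empty cases handled by unitality. The only (harmless) difference is organizational: you extract the coefficient of a fixed monomial $\mathfrak{q}$ and argue uniqueness of the factorization $\mathfrak{q}=\mathfrak{m}\mathfrak{n}$, whereas the paper simply reindexes the double sum into the defining sum of $F_{\alpha\odot\beta}$.
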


\begin{verlong}
Our proof of this proposition will rely on the following lemma:

\begin{lemma}
\label{lem.D(a(.)b)}If $G$ is a set of integers and $r$ is an integer, then we
let $G+r$ denote the set $\left\{  g+r\ \mid\ g\in G\right\}  $ of integers.

Let $p\in\mathbb{N}$ and $q\in\mathbb{N}$. Let $\alpha$ be a composition of
$p$. Let $\beta$ be a composition of $q$. Consider the composition
$\alpha\odot\beta$ defined in Proposition \ref{prop.bel.F}.

\textbf{(a)} Then, $\alpha\odot\beta$ is a composition of $p+q$ satisfying
$D\left(  \alpha\odot\beta\right)  =D\left(  \alpha\right)  \cup\left(
D\left(  \beta\right)  +p\right)  $.

\textbf{(b)} Also, define a composition $\left[  \alpha,\beta\right]  $ by
$\left[  \alpha,\beta\right]  =\left(  \alpha_{1},\alpha_{2},\ldots
,\alpha_{\ell},\beta_{1},\beta_{2},\ldots,\beta_{m}\right)  $, where $\alpha$
and $\beta$ are written in the forms $\alpha=\left(  \alpha_{1},\alpha
_{2},\ldots,\alpha_{\ell}\right)  $ and $\beta=\left(  \beta_{1},\beta
_{2},\ldots,\beta_{m}\right)  $. Assume that $p > 0$ and $q > 0$. Then,
$\left[  \alpha,\beta\right]  $ is a composition of $p+q$ satisfying $D\left(
\left[  \alpha,\beta\right]  \right)  =D\left(  \alpha\right)  \cup\left\{
p\right\}  \cup\left(  D\left(  \beta\right)  +p\right)  $.
\end{lemma}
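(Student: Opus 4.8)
The plan is to prove both parts by a direct computation of the sets of partial sums, with part \textbf{(a)} reduced first to a short case analysis on whether $\alpha$ or $\beta$ is empty.

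For part \textbf{(a)}, I would first dispose of the degenerate cases. If $\alpha$ is empty, then $p=0$ and $\alpha\odot\beta=\beta$, so the claim reduces to $D\left(\beta\right)=\varnothing\cup\left(D\left(\beta\right)+0\right)$, which is immediate since $D\left(\varnothing\right)=\varnothing$. Symmetrically, if $\alpha$ is nonempty but $\beta$ is empty, then $q=0$, $\alpha\odot\beta=\alpha$, and the claim reduces to $D\left(\alpha\right)=D\left(\alpha\right)\cup\varnothing$. In the main case, where $\alpha=\left(\alpha_{1},\ldots,\alpha_{\ell}\right)$ and $\beta=\left(\beta_{1},\ldots,\beta_{m}\right)$ with $\ell\geq 1$ and $m\geq 1$, I would set $\gamma:=\alpha\odot\beta=\left(\alpha_{1},\ldots,\alpha_{\ell-1},\alpha_{\ell}+\beta_{1},\beta_{2},\ldots,\beta_{m}\right)$, observe that its size telescopes to $\left\vert\alpha\right\vert+\left\vert\beta\right\vert=p+q$, and then compute the partial sums $\gamma_{1}+\cdots+\gamma_{i}$. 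For $i\leq\ell-1$ these equal $\alpha_{1}+\cdots+\alpha_{i}$, which runs over exactly $D\left(\alpha\right)$; and for $i=\ell,\ell+1,\ldots,\ell+m-2$, writing $i=\ell+j-1$ with $1\leq j\leq m-1$, they equal $p+\left(\beta_{1}+\cdots+\beta_{j}\right)$, which runs over exactly $D\left(\beta\right)+p$. Taking the union gives $D\left(\alpha\odot\beta\right)=D\left(\alpha\right)\cup\left(D\left(\beta\right)+p\right)$. The boundary subcase $m=1$ is harmless: then $\gamma$ has only $\ell$ parts, there is no $\beta$-contribution, and this matches $D\left(\beta\right)+p=\varnothing$.

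For part \textbf{(b)}, since $p>0$ and $q>0$, both $\alpha$ and $\beta$ are nonempty, so $\left[\alpha,\beta\right]=\left(\alpha_{1},\ldots,\alpha_{\ell},\beta_{1},\ldots,\beta_{m}\right)$ has $\ell+m$ parts and size $p+q$. I would then rerun the same partial-sum computation on $\left[\alpha,\beta\right]$: for $i\leq\ell-1$ we obtain $D\left(\alpha\right)$; for $i=\ell$ we obtain $\alpha_{1}+\cdots+\alpha_{\ell}=p$; and for $i=\ell+j$ with $1\leq j\leq m-1$ we obtain $p+\left(\beta_{1}+\cdots+\beta_{j}\right)$, running over $D\left(\beta\right)+p$. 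Their union is $D\left(\alpha\right)\cup\left\{p\right\}\cup\left(D\left(\beta\right)+p\right)$, as claimed. I expect no serious obstacle: the only thing requiring care is the index bookkeeping, in particular the $m=1$ boundary in part \textbf{(a)} and being precise about which range of $i$ yields which partial sums. It may also be worth recording, though it is not needed, that the three sets $D\left(\alpha\right)\subseteq\left\{1,\ldots,p-1\right\}$, $\left\{p\right\}$, and $D\left(\beta\right)+p\subseteq\left\{p+1,\ldots,p+q-1\right\}$ are pairwise disjoint.
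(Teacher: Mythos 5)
Your proof is correct and follows essentially the same route as the paper's: dispose of the cases where $\alpha$ or $\beta$ is empty, then compute the partial sums of $\alpha\odot\beta$ (resp.\ $\left[\alpha,\beta\right]$) directly, splitting the index range to recover $D\left(\alpha\right)$, the element $p$ (in part \textbf{(b)}), and $D\left(\beta\right)+p$. Nothing to add.
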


(Actually, part \textbf{(b)} of this lemma will not be used until much later,
but part \textbf{(a)} will be used soon.)

\begin{proof}
[Proof of Lemma \ref{lem.D(a(.)b)}.]Write $\alpha$ in the form $\alpha=\left(
\alpha_{1},\alpha_{2},\ldots,\alpha_{\ell}\right)  $. Thus, $\left\vert
\alpha\right\vert =\alpha_{1}+\alpha_{2}+\cdots+\alpha_{\ell}$, so that
$\alpha_{1}+\alpha_{2}+\cdots+\alpha_{\ell}=\left\vert \alpha\right\vert =p$
(since $\alpha$ is a composition of $p$).

Write $\beta$ in the form $\beta=\left(  \beta_{1},\beta_{2},\ldots,\beta
_{m}\right)  $. Thus, $\left\vert \beta\right\vert =\beta_{1}+\beta_{2}%
+\cdots+\beta_{m}$, so that $\beta_{1}+\beta_{2}+\cdots+\beta_{m}=\left\vert
\beta\right\vert =q$ (since $\beta$ is a composition of $q$).

We have $\beta=\left(  \beta_{1},\beta_{2},\ldots,\beta_{m}\right)  $, and
thus%
\begin{align*}
D\left(  \beta\right)   &  =\left\{  \beta_{1},\beta_{1}+\beta_{2},\beta
_{1}+\beta_{2}+\beta_{3},\ldots,\beta_{1}+\beta_{2}+\cdots+\beta_{m-1}\right\}
\\
&  \ \ \ \ \ \ \ \ \ \ \left(  \text{by the definition of }D\left(
\beta\right)  \right) \\
&  =\left\{  \beta_{1}+\beta_{2}+\cdots+\beta_{j}\ \mid\ j\in\left\{
1,2,\ldots,m-1\right\}  \right\}  .
\end{align*}
Also, $\alpha=\left(  \alpha_{1},\alpha_{2},\ldots,\alpha_{\ell}\right)  $,
and thus%
\begin{align*}
D\left(  \alpha\right)   &  =\left\{  \alpha_{1},\alpha_{1}+\alpha_{2}%
,\alpha_{1}+\alpha_{2}+\alpha_{3},\ldots,\alpha_{1}+\alpha_{2}+\cdots
+\alpha_{\ell-1}\right\} \\
&  \ \ \ \ \ \ \ \ \ \ \left(  \text{by the definition of }D\left(
\alpha\right)  \right) \\
&  =\left\{  \alpha_{1}+\alpha_{2}+\cdots+\alpha_{i}\ \mid\ i\in\left\{
1,2,\ldots,\ell-1\right\}  \right\}  .
\end{align*}

\textbf{(a)} If $\alpha$ or $\beta$ is empty, then Lemma \ref{lem.D(a(.)b)}
\textbf{(a)} holds for obvious reasons (because of the definition of
$\alpha\odot\beta$ in this case). Thus, we WLOG assume that neither $\alpha$
nor $\beta$ is empty.

We have $\alpha\odot\beta=\left(  \alpha_{1},\alpha_{2},\ldots,\alpha_{\ell
-1},\alpha_{\ell}+\beta_{1},\beta_{2},\beta_{3},\ldots,\beta_{m}\right)  $ (by
the definition of $\alpha\odot\beta$) and thus%
\begin{align*}
\left\vert \alpha\odot\beta\right\vert  &  =\alpha_{1}+\alpha_{2}%
+\cdots+\alpha_{\ell-1}+\left(  \alpha_{\ell}+\beta_{1}\right)  +\beta
_{2}+\beta_{3}+\cdots+\beta_{m}\\
&  =\underbrace{\left(  \alpha_{1}+\alpha_{2}+\cdots+\alpha_{\ell}\right)
}_{=p}+\underbrace{\left(  \beta_{1}+\beta_{2}+\cdots+\beta_{m}\right)  }%
_{=q}=p+q.
\end{align*}
Thus, $\alpha\odot\beta$ is a composition of $p+q$. Hence, it remains to show
that $D\left(  \alpha\odot\beta\right)  =D\left(  \alpha\right)  \cup\left(
D\left(  \beta\right)  +p\right)  $.

Now, $\alpha\odot\beta=\left(  \alpha_{1},\alpha_{2},\ldots,\alpha_{\ell
-1},\alpha_{\ell}+\beta_{1},\beta_{2},\beta_{3},\ldots,\beta_{m}\right)  $, so
that%
\begin{align*}
&  D\left(  \alpha\odot\beta\right) \\
&  =\left\{  \alpha_{1},\alpha_{1}+\alpha_{2},\alpha_{1}+\alpha_{2}+\alpha
_{3},\ldots,\alpha_{1}+\alpha_{2}+\cdots+\alpha_{\ell-1},\right. \\
&  \ \ \ \ \ \ \ \ \ \ \left.  \alpha_{1}+\alpha_{2}+\cdots+\alpha_{\ell
-1}+\left(  \alpha_{\ell}+\beta_{1}\right)  ,\alpha_{1}+\alpha_{2}%
+\cdots+\alpha_{\ell-1}+\left(  \alpha_{\ell}+\beta_{1}\right)  +\beta
_{2},\right. \\
&  \ \ \ \ \ \ \ \ \ \ \left.  \alpha_{1}+\alpha_{2}+\cdots+\alpha_{\ell
-1}+\left(  \alpha_{\ell}+\beta_{1}\right)  +\beta_{2}+\beta_{3}%
,\ldots,\right. \\
&  \ \ \ \ \ \ \ \ \ \ \left.  \alpha_{1}+\alpha_{2}+\cdots+\alpha_{\ell
-1}+\left(  \alpha_{\ell}+\beta_{1}\right)  +\beta_{2}+\beta_{3}+\cdots
+\beta_{m-1}\right\} \\
&  \ \ \ \ \ \ \ \ \ \ \left(  \text{by the definition of }D\left(
\alpha\odot\beta\right)  \right) \\
&  =\underbrace{\left\{  \alpha_{1}+\alpha_{2}+\cdots+\alpha_{i}\ \mid
\ i\in\left\{  1,2,\ldots,\ell-1\right\}  \right\}  }_{=D\left(
\alpha\right)  }\\
&  \ \ \ \ \ \ \ \ \ \ \cup\left\{  \underbrace{\alpha_{1}+\alpha_{2}%
+\cdots+\alpha_{\ell-1}+\left(  \alpha_{\ell}+\beta_{1}\right)  +\beta
_{2}+\beta_{3}+\cdots+\beta_{j}}_{\substack{=\left(  \alpha_{1}+\alpha
_{2}+\cdots+\alpha_{\ell}\right)  +\left(  \beta_{1}+\beta_{2}+\cdots
+\beta_{j}\right)  \\=\left(  \beta_{1}+\beta_{2}+\cdots+\beta_{j}\right)
+\left(  \alpha_{1}+\alpha_{2}+\cdots+\alpha_{\ell}\right)  }}\ \mid
\ j\in\left\{  1,2,\ldots,m-1\right\}  \right\} \\
&  =D\left(  \alpha\right)  \cup\left\{  \left(  \beta_{1}+\beta_{2}%
+\cdots+\beta_{j}\right)  +\underbrace{\left(  \alpha_{1}+\alpha_{2}%
+\cdots+\alpha_{\ell}\right)  }_{=p}\ \mid\ j\in\left\{  1,2,\ldots
,m-1\right\}  \right\} \\
&  =D\left(  \alpha\right)  \cup\underbrace{\left\{  \left(  \beta_{1}%
+\beta_{2}+\cdots+\beta_{j}\right)  +p\ \mid\ j\in\left\{  1,2,\ldots
,m-1\right\}  \right\}  }_{=\left\{  \beta_{1}+\beta_{2}+\cdots+\beta
_{j}\ \mid\ j\in\left\{  1,2,\ldots,m-1\right\}  \right\}  +p}\\
&  =D\left(  \alpha\right)  \cup\left(  \underbrace{\left\{  \beta_{1}%
+\beta_{2}+\cdots+\beta_{j}\ \mid\ j\in\left\{  1,2,\ldots,m-1\right\}
\right\}  }_{=D\left(  \beta\right)  }+p\right) \\
&  =D\left(  \alpha\right)  \cup\left(  D\left(  \beta\right)  +p\right)  .
\end{align*}
This completes the proof of Lemma \ref{lem.D(a(.)b)} \textbf{(a)}.

\textbf{(b)} We have $p > 0$. Thus, the composition $\alpha$ is nonempty
(since $\alpha$ is a composition of $p$). In other words, the composition
$\left(  \alpha_{1}, \alpha_{2}, \ldots, \alpha_{\ell}\right)  $ is nonempty
(since $\alpha= \left(  \alpha_{1}, \alpha_{2}, \ldots, \alpha_{\ell}\right)
$). Hence, $\ell> 0$.

We have $q > 0$. Thus, the composition $\beta$ is nonempty (since $\beta$ is a
composition of $q$). In other words, the composition $\left(  \beta_{1},
\beta_{2}, \ldots, \beta_{m}\right)  $ is nonempty (since $\beta= \left(
\beta_{1}, \beta_{2}, \ldots, \beta_{m}\right)  $). Hence, $m > 0$.

We have $\left[  \alpha,\beta\right]  =\left(  \alpha_{1},\alpha_{2}%
,\ldots,\alpha_{\ell},\beta_{1},\beta_{2},\ldots,\beta_{m}\right)  $ (by the
definition of $\left[  \alpha,\beta\right]  $) and thus%
\begin{align*}
\left\vert \left[\alpha,\beta\right] \right\vert  &  =\alpha_{1}+\alpha_{2}%
+\cdots+\alpha_{\ell}+\beta_{1}+\beta_{2}+\cdots+\beta_{m}\\
&  =\underbrace{\left(  \alpha_{1}+\alpha_{2}+\cdots+\alpha_{\ell}\right)
}_{=p}+\underbrace{\left(  \beta_{1}+\beta_{2}+\cdots+\beta_{m}\right)  }%
_{=q}=p+q.
\end{align*}
Thus, $\left[  \alpha,\beta\right]  $ is a composition of $p+q$. Hence, it
remains to show that $D\left(  \left[  \alpha,\beta\right]  \right)  =D\left(
\alpha\right)  \cup\left\{  p\right\}
\cup\left(  D\left(  \beta\right)  +p\right)  $.

Now, $\left[  \alpha,\beta\right]  =\left(  \alpha_{1},\alpha_{2}%
,\ldots,\alpha_{\ell},\beta_{1},\beta_{2},\ldots,\beta_{m}\right)  $, so that%
\begin{align*}
&  D\left(  \left[  \alpha,\beta\right]  \right) \\
&  =\left\{  \alpha_{1},\alpha_{1}+\alpha_{2},\alpha_{1}+\alpha_{2}+\alpha
_{3},\ldots,\alpha_{1}+\alpha_{2}+\cdots+\alpha_{\ell-1},\right. \\
&  \ \ \ \ \ \ \ \ \ \ \left.  \alpha_{1}+\alpha_{2}+\cdots+\alpha_{\ell
-1}+\alpha_{\ell},\alpha_{1}+\alpha_{2}+\cdots+\alpha_{\ell-1}+\alpha_{\ell
}+\beta_{1},\right. \\
&  \ \ \ \ \ \ \ \ \ \ \left.  \alpha_{1}+\alpha_{2}+\cdots+\alpha_{\ell
-1}+\alpha_{\ell}+\beta_{1}+\beta_{2},\ldots,\right. \\
&  \ \ \ \ \ \ \ \ \ \ \left.  \alpha_{1}+\alpha_{2}+\cdots+\alpha_{\ell
-1}+\alpha_{\ell}+\beta_{1}+\beta_{2}+\cdots+\beta_{m-1}\right\} \\
&  \ \ \ \ \ \ \ \ \ \ \left(  \text{by the definition of }D\left(  \left[
\alpha,\beta\right]  \right)  \right) \\
&  =\underbrace{\left\{  \alpha_{1}+\alpha_{2}+\cdots+\alpha_{i}\ \mid
\ i\in\left\{  1,2,\ldots,\ell-1\right\}  \right\}  }_{=D\left(
\alpha\right)  }\\
&  \ \ \ \ \ \ \ \ \ \ \cup\left\{  \underbrace{\alpha_{1}+\alpha_{2}%
+\cdots+\alpha_{\ell}}_{=p}\right\} \\
&  \ \ \ \ \ \ \ \ \ \ \cup\left\{  \underbrace{\alpha_{1}+\alpha_{2}%
+\cdots+\alpha_{\ell-1}+\alpha_{\ell}+\beta_{1}+\beta_{2}+\cdots+\beta_{j}%
}_{\substack{=\left(  \alpha_{1}+\alpha_{2}+\cdots+\alpha_{\ell}\right)
+\left(  \beta_{1}+\beta_{2}+\cdots+\beta_{j}\right)  \\=\left(  \beta
_{1}+\beta_{2}+\cdots+\beta_{j}\right)  +\left(  \alpha_{1}+\alpha_{2}%
+\cdots+\alpha_{\ell}\right)  }}\ \mid\ j\in\left\{  1,2,\ldots,m-1\right\}
\right\} \\
&  =D\left(  \alpha\right)  \cup\left\{  p\right\}  \cup\left\{  \left(
\beta_{1}+\beta_{2}+\cdots+\beta_{j}\right)  +\underbrace{\left(  \alpha
_{1}+\alpha_{2}+\cdots+\alpha_{\ell}\right)  }_{=p}\ \mid\ j\in\left\{
1,2,\ldots,m-1\right\}  \right\} \\
&  =D\left(  \alpha\right)  \cup\left\{  p\right\}  \cup\underbrace{\left\{
\left(  \beta_{1}+\beta_{2}+\cdots+\beta_{j}\right)  +p\ \mid\ j\in\left\{
1,2,\ldots,m-1\right\}  \right\}  }_{=\left\{  \beta_{1}+\beta_{2}%
+\cdots+\beta_{j}\ \mid\ j\in\left\{  1,2,\ldots,m-1\right\}  \right\}  +p}\\
&  =D\left(  \alpha\right)  \cup\left\{  p\right\}  \cup\left(
\underbrace{\left\{  \beta_{1}+\beta_{2}+\cdots+\beta_{j}\ \mid\ j\in\left\{
1,2,\ldots,m-1\right\}  \right\}  }_{=D\left(  \beta\right)  }+p\right) \\
&  =D\left(  \alpha\right)  \cup\left\{  p\right\}  \cup\left(  D\left(
\beta\right)  +p\right)  .
\end{align*}
This completes the proof of Lemma \ref{lem.D(a(.)b)} \textbf{(b)}.
\end{proof}
\end{verlong}

\begin{proof}
[Proof of Proposition \ref{prop.bel.F}.]If either $\alpha$ or $\beta$ is
empty, then this is obvious (since $\bel $ is unital with $1$ as its unity,
and since $F_{\varnothing}=1$). So let us WLOG assume that neither is. Write
$\alpha$ as $\alpha=\left(  \alpha_{1},\alpha_{2},\ldots,\alpha_{\ell}\right)
$, and write $\beta$ as $\beta=\left(  \beta_{1},\beta_{2},\ldots,\beta
_{m}\right)  $. Thus, $\ell$ and $m$ are positive (since $\alpha$ and $\beta$
are nonempty).

Let $p=\left\vert \alpha\right\vert $ and $q=\left\vert \beta\right\vert $.
Thus, $p$ and $q$ are positive (since $\alpha$ and $\beta$ are nonempty).
Recall that we use the notation $D\left(  \alpha\right)  $ for the set of
partial sums of a composition $\alpha$. If $G$ is a set of integers and $r$ is
an integer, then we let $G+r$ denote the set $\left\{  g+r\ \mid\ g\in
G\right\}  $ of integers.

\begin{verlong}
Lemma \ref{lem.D(a(.)b)} \textbf{(a)} shows that $\alpha\odot\beta$ is a
composition of $p+q$ satisfying $D\left(  \alpha\odot\beta\right)  =D\left(
\alpha\right)  \cup\left(  D\left(  \beta\right)  +p\right)  $.
\end{verlong}

Applying (\ref{eq.F.def}) to $p$ instead of $n$, we obtain%
\begin{equation}
F_{\alpha}=\sum_{\substack{i_{1}\leq i_{2}\leq\cdots\leq i_{p};\\i_{j}%
<i_{j+1}\text{ if }j\in D\left(  \alpha\right)  }}x_{i_{1}}x_{i_{2}}\cdots
x_{i_{p}}. \label{pf.bel.F.1}%
\end{equation}
Applying (\ref{eq.F.def}) to $q$ and $\beta$ instead of $n$ and $\alpha$, we
obtain%
\[
F_{\beta}=\sum_{\substack{i_{1}\leq i_{2}\leq\cdots\leq i_{q};\\i_{j}%
<i_{j+1}\text{ if }j\in D\left(  \beta\right)  }}x_{i_{1}}x_{i_{2}}\cdots
x_{i_{q}}=\sum_{\substack{i_{p+1}\leq i_{p+2}\leq\cdots\leq i_{p+q}%
;\\i_{j}<i_{j+1}\text{ if }j\in D\left(  \beta\right)  +p}}x_{i_{p+1}%
}x_{i_{p+2}}\cdots x_{i_{p+q}}%
\]
(here, we renamed the summation index $\left(  i_{1},i_{2},\ldots
,i_{q}\right)  $ as $\left(  i_{p+1},i_{p+2},\ldots,i_{p+q}\right)  $). This,
together with (\ref{pf.bel.F.1}), yields%
\begin{align}
&  F_{\alpha} \bel F_{\beta}\nonumber\\
&  =\left(  \sum_{\substack{i_{1}\leq i_{2}\leq\cdots\leq i_{p};\\i_{j}%
<i_{j+1}\text{ if }j\in D\left(  \alpha\right)  }}x_{i_{1}}x_{i_{2}}\cdots
x_{i_{p}}\right)  \bel \left(  \sum_{\substack{i_{p+1}\leq i_{p+2}\leq
\cdots\leq i_{p+q};\\i_{j}<i_{j+1}\text{ if }j\in D\left(  \beta\right)
+p}}x_{i_{p+1}}x_{i_{p+2}}\cdots x_{i_{p+q}}\right) \nonumber\\
&  =\sum_{\substack{i_{1}\leq i_{2}\leq\cdots\leq i_{p};\\i_{j}<i_{j+1}\text{
if }j\in D\left(  \alpha\right)  }}\sum_{\substack{i_{p+1}\leq i_{p+2}%
\leq\cdots\leq i_{p+q};\\i_{j}<i_{j+1}\text{ if }j\in D\left(  \beta\right)
+p}}\underbrace{\left(  x_{i_{1}}x_{i_{2}}\cdots x_{i_{p}}\right)
\bel \left(  x_{i_{p+1}}x_{i_{p+2}}\cdots x_{i_{p+q}}\right)  }_{\substack{=
\begin{cases}
x_{i_{1}}x_{i_{2}}\cdots x_{i_{p}}x_{i_{p+1}}x_{i_{p+2}}\cdots x_{i_{p+q}}, &
\text{if }i_{p}\leq i_{p+1};\\
0, & \text{if }i_{p}>i_{p+1}%
\end{cases}
\\\text{(by the definition of } \bel \text{ on monomials)}}}\nonumber\\
&  =\sum_{\substack{i_{1}\leq i_{2}\leq\cdots\leq i_{p};\\i_{j}<i_{j+1}\text{
if }j\in D\left(  \alpha\right)  }}\sum_{\substack{i_{p+1}\leq i_{p+2}%
\leq\cdots\leq i_{p+q};\\i_{j}<i_{j+1}\text{ if }j\in D\left(  \beta\right)
+p}}
\begin{cases}
x_{i_{1}}x_{i_{2}}\cdots x_{i_{p}}x_{i_{p+1}}x_{i_{p+2}}\cdots x_{i_{p+q}}, &
\text{if }i_{p}\leq i_{p+1};\\
0, & \text{if }i_{p}>i_{p+1}%
\end{cases}
\nonumber\\
&  =\underbrace{\sum_{\substack{i_{1}\leq i_{2}\leq\cdots\leq i_{p}%
;\\i_{j}<i_{j+1}\text{ if }j\in D\left(  \alpha\right)  ;\\i_{p+1}\leq
i_{p+2}\leq\cdots\leq i_{p+q};\\i_{j}<i_{j+1}\text{ if }j\in D\left(
\beta\right)  +p;\\i_{p}\leq i_{p+1}}}}_{=\sum_{\substack{i_{1}\leq i_{2}%
\leq\cdots\leq i_{p+q};\\i_{j}<i_{j+1}\text{ if }j\in D\left(  \alpha\right)
\cup\left(  D\left(  \beta\right)  +p\right)  }}}\underbrace{x_{i_{1}}%
x_{i_{2}}\cdots x_{i_{p}}x_{i_{p+1}}x_{i_{p+2}}\cdots x_{i_{p+q}}}_{=x_{i_{1}%
}x_{i_{2}}\cdots x_{i_{p+q}}}\nonumber\\
&  =\sum_{\substack{i_{1}\leq i_{2}\leq\cdots\leq i_{p+q};\\i_{j}%
<i_{j+1}\text{ if }j\in D\left(  \alpha\right)  \cup\left(  D\left(
\beta\right)  +p\right)  }}x_{i_{1}}x_{i_{2}}\cdots x_{i_{p+q}}.
\label{pf.bel.F.4}%
\end{align}

On the other hand, $\alpha\odot\beta$ is a composition of $p+q$ satisfying
$D\left(  \alpha\odot\beta\right)  =D\left(  \alpha\right)  \cup\left(
D\left(  \beta\right)  +p\right)  $. Thus, (\ref{eq.F.def}) (applied to
$\alpha\odot\beta$ and $p+q$ instead of $\alpha$ and $n$) yields%
\[
F_{\alpha\odot\beta}=\sum_{\substack{i_{1}\leq i_{2}\leq\cdots\leq
i_{p+q};\\i_{j}<i_{j+1}\text{ if }j\in D\left(  \alpha\odot\beta\right)
}}x_{i_{1}}x_{i_{2}}\cdots x_{i_{p+q}}=\sum_{\substack{i_{1}\leq i_{2}%
\leq\cdots\leq i_{p+q};\\i_{j}<i_{j+1}\text{ if }j\in D\left(  \alpha\right)
\cup\left(  D\left(  \beta\right)  +p\right)  }}x_{i_{1}}x_{i_{2}}\cdots
x_{i_{p+q}}%
\]
(since $D\left(  \alpha\odot\beta\right)  =D\left(  \alpha\right)  \cup\left(
D\left(  \beta\right)  +p\right)  $). Compared with (\ref{pf.bel.F.4}), this
yields $F_{\alpha} \bel F_{\beta}=F_{\alpha\odot\beta}$. This proves
Proposition \ref{prop.bel.F}.
\end{proof}

For our goals, we need a certain particular case of Proposition
\ref{prop.bel.F}. Namely, let us recall that for every $m\in\mathbb{N}$, the
$m$\textit{-th complete homogeneous symmetric function} $h_{m}$ is defined as
the element $\sum\limits_{1\leq i_{1}\leq i_{2}\leq\cdots\leq i_{m}}x_{i_{1}%
}x_{i_{2}}\cdots x_{i_{m}}$ of $\operatorname*{Sym}$. It is easy to see that
$h_{m}=F_{\left(  m\right)  }$ for every positive integer $m$. From this, we obtain:

\begin{corollary}
\label{cor.bel.F.hm}For any two compositions $\alpha$ and $\beta$, define a
composition $\alpha\odot\beta$ as in Proposition \ref{prop.bel.F}. Then, every
composition $\alpha$ and every positive integer $m$ satisfy%
\begin{equation}
F_{\alpha\odot\left(  m\right)  }=F_{\alpha} \bel h_{m}. \label{pf.hmDless.2}%
\end{equation}

\end{corollary}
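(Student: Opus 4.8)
The plan is to obtain this corollary as an immediate specialization of Proposition \ref{prop.bel.F}, taking the second composition $\beta$ to be the one-part composition $\left(m\right)$. First I would fix an arbitrary composition $\alpha$ and an arbitrary positive integer $m$. Since $m\geq 1$, the sequence $\left(m\right)$ is a genuine composition, so Proposition \ref{prop.bel.F} applies to the pair of compositions $\alpha$ and $\left(m\right)$ and gives
\[
F_{\alpha}\left.  \textarm{\belgthor}\right.  F_{\left(  m\right)  }=F_{\alpha\odot\left(  m\right)  }.
\]

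Next I would invoke the identity $h_{m}=F_{\left(  m\right)  }$, which holds for every positive integer $m$ and was recorded just before the statement of the corollary. Substituting $h_{m}$ for $F_{\left(  m\right)  }$ on the left-hand side of the displayed equation yields $F_{\alpha}\left.  \textarm{\belgthor}\right.  h_{m}=F_{\alpha\odot\left(  m\right)  }$, which is exactly the asserted equation (\ref{pf.hmDless.2}) (read from right to left).

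I do not anticipate any real obstacle: the argument is a one-line deduction from Proposition \ref{prop.bel.F}. The only point that deserves a word of caution is the hypothesis that $m$ be positive; it is needed both to make $\left(m\right)$ a legitimate composition (so that $F_{\left(m\right)}$ and $\alpha\odot\left(m\right)$ make sense) and for the equality $h_{m}=F_{\left(  m\right)  }$ to hold, since for $m=0$ one has $h_{0}=1=F_{\varnothing}$ rather than $F_{\left(  0\right)  }$. Because the corollary is stated only for positive integers $m$, this causes no difficulty, and no further case analysis is required.
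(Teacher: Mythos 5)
Your proof is correct and matches the paper's own argument exactly: specialize Proposition \ref{prop.bel.F} to $\beta=\left(m\right)$ and substitute $h_{m}=F_{\left(m\right)}$. Your remark about why $m>0$ is needed is a nice (and accurate) extra observation.
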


\begin{verlong}
\begin{proof}
[Proof of Corollary \ref{cor.bel.F.hm}.]Let $\alpha$ be a composition. Let $m$
be a positive integer. Recall that $h_{m}=F_{\left(  m\right)  }$. Proposition
\ref{prop.bel.F} yields $F_{\alpha} \bel F_{\left(  m\right)  }=F_{\alpha
\odot\left(  m\right)  }$. Hence, $F_{\alpha\odot\left(  m\right)  }%
=F_{\alpha} \bel \underbrace{F_{\left(  m\right)  }}_{=h_{m}}=F_{\alpha}
\bel h_{m}$. This proves Corollary \ref{cor.bel.F.hm}.
\end{proof}
\end{verlong}

\begin{vershort}
\begin{remark}
We can also define a binary operation $\tvi
:\mathbf{k}\left[  \left[  x_{1},x_{2},x_{3},\ldots\right]  \right]
\times\mathbf{k}\left[  \left[  x_{1},x_{2},x_{3},\ldots\right]  \right]
\rightarrow\mathbf{k}\left[  \left[  x_{1},x_{2},x_{3},\ldots\right]  \right]
$ (written in infix notation) by the requirements that it be $\mathbf{k}%
$-bilinear and continuous with respect to the topology on $\mathbf{k}\left[
\left[  x_{1},x_{2},x_{3},\ldots\right]  \right]  $ and that it satisfy%
\[
\mathfrak{m}\tvi \mathfrak{n}=
\begin{cases}
\mathfrak{m}\cdot\mathfrak{n}, & \text{if }\max\left(  \operatorname*{Supp}%
\mathfrak{m}\right)  <\min\left(  \operatorname*{Supp}\mathfrak{n}\right)  ;\\
0, & \text{if }\max\left(  \operatorname*{Supp}\mathfrak{m}\right)  \geq
\min\left(  \operatorname*{Supp}\mathfrak{n}\right)
\end{cases}
\]
for any two monomials $\mathfrak{m}$ and $\mathfrak{n}$. (Recall that
$\max\varnothing=0$ and $\min\varnothing=\infty$.)

This operation $\tvi $ shares some of the properties of $\bel $ (in
particular, it is associative and has neutral element $1$); an analogue of
Theorem \ref{thm.beldend} says that
\[
\sum_{\left(  b\right)  }\left(  S\left(  b_{\left(  1\right)  }\right)
\tvi a\right)  b_{\left(  2\right)  }=a\left.  \preceq\right.  b
\]
for any $a\in\mathbf{k}\left[  \left[  x_{1},x_{2},x_{3},\ldots\right]
\right]  $ and $b\in\operatorname*{QSym}$, where $a\left.  \preceq\right.  b$
stands for $b\left.  \succeq\right.  a$. (Of course, we could also define
$\left.  \preceq\right.  $ by changing the \textquotedblleft$<$%
\textquotedblright\ into a \textquotedblleft$\leq$\textquotedblright\ and the
\textquotedblleft$\geq$\textquotedblright\ into a \textquotedblleft%
$>$\textquotedblright\ in the definition of $\left.  \prec\right.  $.)
\end{remark}
\end{vershort}

\begin{verlong}
For the sake of completeness (or, rather, in order not to lose old writing),
let me write down the definitions of some more operations on $\mathbf{k}%
\left[  \left[  x_{1},x_{2},x_{3},\ldots\right]  \right]  $.

\begin{definition}
We define a binary operation $\left.  \preceq\right.  :\mathbf{k}\left[
\left[  x_{1},x_{2},x_{3},\ldots\right]  \right]  \times\mathbf{k}\left[
\left[  x_{1},x_{2},x_{3},\ldots\right]  \right]  \rightarrow\mathbf{k}\left[
\left[  x_{1},x_{2},x_{3},\ldots\right]  \right]  $ (written in infix
notation) by the requirements that it be $\mathbf{k}$-bilinear and continuous
with respect to the topology on $\mathbf{k}\left[  \left[  x_{1},x_{2}%
,x_{3},\ldots\right]  \right]  $ and that it satisfy%
\[
\mathfrak{m}\left.  \preceq\right.  \mathfrak{n}=
\begin{cases}
\mathfrak{m}\cdot\mathfrak{n}, & \text{if }\min\left(  \operatorname*{Supp}%
\mathfrak{m}\right)  \leq\min\left(  \operatorname*{Supp}\mathfrak{n}\right)
;\\
0, & \text{if }\min\left(  \operatorname*{Supp}\mathfrak{m}\right)
>\min\left(  \operatorname*{Supp}\mathfrak{n}\right)
\end{cases}
\]
for any two monomials $\mathfrak{m}$ and $\mathfrak{n}$.
\end{definition}

Here, all the remarks we made after Definition \ref{def.Dless} apply. In
particular, $\min\varnothing=\infty$, and we are using $\left.  \preceq
\right.  $ as an operation symbol.

We have $\mathfrak{m}\left.  \preceq\right.  1=\mathfrak{m}$ for every
monomial $\mathfrak{m}$, and $1\left.  \preceq\right.  \mathfrak{m}=0$ for
every nonconstant monomial $\mathfrak{m}$.

\begin{remark}
The operation $\left.  \preceq\right.  $ is part of a dendriform algebra
structure on $\mathbf{k}\left[  \left[  x_{1},x_{2},x_{3},\ldots\right]
\right]  $ (and on $\operatorname*{QSym}$). More precisely, if we define
another binary operation $\left.  \succ\right.  $ on $\mathbf{k}\left[
\left[  x_{1},x_{2},x_{3},\ldots\right]  \right]  $ similarly to $\left.
\preceq\right.  $ except that we set%
\[
\mathfrak{m}\left.  \succ\right.  \mathfrak{n}=
\begin{cases}
\mathfrak{m}\cdot\mathfrak{n}, & \text{if }\min\left(  \operatorname*{Supp}%
\mathfrak{m}\right)  >\min\left(  \operatorname*{Supp}\mathfrak{n}\right)  ;\\
0, & \text{if }\min\left(  \operatorname*{Supp}\mathfrak{m}\right)  \leq
\min\left(  \operatorname*{Supp}\mathfrak{n}\right)
\end{cases}
\quad,
\]
then the structure $\left(  \mathbf{k}\left[  \left[  x_{1},x_{2},x_{3}%
,\ldots\right]  \right]  ,\left.  \preceq\right.  ,\left.  \succ\right.
\right)  $ is a dendriform algebra augmented to satisfy \cite[(15)]{EbrFar08}.
In particular, any three elements $a$, $b$ and $c$ of $\mathbf{k}\left[
\left[  x_{1},x_{2},x_{3},\ldots\right]  \right]  $ satisfy%
\begin{align*}
a\left.  \preceq\right.  b+a\left.  \succ\right.  b  &  =ab;\\
\left(  a\left.  \preceq\right.  b\right)  \left.  \preceq\right.  c  &
=a\left.  \preceq\right.  \left(  bc\right)  ;\\
\left(  a\left.  \succ\right.  b\right)  \left.  \preceq\right.  c  &
=a\left.  \succ\right.  \left(  b\left.  \preceq\right.  c\right)  ;\\
a\left.  \succ\right.  \left(  b\left.  \succ\right.  c\right)   &  =\left(
ab\right)  \left.  \succ\right.  c.
\end{align*}

\end{remark}

And here is an analogue of $\bel $ which has \textit{mostly} similar properties:

\begin{definition}
We define a binary operation $\tvi :\mathbf{k}\left[  \left[  x_{1}%
,x_{2},x_{3},\ldots\right]  \right]  \times\mathbf{k}\left[  \left[
x_{1},x_{2},x_{3},\ldots\right]  \right]  \rightarrow\mathbf{k}\left[  \left[
x_{1},x_{2},x_{3},\ldots\right]  \right]  $ (written in infix notation) by the
requirements that it be $\mathbf{k}$-bilinear and continuous with respect to
the topology on $\mathbf{k}\left[  \left[  x_{1},x_{2},x_{3},\ldots\right]
\right]  $ and that it satisfy%
\[
\mathfrak{m} \tvi \mathfrak{n}=
\begin{cases}
\mathfrak{m}\cdot\mathfrak{n}, & \text{if }\max\left(  \operatorname*{Supp}%
\mathfrak{m}\right)  <\min\left(  \operatorname*{Supp}\mathfrak{n}\right)  ;\\
0, & \text{if }\max\left(  \operatorname*{Supp}\mathfrak{m}\right)  \geq
\min\left(  \operatorname*{Supp}\mathfrak{n}\right)
\end{cases}
\]
for any two monomials $\mathfrak{m}$ and $\mathfrak{n}$.
\end{definition}

Here, again, $\max\varnothing$ is understood as $0$. The binary operation
$\tvi$ is associative. It is also unital (with $1$ serving as the unity).

\begin{proposition}
Every $a\in\operatorname*{QSym}$ and $b\in\operatorname*{QSym}$ satisfy
$a\left.  \preceq\right.  b\in\operatorname*{QSym}$ and $a \tvi b\in
\operatorname*{QSym}$.
\end{proposition}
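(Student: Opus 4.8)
The plan is to reduce everything to the monomial basis, just as in the proof of Proposition \ref{prop.QSym.closed}. Since both $\left.\preceq\right.$ and $\left.\tvi\right.$ are $\mathbf{k}$-bilinear and $\left(M_\gamma\right)_{\gamma\in\operatorname*{Comp}}$ is a basis of $\operatorname*{QSym}$, it suffices to prove that $M_\alpha\left.\preceq\right.M_\beta\in\operatorname*{QSym}$ and $M_\alpha\left.\tvi\right.M_\beta\in\operatorname*{QSym}$ for any two compositions $\alpha$ and $\beta$; the general case then follows by extending bilinearly.

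I would dispatch $\left.\tvi\right.$ first, since it admits an explicit formula on the monomial basis. The claim is that $M_\alpha\left.\tvi\right.M_\beta=M_{\left[\alpha,\beta\right]}$, where $\left[\alpha,\beta\right]$ denotes the concatenation of the two compositions. If $\alpha$ or $\beta$ is empty this is clear, since $\left.\tvi\right.$ is unital with unity $1=M_\varnothing=F_\varnothing$. Otherwise, write $\alpha=\left(\alpha_1,\ldots,\alpha_\ell\right)$ and $\beta=\left(\beta_1,\ldots,\beta_m\right)$, and expand $M_\alpha$ and $M_\beta$ as sums of monomials indexed by their Parikh compositions (as in the proof of Proposition \ref{prop.QSym.closed}). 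One then observes that $\mathfrak{m}\left.\tvi\right.\mathfrak{n}$ is nonzero precisely when $\max\left(\operatorname*{Supp}\mathfrak{m}\right)<\min\left(\operatorname*{Supp}\mathfrak{n}\right)$, in which case the supports of $\mathfrak{m}$ and $\mathfrak{n}$ are disjoint and entirely separated, so that $\operatorname*{Parikh}\left(\mathfrak{mn}\right)=\left[\alpha,\beta\right]$; conversely, every monomial with Parikh composition $\left[\alpha,\beta\right]$ splits uniquely into such a pair $\left(\mathfrak{m},\mathfrak{n}\right)$ by cutting after the $\ell$-th variable block. This yields the formula, and hence $M_\alpha\left.\tvi\right.M_\beta\in\operatorname*{QSym}$, so $a\left.\tvi\right.b\in\operatorname*{QSym}$ for all $a,b\in\operatorname*{QSym}$.

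For $\left.\preceq\right.$, I would copy the proof of Proposition \ref{prop.QSym.closed} essentially word for word. Fix $\alpha=\left(\alpha_1,\ldots,\alpha_\ell\right)$ and $\beta=\left(\beta_1,\ldots,\beta_m\right)$, and retain the auxiliary data $\mathcal{S}_0$, $\mathcal{S}_1$, $\mathcal{S}$, $\operatorname*{inc}_0$, $\operatorname*{inc}_1$, $\rho$ from that proof. For a composition $\gamma$, define a \emph{$\gamma$-pmap} in the same way as a $\gamma$-smap there, except that the second defining property becomes $\min\left(f\left(\mathcal{S}_0\right)\right)\leq\min\left(f\left(\mathcal{S}_1\right)\right)$ (a weak inequality in place of the strict one). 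The analogue of Claim 1 then holds: the coefficient of a monomial $\mathfrak{q}$ with $\operatorname*{Parikh}\mathfrak{q}=\gamma$ in $M_\alpha\left.\preceq\right.M_\beta$ equals the number of $\gamma$-pmaps. Its proof is the proof of Claim 1 verbatim, because the bijection $\Phi$ (with inverse $\Psi$) between the ``spairs'' and the relevant maps matches the condition relating $\min\left(\operatorname*{Supp}\mathfrak{m}\right)$ and $\min\left(\operatorname*{Supp}\mathfrak{n}\right)$ on the monomial side with the condition relating $\min\left(f\left(\mathcal{S}_0\right)\right)$ and $\min\left(f\left(\mathcal{S}_1\right)\right)$ on the map side, regardless of whether those conditions use $<$ or $\leq$ (since $k_1<k_2<\cdots$ makes $r\mapsto k_r$ strictly monotone). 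Consequently the coefficient of $\mathfrak{q}$ in $M_\alpha\left.\preceq\right.M_\beta$ depends only on $\operatorname*{Parikh}\mathfrak{q}$, so $M_\alpha\left.\preceq\right.M_\beta$ is quasisymmetric; being bounded-degree (indeed homogeneous of degree $\left\vert\alpha\right\vert+\left\vert\beta\right\vert$), it lies in $\operatorname*{QSym}$. Bilinearity then gives $a\left.\preceq\right.b\in\operatorname*{QSym}$ for all $a,b\in\operatorname*{QSym}$. As a byproduct one also gets a combinatorial expansion of $M_\alpha\left.\preceq\right.M_\beta$ in the monomial basis with coefficients counting $\gamma$-pmaps, in analogy with Remark \ref{rmk.QSym.closed}.

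There is essentially no conceptual obstacle: the paper itself notes that such closure statements ``can reasonably be called obvious,'' and both halves above are direct transcriptions of arguments already in the excerpt. The only care needed is routine bookkeeping — checking the empty-composition cases in the identity $M_\alpha\left.\tvi\right.M_\beta=M_{\left[\alpha,\beta\right]}$, and confirming that weakening the strict inequality to a non-strict one in the definition of the counting maps leaves every step of the $\Phi/\Psi$ correspondence intact. I expect this last verification to be the most tedious point, but it is entirely mechanical.
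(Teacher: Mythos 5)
Your proposal is correct and follows exactly the route the paper intends: the paper states this proposition without proof as the evident analogue of Proposition \ref{prop.QSym.closed}, asserts the same formula $M_{\alpha}\left.\tvi\right. M_{\beta}=M_{\left[\alpha,\beta\right]}$ that you prove, and your adaptation of the smap argument (weakening the strict inequality in the second defining property) is precisely the "very similar" proof alluded to there. The one point worth checking carefully — that the correspondence $\Phi/\Psi$ transports the weak inequality because $r\mapsto k_{r}$ is strictly increasing — is handled correctly in your write-up.
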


For example, any two compositions $\alpha$ and $\beta$ satisfy $M_{\alpha}
\tvi M_{\beta}=M_{\left[  \alpha,\beta\right]  }$, where $\left[  \alpha
,\beta\right]  $ denotes the concatenation of $\alpha$ and $\beta$ (defined by
$\left[  \left(  \alpha_{1},\alpha_{2},\ldots,\alpha_{\ell}\right)  ,\left(
\beta_{1},\beta_{2},\ldots,\beta_{m}\right)  \right]  =\left(  \alpha
_{1},\alpha_{2},\ldots,\alpha_{\ell},\beta_{1},\beta_{2},\ldots,\beta
_{m}\right)  $). (Recall that $\left(  M_{\gamma}\right)  _{\gamma
\in\operatorname*{Comp}}$ is the monomial basis of $\operatorname*{QSym}$.)

\begin{theorem}
\label{thm.tvidend}Let $S$ denote the antipode of the Hopf algebra
$\operatorname*{QSym}$. Let us use Sweedler's notation $\sum_{\left(
b\right)  }b_{\left(  1\right)  }\otimes b_{\left(  2\right)  }$ for
$\Delta\left(  b\right)  $, where $b$ is any element of $\operatorname*{QSym}%
$. Then,%
\[
\sum_{\left(  b\right)  }\left(  S\left(  b_{\left(  1\right)  }\right)
\tvi a\right)  b_{\left(  2\right)  }=a\left.  \preceq\right.  b
\]
for any $a\in\mathbf{k}\left[  \left[  x_{1},x_{2},x_{3},\ldots\right]
\right]  $ and $b\in\operatorname*{QSym}$.
\end{theorem}

\begin{proof}
[Proof of Theorem \ref{thm.tvidend}.]The following proof is mostly analogous
to the proof of Theorem \ref{thm.beldend}.

Let $a\in\mathbf{k}\left[  \left[  x_{1},x_{2},x_{3},\ldots\right]  \right]
$. We can WLOG assume that $a$ is a monomial (because all operations in sight
are $\mathbf{k}$-linear and continuous). So assume this. That is,
$a=\mathfrak{n}$ for some monomial $\mathfrak{n}$. Consider this
$\mathfrak{n}$. Let $k=\min\left(  \operatorname*{Supp}\mathfrak{n}\right)  $.
Notice that $k\in\left\{  1,2,3,\ldots\right\}  \cup\left\{  \infty\right\}  $.

(Some remarks about $\infty$ are in order. We use $\infty$ as an object which
is greater than every integer. We will use summation signs like $\sum_{1\leq
i_{1}<i_{2}<\cdots<i_{\ell}<k}$ and $\sum_{k\leq i_{1}<i_{2}<\cdots<i_{\ell}}$
in the following. Both of these summation signs range over $\left(
i_{1},i_{2},\ldots,i_{\ell}\right)  \in\left\{  1,2,3,\ldots\right\}  ^{\ell}$
satisfying certain conditions ($1\leq i_{1}<i_{2}<\cdots<i_{\ell}<k$ in the
first case, and $k\leq i_{1}<i_{2}<\cdots<i_{\ell}$ in the second case). In
particular, none of the $i_{1},i_{2},\ldots,i_{\ell}$ is allowed to be
$\infty$ (unlike $k$). So the summation $\sum_{1\leq i_{1}<i_{2}%
<\cdots<i_{\ell}<k}$ is identical to $\sum_{1\leq i_{1}<i_{2}<\cdots<i_{\ell}%
}$ when $k=\infty$, whereas the summation $\sum_{k\leq i_{1}<i_{2}%
<\cdots<i_{\ell}}$ is empty when $k=\infty$ unless $\ell=0$. (If $\ell=0$,
then the summation $\sum_{k\leq i_{1}<i_{2}<\cdots<i_{\ell}}$ ranges over the
empty $0$-tuple, no matter what $k$ is.))

Every composition $\alpha=\left(  \alpha_{1},\alpha_{2},\ldots,\alpha_{\ell
}\right)  $ satisfies%
\begin{equation}
a\left.  \preceq\right.  M_{\alpha}=\left(  \sum_{k\leq i_{1}<i_{2}%
<\cdots<i_{\ell}}x_{i_{1}}^{\alpha_{1}}x_{i_{2}}^{\alpha_{2}}\cdots
x_{i_{\ell}}^{\alpha_{\ell}}\right)  \cdot a
\label{pf.thm.tvidend.a-Dless-Malpha}%
\end{equation}
\footnote{\textit{Proof of (\ref{pf.thm.tvidend.a-Dless-Malpha}):} Let
$\alpha=\left(  \alpha_{1},\alpha_{2},\ldots,\alpha_{\ell}\right)  $ be a
composition. The definition of $M_{\alpha}$ yields $M_{\alpha}=\sum_{1\leq
i_{1}<i_{2}<\cdots<i_{\ell}}x_{i_{1}}^{\alpha_{1}}x_{i_{2}}^{\alpha_{2}}\cdots
x_{i_{\ell}}^{\alpha_{\ell}}$. Combined with $a=\mathfrak{n}$, this yields%
\begin{align*}
a\left.  \preceq\right.  M_{\alpha}  &  =\mathfrak{n}\left.  \preceq\right.
\left(  \sum_{1\leq i_{1}<i_{2}<\cdots<i_{\ell}}x_{i_{1}}^{\alpha_{1}}%
x_{i_{2}}^{\alpha_{2}}\cdots x_{i_{\ell}}^{\alpha_{\ell}}\right) \\
&  =\sum_{1\leq i_{1}<i_{2}<\cdots<i_{\ell}}\underbrace{\mathfrak{n}\left.
\preceq\right.  \left(  x_{i_{1}}^{\alpha_{1}}x_{i_{2}}^{\alpha_{2}}\cdots
x_{i_{\ell}}^{\alpha_{\ell}}\right)  }_{\substack{=
\begin{cases}
\mathfrak{n}\cdot x_{i_{1}}^{\alpha_{1}}x_{i_{2}}^{\alpha_{2}}\cdots
x_{i_{\ell}}^{\alpha_{\ell}}, & \text{if }\min\left(  \operatorname*{Supp}%
\mathfrak{n}\right)  \leq\min\left\{  i_{1},i_{2},\ldots,i_{\ell}\right\}  ;\\
0, & \text{if }\min\left(  \operatorname*{Supp}\mathfrak{n}\right)
>\min\left\{  i_{1},i_{2},\ldots,i_{\ell}\right\}
\end{cases}
\\\text{(by the definition of }\left.  \preceq\right.  \text{ on monomials)}%
}}\\
&  \ \ \ \ \ \ \ \ \ \ \left(  \text{since }\left.  \preceq\right.  \text{ is
}\mathbf{k}\text{-bilinear and continuous}\right) \\
&  =\sum_{1\leq i_{1}<i_{2}<\cdots<i_{\ell}}
\begin{cases}
\mathfrak{n}\cdot x_{i_{1}}^{\alpha_{1}}x_{i_{2}}^{\alpha_{2}}\cdots
x_{i_{\ell}}^{\alpha_{\ell}}, & \text{if }\min\left(  \operatorname*{Supp}%
\mathfrak{n}\right)  \leq\min\left\{  i_{1},i_{2},\ldots,i_{\ell}\right\}  ;\\
0, & \text{if }\min\left(  \operatorname*{Supp}\mathfrak{n}\right)
>\min\left\{  i_{1},i_{2},\ldots,i_{\ell}\right\}
\end{cases}
\\
&  =\underbrace{\sum_{\substack{1\leq i_{1}<i_{2}<\cdots<i_{\ell}%
;\\\min\left(  \operatorname*{Supp}\mathfrak{n}\right)  \leq\min\left\{
i_{1},i_{2},\ldots,i_{\ell}\right\}  }}}_{\substack{=\sum_{\min\left(
\operatorname*{Supp}\mathfrak{n}\right)  \leq i_{1}<i_{2}<\cdots<i_{\ell}%
}\\=\sum_{k\leq i_{1}<i_{2}<\cdots<i_{\ell}}\\\text{(since }\min\left(
\operatorname*{Supp}\mathfrak{n}\right)  =k\text{)}}}\underbrace{\mathfrak{n}%
}_{=a}\cdot x_{i_{1}}^{\alpha_{1}}x_{i_{2}}^{\alpha_{2}}\cdots x_{i_{\ell}%
}^{\alpha_{\ell}}=\sum_{k\leq i_{1}<i_{2}<\cdots<i_{\ell}}a\cdot x_{i_{1}%
}^{\alpha_{1}}x_{i_{2}}^{\alpha_{2}}\cdots x_{i_{\ell}}^{\alpha_{\ell}}\\
&  =\left(  \sum_{k\leq i_{1}<i_{2}<\cdots<i_{\ell}}x_{i_{1}}^{\alpha_{1}%
}x_{i_{2}}^{\alpha_{2}}\cdots x_{i_{\ell}}^{\alpha_{\ell}}\right)  \cdot a.
\end{align*}
This proves (\ref{pf.thm.tvidend.a-Dless-Malpha}).}.

Let us define a map $\mathfrak{B}_{k}^{\prime}:\mathbf{k}\left[  \left[
x_{1},x_{2},x_{3},\ldots\right]  \right]  \rightarrow\mathbf{k}\left[  \left[
x_{1},x_{2},x_{3},\ldots\right]  \right]  $ by%
\[
\mathfrak{B}_{k}^{\prime}\left(  p\right)  =p\left(  x_{1},x_{2}%
,\ldots,x_{k-1},0,0,0,\ldots\right)  \ \ \ \ \ \ \ \ \ \ \text{for every }%
p\in\mathbf{k}\left[  \left[  x_{1},x_{2},x_{3},\ldots\right]  \right]
\]
(where $p\left(  x_{1},x_{2},\ldots,x_{k-1},0,0,0,\ldots\right)  $ has to be
understood as $p\left(  x_{1},x_{2},x_{3},\ldots\right)  =p$ when $k=\infty$).
Then, $\mathfrak{B}_{k}^{\prime}$ is an evaluation map (in an appropriate
sense) and thus a continuous $\mathbf{k}$-algebra homomorphism. Any monomial
$\mathfrak{m}$ satisfies%
\begin{equation}
\mathfrak{B}_{k}^{\prime}\left(  \mathfrak{m}\right)  =
\begin{cases}
\mathfrak{m}, & \text{if }\max\left(  \operatorname*{Supp}\mathfrak{m}\right)
<k;\\
0, & \text{if }\max\left(  \operatorname*{Supp}\mathfrak{m}\right)  \geq k
\end{cases}
\label{pf.thm.tvidend.monom}%
\end{equation}
\footnote{\textit{Proof.} Let $\mathfrak{m}$ be a monomial. Then,%
\begin{align*}
\mathfrak{B}_{k}^{\prime}\left(  \mathfrak{m}\right)   &  =\mathfrak{m}\left(
x_{1},x_{2},\ldots,x_{k-1},0,0,0,\ldots\right)  \ \ \ \ \ \ \ \ \ \ \left(
\text{by the definition of }\mathfrak{B}_{k}^{\prime}\right) \\
&  =\left(  \text{the result of replacing the indeterminates }x_{k}%
,x_{k+1},x_{k+2},\ldots\text{ by }0\text{ in }\mathfrak{m}\right) \\
&  =
\begin{cases}
\mathfrak{m}, & \text{if none of the indeterminates }x_{k},x_{k+1}%
,x_{k+2},\ldots\text{ appears in }\mathfrak{m}\text{;}\\
0, & \text{if some of the indeterminates }x_{k},x_{k+1},x_{k+2},\ldots\text{
appear in }\mathfrak{m}%
\end{cases}
\\
&  =
\begin{cases}
\mathfrak{m}, & \text{if }\max\left(  \operatorname*{Supp}\mathfrak{m}\right)
<k;\\
0, & \text{if }\max\left(  \operatorname*{Supp}\mathfrak{m}\right)  \geq k
\end{cases}
\end{align*}
(because none of the indeterminates $x_{k},x_{k+1},x_{k+2},\ldots$ appears in
$\mathfrak{m}$ if and only if $\max\left(  \operatorname*{Supp}\mathfrak{m}%
\right)  <k$). This proves (\ref{pf.thm.tvidend.monom}).}. Any $p\in
\mathbf{k}\left[  \left[  x_{1},x_{2},x_{3},\ldots\right]  \right]  $
satisfies%
\begin{equation}
p \tvi a=a\cdot\mathfrak{B}_{k}^{\prime}\left(  p\right)
\label{pf.thm.tvidend.B}%
\end{equation}
\footnote{\textit{Proof of (\ref{pf.thm.tvidend.B}):} Fix $p\in\mathbf{k}%
\left[  \left[  x_{1},x_{2},x_{3},\ldots\right]  \right]  $. Since the
equality (\ref{pf.thm.tvidend.B}) is $\mathbf{k}$-linear and continuous in
$p$, we can WLOG assume that $p$ is a monomial. Assume this. Hence,
$p=\mathfrak{m}$ for some monomial $\mathfrak{m}$. Consider this
$\mathfrak{m}$. We have%
\begin{equation}
\mathfrak{B}_{k}^{\prime}\left(  \underbrace{p}_{=\mathfrak{m}}\right)
=\mathfrak{B}_{k}^{\prime}\left(  \mathfrak{m}\right)  =
\begin{cases}
\mathfrak{m}, & \text{if }\max\left(  \operatorname*{Supp}\mathfrak{m}\right)
<k;\\
0, & \text{if }\max\left(  \operatorname*{Supp}\mathfrak{m}\right)  \geq k
\end{cases}
\label{pf.thm.tvidend.B.pf.2}%
\end{equation}
(by (\ref{pf.thm.tvidend.monom})). Now,%
\begin{align*}
\underbrace{p}_{=\mathfrak{m}} \tvi \underbrace{a}_{=\mathfrak{n}}  &
=\mathfrak{m} \tvi \mathfrak{n}=
\begin{cases}
\mathfrak{m}\cdot\mathfrak{n}, & \text{if }\max\left(  \operatorname*{Supp}%
\mathfrak{m}\right)  <\min\left(  \operatorname*{Supp}\mathfrak{n}\right)  ;\\
0, & \text{if }\max\left(  \operatorname*{Supp}\mathfrak{m}\right)  \geq
\min\left(  \operatorname*{Supp}\mathfrak{n}\right)
\end{cases}
\\
&  \ \ \ \ \ \ \ \ \ \ \left(  \text{by the definition of } \tvi \right) \\
&  =
\begin{cases}
\mathfrak{m}\cdot\mathfrak{n}, & \text{if }\max\left(  \operatorname*{Supp}%
\mathfrak{m}\right)  <k;\\
0, & \text{if }\max\left(  \operatorname*{Supp}\mathfrak{m}\right)  \geq k
\end{cases}
\ \ \ \ \ \ \ \ \ \ \left(  \text{since }\min\left(  \operatorname*{Supp}%
\mathfrak{n}\right)  =k\right) \\
&  =\underbrace{\mathfrak{n}}_{=a}\cdot\underbrace{
\begin{cases}
\mathfrak{m}, & \text{if }\max\left(  \operatorname*{Supp}\mathfrak{m}\right)
<k;\\
0, & \text{if }\max\left(  \operatorname*{Supp}\mathfrak{m}\right)  \geq k
\end{cases}
}_{\substack{=\mathfrak{B}_{k}^{\prime}\left(  p\right)  \\\text{(by
(\ref{pf.thm.tvidend.B.pf.2}))}}}\\
&  =a\cdot\mathfrak{B}_{k}^{\prime}\left(  p\right)  .
\end{align*}
This proves (\ref{pf.thm.tvidend.B}).}. Also, every composition $\alpha
=\left(  \alpha_{1},\alpha_{2},\ldots,\alpha_{\ell}\right)  $ satisfies%
\begin{equation}
\mathfrak{B}_{k}^{\prime}\left(  M_{\alpha}\right)  =\sum_{1\leq i_{1}%
<i_{2}<\cdots<i_{\ell}<k}x_{i_{1}}^{\alpha_{1}}x_{i_{2}}^{\alpha_{2}}\cdots
x_{i_{\ell}}^{\alpha_{\ell}} \label{pf.thm.tvidend.Malpha-bel-a}%
\end{equation}
(where the sum $\sum_{1\leq i_{1}<i_{2}<\cdots<i_{\ell}<k}x_{i_{1}}%
^{\alpha_{1}}x_{i_{2}}^{\alpha_{2}}\cdots x_{i_{\ell}}^{\alpha_{\ell}}$ has to
be interpreted as being equal to $1$, rather than being empty, when $\ell
=0$)\ \ \ \ \footnote{\textit{Proof of (\ref{pf.thm.tvidend.Malpha-bel-a}):}
Let $\alpha=\left(  \alpha_{1},\alpha_{2},\ldots,\alpha_{\ell}\right)  $ be a
composition. The definition of $M_{\alpha}$ yields $M_{\alpha}=\sum_{1\leq
i_{1}<i_{2}<\cdots<i_{\ell}}x_{i_{1}}^{\alpha_{1}}x_{i_{2}}^{\alpha_{2}}\cdots
x_{i_{\ell}}^{\alpha_{\ell}}$. Applying the map $\mathfrak{B}_{k}^{\prime}$ to
both sides of this equality, we obtain%
\begin{align*}
\mathfrak{B}_{k}^{\prime}\left(  M_{\alpha}\right)   &  =\mathfrak{B}%
_{k}^{\prime}\left(  \sum_{1\leq i_{1}<i_{2}<\cdots<i_{\ell}}x_{i_{1}}%
^{\alpha_{1}}x_{i_{2}}^{\alpha_{2}}\cdots x_{i_{\ell}}^{\alpha_{\ell}}\right)
\\
&  =\sum_{1\leq i_{1}<i_{2}<\cdots<i_{\ell}}\underbrace{\mathfrak{B}%
_{k}^{\prime}\left(  x_{i_{1}}^{\alpha_{1}}x_{i_{2}}^{\alpha_{2}}\cdots
x_{i_{\ell}}^{\alpha_{\ell}}\right)  }_{\substack{=
\begin{cases}
x_{i_{1}}^{\alpha_{1}}x_{i_{2}}^{\alpha_{2}}\cdots x_{i_{\ell}}^{\alpha_{\ell
}}, & \text{if }\max\left(  \operatorname*{Supp}\left(  x_{i_{1}}^{\alpha_{1}%
}x_{i_{2}}^{\alpha_{2}}\cdots x_{i_{\ell}}^{\alpha_{\ell}}\right)  \right)
<k;\\
0, & \text{if }\max\left(  \operatorname*{Supp}\left(  x_{i_{1}}^{\alpha_{1}%
}x_{i_{2}}^{\alpha_{2}}\cdots x_{i_{\ell}}^{\alpha_{\ell}}\right)  \right)
\geq k
\end{cases}
\\\text{(by (\ref{pf.thm.tvidend.monom}), applied to }\mathfrak{m}=x_{i_{1}%
}^{\alpha_{1}}x_{i_{2}}^{\alpha_{2}}\cdots x_{i_{\ell}}^{\alpha_{\ell}%
}\text{)}}}\\
&  \ \ \ \ \ \ \ \ \ \ \left(  \text{since }\mathfrak{B}_{k}^{\prime}\text{ is
}\mathbf{k}\text{-linear and continuous}\right) \\
&  =\sum_{1\leq i_{1}<i_{2}<\cdots<i_{\ell}}\underbrace{
\begin{cases}
x_{i_{1}}^{\alpha_{1}}x_{i_{2}}^{\alpha_{2}}\cdots x_{i_{\ell}}^{\alpha_{\ell
}}, & \text{if }\max\left(  \operatorname*{Supp}\left(  x_{i_{1}}^{\alpha_{1}%
}x_{i_{2}}^{\alpha_{2}}\cdots x_{i_{\ell}}^{\alpha_{\ell}}\right)  \right)
<k;\\
0, & \text{if }\max\left(  \operatorname*{Supp}\left(  x_{i_{1}}^{\alpha_{1}%
}x_{i_{2}}^{\alpha_{2}}\cdots x_{i_{\ell}}^{\alpha_{\ell}}\right)  \right)
\geq k
\end{cases}
}_{\substack{=
\begin{cases}
x_{i_{1}}^{\alpha_{1}}x_{i_{2}}^{\alpha_{2}}\cdots x_{i_{\ell}}^{\alpha_{\ell
}}, & \text{if }\max\left\{  i_{1},i_{2},\ldots,i_{\ell}\right\}  <k;\\
0, & \text{if }\max\left\{  i_{1},i_{2},\ldots,i_{\ell}\right\}  \geq k
\end{cases}
\\\text{(since }\operatorname*{Supp}\left(  x_{i_{1}}^{\alpha_{1}}x_{i_{2}%
}^{\alpha_{2}}\cdots x_{i_{\ell}}^{\alpha_{\ell}}\right)  =\left\{
i_{1},i_{2},\ldots,i_{\ell}\right\}  \text{)}}}\\
&  =\sum_{1\leq i_{1}<i_{2}<\cdots<i_{\ell}}
\begin{cases}
x_{i_{1}}^{\alpha_{1}}x_{i_{2}}^{\alpha_{2}}\cdots x_{i_{\ell}}^{\alpha_{\ell
}}, & \text{if }\max\left\{  i_{1},i_{2},\ldots,i_{\ell}\right\}  <k;\\
0, & \text{if }\max\left\{  i_{1},i_{2},\ldots,i_{\ell}\right\}  \geq k
\end{cases}
\\
&  =\underbrace{\sum_{\substack{1\leq i_{1}<i_{2}<\cdots<i_{\ell}%
;\\\max\left\{  i_{1},i_{2},\ldots,i_{\ell}\right\}  <k}}}_{=\sum_{1\leq
i_{1}<i_{2}<\cdots<i_{\ell}<k}}x_{i_{1}}^{\alpha_{1}}x_{i_{2}}^{\alpha_{2}%
}\cdots x_{i_{\ell}}^{\alpha_{\ell}}=\sum_{1\leq i_{1}<i_{2}<\cdots<i_{\ell
}<k}x_{i_{1}}^{\alpha_{1}}x_{i_{2}}^{\alpha_{2}}\cdots x_{i_{\ell}}%
^{\alpha_{\ell}}.
\end{align*}
This proves (\ref{pf.thm.tvidend.Malpha-bel-a}).}.

We shall use one further obvious observation: If $i_{1},i_{2},\ldots,i_{\ell}$
are some positive integers satisfying $i_{1}<i_{2}<\cdots<i_{\ell}$, then%
\begin{equation}
\text{there exists exactly one }j\in\left\{  0,1,\ldots,\ell\right\}  \text{
satisfying }i_{j}<k\leq i_{j+1} \label{pf.thm.tvidend.exactlyonej}%
\end{equation}
(where $i_{0}$ is to be understood as $0$, and $i_{\ell+1}$ as $\infty$).

Let us now notice that every $f\in\operatorname*{QSym}$ satisfies%
\begin{equation}
af=\sum_{\left(  f\right)  }\mathfrak{B}_{k}^{\prime}\left(  f_{\left(
1\right)  }\right)  \left(  a\left.  \preceq\right.  f_{\left(  2\right)
}\right)  . \label{pf.thm.tvidend.mainlem}%
\end{equation}

\textit{Proof of (\ref{pf.thm.tvidend.mainlem}):} Both sides of the equality
(\ref{pf.thm.tvidend.mainlem}) are $\mathbf{k}$-linear in $f$. Hence, it is
enough to check (\ref{pf.thm.tvidend.mainlem}) on the basis $\left(
M_{\gamma}\right)  _{\gamma\in\operatorname*{Comp}}$ of $\operatorname*{QSym}%
$, that is, to prove that (\ref{pf.thm.tvidend.mainlem}) holds whenever
$f=M_{\gamma}$ for some $\gamma\in\operatorname*{Comp}$. In other words, it is
enough to show that
\[
aM_{\gamma}=\sum_{\left(  M_{\gamma}\right)  }\mathfrak{B}_{k}^{\prime}\left(
\left(  M_{\gamma}\right)  _{\left(  1\right)  }\right)  \cdot\left(  a\left.
\preceq\right.  \left(  M_{\gamma}\right)  _{\left(  2\right)  }\right)
\ \ \ \ \ \ \ \ \ \ \text{for every }\gamma\in\operatorname*{Comp}.
\]
But this is easily done: Let $\gamma\in\operatorname*{Comp}$. Write $\gamma$
in the form $\gamma=\left(  \gamma_{1},\gamma_{2},\ldots,\gamma_{\ell}\right)
$. Then,%
\begin{align*}
&  \sum_{\left(  M_{\gamma}\right)  }\mathfrak{B}_{k}^{\prime}\left(  \left(
M_{\gamma}\right)  _{\left(  1\right)  }\right)  \cdot\left(  a\left.
\preceq\right.  \left(  M_{\gamma}\right)  _{\left(  2\right)  }\right) \\
&  =\sum_{j=0}^{\ell}\underbrace{\mathfrak{B}_{k}^{\prime}\left(  M_{\left(
\gamma_{1},\gamma_{2},\ldots,\gamma_{j}\right)  }\right)  }_{\substack{=\sum
_{1\leq i_{1}<i_{2}<\cdots<i_{j}<k}x_{i_{1}}^{\gamma_{1}}x_{i_{2}}^{\gamma
_{2}}\cdots x_{i_{j}}^{\gamma_{j}}\\\text{(by
(\ref{pf.thm.tvidend.Malpha-bel-a}))}}}\cdot\underbrace{\left(  a\left.
\preceq\right.  M_{\left(  \gamma_{j+1},\gamma_{j+2},\ldots,\gamma_{\ell
}\right)  }\right)  }_{\substack{=\left(  \sum_{k\leq i_{1}<i_{2}%
<\cdots<i_{\ell-j}}x_{i_{1}}^{\gamma_{j+1}}x_{i_{2}}^{\gamma_{j+2}}\cdots
x_{i_{\ell-j}}^{\gamma_{\ell}}\right)  \cdot a\\\text{(by
(\ref{pf.thm.tvidend.a-Dless-Malpha}))}}}\\
&  \ \ \ \ \ \ \ \ \ \ \left(  \text{since }\sum_{\left(  M_{\gamma}\right)
}\left(  M_{\gamma}\right)  _{\left(  1\right)  }\otimes\left(  M_{\gamma
}\right)  _{\left(  2\right)  }=\Delta\left(  M_{\gamma}\right)  =\sum
_{j=0}^{\ell}M_{\left(  \gamma_{1},\gamma_{2},\ldots,\gamma_{j}\right)
}\otimes M_{\left(  \gamma_{j+1},\gamma_{j+2},\ldots,\gamma_{\ell}\right)
}\right) \\
&  =\sum_{j=0}^{\ell}\left(  \sum_{1\leq i_{1}<i_{2}<\cdots<i_{j}<k}x_{i_{1}%
}^{\gamma_{1}}x_{i_{2}}^{\gamma_{2}}\cdots x_{i_{j}}^{\gamma_{j}}\right)
\underbrace{\left(  \sum_{k\leq i_{1}<i_{2}<\cdots<i_{\ell-j}}x_{i_{1}%
}^{\gamma_{j+1}}x_{i_{2}}^{\gamma_{j+2}}\cdots x_{i_{\ell-j}}^{\gamma_{\ell}%
}\right)  }_{\substack{=\sum_{k\leq i_{j+1}<i_{j+2}<\cdots<i_{\ell}}%
x_{i_{j+1}}^{\gamma_{j+1}}x_{i_{j+2}}^{\gamma_{j+2}}\cdots x_{i_{\ell}%
}^{\gamma_{\ell}}\\\text{(here, we have renamed the summation index}\\\left(
i_{1},i_{2},\ldots,i_{\ell-j}\right)  \text{ as }\left(  i_{j+1}%
,i_{j+2},\ldots,i_{\ell}\right)  \text{)}}}\cdot a\\
&  =\sum_{j=0}^{\ell}\left(  \sum_{1\leq i_{1}<i_{2}<\cdots<i_{j}<k}x_{i_{1}%
}^{\gamma_{1}}x_{i_{2}}^{\gamma_{2}}\cdots x_{i_{j}}^{\gamma_{j}}\right)
\left(  \sum_{k\leq i_{j+1}<i_{j+2}<\cdots<i_{\ell}}x_{i_{j+1}}^{\gamma_{j+1}%
}x_{i_{j+2}}^{\gamma_{j+2}}\cdots x_{i_{\ell}}^{\gamma_{\ell}}\right)  \cdot
a\\
&  =\underbrace{\sum_{j=0}^{\ell}\sum_{1\leq i_{1}<i_{2}<\cdots<i_{j}<k}%
\sum_{k\leq i_{j+1}<i_{j+2}<\cdots<i_{\ell}}}_{\substack{=\sum_{1\leq
i_{1}<i_{2}<\cdots<i_{\ell}}\sum_{\substack{j\in\left\{  0,1,\ldots
,\ell\right\}  ;\\i_{j}<k\leq i_{j+1}}}\\\text{(where }i_{0}\text{ is to be
understood as }0\text{, and }i_{\ell+1}\text{ as }\infty\text{)}%
}}\underbrace{\left(  x_{i_{1}}^{\gamma_{1}}x_{i_{2}}^{\gamma_{2}}\cdots
x_{i_{j}}^{\gamma_{j}}\right)  \left(  x_{i_{j+1}}^{\gamma_{j+1}}x_{i_{j+2}%
}^{\gamma_{j+2}}\cdots x_{i_{\ell}}^{\gamma_{\ell}}\right)  }_{=x_{i_{1}%
}^{\gamma_{1}}x_{i_{2}}^{\gamma_{2}}\cdots x_{i_{\ell}}^{\gamma_{\ell}}}\cdot
a\\
&  =\sum_{1\leq i_{1}<i_{2}<\cdots<i_{\ell}}\underbrace{\sum_{\substack{j\in
\left\{  0,1,\ldots,\ell\right\}  ;\\i_{j}<k\leq i_{j+1}}}x_{i_{1}}%
^{\gamma_{1}}x_{i_{2}}^{\gamma_{2}}\cdots x_{i_{\ell}}^{\gamma_{\ell}}%
}_{\substack{\text{this sum has precisely one addend}\\\text{(because of
(\ref{pf.thm.tvidend.exactlyonej})),}\\\text{and thus equals }x_{i_{1}%
}^{\gamma_{1}}x_{i_{2}}^{\gamma_{2}}\cdots x_{i_{\ell}}^{\gamma_{\ell}}}}\cdot
a=\underbrace{\sum_{1\leq i_{1}<i_{2}<\cdots<i_{\ell}}x_{i_{1}}^{\gamma_{1}%
}x_{i_{2}}^{\gamma_{2}}\cdots x_{i_{\ell}}^{\gamma_{\ell}}}_{=M_{\gamma}}\cdot
a\\
&  =M_{\gamma}\cdot a=aM_{\gamma},
\end{align*}
qed. Thus, (\ref{pf.thm.tvidend.mainlem}) is proven.

Now, every $b\in\operatorname*{QSym}$ satisfies%
\begin{align*}
&  \sum_{\left(  b\right)  }\underbrace{\left(  S\left(  b_{\left(  1\right)
}\right)  \tvi a\right)  }_{\substack{=a\cdot\mathfrak{B}_{k}^{\prime}\left(
S\left(  b_{\left(  1\right)  }\right)  \right)  \\\text{(by
(\ref{pf.thm.tvidend.B}), applied to }p=S\left(  b_{\left(  1\right)
}\right)  \text{)}}}b_{\left(  2\right)  }\\
&  =\sum_{\left(  b\right)  }a\cdot\mathfrak{B}_{k}^{\prime}\left(  S\left(
b_{\left(  1\right)  }\right)  \right)  b_{\left(  2\right)  }=\sum_{\left(
b\right)  }\mathfrak{B}_{k}^{\prime}\left(  S\left(  b_{\left(  1\right)
}\right)  \right)  \cdot\underbrace{ab_{\left(  2\right)  }}_{\substack{=\sum
_{\left(  b_{\left(  2\right)  }\right)  }\mathfrak{B}_{k}^{\prime}\left(
\left(  b_{\left(  2\right)  }\right)  _{\left(  1\right)  }\right)  \left(
a\left.  \preceq\right.  \left(  b_{\left(  2\right)  }\right)  _{\left(
2\right)  }\right)  \\\text{(by (\ref{pf.thm.tvidend.mainlem}), applied to
}f=b_{\left(  2\right)  }\text{)}}}\\
&  =\sum_{\left(  b\right)  }\mathfrak{B}_{k}^{\prime}\left(  S\left(
b_{\left(  1\right)  }\right)  \right)  \left(  \sum_{\left(  b_{\left(
2\right)  }\right)  }\mathfrak{B}_{k}^{\prime}\left(  \left(  b_{\left(
2\right)  }\right)  _{\left(  1\right)  }\right)  \left(  a\left.
\preceq\right.  \left(  b_{\left(  2\right)  }\right)  _{\left(  2\right)
}\right)  \right) \\
&  =\sum_{\left(  b\right)  }\sum_{\left(  b_{\left(  2\right)  }\right)
}\mathfrak{B}_{k}^{\prime}\left(  S\left(  b_{\left(  1\right)  }\right)
\right)  \mathfrak{B}_{k}^{\prime}\left(  \left(  b_{\left(  2\right)
}\right)  _{\left(  1\right)  }\right)  \left(  a\left.  \preceq\right.
\left(  b_{\left(  2\right)  }\right)  _{\left(  2\right)  }\right) \\
&  =\sum_{\left(  b\right)  }\underbrace{\sum_{\left(  b_{\left(  1\right)
}\right)  }\mathfrak{B}_{k}^{\prime}\left(  S\left(  \left(  b_{\left(
1\right)  }\right)  _{\left(  1\right)  }\right)  \right)  \mathfrak{B}%
_{k}^{\prime}\left(  \left(  b_{\left(  1\right)  }\right)  _{\left(
2\right)  }\right)  }_{\substack{=\mathfrak{B}_{k}^{\prime}\left(
\sum_{\left(  b_{\left(  1\right)  }\right)  }S\left(  \left(  b_{\left(
1\right)  }\right)  _{\left(  1\right)  }\right)  \cdot\left(  b_{\left(
1\right)  }\right)  _{\left(  2\right)  }\right)  \\\text{(since }%
\mathfrak{B}_{k}^{\prime}\text{ is a }\mathbf{k}\text{-algebra homomorphism)}%
}}\left(  a\left.  \preceq\right.  b_{\left(  2\right)  }\right) \\
&  \ \ \ \ \ \ \ \ \ \ \left(
\begin{array}
[c]{c}%
\text{since the coassociativity of }\Delta\text{ yields}\\
\sum_{\left(  b\right)  }\sum_{\left(  b_{\left(  2\right)  }\right)
}b_{\left(  1\right)  }\otimes\left(  b_{\left(  2\right)  }\right)  _{\left(
1\right)  }\otimes\left(  b_{\left(  2\right)  }\right)  _{\left(  2\right)
}=\sum_{\left(  b\right)  }\sum_{\left(  b_{\left(  1\right)  }\right)
}\left(  b_{\left(  1\right)  }\right)  _{\left(  1\right)  }\otimes\left(
b_{\left(  1\right)  }\right)  _{\left(  2\right)  }\otimes b_{\left(
2\right)  }%
\end{array}
\right) \\
&  =\sum_{\left(  b\right)  }\mathfrak{B}_{k}^{\prime}\left(  \underbrace{\sum
_{\left(  b_{\left(  1\right)  }\right)  }S\left(  \left(  b_{\left(
1\right)  }\right)  _{\left(  1\right)  }\right)  \left(  b_{\left(  1\right)
}\right)  _{\left(  2\right)  }}_{\substack{=\varepsilon\left(  b_{\left(
1\right)  }\right)  \\\text{(by one of the defining equations of the
antipode)}}}\right)  \left(  a\left.  \preceq\right.  b_{\left(  2\right)
}\right) \\
&  =\sum_{\left(  b\right)  }\underbrace{\mathfrak{B}_{k}^{\prime}\left(
\varepsilon\left(  b_{\left(  1\right)  }\right)  \right)  }%
_{\substack{=\varepsilon\left(  b_{\left(  1\right)  }\right)  \\\text{(since
}\mathfrak{B}_{k}^{\prime}\text{ is a }\mathbf{k}\text{-algebra}%
\\\text{homomorphism, and}\\\varepsilon\left(  b_{\left(  1\right)  }\right)
\in\mathbf{k}\text{ is a scalar)}}}\left(  a\left.  \preceq\right.  b_{\left(
2\right)  }\right)  =\sum_{\left(  b\right)  }\varepsilon\left(  b_{\left(
1\right)  }\right)  \cdot\left(  a\left.  \preceq\right.  b_{\left(  2\right)
}\right) \\
&  =\sum_{\left(  b\right)  }a\left.  \preceq\right.  \left(  \varepsilon
\left(  b_{\left(  1\right)  }\right)  b_{\left(  2\right)  }\right)
=a\left.  \preceq\right.  \underbrace{\left(  \sum_{\left(  b\right)
}\varepsilon\left(  b_{\left(  1\right)  }\right)  b_{\left(  2\right)
}\right)  }_{=b}=a\left.  \preceq\right.  b.
\end{align*}

This proves Theorem \ref{thm.tvidend}.
\end{proof}

\begin{noncompile}
Here is another proof of Theorem \ref{thm.tvidend}, which is more or less the
same as the above but using a slightly different language.

\begin{proof}
[Second proof of Theorem \ref{thm.tvidend} (sketched).]We can WLOG assume that
$a$ is a monomial (because of the $\mathbf{k}$-linearity and continuity of all
operations in sight). So assume this. Let $i=\min\left(  \operatorname*{Supp}%
a\right)  $. Then, it is easy to see that every $f\in\operatorname*{QSym}$
satisfies $f \tvi a=f\left(  x_{1},x_{2},\ldots,x_{i-1}\right)  \cdot a$
(where we evaluate quasisymmetric functions on variables in the usual way, and
for $i=\infty$ we simply regard $f\left(  x_{1},x_{2},\ldots,x_{i-1}\right)  $
as meaning $f\left(  x_{1},x_{2},x_{3},\ldots\right)  =f$) and $a\left.
\preceq\right.  f=a\cdot f\left(  x_{i},x_{i+1},x_{i+2},\ldots\right)  $
(which is supposed to mean $\varepsilon\left(  f\right)  $ if $i=\infty$).
Hence, the equality in question, $\sum_{\left(  b\right)  }\left(  S\left(
b_{\left(  1\right)  }\right)  \tvi a\right)  b_{\left(  2\right)  }=a\left.
\preceq\right.  b$, rewrites as%
\[
\sum_{\left(  b\right)  }\left(  S\left(  b_{\left(  1\right)  }\right)
\right)  \left(  x_{1},x_{2},\ldots,x_{i-1}\right)  \cdot a\cdot b_{\left(
2\right)  }=a\cdot b\left(  x_{i},x_{i+1},x_{i+2},\ldots\right)  .
\]
This will immediately follow if we can show that%
\begin{equation}
\sum_{\left(  b\right)  }\left(  S\left(  b_{\left(  1\right)  }\right)
\right)  \left(  x_{1},x_{2},\ldots,x_{i-1}\right)  \cdot b_{\left(  2\right)
}=b\left(  x_{i},x_{i+1},x_{i+2},\ldots\right)  . \label{pf.tvidend.2}%
\end{equation}
So we need to prove (\ref{pf.tvidend.2}).

Every $f\in\operatorname*{QSym}$ satisfies%
\begin{equation}
f=\sum_{\left(  f\right)  }f_{\left(  1\right)  }\left(  x_{1},x_{2}%
,\ldots,x_{i-1}\right)  \cdot f_{\left(  2\right)  }\left(  x_{i}%
,x_{i+1},x_{i+2},\ldots\right)  . \label{pf.tvidend.4}%
\end{equation}
\footnote{In fact, this follows from the fact that the ordered alphabet
$\left(  x_{1},x_{2},x_{3},\ldots\right)  $ can be regarded as the sum of the
ordered alphabets $\left(  x_{1},x_{2},\ldots,x_{i-1}\right)  $ and $\left(
x_{i},x_{i+1},x_{i+2},\ldots\right)  $. Or, more down-to-earthly, it can be
checked on the basis $\left(  M_{\gamma}\right)  _{\gamma\in
\operatorname*{Comp}}$ of $\operatorname*{QSym}$ in a few lines.} Now,
(\ref{pf.tvidend.2}) becomes%
\begin{align*}
&  \sum_{\left(  b\right)  }\left(  S\left(  b_{\left(  1\right)  }\right)
\right)  \left(  x_{1},x_{2},\ldots,x_{i-1}\right)  \cdot
\underbrace{b_{\left(  2\right)  }}_{\substack{=\sum_{\left(  b_{\left(
2\right)  }\right)  }\left(  b_{\left(  2\right)  }\right)  _{\left(
1\right)  }\left(  x_{1},x_{2},\ldots,x_{i-1}\right)  \cdot\left(  b_{\left(
2\right)  }\right)  _{\left(  2\right)  }\left(  x_{i},x_{i+1},x_{i+2}%
,\ldots\right)  \\\text{(by (\ref{pf.tvidend.4}))}}}\\
&  =\sum_{\left(  b\right)  }\sum_{\left(  b_{\left(  2\right)  }\right)
}\left(  S\left(  b_{\left(  1\right)  }\right)  \right)  \left(  x_{1}%
,x_{2},\ldots,x_{i-1}\right)  \cdot\left(  b_{\left(  2\right)  }\right)
_{\left(  1\right)  }\left(  x_{1},x_{2},\ldots,x_{i-1}\right)  \cdot\left(
b_{\left(  2\right)  }\right)  _{\left(  2\right)  }\left(  x_{i}%
,x_{i+1},x_{i+2},\ldots\right) \\
&  =\sum_{\left(  b\right)  }\underbrace{\sum_{\left(  b_{\left(  1\right)
}\right)  }\left(  S\left(  \left(  b_{\left(  1\right)  }\right)  _{\left(
1\right)  }\right)  \right)  \left(  x_{1},x_{2},\ldots,x_{i-1}\right)
\cdot\left(  b_{\left(  1\right)  }\right)  _{\left(  2\right)  }\left(
x_{1},x_{2},\ldots,x_{i-1}\right)  }_{=\left(  \sum_{\left(  b_{\left(
1\right)  }\right)  }S\left(  \left(  b_{\left(  1\right)  }\right)  _{\left(
1\right)  }\right)  \cdot\left(  b_{\left(  1\right)  }\right)  _{\left(
2\right)  }\right)  \left(  x_{1},x_{2},\ldots,x_{i-1}\right)  }\cdot
b_{\left(  2\right)  }\left(  x_{i},x_{i+1},x_{i+2},\ldots\right) \\
&  \ \ \ \ \ \ \ \ \ \ \left(  \text{by the coassociativity of }\Delta\right)
\\
&  =\sum_{\left(  b\right)  }\underbrace{\left(  \sum_{\left(  b_{\left(
1\right)  }\right)  }S\left(  \left(  b_{\left(  1\right)  }\right)  _{\left(
1\right)  }\right)  \cdot\left(  b_{\left(  1\right)  }\right)  _{\left(
2\right)  }\right)  }_{\substack{=\varepsilon\left(  b_{\left(  1\right)
}\right)  \\\text{(by one of the defining equations of the antipode)}}}\left(
x_{1},x_{2},\ldots,x_{i-1}\right)  \cdot b_{\left(  2\right)  }\left(
x_{i},x_{i+1},x_{i+2},\ldots\right) \\
&  =\sum_{\left(  b\right)  }\underbrace{\left(  \varepsilon\left(  b_{\left(
1\right)  }\right)  \right)  \left(  x_{1},x_{2},\ldots,x_{i-1}\right)
}_{=\varepsilon\left(  b_{\left(  1\right)  }\right)  }\cdot b_{\left(
2\right)  }\left(  x_{i},x_{i+1},x_{i+2},\ldots\right) \\
&  =\sum_{\left(  b\right)  }\varepsilon\left(  b_{\left(  1\right)  }\right)
\cdot b_{\left(  2\right)  }\left(  x_{i},x_{i+1},x_{i+2},\ldots\right)
=\underbrace{\left(  \sum_{\left(  b\right)  }\varepsilon\left(  b_{\left(
1\right)  }\right)  \cdot b_{\left(  2\right)  }\right)  }%
_{\substack{=b\\\text{(by the coalgebra axioms)}}}\left(  x_{i},x_{i+1}%
,x_{i+2},\ldots\right) \\
&  =b\left(  x_{i},x_{i+1},x_{i+2},\ldots\right)  .
\end{align*}
This proves (\ref{pf.tvidend.2}) and thus Theorem \ref{thm.tvidend}. (A
different proof might appear in \cite{Gri-gammapart}.)
\end{proof}
\end{noncompile}
\end{verlong}

\section{\label{sect.dualimmac}Dual immaculate functions and the operation
$\left.  \prec\right.  $}

We will now study the dual immaculate functions defined in \cite{BBSSZ}.
However, instead of defining them as was done in \cite[Section 3.7]{BBSSZ}, we
shall give a different (but equivalent) definition. First, we introduce
immaculate tableaux (which we define as in \cite[Definition 3.9]{BBSSZ}),
which are an analogue of the well-known semistandard Young tableaux (also
known as \textquotedblleft column-strict tableaux\textquotedblright%
)\footnote{See, e.g., \cite[Chapter 7]{Stanley-EC2} for a study of
semistandard Young tableaux. We will not use them in this note; however, our
terminology for immaculate tableaux will imitate some of the classical
terminology defined for semistandard Young tableaux.}:

\Needspace{15cm}

\begin{definition}
\label{def.immactab}Let $\alpha=\left(  \alpha_{1},\alpha_{2},\ldots
,\alpha_{\ell}\right)  $ be a composition.

\textbf{(a)} The \textit{Young diagram} of $\alpha$ will mean the subset
$\left\{  \left(  i,j\right)  \in\mathbb{Z}^{2}\ \mid\ 1\leq i\leq\ell;\ 1\leq
j\leq\alpha_{i}\right\}  $ of $\mathbb{Z}^{2}$. It is denoted by $Y\left(
\alpha\right)  $.

\textbf{(b)} An \textit{immaculate tableau of shape }$\alpha$ will mean a map
$T:Y\left(  \alpha\right)  \rightarrow\left\{  1,2,3,\ldots\right\}  $ which
satisfies the following two axioms:

\begin{enumerate}
\item We have $T\left(  i,1\right)  <T\left(  j,1\right)  $ for any integers
$i$ and $j$ satisfying $1\leq i<j\leq\ell$.

\item We have $T\left(  i,u\right)  \leq T\left(  i,v\right)  $ for any
integers $i$, $u$ and $v$ satisfying $1\leq i\leq\ell$ and $1\leq
u<v\leq\alpha_{i}$.
\end{enumerate}

The \textit{entries} of an immaculate tableau $T$ mean the images of elements
of $Y\left(  \alpha\right)  $ under $T$.

We will use the same graphical representation of immaculate tableaux
(analogous to the \textquotedblleft English notation\textquotedblright\ for
semistandard Young tableaux) that was used in \cite{BBSSZ}: An immaculate
tableau $T$ of shape $\alpha=\left(  \alpha_{1},\alpha_{2},\ldots,\alpha
_{\ell}\right)  $ is represented as a table whose rows are left-aligned (but
can have different lengths), and whose $i$-th row (counted from top) has
$\alpha_{i}$ boxes, which are respectively filled with the entries $T\left(
i,1\right)  $, $T\left(  i,2\right)  $, $\ldots$, $T\left(  i,\alpha
_{i}\right)  $ (from left to right). For example, an immaculate tableau $T$ of
shape $\left(  3,1,2\right)  $ is represented by the picture%
\[
\ytableausetup{boxsize=2em} \begin{ytableau}
a_{1,1} & a_{1,2} & a_{1,3} \\
a_{2,1} \\
a_{3,1} & a_{3,2}
\end{ytableau}
\quad,
\]
where $a_{i,j}=T\left(  i,j\right)  $ for every $\left(  i,j\right)  \in
Y\left(  \left(  3,1,2\right)  \right)  $. Thus, the first of the above two
axioms for an immaculate tableau $T$ says that the entries of $T$ are strictly
increasing down the first column of $Y\left(  \alpha\right)  $, whereas the
second of the above two axioms says that the entries of $T$ are weakly
increasing along each row of $Y\left(  \alpha\right)  $.

\textbf{(c)} Let $\beta=\left(  \beta_{1},\beta_{2},\ldots,\beta_{k}\right)  $
be a composition of $\left\vert \alpha\right\vert $. An immaculate tableau $T$
of shape $\alpha$ is said to have \textit{content }$\beta$ if every
$j\in\left\{  1,2,3,\ldots\right\}  $ satisfies%
\[
\left\vert T^{-1}\left(  j\right)  \right\vert =
\begin{cases}
\beta_{j}, & \text{if }j\leq k;\\
0, & \text{if }j>k
\end{cases}
\quad.
\]
Notice that not every immaculate tableau has a content (with this definition),
because we only allow compositions as contents. More precisely, if $T$ is an
immaculate tableau of shape $\alpha$, then there exists a composition $\beta$
such that $T$ has content $\beta$ if and only if there exists a $k\in
\mathbb{N}$ such that $T\left(  Y\left(  \alpha\right)  \right)  =\left\{
1,2,\ldots,k\right\}  $.

\textbf{(d)} Let $\beta$ be a composition of $\left\vert \alpha\right\vert $.
Then, $K_{\alpha,\beta}$ denotes the number of immaculate tableaux of shape
$\alpha$ and content $\beta$.
\end{definition}

\begin{noncompile}
If $\alpha$ is a composition and $T$ is an immaculate tableau of shape
$\alpha$, then we refer to the image of $\left(  i,j\right)  \in Y\left(
\alpha\right)  $ under $T$ as the \textit{entry}\ of the cell $\left(
i,j\right)  $ in the tableau $T$.
\end{noncompile}

\begin{vershort}
For future reference, let us notice that if $\alpha$ is a composition, if $T$
is an immaculate tableau of shape $\alpha$, and if $\left(  i,j\right)  \in
Y\left(  \alpha\right)  $ is such that $i>1$, then%
\begin{equation}
T\left(  1,1\right)  <T\left(  i,1\right)  \leq T\left(  i,j\right)  .
\label{eq.immactab.min.2.short}%
\end{equation}

\end{vershort}

\begin{verlong}
For future reference, let us notice that if $\alpha$ is a nonempty composition
and if $T$ is an immaculate tableau of shape $\alpha$, then%
\begin{equation}
\text{the smallest entry of }T\text{ is }T\left(  1,1\right)
\label{eq.immactab.min}%
\end{equation}
(because every $\left(  i,j\right)  \in Y\left(  \alpha\right)  $ satisfies
$T\left(  1,1\right)  \leq T\left(  i,1\right)  \leq T\left(  i,j\right)  $).
Moreover, if $\alpha$ is a composition, if $T$ is an immaculate tableau of
shape $\alpha$, and if $\left(  i,j\right)  \in Y\left(  \alpha\right)  $ is
such that $i>1$, then%
\begin{equation}
T\left(  1,1\right)  <T\left(  i,1\right)  \leq T\left(  i,j\right)  .
\label{eq.immactab.min.2}%
\end{equation}

\end{verlong}

\begin{definition}
\label{def.dualimmac}Let $\alpha$ be a composition. The \textit{dual
immaculate function }$\mathfrak{S}_{\alpha}^{\ast}$ corresponding to $\alpha$
is defined as the quasisymmetric function%
\[
\sum_{\beta\models\left\vert \alpha\right\vert }K_{\alpha,\beta}M_{\beta}.
\]

\end{definition}

This definition is not identical to the definition of $\mathfrak{S}_{\alpha
}^{\ast}$ used in \cite{BBSSZ}, but it is equivalent to it, as the following
proposition shows.

\begin{proposition}
\label{prop.dualimmac.equiv}Definition \ref{def.dualimmac} is equivalent to
the definition of $\mathfrak{S}_{\alpha}^{\ast}$ used in \cite{BBSSZ}.
\end{proposition}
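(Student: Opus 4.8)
The plan is to reduce Proposition~\ref{prop.dualimmac.equiv} to a short combinatorial identity by passing through the weighted generating function of immaculate tableaux. Recall that, by the results of \cite{BBSSZ} (see \cite[Section 3.7]{BBSSZ}), the dual immaculate function $\mathfrak{S}_{\alpha}^{\ast}$ equals $\sum_{T}\mathbf{x}^{T}$, where $T$ ranges over all immaculate tableaux of shape $\alpha$ and $\mathbf{x}^{T}:=\prod_{\left(i,j\right)\in Y\left(\alpha\right)}x_{T\left(i,j\right)}\in\operatorname*{Mon}$. Since the immaculate tableaux of \cite[Definition 3.9]{BBSSZ} are exactly those of Definition~\ref{def.immactab}, it will suffice to prove that $\sum_{T}\mathbf{x}^{T}=\sum_{\beta\models\left\vert \alpha\right\vert }K_{\alpha,\beta}M_{\beta}$ in $\operatorname*{QSym}$, with $T$ again ranging over all immaculate tableaux of shape $\alpha$ and the right-hand side being the function of Definition~\ref{def.dualimmac}.

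To prove this identity I would compare, for an arbitrary monomial $\mathfrak{q}\in\operatorname*{Mon}$, the coefficient of $\mathfrak{q}$ on the two sides. Set $\beta:=\operatorname*{Parikh}\mathfrak{q}$. Using the description $M_{\gamma}=\sum_{\mathfrak{m}\in\operatorname*{Mon};\ \operatorname*{Parikh}\mathfrak{m}=\gamma}\mathfrak{m}$ from (\ref{pf.prop.QSym.closed.Malpha-via-Parikh}), the coefficient of $\mathfrak{q}$ in $\sum_{\beta\models\left\vert \alpha\right\vert }K_{\alpha,\beta}M_{\beta}$ is simply $K_{\alpha,\beta}$. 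On the other side, the coefficient of $\mathfrak{q}$ in $\sum_{T}\mathbf{x}^{T}$ is the number of immaculate tableaux $T$ of shape $\alpha$ with $\mathbf{x}^{T}=\mathfrak{q}$. Writing $\mathfrak{q}=x_{k_{1}}^{\beta_{1}}x_{k_{2}}^{\beta_{2}}\cdots x_{k_{\ell}}^{\beta_{\ell}}$ with $k_{1}<k_{2}<\cdots<k_{\ell}$ and all $\beta_{u}$ positive (so that $\beta=\left(\beta_{1},\beta_{2},\ldots,\beta_{\ell}\right)$), such a tableau is precisely a map $Y\left(\alpha\right)\rightarrow\left\{k_{1},k_{2},\ldots,k_{\ell}\right\}$ taking each value $k_{u}$ exactly $\beta_{u}$ times and satisfying the two axioms of Definition~\ref{def.immactab}(b). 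The crucial point is that these two axioms (strict increase down the first column, weak increase along each row) depend only on the relative order of the entries; hence composing with the unique order-preserving bijection $\left\{k_{1},k_{2},\ldots,k_{\ell}\right\}\rightarrow\left\{1,2,\ldots,\ell\right\}$ sets up a bijection between these tableaux and the immaculate tableaux of shape $\alpha$ and content $\beta$. Therefore the coefficient of $\mathfrak{q}$ on the left is also $K_{\alpha,\beta}$, the two power series agree, and the proposition follows.

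I do not expect a serious obstacle here: the argument is essentially a relabeling. The only points requiring care are, first, to make sure that the generating-function description of $\mathfrak{S}_{\alpha}^{\ast}$ and the identification of the two notions of immaculate tableau are quoted from \cite{BBSSZ} in exactly the form used above; and second, to check the one convention in Definition~\ref{def.immactab}(c), namely that a content is required to be a composition (so that the entry set of a content-$\beta$ tableau must be an initial segment $\left\{1,2,\ldots,\ell\right\}$). This second point is harmless, since every part $\beta_{u}$ of $\beta=\operatorname*{Parikh}\mathfrak{q}$ is positive, so the relabeled tableaux do use exactly the values $\left\{1,2,\ldots,\ell\right\}$ and hence really have content $\beta$. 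Should the generating-function description of $\mathfrak{S}_{\alpha}^{\ast}$ not be available in \cite{BBSSZ} in a ready-to-use form, I would instead start from the dual-basis definition of $\mathfrak{S}_{\alpha}^{\ast}$ and verify directly that $\sum_{\beta}K_{\alpha,\beta}M_{\beta}$ pairs with $\mathfrak{S}_{\gamma}$ as $\delta_{\alpha,\gamma}$, using the expansion of the complete homogeneous basis of $\operatorname*{NSym}$ in the immaculate basis; but the tableau argument above is shorter and is the one I would pursue.
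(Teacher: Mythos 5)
There is a gap in your primary route: it rests on ``recalling'' from \cite{BBSSZ} that $\mathfrak{S}_{\alpha}^{\ast}=\sum_{T}\mathbf{x}_{T}$ (sum over immaculate tableaux of shape $\alpha$), but that generating-function description is \emph{not} available there -- the present paper explicitly notes, just before Proposition \ref{prop.dualImm}, that this formula is ``known to the authors of \cite{BBSSZ} but not explicitly stated in their work''. Indeed, that formula \emph{is} Proposition \ref{prop.dualImm} of this paper, which is proved only \emph{after} (and using) Definition \ref{def.dualimmac}. So your main argument, while correct mathematics, proves the wrong statement: your careful relabeling via the order-preserving bijection $\left\{ k_{1},k_{2},\ldots,k_{\ell}\right\} \rightarrow\left\{ 1,2,\ldots,\ell\right\} $ essentially reproduces the paper's proof of Proposition \ref{prop.dualImm} (via Lemma \ref{lem.dualImm.trivia}), not of Proposition \ref{prop.dualimmac.equiv}.

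What \cite{BBSSZ} actually provides is a monomial expansion with a lexicographic restriction: by \cite[Proposition 3.36]{BBSSZ}, the $\mathfrak{S}_{\alpha}^{\ast}$ of \cite{BBSSZ} equals $\sum_{\beta\models\left\vert \alpha\right\vert ,\ \beta\leq_{\ell}\alpha}K_{\alpha,\beta}M_{\beta}$. The paper's proof then simply invokes \cite[Proposition 3.15 (2)]{BBSSZ}, which says $K_{\alpha,\beta}=0$ unless $\beta\leq_{\ell}\alpha$, so the restricted sum coincides with the unrestricted sum of Definition \ref{def.dualimmac}. Your fallback -- computing the pairing of $\sum_{\beta}K_{\alpha,\beta}M_{\beta}$ against the immaculate basis and checking it gives $\delta_{\alpha,\gamma}$ -- would be a legitimate alternative starting from the actual dual-basis definition in \cite{BBSSZ}, but as written it is only a one-sentence sketch, and the nontrivial input (the expansion of products of complete homogeneous noncommutative symmetric functions in the immaculate basis, i.e.\ the ``Pieri-type'' coefficients $K_{\alpha,\beta}$) would still need to be quoted or proved.
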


\begin{vershort}
\begin{proof}
[Proof of Proposition \ref{prop.dualimmac.equiv}.]Let $\leq_{\ell}$ denote the
lexicographic order on compositions. Let $\alpha$ be a composition. From
\cite[Proposition 3.15 (2)]{BBSSZ}, we know that $K_{\alpha,\beta}=0$ for
every $\beta\models\left\vert \alpha\right\vert $ which does not satisfy
$\beta\leq_{\ell}\alpha$. Hence, in the sum $\sum_{\beta\models\left\vert
\alpha\right\vert }K_{\alpha,\beta}M_{\beta}$, only the compositions $\beta$
satisfying $\beta\leq_{\ell}\alpha$ contribute nonzero addends. Consequently,
$\sum_{\beta\models\left\vert \alpha\right\vert }K_{\alpha,\beta}M_{\beta
}=\sum_{\substack{\beta\models\left\vert \alpha\right\vert ;\\\beta\leq_{\ell
}\alpha}}K_{\alpha,\beta}M_{\beta}$. The left hand side of this equality is
$\mathfrak{S}_{\alpha}^{\ast}$ according to our definition, whereas the right
hand side is $\mathfrak{S}_{\alpha}^{\ast}$ as defined in \cite{BBSSZ} (by
\cite[Proposition 3.36]{BBSSZ}). Hence, the two definitions are equivalent.
\end{proof}
\end{vershort}

\begin{verlong}
\begin{proof}
[Proof of Proposition \ref{prop.dualimmac.equiv}.]Let $\leq_{\ell}$ denote the
lexicographic order on compositions.

Let $\alpha$ be a composition. Then, \cite[Proposition 3.36]{BBSSZ} yields the
following:%
\[
\left(  \text{the dual immaculate function }\mathfrak{S}_{\alpha}^{\ast}\text{
as defined in \cite{BBSSZ}}\right)  =\sum_{\substack{\beta\models\left\vert
\alpha\right\vert ;\\\beta\leq_{\ell}\alpha}}K_{\alpha,\beta}M_{\beta}.
\]
Compared with%
\begin{align*}
&  \left(  \text{the dual immaculate function }\mathfrak{S}_{\alpha}^{\ast
}\text{ as defined in Definition \ref{def.dualimmac}}\right) \\
&  =\sum_{\beta\models\left\vert \alpha\right\vert }K_{\alpha,\beta}M_{\beta
}=\sum_{\substack{\beta\models\left\vert \alpha\right\vert ;\\\beta\leq_{\ell
}\alpha}}K_{\alpha,\beta}M_{\beta}+\sum_{\substack{\beta\models\left\vert
\alpha\right\vert ;\\\text{not }\beta\leq_{\ell}\alpha}}\underbrace{K_{\alpha
,\beta}}_{\substack{=0\\\text{(by \cite[Proposition 3.15 (2)]{BBSSZ})}%
}}M_{\beta}\\
&  =\sum_{\substack{\beta\models\left\vert \alpha\right\vert ;\\\beta
\leq_{\ell}\alpha}}K_{\alpha,\beta}M_{\beta}+\underbrace{\sum_{\substack{\beta
\models\left\vert \alpha\right\vert ;\\\text{not }\beta\leq_{\ell}\alpha
}}0M_{\beta}}_{=0}=\sum_{\substack{\beta\models\left\vert \alpha\right\vert
;\\\beta\leq_{\ell}\alpha}}K_{\alpha,\beta}M_{\beta},
\end{align*}
this yields%
\begin{align*}
&  \left(  \text{the dual immaculate function }\mathfrak{S}_{\alpha}^{\ast
}\text{ as defined in \cite{BBSSZ}}\right) \\
&  =\left(  \text{the dual immaculate function }\mathfrak{S}_{\alpha}^{\ast
}\text{ as defined in Definition \ref{def.dualimmac}}\right)  .
\end{align*}
Hence, Definition \ref{def.dualimmac} is equivalent to the definition of
$\mathfrak{S}_{\alpha}^{\ast}$ used in \cite{BBSSZ}. This proves Proposition
\ref{prop.dualimmac.equiv}.
\end{proof}
\end{verlong}

It is helpful to think of dual immaculate functions as analogues of Schur
functions obtained by replacing semistandard Young tableaux by immaculate
tableaux. Definition \ref{def.dualimmac} is the analogue of the well-known
formula $s_{\lambda}=\sum_{\mu\vdash\left\vert \lambda\right\vert }%
k_{\lambda,\mu}m_{\mu}$ for any partition $\lambda$, where $s_{\lambda}$
denotes the Schur function corresponding to $\lambda$, where $m_{\mu}$ denotes
the monomial symmetric function corresponding to the partition $\mu$, and
where $k_{\lambda,\mu}$ is the $\left(  \lambda,\mu\right)  $-th Kostka number
(i.e., the number of semistandard Young tableaux of shape $\lambda$ and
content $\mu$). The following formula for the $\mathfrak{S}_{\alpha}^{\ast}$
(known to the authors of \cite{BBSSZ} but not explicitly stated in their work)
should not come as a surprise:

\begin{proposition}
\label{prop.dualImm}Let $\alpha$ be a composition. Then,%
\[
\mathfrak{S}_{\alpha}^{\ast}=\sum_{\substack{T\text{ is an immaculate}%
\\\text{tableau of shape }\alpha}}\mathbf{x}_{T}.
\]
Here, $\mathbf{x}_{T}$ is defined as $\prod_{\left(  i,j\right)  \in Y\left(
\alpha\right)  }x_{T\left(  i,j\right)  }$ when $T$ is an immaculate tableau
of shape $\alpha$.
\end{proposition}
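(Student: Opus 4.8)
The plan is to massage the right-hand side $\sum_{T}\mathbf{x}_{T}$ (the sum over all immaculate tableaux $T$ of shape $\alpha$) into the monomial-quasisymmetric expansion $\sum_{\beta\models\left\vert \alpha\right\vert }K_{\alpha,\beta}M_{\beta}$, which is $\mathfrak{S}_{\alpha}^{\ast}$ by Definition \ref{def.dualimmac}. First I would group the immaculate tableaux of shape $\alpha$ according to the monomial $\mathbf{x}_{T}=\prod_{\left(i,j\right)\in Y\left(\alpha\right)}x_{T\left(i,j\right)}$ they contribute; more precisely, group them according to the multiset of values $T$ takes, i.e.\ according to the pair consisting of the support $\operatorname*{Supp}\mathbf{x}_T=\left\{i_1<i_2<\cdots<i_k\right\}$ and the exponent vector. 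Then $\sum_{T}\mathbf{x}_{T}=\sum_{\mathfrak{m}\in\operatorname*{Mon}}\left(\#\left\{T\mid\mathbf{x}_T=\mathfrak{m}\right\}\right)\mathfrak{m}$, so the whole task reduces to computing, for each monomial $\mathfrak{m}$, the number of immaculate tableaux $T$ of shape $\alpha$ with $\mathbf{x}_T=\mathfrak{m}$, and checking this number depends only on the Parikh composition of $\mathfrak{m}$ (after which the identity $\sum_{\mathfrak{m}\,:\,\operatorname*{Parikh}\mathfrak{m}=\beta}\mathfrak{m}=M_\beta$ from (\ref{pf.prop.QSym.closed.Malpha-via-Parikh}) finishes it).

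The key step is therefore a bijection. Fix a composition $\beta=\left(\beta_1,\ldots,\beta_k\right)\models\left\vert\alpha\right\vert$ and fix positive integers $i_1<i_2<\cdots<i_k$; I want to show that the number of immaculate tableaux $T$ of shape $\alpha$ with $\left\vert T^{-1}\left(i_r\right)\right\vert=\beta_r$ for all $r$ (and no other values) equals $K_{\alpha,\beta}$, independently of the particular $i_1,\ldots,i_k$. The bijection is the obvious order-preserving relabeling: send such a $T$ to the composite $\phi\circ T$, where $\phi$ is the unique increasing bijection $\left\{i_1,\ldots,i_k\right\}\to\left\{1,\ldots,k\right\}$, $i_r\mapsto r$. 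Because $\phi$ is strictly increasing, $\phi\circ T$ still satisfies both axioms of an immaculate tableau (the strict-increase-down-column-one axiom and the weak-increase-along-rows axiom are preserved by any increasing relabeling of the codomain), and $\phi\circ T$ has content exactly $\beta$; conversely, given any immaculate tableau $S$ of shape $\alpha$ and content $\beta$, the tableau $\phi^{-1}\circ S$ is an immaculate tableau with value multiset $\{i_1^{\beta_1},\ldots,i_k^{\beta_k}\}$. These two maps are mutually inverse, so the count is $K_{\alpha,\beta}$ as desired. Summing the resulting equality
\[
\sum_{\substack{T\text{ immaculate of shape }\alpha;\\\operatorname*{Parikh}\mathbf{x}_T=\beta}}\mathbf{x}_T=K_{\alpha,\beta}M_\beta
\]
over all $\beta\models\left\vert\alpha\right\vert$ gives $\sum_T\mathbf{x}_T=\sum_{\beta\models\left\vert\alpha\right\vert}K_{\alpha,\beta}M_\beta=\mathfrak{S}_\alpha^\ast$.

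I anticipate no serious obstacle; the only things to be careful about are bookkeeping details. One must note that $\operatorname*{Parikh}\mathbf{x}_T$ is genuinely a composition of $\left\vert\alpha\right\vert$ — this is where the "content" subtlety of Definition \ref{def.immactab}(c) would ordinarily bite, but here it does not, because we are Parikh-compressing the monomial $\mathbf{x}_T$ rather than requiring $T$ itself to take values in an initial segment $\{1,\ldots,k\}$; the compressed composition $\operatorname*{Parikh}\mathbf{x}_T$ lists the multiplicities of the actually-occurring entries in increasing order of those entries, and its size is $\left\vert Y\left(\alpha\right)\right\vert=\left\vert\alpha\right\vert$. One should also double-check the edge case $\alpha=\varnothing$: then there is exactly one immaculate tableau (the empty one), $\mathbf{x}_T=1$, and $\mathfrak{S}_\varnothing^\ast=M_\varnothing=1$, so the formula holds. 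This verification could alternatively be phrased as "Proposition \ref{prop.dualImm} is the immaculate-tableau analogue of the identity $s_\lambda=\sum_\mu k_{\lambda,\mu}m_\mu$, proven in the identical fashion," but spelling out the relabeling bijection is cleaner and self-contained.
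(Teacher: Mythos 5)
Your proof is correct and is essentially the paper's own argument: the paper proves the same relabeling statement as Lemma \ref{lem.dualImm.trivia} (your $\phi$ is the map $r_{T\left(  Y\left(  \alpha\right)  \right)  }^{-1}$ there), and then sums over the compositions $\beta$ exactly as you do, merely fibering the sum over the packed tableau $Q$ rather than over the monomial $\mathbf{x}_{T}$. The edge cases you flag (the content subtlety and $\alpha=\varnothing$) are handled the same way.
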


Before we prove this proposition, let us state a fundamental and simple lemma:

\begin{lemma}
\label{lem.dualImm.trivia}\textbf{(a)} If $I$ is a finite subset of $\left\{
1,2,3,\ldots\right\}  $, then there exists a unique strictly increasing
bijection $\left\{  1,2,\ldots,\left\vert I\right\vert \right\}  \rightarrow
I$. Let us denote this bijection by $r_{I}$. Its inverse $r_{I}^{-1}$ is
obviously again a strictly increasing bijection.

Now, let $\alpha$ be a composition.

\textbf{(b)} If $T$ is an immaculate tableau of shape $\alpha$, then
$r_{T\left(  Y\left(  \alpha\right)  \right)  }^{-1}\circ T$ (remember that
immaculate tableaux are maps from $Y\left(  \alpha\right)  $ to $\left\{
1,2,3,\ldots\right\}  $) is an immaculate tableau of shape $\alpha$ as well,
and has the additional property that there exists a unique composition $\beta$
of $\left\vert \alpha\right\vert $ such that $r_{T\left(  Y\left(
\alpha\right)  \right)  }^{-1}\circ T$ has content $\beta$.

\textbf{(c)} Let $Q$ be an immaculate tableau of shape $\alpha$. Let $\beta$
be a composition of $\left\vert \alpha\right\vert $ such that $Q$ has content
$\beta$. Then,%
\begin{equation}
M_{\beta}=\sum_{\substack{T\text{ is an immaculate}\\\text{tableau of shape
}\alpha;\\r_{T\left(  Y\left(  \alpha\right)  \right)  }^{-1}\circ
T=Q}}\mathbf{x}_{T}. \label{pf.dualImm.M}%
\end{equation}

\end{lemma}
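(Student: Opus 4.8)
The plan is to dispatch parts \textbf{(a)} and \textbf{(b)} quickly as routine bookkeeping, and then spend most of the effort on part \textbf{(c)}, which is where the combinatorial content lies. For part \textbf{(a)}, the existence and uniqueness of a strictly increasing bijection $\left\{ 1,2,\ldots,\left\vert I\right\vert \right\} \rightarrow I$ is a standard fact about finite totally ordered sets (enumerate $I$ in increasing order); I would state it and move on. For part \textbf{(b)}, I would first observe that precomposing the order-preserving... wait, that composes on the wrong side --- the correct observation is that \emph{postcomposing} an immaculate tableau $T$ with the strictly increasing map $r_{T\left( Y\left( \alpha\right) \right) }^{-1}$ preserves both tableau axioms (strict increase down column~1, weak increase along rows), since applying a strictly increasing function to both sides of an inequality preserves $<$ and $\leq$. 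Moreover $r_{T\left( Y\left( \alpha\right) \right) }^{-1}\circ T$ has image exactly $\left\{ 1,2,\ldots,k\right\}$ where $k=\left\vert T\left( Y\left( \alpha\right) \right) \right\vert$, so by the criterion recorded after Definition~\ref{def.immactab}\textbf{(c)} it has a content, and that content is a composition of $\left\vert \alpha\right\vert$; uniqueness of $\beta$ is immediate since $\beta$ is determined by the fiber cardinalities $\left\vert T^{-1}\left( j\right) \right\vert$.

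For part \textbf{(c)}, the strategy is to compare coefficients of each monomial $\mathfrak{m}$ on both sides of \eqref{pf.dualImm.M}. Write $\beta = \left( \beta_{1}, \beta_{2}, \ldots, \beta_{k}\right)$. By the definition of $M_{\beta}$, the monomials appearing in $M_{\beta}$ (all with coefficient $1$) are exactly those of the form $x_{i_{1}}^{\beta_{1}} x_{i_{2}}^{\beta_{2}} \cdots x_{i_{k}}^{\beta_{k}}$ with $i_{1} < i_{2} < \cdots < i_{k}$. On the right-hand side, a monomial $\mathbf{x}_{T} = \prod_{\left(i,j\right)\in Y\left(\alpha\right)} x_{T\left(i,j\right)}$ appears once for each immaculate tableau $T$ of shape $\alpha$ with $r_{T\left( Y\left( \alpha\right) \right) }^{-1}\circ T = Q$. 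The key is to set up a bijection: given a strictly increasing tuple $i_{1} < i_{2} < \cdots < i_{k}$ (equivalently, a strictly increasing injection $g : \left\{1,2,\ldots,k\right\} \to \left\{1,2,3,\ldots\right\}$), the tableau $T := g \circ Q$ is an immaculate tableau of shape $\alpha$ (again because postcomposition with a strictly increasing map preserves the axioms), satisfies $T\left( Y\left( \alpha\right) \right) = g\left( \left\{1,\ldots,k\right\}\right) = \left\{ i_1,\ldots,i_k\right\}$, hence $r_{T\left( Y\left( \alpha\right) \right) }^{-1} = g^{-1}$ (as the unique strictly increasing bijection onto that set), so $r_{T\left( Y\left( \alpha\right) \right) }^{-1}\circ T = g^{-1}\circ g \circ Q = Q$, and finally $\mathbf{x}_{T} = x_{i_1}^{\beta_1} x_{i_2}^{\beta_2}\cdots x_{i_k}^{\beta_k}$ because $Q$ has content $\beta$ (so $Q$ takes the value $u$ exactly $\beta_u$ times, and $T$ takes the value $i_u$ exactly $\beta_u$ times). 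Conversely, any $T$ with $r_{T\left( Y\left( \alpha\right) \right) }^{-1}\circ T = Q$ factors as $T = r_{T\left( Y\left( \alpha\right) \right)} \circ Q$, recovering $g = r_{T\left( Y\left( \alpha\right) \right)}$; this shows the correspondence $g \leftrightarrow T$ is a bijection between strictly increasing injections $\left\{1,\ldots,k\right\}\to\left\{1,2,3,\ldots\right\}$ and the tableaux indexing the right-hand sum.

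This bijection shows that the right-hand side of \eqref{pf.dualImm.M} equals $\sum_{i_1 < i_2 < \cdots < i_k} x_{i_1}^{\beta_1} x_{i_2}^{\beta_2} \cdots x_{i_k}^{\beta_k}$, which is precisely $M_{\beta}$ by its definition, completing the proof. The main obstacle --- really the only non-routine point --- is getting the identification $r_{T\left( Y\left( \alpha\right) \right)}^{-1} = g^{-1}$ and the bookkeeping of which value of $Q$ maps to which variable exactly right, i.e.\ verifying carefully that the content-$\beta$ hypothesis on $Q$ translates into the exponent pattern $\left(\beta_1,\ldots,\beta_k\right)$ on $\mathbf{x}_T$; everything else is a mechanical check that strictly increasing postcomposition preserves immaculate-tableau axioms and preserves/tracks supports.
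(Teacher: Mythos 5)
Your proposal is correct and follows essentially the same route as the paper's own proof: parts (a) and (b) are handled by the same observation that postcomposition with a strictly increasing map preserves the tableau axioms, and part (c) is proved via the same bijection $T = g\circ Q$ between strictly increasing injections $g:\left\{1,\ldots,k\right\}\to\left\{1,2,3,\ldots\right\}$ (the paper phrases these as $k$-element subsets $T\left(Y\left(\alpha\right)\right)$, which is the same data) and the tableaux indexing the right-hand sum, with the content hypothesis on $Q$ supplying the exponent pattern $\left(\beta_{1},\ldots,\beta_{k}\right)$. No gaps.
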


\begin{proof}
[Proof of Lemma \ref{lem.dualImm.trivia}.]\textbf{(a)} Lemma
\ref{lem.dualImm.trivia} \textbf{(a)} is obvious.

\textbf{(b)} Let $T$ be an immaculate tableau of shape $\alpha$. Then,
$r_{T\left(  Y\left(  \alpha\right)  \right)  }^{-1}\circ T$ is an immaculate
tableau of shape $\alpha$ as well\footnote{This is because the map
$r_{T\left(  Y\left(  \alpha\right)  \right)  }^{-1}$ is strictly increasing,
and the inequality conditions which decide whether a map $Y\left(
\alpha\right)  \rightarrow\left\{  1,2,3,\ldots\right\}  $ is an immaculate
tableau of shape $\alpha$ are preserved under composition with a strictly
increasing map.}. Let $R=r_{T\left(  Y\left(  \alpha\right)  \right)  }%
^{-1}\circ T:Y\left(  \alpha\right)  \rightarrow\left\{  1,2,\ldots,\left\vert
T\left(  Y\left(  \alpha\right)  \right)  \right\vert \right\}  $. Then,%
\begin{align*}
\underbrace{R}_{=r_{T\left(  Y\left(  \alpha\right)  \right)  }^{-1}\circ
T}\left(  Y\left(  \alpha\right)  \right)   &  =\left(  r_{T\left(  Y\left(
\alpha\right)  \right)  }^{-1}\circ T\right)  \left(  Y\left(  \alpha\right)
\right) \\
&  =r_{T\left(  Y\left(  \alpha\right)  \right)  }^{-1}\left(  T\left(
Y\left(  \alpha\right)  \right)  \right)  =\left\{  1,2,\ldots,\left\vert
T\left(  Y\left(  \alpha\right)  \right)  \right\vert \right\}  .
\end{align*}
Hence, $\left(  \left\vert R^{-1}\left(  1\right)  \right\vert ,\left\vert
R^{-1}\left(  2\right)  \right\vert ,\ldots,\left\vert R^{-1}\left(
\left\vert T\left(  Y\left(  \alpha\right)  \right)  \right\vert \right)
\right\vert \right)  $ is a composition. Therefore, there exists a unique
composition $\beta$ of $\left\vert \alpha\right\vert $ such that $R$ has
content $\beta$ (namely, $\beta=\left(  \left\vert R^{-1}\left(  1\right)
\right\vert ,\left\vert R^{-1}\left(  2\right)  \right\vert ,\ldots,\left\vert
R^{-1}\left(  \left\vert T\left(  Y\left(  \alpha\right)  \right)  \right\vert
\right)  \right\vert \right)  $). In other words, there exists a unique
composition $\beta$ of $\left\vert \alpha\right\vert $ such that $r_{T\left(
Y\left(  \alpha\right)  \right)  }^{-1}\circ T$ has content $\beta$ (since
$R=r_{T\left(  Y\left(  \alpha\right)  \right)  }^{-1}\circ T$). This
completes the proof of Lemma \ref{lem.dualImm.trivia} \textbf{(b)}.

\textbf{(c)} If $T$ is a map $Y\left(  \alpha\right)  \rightarrow\left\{
1,2,3,\ldots\right\}  $ satisfying $r_{T\left(  Y\left(  \alpha\right)
\right)  }^{-1}\circ T=Q$, then $T$ is automatically an immaculate tableau of
shape $\alpha$\ \ \ \ \footnote{\textit{Proof.} Let $T$ be a map $Y\left(
\alpha\right)  \rightarrow\left\{  1,2,3,\ldots\right\}  $ satisfying
$r_{T\left(  Y\left(  \alpha\right)  \right)  }^{-1}\circ T=Q$. Thus,
$T=r_{T\left(  Y\left(  \alpha\right)  \right)  }\circ Q$. Since $Q$ is an
immaculate tableau of shape $\alpha$, this shows that $T$ is an immaculate
tableau of shape $\alpha$ (since the map $r_{T\left(  Y\left(  \alpha\right)
\right)  }$ is strictly increasing, and the inequality conditions which decide
whether a map $Y\left(  \alpha\right)  \rightarrow\left\{  1,2,3,\ldots
\right\}  $ is an immaculate tableau of shape $\alpha$ are preserved under
composition with a strictly increasing map).}. Hence, the summation sign
\textquotedblleft$\sum_{\substack{T\text{ is an immaculate}\\\text{tableau of
shape }\alpha;\\r_{T\left(  Y\left(  \alpha\right)  \right)  }^{-1}\circ
T=Q}}$\textquotedblright\ on the right hand side of (\ref{pf.dualImm.M}) can
be replaced by \textquotedblleft$\sum_{\substack{T:Y\left(  \alpha\right)
\rightarrow\left\{  1,2,3,\ldots\right\}  ;\\r_{T\left(  Y\left(
\alpha\right)  \right)  }^{-1}\circ T=Q}}$\textquotedblright. Hence,%
\[
\sum_{\substack{T\text{ is an immaculate}\\\text{tableau of shape }%
\alpha;\\r_{T\left(  Y\left(  \alpha\right)  \right)  }^{-1}\circ
T=Q}}\mathbf{x}_{T}=\sum_{\substack{T:Y\left(  \alpha\right)  \rightarrow
\left\{  1,2,3,\ldots\right\}  ;\\r_{T\left(  Y\left(  \alpha\right)  \right)
}^{-1}\circ T=Q}}\mathbf{x}_{T}.
\]

Now, let us write the composition $\beta$ in the form $\left(  \beta_{1}%
,\beta_{2},\ldots,\beta_{\ell}\right)  $. Then, we have%
\begin{equation}
\left\vert Q^{-1}\left(  k\right)  \right\vert =
\begin{cases}
\beta_{k}, & \text{if }k\leq\ell;\\
0, & \text{if }k>\ell
\end{cases}
\ \ \ \ \ \ \ \ \ \ \text{for every positive integer }k
\label{pf.dualImm.fn.ct}%
\end{equation}
(since $Q$ has content $\beta$). Hence, $Q\left(  Y\left(  \alpha\right)
\right)  =\left\{  1,2,\ldots,\ell\right\}  $. As a consequence, the maps
$T:Y\left(  \alpha\right)  \rightarrow\left\{  1,2,3,\ldots\right\}  $
satisfying $r_{T\left(  Y\left(  \alpha\right)  \right)  }^{-1}\circ T=Q$ are
in 1-to-1 correspondence with the $\ell$-element subsets of $\left\{
1,2,3,\ldots\right\}  $ (the correspondence sends a map $T$ to the $\ell
$-element subset $T\left(  Y\left(  \alpha\right)  \right)  $, and the inverse
correspondence sends an $\ell$-element subset $I$ to the map $r_{I}\circ Q$).
But these latter subsets, in turn, are in 1-to-1 correspondence with the
strictly increasing length-$\ell$ sequences $\left(  i_{1}<i_{2}%
<\cdots<i_{\ell}\right)  $ of positive integers (the correspondence sends a
subset $G$ to the sequence $\left(  r_{G}\left(  1\right)  ,r_{G}\left(
2\right)  ,\ldots,r_{G}\left(  \ell\right)  \right)  $; of course, this latter
sequence is just the list of all elements of $G$ in increasing order).
Composing these two 1-to-1 correspondences, we conclude that the maps
$T:Y\left(  \alpha\right)  \rightarrow\left\{  1,2,3,\ldots\right\}  $
satisfying $r_{T\left(  Y\left(  \alpha\right)  \right)  }^{-1}\circ T=Q$ are
in 1-to-1 correspondence with the strictly increasing length-$\ell$ sequences
$\left(  i_{1}<i_{2}<\cdots<i_{\ell}\right)  $ of positive integers (the
correspondence sends a map $T$ to the sequence $\left(  r_{T\left(  Y\left(
\alpha\right)  \right)  }\left(  1\right)  ,r_{T\left(  Y\left(
\alpha\right)  \right)  }\left(  2\right)  ,\ldots,r_{T\left(  Y\left(
\alpha\right)  \right)  }\left(  \ell\right)  \right)  $), and this
correspondence has the property that $\mathbf{x}_{T}=x_{i_{1}}^{\beta_{1}%
}x_{i_{2}}^{\beta_{2}}\cdots x_{i_{\ell}}^{\beta_{\ell}}$ whenever some map
$T$ gets sent to some sequence $\left(  i_{1}<i_{2}<\cdots<i_{\ell}\right)  $
(because if some map $T$ gets sent to some sequence $\left(  i_{1}%
<i_{2}<\cdots<i_{\ell}\right)  $, then $\left(  i_{1},i_{2},\ldots,i_{\ell
}\right)  =\left(  r_{T\left(  Y\left(  \alpha\right)  \right)  }\left(
1\right)  ,r_{T\left(  Y\left(  \alpha\right)  \right)  }\left(  2\right)
,\ldots,r_{T\left(  Y\left(  \alpha\right)  \right)  }\left(  \ell\right)
\right)  $, so that every $k\in\left\{  1,2,\ldots,\ell\right\}  $ satisfies
$i_{k}=r_{T\left(  Y\left(  \alpha\right)  \right)  }\left(  k\right)  $, and
now we have%
\begin{align*}
\mathbf{x}_{T}  &  =\prod_{\left(  i,j\right)  \in Y\left(  \alpha\right)
}x_{T\left(  i,j\right)  }=\prod_{k=1}^{\ell}\prod_{\substack{\left(
i,j\right)  \in Y\left(  \alpha\right)  ;\\Q\left(  i,j\right)  =k}%
}\underbrace{x_{T\left(  i,j\right)  }}_{\substack{=x_{r_{T\left(  Y\left(
\alpha\right)  \right)  }\left(  Q\left(  i,j\right)  \right)  }\\\text{(since
}T\left(  i,j\right)  =r_{T\left(  Y\left(  \alpha\right)  \right)  }\left(
Q\left(  i,j\right)  \right)  \\\text{(because }r_{T\left(  Y\left(
\alpha\right)  \right)  }^{-1}\circ T=Q\\\text{and thus }T=r_{T\left(
Y\left(  \alpha\right)  \right)  }\circ Q\text{))}}}\\
&  \ \ \ \ \ \ \ \ \ \ \left(  \text{since }Q\left(  Y\left(  \alpha\right)
\right)  =\left\{  1,2,\ldots,\ell\right\}  \right) \\
&  =\prod_{k=1}^{\ell}\underbrace{\prod_{\substack{\left(  i,j\right)  \in
Y\left(  \alpha\right)  ;\\Q\left(  i,j\right)  =k}}}_{=\prod_{\left(
i,j\right)  \in Q^{-1}\left(  k\right)  }}\underbrace{x_{r_{T\left(  Y\left(
\alpha\right)  \right)  }\left(  Q\left(  i,j\right)  \right)  }%
}_{\substack{=x_{r_{T\left(  Y\left(  \alpha\right)  \right)  }\left(
k\right)  }\\\text{(since }Q\left(  i,j\right)  =k\text{)}}}\\
&  =\prod_{k=1}^{\ell}\underbrace{\prod_{\left(  i,j\right)  \in Q^{-1}\left(
k\right)  }x_{r_{T\left(  Y\left(  \alpha\right)  \right)  }\left(  k\right)
}}_{\substack{=x_{r_{T\left(  Y\left(  \alpha\right)  \right)  }\left(
k\right)  }^{\left\vert Q^{-1}\left(  k\right)  \right\vert }=x_{i_{k}%
}^{\left\vert Q^{-1}\left(  k\right)  \right\vert }\\\text{(since }r_{T\left(
Y\left(  \alpha\right)  \right)  }\left(  k\right)  =i_{k}\text{)}}%
}=\prod_{k=1}^{\ell}\underbrace{x_{i_{k}}^{\left\vert Q^{-1}\left(  k\right)
\right\vert }}_{\substack{=x_{i_{k}}^{\beta_{k}}\\\text{(since }\left\vert
Q^{-1}\left(  k\right)  \right\vert =\beta_{k}\\\text{(by
(\ref{pf.dualImm.fn.ct})))}}}=\prod_{k=1}^{\ell}x_{i_{k}}^{\beta_{k}}%
=x_{i_{1}}^{\beta_{1}}x_{i_{2}}^{\beta_{2}}\cdots x_{i_{\ell}}^{\beta_{\ell}}%
\end{align*}
). Hence,
\[
\sum_{\substack{T:Y\left(  \alpha\right)  \rightarrow\left\{  1,2,3,\ldots
\right\}  ;\\r_{T\left(  Y\left(  \alpha\right)  \right)  }^{-1}\circ
T=Q}}\mathbf{x}_{T}=\sum_{1\leq i_{1}<i_{2}<\cdots<i_{\ell}}x_{i_{1}}%
^{\beta_{1}}x_{i_{2}}^{\beta_{2}}\cdots x_{i_{\ell}}^{\beta_{\ell}}=M_{\beta}%
\]
(by the definition of $M_{\beta}$). Altogether, we thus have%
\[
\sum_{\substack{T\text{ is an immaculate}\\\text{tableau of shape }%
\alpha;\\r_{T\left(  Y\left(  \alpha\right)  \right)  }^{-1}\circ
T=Q}}\mathbf{x}_{T}=\sum_{\substack{T:Y\left(  \alpha\right)  \rightarrow
\left\{  1,2,3,\ldots\right\}  ;\\r_{T\left(  Y\left(  \alpha\right)  \right)
}^{-1}\circ T=Q}}\mathbf{x}_{T}=M_{\beta}.
\]
This proves Lemma \ref{lem.dualImm.trivia} \textbf{(c)}.
\end{proof}

\begin{proof}
[Proof of Proposition \ref{prop.dualImm}.]For every finite subset $I$ of
$\left\{  1,2,3,\ldots\right\}  $, we shall use the notation $r_{I}$
introduced in Lemma \ref{lem.dualImm.trivia} \textbf{(a)}. Recall Lemma
\ref{lem.dualImm.trivia} \textbf{(b)}; it says that if $T$ is an immaculate
tableau of shape $\alpha$, then $r_{T\left(  Y\left(  \alpha\right)  \right)
}^{-1}\circ T$ is an immaculate tableau of shape $\alpha$ as well, and has the
additional property that there exists a unique composition $\beta$ of
$\left\vert \alpha\right\vert $ such that $r_{T\left(  Y\left(  \alpha\right)
\right)  }^{-1}\circ T$ has content $\beta$.

Now,
\begin{equation}
\mathfrak{S}_{\alpha}^{\ast}=\sum_{\beta\models\left\vert \alpha\right\vert
}\underbrace{K_{\alpha,\beta}M_{\beta}}_{\substack{=\sum_{\substack{Q\text{ is
an immaculate}\\\text{tableau of shape }\alpha\\\text{and content }\beta
}}M_{\beta}\\\text{(by the definition of }K_{\alpha,\beta}\text{)}}%
}=\sum_{\beta\models\left\vert \alpha\right\vert }\sum_{\substack{Q\text{ is
an immaculate}\\\text{tableau of shape }\alpha\\\text{and content }\beta
}}M_{\beta}. \label{pf.dualImm.1}%
\end{equation}

But (\ref{pf.dualImm.M}) shows that every composition $\beta$ of $\left\vert
\alpha\right\vert $ satisfies%
\[
\sum_{\substack{Q\text{ is an immaculate}\\\text{tableau of shape }%
\alpha\\\text{and content }\beta}}M_{\beta}=\sum_{\substack{Q\text{ is an
immaculate}\\\text{tableau of shape }\alpha\\\text{and content }\beta}%
}\sum_{\substack{T\text{ is an immaculate}\\\text{tableau of shape }%
\alpha;\\r_{T\left(  Y\left(  \alpha\right)  \right)  }^{-1}\circ
T=Q}}\mathbf{x}_{T}=\sum_{\substack{T\text{ is an immaculate}\\\text{tableau
of shape }\alpha\\\text{such that }r_{T\left(  Y\left(  \alpha\right)
\right)  }^{-1}\circ T\\\text{has content }\beta}}\mathbf{x}_{T}%
\]
(because for every immaculate tableau $T$ of shape $\alpha$, the map
$r_{T\left(  Y\left(  \alpha\right)  \right)  }^{-1}\circ T$ is an immaculate
tableau of shape $\alpha$ as well). Substituting this into (\ref{pf.dualImm.1}%
), we obtain%
\begin{align*}
\mathfrak{S}_{\alpha}^{\ast}  &  =\sum_{\beta\models\left\vert \alpha
\right\vert }\underbrace{\sum_{\substack{Q\text{ is an immaculate}%
\\\text{tableau of shape }\alpha\\\text{and content }\beta}}M_{\beta}}%
_{=\sum_{\substack{T\text{ is an immaculate}\\\text{tableau of shape }%
\alpha\\\text{such that }r_{T\left(  Y\left(  \alpha\right)  \right)  }%
^{-1}\circ T\\\text{has content }\beta}}\mathbf{x}_{T}}=\sum_{\beta
\models\left\vert \alpha\right\vert }\sum_{\substack{T\text{ is an
immaculate}\\\text{tableau of shape }\alpha\\\text{such that }r_{T\left(
Y\left(  \alpha\right)  \right)  }^{-1}\circ T\\\text{has content }\beta
}}\mathbf{x}_{T}\\
&  =\sum_{\substack{T\text{ is an immaculate}\\\text{tableau of shape }\alpha
}}\mathbf{x}_{T}%
\end{align*}
(because for every immaculate tableau $T$ of shape $\alpha$, there exists a
unique composition $\beta$ of $\left\vert \alpha\right\vert $ such that
$r_{T\left(  Y\left(  \alpha\right)  \right)  }^{-1}\circ T$ has content
$\beta$), whence Proposition \ref{prop.dualImm} follows.
\end{proof}

\begin{corollary}
\label{cor.dualImm.dend}Let $\alpha=\left(  \alpha_{1},\alpha_{2}%
,\ldots,\alpha_{\ell}\right)  $ be a composition with $\ell>0$. Let
$\overline{\alpha}$ denote the composition $\left(  \alpha_{2},\alpha
_{3},\ldots,\alpha_{\ell}\right)  $ of $\left\vert \alpha\right\vert
-\alpha_{1}$. Then,%
\[
\mathfrak{S}_{\alpha}^{\ast}=h_{\alpha_{1}}\left.  \prec\right.
\mathfrak{S}_{\overline{\alpha}}^{\ast}.
\]
Here, $h_{n}$ denotes the $n$-th complete homogeneous symmetric function for
every $n\in\mathbb{N}$.
\end{corollary}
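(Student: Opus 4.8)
The plan is to establish the identity by comparing the combinatorial (immaculate-tableau) descriptions of the two sides. First I would invoke Proposition~\ref{prop.dualImm} to write
\[
\mathfrak{S}_{\alpha}^{\ast}=\sum_{\substack{T\text{ immaculate}\\\text{tableau of shape }\alpha}}\mathbf{x}_{T}
\qquad\text{and}\qquad
\mathfrak{S}_{\overline{\alpha}}^{\ast}=\sum_{\substack{T'\text{ immaculate}\\\text{tableau of shape }\overline{\alpha}}}\mathbf{x}_{T'},
\]
and record the trivial observation that $h_{\alpha_{1}}=\sum_{\mathfrak{m}\in\operatorname*{Mon},\ \deg\mathfrak{m}=\alpha_{1}}\mathfrak{m}$, each monomial of degree $\alpha_{1}$ having a unique weakly increasing factorization. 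When $\ell=1$ the statement degenerates to $\mathfrak{S}_{(\alpha_{1})}^{\ast}=h_{\alpha_{1}}\left.\prec\right.1=h_{\alpha_{1}}$ (using $\mathfrak{m}\left.\prec\right.1=\mathfrak{m}$ for nonconstant $\mathfrak{m}$, $\mathfrak{S}_{\varnothing}^{\ast}=1$, and $\mathfrak{S}_{(\alpha_{1})}^{\ast}=h_{\alpha_{1}}$), but the argument below treats all $\ell>0$ uniformly once the convention $\min\varnothing=\infty$ from Definition~\ref{def.Dless} is kept in force.

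Second, I would expand the right-hand side using the $\mathbf{k}$-bilinearity and continuity of $\left.\prec\right.$ together with~\eqref{eq.def.Dless.monomial} (exactly as in the proof of Proposition~\ref{prop.QSym.closed}):
\[
h_{\alpha_{1}}\left.\prec\right.\mathfrak{S}_{\overline{\alpha}}^{\ast}
=\sum_{\substack{\mathfrak{m}\in\operatorname*{Mon},\ \deg\mathfrak{m}=\alpha_{1};\\T'\text{ immaculate of shape }\overline{\alpha};\\\min\left(\operatorname*{Supp}\mathfrak{m}\right)<\min\left(\operatorname*{Supp}\mathbf{x}_{T'}\right)}}\mathfrak{m}\cdot\mathbf{x}_{T'}.
\]
The core of the proof is then a weight-preserving bijection $\Phi$ from the index set of this sum to the set of immaculate tableaux of shape $\alpha$: given $(\mathfrak{m},T')$, write $\mathfrak{m}=x_{i_{1}}x_{i_{2}}\cdots x_{i_{\alpha_{1}}}$ with $i_{1}\leq i_{2}\leq\cdots\leq i_{\alpha_{1}}$, and let $\Phi(\mathfrak{m},T')$ be the filling $T$ of $Y(\alpha)$ whose first row is $(i_{1},i_{2},\ldots,i_{\alpha_{1}})$ and whose remaining rows carry the entries of $T'$ (that is, $T(i,j)=T'(i-1,j)$ for $i\geq2$). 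Its inverse deletes the first row of a tableau $T$ of shape $\alpha$, recording that row as the monomial $\prod_{j=1}^{\alpha_{1}}x_{T(1,j)}$ and leaving the tableau $T'$ of shape $\overline{\alpha}$ given by $T'(i,j)=T(i+1,j)$. In both directions one has $\mathbf{x}_{T}=\mathfrak{m}\cdot\mathbf{x}_{T'}$, so once $\Phi$ is shown to be a well-defined bijection, the displayed sum equals $\sum_{T}\mathbf{x}_{T}=\mathfrak{S}_{\alpha}^{\ast}$, which is the assertion.

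The only point demanding attention is the well-definedness of $\Phi$ and $\Phi^{-1}$. All the defining axioms of an immaculate tableau (Definition~\ref{def.immactab}) transfer cell-by-cell between $T$ and the pair $(\text{first row},T')$, except for the single strict inequality $T(1,1)<T(2,1)$ linking the new top row to the rest. Here one uses that $T(1,1)=i_{1}=\min\left(\operatorname*{Supp}\mathfrak{m}\right)$ (the first row being weakly increasing) and that $T(2,1)=\min\left(\operatorname*{Supp}\mathbf{x}_{T'}\right)$, since the smallest entry of any immaculate tableau occupies its top-left cell — an immediate consequence of the two axioms (and, for $\overline{\alpha}=\varnothing$, of the convention $\min\varnothing=\infty$, which also makes the condition automatic in that case). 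Thus $T(1,1)<T(2,1)$ is literally the support condition $\min\left(\operatorname*{Supp}\mathfrak{m}\right)<\min\left(\operatorname*{Supp}\mathbf{x}_{T'}\right)$ appearing under the summation sign, so both directions of $\Phi$ land where they should. I anticipate no genuine obstacle: the entire content is the matching of the $\left.\prec\right.$-support condition with the first-column-strict axiom, plus the $\min\varnothing=\infty$ bookkeeping that absorbs the $\ell=1$ case into the general argument.
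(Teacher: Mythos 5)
Your proposal is correct and follows essentially the same route as the paper's proof: expand both sides via Proposition \ref{prop.dualImm}, distribute $\left.\prec\right.$ over the monomial expansions, and exhibit the weight-preserving ``add a top row'' bijection, with the $\left.\prec\right.$-support condition matching the strict first-column axiom because the smallest entry of an immaculate tableau sits in its top-left cell. The $\min\varnothing=\infty$ bookkeeping for $\ell=1$ is also handled as in the paper.
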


\begin{proof}
[Proof of Corollary \ref{cor.dualImm.dend}.]Proposition \ref{prop.dualImm}
shows that
\begin{equation}
\mathfrak{S}_{\alpha}^{\ast}=\sum_{\substack{T\text{ is an immaculate}%
\\\text{tableau of shape }\alpha}}\mathbf{x}_{T}=\sum_{\substack{Q\text{ is an
immaculate}\\\text{tableau of shape }\alpha}}\mathbf{x}_{Q}
\label{pf.cor.dualImm.dend.LHS}%
\end{equation}
(here, we have renamed the summation index $T$ as $Q$).

Let $n=\alpha_{1}$. If $i_{1},i_{2},\ldots,i_{n}$ are positive integers
satisfying $i_{1}\leq i_{2}\leq\cdots\leq i_{n}$, and if $T$ is an immaculate
tableau of shape $\overline{\alpha}$, then%
\begin{align}
&  \left(  x_{i_{1}}x_{i_{2}}\cdots x_{i_{n}}\right)  \left.  \prec\right.
\mathbf{x}_{T}\nonumber\\
&  =
\begin{cases}
x_{i_{1}}x_{i_{2}}\cdots x_{i_{n}}\mathbf{x}_{T}, & \text{if } \min\left(
\operatorname*{Supp}\left(  x_{i_{1}}x_{i_{2}}\cdots x_{i_{n}}\right)
\right)  <\min\left(  \operatorname*{Supp}\left(  \mathbf{x}_{T}\right)
\right)  ;\\
0, & \text{if }\min\left(  \operatorname*{Supp}\left(  x_{i_{1}}x_{i_{2}%
}\cdots x_{i_{n}}\right)  \right)  \geq\min\left(  \operatorname*{Supp}\left(
\mathbf{x}_{T}\right)  \right)
\end{cases}
\nonumber\\
&  \ \ \ \ \ \ \ \ \ \ \left(  \text{by the definition of }\left.
\prec\right.  \text{ on monomials}\right) \nonumber\\
&  =
\begin{cases}
x_{i_{1}}x_{i_{2}}\cdots x_{i_{n}}\mathbf{x}_{T}, & \text{if } i_{1}%
<\min\left(  T\left(  Y\left(  \overline{\alpha}\right)  \right)  \right)  ;\\
0, & \text{if }i_{1}\geq\min\left(  T\left(  Y\left(  \overline{\alpha
}\right)  \right)  \right)
\end{cases}
\label{pf.cor.dualImm.dend.1}\\
&  \ \ \ \ \ \ \ \ \ \ \left(  \text{since }\min\left(  \operatorname*{Supp}%
\left(  x_{i_{1}}x_{i_{2}}\cdots x_{i_{n}}\right)  \right)  =i_{1}\text{ and
}\operatorname*{Supp}\left(  \mathbf{x}_{T}\right)  =T\left(  Y\left(
\overline{\alpha}\right)  \right)  \right)  .\nonumber
\end{align}

\begin{vershort}
But from $n=\alpha_{1}$, we obtain $h_{n}=h_{\alpha_{1}}$, so that
$h_{\alpha_{1}}=h_{n}=\sum\limits_{i_{1}\leq i_{2}\leq\cdots\leq i_{n}%
}x_{i_{1}}x_{i_{2}}\cdots x_{i_{n}}$ and $\mathfrak{S}_{\overline{\alpha}%
}^{\ast}=\sum_{\substack{T\text{ is an immaculate}\\\text{tableau of shape
}\overline{\alpha}}}\mathbf{x}_{T}$ (by Proposition \ref{prop.dualImm}).
Hence,%
\begin{align}
&  h_{\alpha_{1}}\left.  \prec\right.  \mathfrak{S}_{\overline{\alpha}}^{\ast
}\nonumber\\
&  =\left(  \sum\limits_{i_{1}\leq i_{2}\leq\cdots\leq i_{n}}x_{i_{1}}%
x_{i_{2}}\cdots x_{i_{n}}\right)  \left.  \prec\right.  \left(  \sum
_{\substack{T\text{ is an immaculate}\\\text{tableau of shape }\overline
{\alpha}}}\mathbf{x}_{T}\right) \nonumber\\
&  =\sum\limits_{i_{1}\leq i_{2}\leq\cdots\leq i_{n}}\sum_{\substack{T\text{
is an immaculate}\\\text{tableau of shape }\overline{\alpha}}}\left(
x_{i_{1}}x_{i_{2}}\cdots x_{i_{n}}\right)  \left.  \prec\right.
\mathbf{x}_{T}\nonumber\\
&  =\sum\limits_{\substack{i_{1}\leq i_{2}\leq\cdots\leq i_{n};\\T\text{ is an
immaculate}\\\text{tableau of shape }\overline{\alpha};\\i_{1}<\min\left(
T\left(  Y\left(  \overline{\alpha}\right)  \right)  \right)  }}x_{i_{1}%
}x_{i_{2}}\cdots x_{i_{n}}\mathbf{x}_{T}\ \ \ \ \ \ \ \ \ \ \left(  \text{by
(\ref{pf.cor.dualImm.dend.1})}\right)  . \label{pf.cor.dualImm.dend.short.RHS}%
\end{align}
We need to check that this equals $\mathfrak{S}_{\alpha}^{\ast}=\sum
_{\substack{Q\text{ is an immaculate}\\\text{tableau of shape }\alpha
}}\mathbf{x}_{Q}$.
\end{vershort}

\begin{verlong}
But from $n=\alpha_{1}$, we obtain $h_{n}=h_{\alpha_{1}}$, so that
$h_{\alpha_{1}}=h_{n}=\sum\limits_{i_{1}\leq i_{2}\leq\cdots\leq i_{n}%
}x_{i_{1}}x_{i_{2}}\cdots x_{i_{n}}$ and $\mathfrak{S}_{\overline{\alpha}%
}^{\ast}=\sum_{\substack{T\text{ is an immaculate}\\\text{tableau of shape
}\overline{\alpha}}}\mathbf{x}_{T}$ (by Proposition \ref{prop.dualImm}).
Hence,%
\begin{align}
&  h_{\alpha_{1}}\left.  \prec\right.  \mathfrak{S}_{\overline{\alpha}}^{\ast
}\nonumber\\
&  =\left(  \sum\limits_{i_{1}\leq i_{2}\leq\cdots\leq i_{n}}x_{i_{1}}%
x_{i_{2}}\cdots x_{i_{n}}\right)  \left.  \prec\right.  \left(  \sum
_{\substack{T\text{ is an immaculate}\\\text{tableau of shape }\overline
{\alpha}}}\mathbf{x}_{T}\right) \nonumber\\
&  =\sum\limits_{i_{1}\leq i_{2}\leq\cdots\leq i_{n}}\sum_{\substack{T\text{
is an immaculate}\\\text{tableau of shape }\overline{\alpha}}%
}\underbrace{\left(  x_{i_{1}}x_{i_{2}}\cdots x_{i_{n}}\right)  \left.
\prec\right.  \mathbf{x}_{T}}_{\substack{=
\begin{cases}
x_{i_{1}}x_{i_{2}}\cdots x_{i_{n}}\mathbf{x}_{T}, & \text{if }i_{1}%
<\min\left(  T\left(  Y\left(  \overline{\alpha}\right)  \right)  \right)  ;\\
0, & \text{if }i_{1}\geq\min\left(  T\left(  Y\left(  \overline{\alpha
}\right)  \right)  \right)
\end{cases}
\\\text{(by (\ref{pf.cor.dualImm.dend.1}))}}}\nonumber\\
&  =\sum\limits_{i_{1}\leq i_{2}\leq\cdots\leq i_{n}}\sum_{\substack{T\text{
is an immaculate}\\\text{tableau of shape }\overline{\alpha}}}
\begin{cases}
x_{i_{1}}x_{i_{2}}\cdots x_{i_{n}}\mathbf{x}_{T}, & \text{if }i_{1}%
<\min\left(  T\left(  Y\left(  \overline{\alpha}\right)  \right)  \right)  ;\\
0, & \text{if }i_{1}\geq\min\left(  T\left(  Y\left(  \overline{\alpha
}\right)  \right)  \right)
\end{cases}
\nonumber\\
&  =\sum\limits_{\substack{i_{1}\leq i_{2}\leq\cdots\leq i_{n};\\T\text{ is an
immaculate}\\\text{tableau of shape }\overline{\alpha};\\i_{1}<\min\left(
T\left(  Y\left(  \overline{\alpha}\right)  \right)  \right)  }}x_{i_{1}%
}x_{i_{2}}\cdots x_{i_{n}}\mathbf{x}_{T}. \label{pf.cor.dualImm.dend.RHS}%
\end{align}
We need to check that this equals $\mathfrak{S}_{\alpha}^{\ast}=\sum
_{\substack{Q\text{ is an immaculate}\\\text{tableau of shape }\alpha
}}\mathbf{x}_{Q}$.
\end{verlong}

Now, let us define a map $\Phi$ from:

\begin{itemize}
\item the set of all pairs $\left(  \left(  i_{1},i_{2},\ldots,i_{n}\right)
,T\right)  $, where $i_{1}$, $i_{2}$, $\ldots$, $i_{n}$ are positive integers
satisfying $i_{1}\leq i_{2}\leq\cdots\leq i_{n}$, and where $T$ is an
immaculate tableau of shape $\overline{\alpha}$ satisfying $i_{1}<\min\left(
T\left(  Y\left(  \overline{\alpha}\right)  \right)  \right)  $
\end{itemize}

to:

\begin{itemize}
\item the set of all immaculate tableaux of shape $\alpha$.
\end{itemize}

Namely, we define the image of a pair $\left(  \left(  i_{1},i_{2}%
,\ldots,i_{n}\right)  ,T\right)  $ under $\Phi$ to be the immaculate tableau
obtained by adding a new row, filled with the entries $i_{1},i_{2}%
,\ldots,i_{n}$ (from left to right), to the top\footnote{Here, we are using
the graphical representation of immaculate tableaux introduced in Definition
\ref{def.immactab}.} of the tableau $T$ \ \ \ \ \footnote{Formally speaking,
this means that the image of $\left(  \left(  i_{1},i_{2},\ldots,i_{n}\right)
,T\right)  $ is the map $Y\left(  \alpha\right)  \rightarrow\left\{
1,2,3,\ldots\right\}  $ which sends every $\left(  u,v\right)  \in Y\left(
\alpha\right)  $ to $%
\begin{cases}
i_{v}, & \text{if }u=1;\\
T\left(  u-1,v\right)  , & \text{if }u\neq1
\end{cases}
\quad$. Proving that this map is an immaculate tableau is easy.}.

\begin{vershort}
This map $\Phi$ is a bijection\footnote{\textit{Proof.} The injectivity of the
map $\Phi$ is obvious. Its surjectivity follows from the observation that if
$Q$ is an immaculate tableau of shape $\alpha$, then the first entry of its
top row is smaller than the smallest entry of the immaculate tableau formed by
all other rows of $Q$. (This is a consequence of
(\ref{eq.immactab.min.2.short}), applied to $Q$ instead of $T$.)}, and has the
property that if $Q$ denotes the image of a pair $\left(  \left(  i_{1}%
,i_{2},\ldots,i_{n}\right)  ,T\right)  $ under the bijection $\Phi$, then
$\mathbf{x}_{Q}=x_{i_{1}}x_{i_{2}}\cdots x_{i_{n}}\mathbf{x}_{T}$. Hence,%
\[
\sum_{\substack{Q\text{ is an immaculate}\\\text{tableau of shape }\alpha
}}\mathbf{x}_{Q}=\sum\limits_{\substack{i_{1}\leq i_{2}\leq\cdots\leq
i_{n};\\T\text{ is an immaculate}\\\text{tableau of shape }\overline{\alpha
};\\i_{1}<\min\left(  T\left(  Y\left(  \overline{\alpha}\right)  \right)
\right)  }}x_{i_{1}}x_{i_{2}}\cdots x_{i_{n}}\mathbf{x}_{T}.
\]
In light of (\ref{pf.cor.dualImm.dend.LHS}) and
(\ref{pf.cor.dualImm.dend.short.RHS}), this rewrites as $\mathfrak{S}_{\alpha
}^{\ast}=h_{\alpha_{1}}\left.  \prec\right.  \mathfrak{S}_{\overline{\alpha}%
}^{\ast}$.
\end{vershort}

\begin{verlong}
This map $\Phi$ is a bijection\footnote{\textit{Proof.} The injectivity of the
map $\Phi$ is obvious. Its surjectivity follows from the observation that if
$Q$ is an immaculate tableau of shape $\alpha$, then the first entry of its
top row is smaller than the smallest entry of the immaculate tableau formed by
all other rows of $Q$. (This is a consequence of (\ref{eq.immactab.min.2}),
applied to $Q$ instead of $T$.)}, and has the property that if $Q$ denotes the
image of a pair $\left(  \left(  i_{1},i_{2},\ldots,i_{n}\right)  ,T\right)  $
under the bijection $\Phi$, then $\mathbf{x}_{Q}=x_{i_{1}}x_{i_{2}}\cdots
x_{i_{n}}\mathbf{x}_{T}$. Hence,%
\[
\sum_{\substack{Q\text{ is an immaculate}\\\text{tableau of shape }\alpha
}}\mathbf{x}_{Q}=\sum\limits_{\substack{i_{1}\leq i_{2}\leq\cdots\leq
i_{n};\\T\text{ is an immaculate}\\\text{tableau of shape }\overline{\alpha
};\\i_{1}<\min\left(  T\left(  Y\left(  \overline{\alpha}\right)  \right)
\right)  }}x_{i_{1}}x_{i_{2}}\cdots x_{i_{n}}\mathbf{x}_{T}.
\]
In light of (\ref{pf.cor.dualImm.dend.LHS}) and (\ref{pf.cor.dualImm.dend.RHS}%
), this rewrites as $\mathfrak{S}_{\alpha}^{\ast}=h_{\alpha_{1}}\left.
\prec\right.  \mathfrak{S}_{\overline{\alpha}}^{\ast}$. So Corollary
\ref{cor.dualImm.dend} is proven.
\end{verlong}
\end{proof}

\begin{corollary}
\label{cor.dualImm.dend.explicit}Let $\alpha=\left(  \alpha_{1},\alpha
_{2},\ldots,\alpha_{\ell}\right)  $ be a composition. Then,%
\[
\mathfrak{S}_{\alpha}^{\ast}=h_{\alpha_{1}}\left.  \prec\right.  \left(
h_{\alpha_{2}}\left.  \prec\right.  \left(  \cdots\left.  \prec\right.
\left(  h_{\alpha_{\ell}}\left.  \prec\right.  1\right)  \cdots\right)
\right)  .
\]

\end{corollary}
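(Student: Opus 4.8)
The plan is to derive Corollary \ref{cor.dualImm.dend.explicit} from Corollary \ref{cor.dualImm.dend} by induction on $\ell$. For the base case $\ell = 0$, the composition $\alpha$ is empty, so $\mathfrak{S}_{\varnothing}^{\ast} = M_{\varnothing} = 1$ (since the only immaculate tableau of empty shape is the empty map, contributing the empty product $1$), and the right-hand side is the empty $\left.\prec\right.$-nesting, which is defined to be $1$ as well. For the induction step, suppose $\ell > 0$ and the claim holds for all shorter compositions. Let $\overline{\alpha} = \left(\alpha_2, \alpha_3, \ldots, \alpha_\ell\right)$ as in Corollary \ref{cor.dualImm.dend}. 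Then Corollary \ref{cor.dualImm.dend} gives
\[
\mathfrak{S}_{\alpha}^{\ast} = h_{\alpha_1} \left.\prec\right. \mathfrak{S}_{\overline{\alpha}}^{\ast},
\]
and the induction hypothesis, applied to $\overline{\alpha}$, gives
\[
\mathfrak{S}_{\overline{\alpha}}^{\ast} = h_{\alpha_2} \left.\prec\right. \left( h_{\alpha_3} \left.\prec\right. \left( \cdots \left.\prec\right. \left( h_{\alpha_\ell} \left.\prec\right. 1 \right) \cdots \right) \right).
\]
Substituting the second equality into the first yields exactly the claimed identity for $\alpha$.

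One subtlety worth a sentence of care: Corollary \ref{cor.dualImm.dend} is stated only for compositions $\alpha$ with $\ell > 0$, which is exactly the hypothesis of the induction step, so the application is legitimate. When $\ell = 1$, the composition $\overline{\alpha}$ is empty, and the induction hypothesis in that case is precisely the base case $\mathfrak{S}_{\varnothing}^{\ast} = 1$, so no separate treatment of $\ell = 1$ is needed. I would also want to confirm at the outset that $\mathfrak{S}_{\varnothing}^{\ast} = 1$; this follows immediately from Definition \ref{def.dualimmac} (the sum $\sum_{\beta \models 0} K_{\varnothing, \beta} M_\beta$ has only the term $\beta = \varnothing$ with $K_{\varnothing, \varnothing} = 1$ and $M_\varnothing = 1$) or, equivalently, from Proposition \ref{prop.dualImm}.

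There is no real obstacle here — the corollary is a routine unwinding of Corollary \ref{cor.dualImm.dend} by iteration, and all the combinatorial work has already been done in the proofs of Proposition \ref{prop.dualImm} and Corollary \ref{cor.dualImm.dend}. The only thing to be mildly attentive to is bookkeeping: making sure the nesting of the $\left.\prec\right.$ operations matches on both sides after substitution (the operation is not associative, so the parenthesization genuinely matters), but since the induction hypothesis already carries exactly the right-nested expression, the substitution produces the desired right-nested expression for $\alpha$ with no rearrangement.

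\begin{proof}
[Proof of Corollary \ref{cor.dualImm.dend.explicit}.]We induct on $\ell$.

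\textit{Base case:} If $\ell = 0$, then $\alpha = \varnothing$, so that $\mathfrak{S}_{\alpha}^{\ast} = \mathfrak{S}_{\varnothing}^{\ast} = M_{\varnothing} = 1$ (by Definition \ref{def.dualimmac}, since the only composition of $0$ is $\varnothing$, and $K_{\varnothing, \varnothing} = 1$). On the other hand, the right-hand side of the claimed equality is the empty $\left.\prec\right.$-nesting, which equals $1$. Hence the claim holds for $\ell = 0$.

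\textit{Induction step:} Let $L$ be a positive integer, and assume that Corollary \ref{cor.dualImm.dend.explicit} holds whenever $\ell = L - 1$. We must prove it for $\ell = L$. So let $\alpha = \left(\alpha_1, \alpha_2, \ldots, \alpha_L\right)$ be a composition with $L$ parts. Let $\overline{\alpha} = \left(\alpha_2, \alpha_3, \ldots, \alpha_L\right)$. Since $\alpha$ has $L > 0$ parts, Corollary \ref{cor.dualImm.dend} applies and yields
\[
\mathfrak{S}_{\alpha}^{\ast} = h_{\alpha_1} \left.\prec\right. \mathfrak{S}_{\overline{\alpha}}^{\ast}.
\]
The composition $\overline{\alpha}$ has $L - 1$ parts, so the induction hypothesis (applied to $\overline{\alpha}$ instead of $\alpha$) gives
\[
\mathfrak{S}_{\overline{\alpha}}^{\ast} = h_{\alpha_2} \left.\prec\right. \left( h_{\alpha_3} \left.\prec\right. \left( \cdots \left.\prec\right. \left( h_{\alpha_L} \left.\prec\right. 1 \right) \cdots \right) \right).
\]
Substituting this into the previous equality, we obtain
\[
\mathfrak{S}_{\alpha}^{\ast} = h_{\alpha_1} \left.\prec\right. \left( h_{\alpha_2} \left.\prec\right. \left( \cdots \left.\prec\right. \left( h_{\alpha_L} \left.\prec\right. 1 \right) \cdots \right) \right),
\]
which is precisely the claim for $\ell = L$. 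This completes the induction, and Corollary \ref{cor.dualImm.dend.explicit} is proven.
\end{proof}
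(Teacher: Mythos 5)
Your proof is correct and follows essentially the same route as the paper's own: induction on $\ell$, with base case $\mathfrak{S}_{\varnothing}^{\ast}=1$ and the induction step given by a single application of Corollary \ref{cor.dualImm.dend} followed by substitution of the induction hypothesis. The extra remarks about the parenthesization and the justification of $\mathfrak{S}_{\varnothing}^{\ast}=1$ are accurate and harmless.
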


\begin{verlong}
\begin{proof}
[Proof of Corollary \ref{cor.dualImm.dend.explicit}.]We prove Corollary
\ref{cor.dualImm.dend.explicit} by induction over $\ell$:

\textit{Induction base:} If $\ell=0$, then $\alpha=\varnothing$ and thus
$\mathfrak{S}_{\alpha}^{\ast}=\mathfrak{S}_{\varnothing}^{\ast}=1$. But if
$\ell=0$, then we also have $h_{\alpha_{1}}\left.  \prec\right.  \left(
h_{\alpha_{2}}\left.  \prec\right.  \left(  \cdots\left.  \prec\right.
\left(  h_{\alpha_{\ell}}\left.  \prec\right.  1\right)  \cdots\right)
\right)  =1$. Hence, if $\ell=0$, then $\mathfrak{S}_{\alpha}^{\ast
}=1=h_{\alpha_{1}}\left.  \prec\right.  \left(  h_{\alpha_{2}}\left.
\prec\right.  \left(  \cdots\left.  \prec\right.  \left(  h_{\alpha_{\ell}%
}\left.  \prec\right.  1\right)  \cdots\right)  \right)  $. Thus, Corollary
\ref{cor.dualImm.dend.explicit} is proven when $\ell=0$. The induction base is complete.

\textit{Induction step:} Let $L$ be a positive integer. Assume that Corollary
\ref{cor.dualImm.dend.explicit} holds for $\ell=L-1$. We now need to prove
that Corollary \ref{cor.dualImm.dend.explicit} holds for $\ell=L$.

So let $\alpha=\left(  \alpha_{1},\alpha_{2},\ldots,\alpha_{\ell}\right)  $ be
a composition with $\ell=L$. Then, $\ell=L>0$. Now, let $\overline{\alpha}$
denote the composition $\left(  \alpha_{2},\alpha_{3},\ldots,\alpha_{\ell
}\right)  $ of $\left\vert \alpha\right\vert -\alpha_{1}$. Then, Corollary
\ref{cor.dualImm.dend} yields $\mathfrak{S}_{\alpha}^{\ast}=h_{\alpha_{1}%
}\left.  \prec\right.  \mathfrak{S}_{\overline{\alpha}}^{\ast}$. But by our
induction hypothesis, we can apply Corollary \ref{cor.dualImm.dend.explicit}
to $\overline{\alpha}=\left(  \alpha_{2},\alpha_{3},\ldots,\alpha_{\ell
}\right)  $ instead of $\alpha=\left(  \alpha_{1},\alpha_{2},\ldots
,\alpha_{\ell}\right)  $ (since $\ell-1=L-1$). As a result, we obtain
$\mathfrak{S}_{\overline{\alpha}}^{\ast}=h_{\alpha_{2}}\left.  \prec\right.
\left(  h_{\alpha_{3}}\left.  \prec\right.  \left(  \cdots\left.
\prec\right.  \left(  h_{\alpha_{\ell}}\left.  \prec\right.  1\right)
\cdots\right)  \right)  $. Hence,%
\begin{align*}
\mathfrak{S}_{\alpha}^{\ast}  &  =h_{\alpha_{1}}\left.  \prec\right.
\underbrace{\mathfrak{S}_{\overline{\alpha}}^{\ast}}_{=h_{\alpha_{2}}\left.
\prec\right.  \left(  h_{\alpha_{3}}\left.  \prec\right.  \left(
\cdots\left.  \prec\right.  \left(  h_{\alpha_{\ell}}\left.  \prec\right.
1\right)  \cdots\right)  \right)  }=h_{\alpha_{1}}\left.  \prec\right.
\left(  h_{\alpha_{2}}\left.  \prec\right.  \left(  h_{\alpha_{3}}\left.
\prec\right.  \left(  \cdots\left.  \prec\right.  \left(  h_{\alpha_{\ell}%
}\left.  \prec\right.  1\right)  \cdots\right)  \right)  \right) \\
&  =h_{\alpha_{1}}\left.  \prec\right.  \left(  h_{\alpha_{2}}\left.
\prec\right.  \left(  \cdots\left.  \prec\right.  \left(  h_{\alpha_{\ell}%
}\left.  \prec\right.  1\right)  \cdots\right)  \right)  .
\end{align*}
Now, let us forget that we fixed $\alpha$. We thus have shown that
\newline$\mathfrak{S}_{\alpha}^{\ast}=h_{\alpha_{1}}\left.  \prec\right.
\left(  h_{\alpha_{2}}\left.  \prec\right.  \left(  \cdots\left.
\prec\right.  \left(  h_{\alpha_{\ell}}\left.  \prec\right.  1\right)
\cdots\right)  \right)  $ for every composition $\alpha=\left(  \alpha
_{1},\alpha_{2},\ldots,\alpha_{\ell}\right)  $ which satisfies $\ell=L$. In
other words, Corollary \ref{cor.dualImm.dend.explicit} holds for $\ell=L$.
This completes the induction step. The induction proof of Corollary
\ref{cor.dualImm.dend.explicit} is thus complete.
\end{proof}
\end{verlong}

\begin{vershort}
\begin{proof}
[Proof of Corollary \ref{cor.dualImm.dend.explicit}.]This follows by induction
from Corollary \ref{cor.dualImm.dend} (since $\mathfrak{S}_{\varnothing}%
^{\ast}=1$).
\end{proof}
\end{vershort}

\section{\label{sect.zabrocki}An alternative description of $h_{m}\left.
\prec\right.  $}

In this section, we shall also use the Hopf algebra of \textit{noncommutative
symmetric functions}. This Hopf algebra (a noncommutative one, for a change)
is denoted by $\operatorname*{NSym}$ and has been discussed in \cite[Section
5.4]{Reiner} and \cite[Chapter 6]{HGK}; all we need to know about it are the
following properties:

\begin{itemize}
\item There is a nondegenerate pairing between $\operatorname*{NSym}$ and
$\operatorname*{QSym}$, that is, a nondegenerate $\mathbf{k}$-bilinear form
$\operatorname*{NSym}\times\operatorname*{QSym}\rightarrow\mathbf{k}$. We
shall denote this bilinear form by $\left(  \cdot,\cdot\right)  $. This
$\mathbf{k}$-bilinear form is a Hopf algebra pairing, i.e., it satisfies%
\begin{align}
\left(  ab,c\right)   &  =\sum_{\left(  c\right)  }\left(  a,c_{\left(
1\right)  }\right)  \left(  b,c_{\left(  2\right)  }\right)  \label{eq.(ab,c)}%
\\
&  \ \ \ \ \ \ \ \ \ \ \text{for all }a\in\operatorname*{NSym}\text{, }%
b\in\operatorname*{NSym}\text{ and }c\in\operatorname*{QSym};\nonumber
\end{align}%
\[
\left(  1,c\right)  =\varepsilon\left(  c\right)
\ \ \ \ \ \ \ \ \ \ \text{for all }c\in\operatorname*{QSym};
\]%
\begin{align*}
\sum_{\left(  a\right)  }\left(  a_{\left(  1\right)  },b\right)  \left(
a_{\left(  2\right)  },c\right)   &  =\left(  a,bc\right) \\
\ \ \ \ \ \ \ \ \ \ \text{for all }a  &  \in\operatorname*{NSym}\text{, }%
b\in\operatorname*{QSym}\text{ and }c\in\operatorname*{QSym};
\end{align*}%
\[
\left(  a,1\right)  =\varepsilon\left(  a\right)
\ \ \ \ \ \ \ \ \ \ \text{for all }a\in\operatorname*{NSym};
\]%
\[
\left(  S\left(  a\right)  ,b\right)  =\left(  a,S\left(  b\right)  \right)
\ \ \ \ \ \ \ \ \ \ \text{for all }a\in\operatorname*{NSym}\text{ and }%
b\in\operatorname*{QSym}%
\]
(where we use Sweedler's notation).

\item There is a basis of the $\mathbf{k}$-module $\operatorname*{NSym}$ which
is dual to the fundamental basis $\left(  F_{\alpha}\right)  _{\alpha
\in\operatorname*{Comp}}$ of $\operatorname*{QSym}$ with respect to the
bilinear form $\left(  \cdot,\cdot\right)  $. This basis is called the
\textit{ribbon basis} and will be denoted by $\left(  R_{\alpha}\right)
_{\alpha\in\operatorname*{Comp}}$.
\end{itemize}

Both of these properties are immediate consequences of the definitions of
$\operatorname*{NSym}$ and of $\left(  R_{\alpha}\right)  _{\alpha
\in\operatorname*{Comp}}$ given in \cite[Section 5.4]{Reiner} (although other
sources define these objects differently, and then the properties no longer
are immediate). The notations we are using here are the same as the ones used
in \cite[Section 5.4]{Reiner} (except that \cite[Section 5.4]{Reiner} calls
$L_{\alpha}$ what we denote by $F_{\alpha}$), and only slightly differ from
those in \cite{BBSSZ} (namely, \cite{BBSSZ} denotes the pairing $\left(
\cdot,\cdot\right)  $ by $\left\langle \cdot,\cdot\right\rangle $ instead).

We need some more definitions. For any $g\in\operatorname*{NSym}$, let
$\operatorname{L}_{g}:\operatorname*{NSym}\rightarrow\operatorname*{NSym}$
denote the left multiplication by $g$ on $\operatorname*{NSym}$ (that is, the
$\mathbf{k}$-linear map $\operatorname{NSym}\rightarrow\operatorname{NSym}%
,\ f\mapsto gf$). For any $g\in\operatorname{NSym}$, let $g^{\perp
}:\operatorname*{QSym}\rightarrow\operatorname*{QSym}$ be the $\mathbf{k}%
$-linear map adjoint to $\operatorname{L}_{g}:\operatorname*{NSym}%
\rightarrow\operatorname*{NSym}$ with respect to the pairing $\left(
\cdot,\cdot\right)  $ between $\operatorname*{NSym}$ and $\operatorname*{QSym}%
$. Thus, for any $g\in\operatorname*{NSym}$, $a\in\operatorname*{NSym}$ and
$c\in\operatorname*{QSym}$, we have%
\begin{equation}
\left(  a,g^{\perp}c\right)  =\left(  \underbrace{\operatorname*{L}%
\nolimits_{g}a}_{=ga},c\right)  =\left(  ga,c\right)  .
\label{pf.lem.adjoint-g.pre}%
\end{equation}
The following fact is well-known (and also is an easy formal consequence of
the definition of $g^{\perp}$ and of (\ref{eq.(ab,c)})):

\begin{lemma}
\label{lem.adjoint-g}Every $g\in\operatorname*{NSym}$ and $f\in
\operatorname*{QSym}$ satisfy%
\begin{equation}
g^{\perp}f=\sum_{\left(  f\right)  }\left(  g,f_{\left(  1\right)  }\right)
f_{\left(  2\right)  }. \label{pf.hmDless.1}%
\end{equation}

\end{lemma}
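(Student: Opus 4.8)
The plan is to prove Lemma \ref{lem.adjoint-g} directly from the definition of $g^{\perp}$ together with the Hopf pairing axiom (\ref{eq.(ab,c)}), exploiting the nondegeneracy of the pairing $\left(  \cdot,\cdot\right)  $ on $\operatorname*{NSym}$. The key observation is that (\ref{pf.hmDless.1}) is an equality between two elements of $\operatorname*{QSym}$, so it suffices to check that both sides have the same pairing with every element $a\in\operatorname*{NSym}$, since the pairing is nondegenerate (a $\mathbf{k}$-bilinear form $V \times W \to \mathbf{k}$ that is nondegenerate in both arguments lets us distinguish elements of $W$ by their pairings with all of $V$; here nondegeneracy on the $\operatorname*{NSym}$-side is what we use).

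First I would fix $g\in\operatorname*{NSym}$ and $f\in\operatorname*{QSym}$, and fix an arbitrary $a\in\operatorname*{NSym}$. Then I would compute the pairing of $a$ with the left-hand side: by (\ref{pf.lem.adjoint-g.pre}) we have $\left(  a,g^{\perp}f\right)  =\left(  ga,f\right)$. Next I would compute the pairing of $a$ with the right-hand side: using $\mathbf{k}$-bilinearity of the pairing,
\[
\left(  a,\sum_{\left(  f\right)  }\left(  g,f_{\left(  1\right)  }\right)  f_{\left(  2\right)  }\right)  =\sum_{\left(  f\right)  }\left(  g,f_{\left(  1\right)  }\right)  \left(  a,f_{\left(  2\right)  }\right)  .
\]
Now the right-hand side of this is exactly what (\ref{eq.(ab,c)}) equals when applied with the roles $\left(  g\cdot a,\ f\right)  \mapsto \sum_{\left(  f\right)}\left(  g, f_{(1)}\right)\left(  a, f_{(2)}\right)$; that is, (\ref{eq.(ab,c)}) with $a$ replaced by $g$, $b$ replaced by $a$, and $c$ replaced by $f$ gives precisely $\left(  ga,f\right)  =\sum_{\left(  f\right)  }\left(  g,f_{\left(  1\right)  }\right)  \left(  a,f_{\left(  2\right)  }\right)$. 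Hence the two pairings agree: $\left(  a,g^{\perp}f\right)  =\left(  a,\sum_{\left(  f\right)  }\left(  g,f_{\left(  1\right)  }\right)  f_{\left(  2\right)  }\right)$ for all $a\in\operatorname*{NSym}$.

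Finally, since $a\in\operatorname*{NSym}$ was arbitrary and the pairing $\left(  \cdot,\cdot\right)  $ is nondegenerate (so that an element of $\operatorname*{QSym}$ is determined by its pairings with all elements of $\operatorname*{NSym}$), I would conclude $g^{\perp}f=\sum_{\left(  f\right)  }\left(  g,f_{\left(  1\right)  }\right)  f_{\left(  2\right)  }$, which is (\ref{pf.hmDless.1}). This proves Lemma \ref{lem.adjoint-g}.

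I do not expect any real obstacle here: the lemma is a routine formal manipulation, and the only thing to be slightly careful about is matching the indices in Sweedler's notation correctly when invoking (\ref{eq.(ab,c)}) — specifically, making sure that it is the \emph{first} tensor factor $f_{(1)}$ that gets paired with $g$ and the \emph{second} factor $f_{(2)}$ with $a$, consistent with the order $ga$ (not $ag$) in which the product is taken. This ordering is dictated by the fact that $g^{\perp}$ is adjoint to \emph{left} multiplication $\operatorname{L}_g$, and it matches (\ref{eq.(ab,c)}) verbatim once we substitute $a \rightsquigarrow g$, $b \rightsquigarrow a$, $c \rightsquigarrow f$.
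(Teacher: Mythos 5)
Your proposal is correct and coincides with the paper's own proof: both compute $\left(a, g^{\perp}f\right) = \left(ga, f\right)$ from the definition of $g^{\perp}$, apply (\ref{eq.(ab,c)}) with $a$, $b$, $c$ replaced by $g$, $a$, $f$ to obtain $\sum_{\left(f\right)}\left(g, f_{\left(1\right)}\right)\left(a, f_{\left(2\right)}\right)$, pull the sum inside the pairing by bilinearity, and conclude by nondegeneracy. Your remark about the ordering $ga$ versus $ag$ being forced by adjointness to \emph{left} multiplication is exactly the right point of care.
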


\begin{vershort}
\begin{proof}
[Proof of Lemma \ref{lem.adjoint-g}.]See the detailed version of this note.
\end{proof}
\end{vershort}

\begin{verlong}
\begin{proof}
[Proof of Lemma \ref{lem.adjoint-g}.]Let $g\in\operatorname*{NSym}$ and
$f\in\operatorname*{QSym}$. For every $a\in\operatorname*{NSym}$, we have%
\begin{align*}
\left(  a,g^{\perp}f\right)   &  =\left(  \underbrace{\operatorname*{L}%
\nolimits_{g}a}_{\substack{=ga\\\text{(by the definition of }\operatorname*{L}%
\nolimits_{g}\text{)}}},f\right)  \ \ \ \ \ \ \ \ \ \ \left(
\begin{array}
[c]{c}%
\text{since the map }g^{\perp}\text{ is adjoint to }\operatorname*{L}%
\nolimits_{g}\\
\text{with respect to the pairing }\left(  \cdot,\cdot\right)
\end{array}
\right) \\
&  =\left(  ga,f\right)  =\sum_{\left(  f\right)  }\left(  g,f_{\left(
1\right)  }\right)  \left(  a,f_{\left(  2\right)  }\right)
\ \ \ \ \ \ \ \ \ \ \left(
\begin{array}
[c]{c}%
\text{by (\ref{eq.(ab,c)}), applied to }g\text{, }a\text{ and }f\\
\text{instead of }a\text{, }b\text{ and }c
\end{array}
\right) \\
&  =\left(  a,\sum_{\left(  f\right)  }\left(  g,f_{\left(  1\right)
}\right)  f_{\left(  2\right)  }\right)  \ \ \ \ \ \ \ \ \ \ \left(
\text{since the pairing }\left(  \cdot,\cdot\right)  \text{ is }%
\mathbf{k}\text{-bilinear}\right)  .
\end{align*}
Since the pairing $\left(  \cdot,\cdot\right)  $ is nondegenerate, this
entails that $g^{\perp}f=\sum_{\left(  f\right)  }\left(  g,f_{\left(
1\right)  }\right)  f_{\left(  2\right)  }$. This proves Lemma
\ref{lem.adjoint-g}.
\end{proof}
\end{verlong}

For any composition $\alpha$, we define a composition $\omega\left(
\alpha\right)  $ as follows: Let $n=\left\vert \alpha\right\vert $, and write
$\alpha$ as $\alpha=\left(  \alpha_{1},\alpha_{2},\ldots,\alpha_{\ell}\right)
$. Let $\operatorname*{rev}\alpha$ denote the composition $\left(
\alpha_{\ell},\alpha_{\ell-1},\ldots,\alpha_{1}\right)  $ of $n$. Then,
$\omega\left(  \alpha\right)  $ shall be the unique composition $\beta$ of $n$
which satisfies $D\left(  \beta\right)  =\left\{  1,2,\ldots,n-1\right\}
\setminus D\left(  \operatorname*{rev}\alpha\right)  $. (This definition is
identical with that in \cite[Definition 5.2.14]{Reiner}. Some authors denote
$\omega\left(  \alpha\right)  $ by $\alpha^{\prime}$ instead.) We notice that
$\omega\left(  \omega\left(  \alpha\right)  \right)  =\alpha$ for any
composition $\alpha$.

\begin{verlong}
Here is a simple property of the composition $\omega\left(  \alpha\right)  $
that will later be used:

\begin{proposition}
\label{prop.omega.odot}\textbf{(a)} We have $\omega\left(  \left[
\alpha,\beta\right]  \right)  =\omega\left(  \beta\right)  \odot\omega\left(
\alpha\right)  $ for any two compositions $\alpha$ and $\beta$.

\textbf{(b)} We have $\omega\left(  \alpha\odot\beta\right)  =\left[
\omega\left(  \beta\right)  ,\omega\left(  \alpha\right)  \right]  $ for any
two compositions $\alpha$ and $\beta$.

\textbf{(c)} We have $\omega\left(  \omega\left(  \gamma\right)  \right)
=\gamma$ for every composition $\gamma$.
\end{proposition}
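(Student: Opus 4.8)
The plan is to push everything down to the level of the partial-sum sets $D(\cdot)$, using Lemma \ref{lem.D(a(.)b)} together with the fact (recalled in the text) that a composition of $n$ is uniquely determined by $n$ and its set of partial sums. I would prove parts \textbf{(c)} and \textbf{(a)} directly by such set-theoretic bookkeeping, and then deduce \textbf{(b)} formally from \textbf{(a)} and \textbf{(c)}.

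First I would record a few elementary facts about reversal. For any composition $\gamma$ of $m$ one has $D(\operatorname*{rev}\gamma)=\{\,m-d \ \mid\ d\in D(\gamma)\,\}$ (immediate from writing out partial sums) and $\operatorname*{rev}(\operatorname*{rev}\gamma)=\gamma$; moreover $\operatorname*{rev}[\alpha,\beta]=[\operatorname*{rev}\beta,\operatorname*{rev}\alpha]$ directly from the definition of concatenation. With these in hand, \textbf{(c)} is a short computation: set $n=|\gamma|$ and note that $x\mapsto n-x$ is a bijection of $\{1,2,\ldots,n-1\}$ and hence commutes with complementation. Then $D(\operatorname*{rev}\omega(\gamma))=n-D(\omega(\gamma))=n-\bigl(\{1,2,\ldots,n-1\}\setminus D(\operatorname*{rev}\gamma)\bigr)=\{1,2,\ldots,n-1\}\setminus\bigl(n-D(\operatorname*{rev}\gamma)\bigr)=\{1,2,\ldots,n-1\}\setminus D(\gamma)$, so $D(\omega(\omega(\gamma)))=\{1,2,\ldots,n-1\}\setminus D(\operatorname*{rev}\omega(\gamma))=D(\gamma)$, whence $\omega(\omega(\gamma))=\gamma$.

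For \textbf{(a)} I would first dispose of the degenerate cases: if $\alpha=\varnothing$ then $[\alpha,\beta]=\beta$ and the right-hand side is $\omega(\beta)\odot\omega(\varnothing)=\omega(\beta)\odot\varnothing=\omega(\beta)$ (using $\omega(\varnothing)=\varnothing$ and $\delta\odot\varnothing=\delta$), and symmetrically if $\beta=\varnothing$. So assume $\alpha,\beta$ nonempty, put $p=|\alpha|>0$, $q=|\beta|>0$, $n=p+q$. On the left, by definition of $\omega$ and by $\operatorname*{rev}[\alpha,\beta]=[\operatorname*{rev}\beta,\operatorname*{rev}\alpha]$, we have $D(\omega([\alpha,\beta]))=\{1,2,\ldots,n-1\}\setminus D([\operatorname*{rev}\beta,\operatorname*{rev}\alpha])$, and Lemma \ref{lem.D(a(.)b)}\textbf{(b)} applied to the compositions $\operatorname*{rev}\beta$ of $q$ and $\operatorname*{rev}\alpha$ of $p$ gives $D([\operatorname*{rev}\beta,\operatorname*{rev}\alpha])=D(\operatorname*{rev}\beta)\cup\{q\}\cup\bigl(D(\operatorname*{rev}\alpha)+q\bigr)$. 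On the right, $\omega(\beta)$ is a composition of $q$ and $\omega(\alpha)$ of $p$, so Lemma \ref{lem.D(a(.)b)}\textbf{(a)} gives $D(\omega(\beta)\odot\omega(\alpha))=D(\omega(\beta))\cup\bigl(D(\omega(\alpha))+q\bigr)$, where $D(\omega(\beta))=\{1,2,\ldots,q-1\}\setminus D(\operatorname*{rev}\beta)$ and $D(\omega(\alpha))+q=\{q+1,\ldots,n-1\}\setminus\bigl(D(\operatorname*{rev}\alpha)+q\bigr)$ (translation commutes with set difference). Now I split $\{1,2,\ldots,n-1\}$ as the disjoint union $\{1,\ldots,q-1\}\sqcup\{q\}\sqcup\{q+1,\ldots,n-1\}$, observe that $D(\operatorname*{rev}\beta)$ lies in the first block, that $\{q\}$ is the middle block, and that $D(\operatorname*{rev}\alpha)+q$ lies in the third; removing the union $D(\operatorname*{rev}\beta)\cup\{q\}\cup\bigl(D(\operatorname*{rev}\alpha)+q\bigr)$ from $\{1,2,\ldots,n-1\}$ therefore yields exactly $\bigl(\{1,\ldots,q-1\}\setminus D(\operatorname*{rev}\beta)\bigr)\cup\bigl(\{q+1,\ldots,n-1\}\setminus(D(\operatorname*{rev}\alpha)+q)\bigr)$, which is precisely $D(\omega(\beta)\odot\omega(\alpha))$. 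Hence the two $D$-sets coincide, both sides are compositions of $n$, and \textbf{(a)} follows.

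Finally, \textbf{(b)} is purely formal: apply \textbf{(a)} with $\alpha,\beta$ replaced by $\omega(\beta),\omega(\alpha)$ to get $\omega\bigl([\omega(\beta),\omega(\alpha)]\bigr)=\omega(\omega(\alpha))\odot\omega(\omega(\beta))=\alpha\odot\beta$ (using \textbf{(c)}), then apply $\omega$ to both sides and use \textbf{(c)} once more to obtain $[\omega(\beta),\omega(\alpha)]=\omega(\alpha\odot\beta)$. The only genuine obstacle is the index-shift bookkeeping inside \textbf{(a)} — in particular spotting that the cut point on the $\odot$-side of the identity sits at $q=|\omega(\beta)|$ rather than at $p$, and that the singleton deleted by $\omega$ on the concatenation side is exactly $\{q\}$ — but once the disjoint decomposition of $\{1,2,\ldots,n-1\}$ is written down, nothing beyond Lemma \ref{lem.D(a(.)b)} and the determinacy of a composition by its partial-sum set is required.
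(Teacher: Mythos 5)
Your proof is correct and follows essentially the same route as the paper's: both arguments reduce everything to the partial-sum sets $D(\cdot)$, invoke Lemma \ref{lem.D(a(.)b)} parts \textbf{(a)} and \textbf{(b)} together with the fact that a composition of $n$ is determined by $n$ and $D(\cdot)$, and derive \textbf{(b)} formally from \textbf{(a)} and \textbf{(c)}. The only difference is organizational: the paper factors $\omega$ as the complementation operator $\rho$ composed with $\operatorname*{rev}$ and proves the key identity $\rho\left(\left[\alpha,\beta\right]\right)=\rho\left(\alpha\right)\odot\rho\left(\beta\right)$ separately, whereas you inline the same disjoint-decomposition computation of $\left\{1,2,\ldots,n-1\right\}$ directly.
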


\begin{proof}
[Proof of Proposition \ref{prop.omega.odot}.]For any composition $\alpha$, we
define a composition $\operatorname*{rev}\alpha$ as follows: Let $n=\left\vert
\alpha\right\vert $, and write $\alpha$ as $\alpha=\left(  \alpha_{1}%
,\alpha_{2},\ldots,\alpha_{\ell}\right)  $. Let $\operatorname*{rev}\alpha$
denote the composition $\left(  \alpha_{\ell},\alpha_{\ell-1},\ldots
,\alpha_{1}\right)  $ of $n$. (This definition of $\operatorname*{rev}\alpha$
is the same as the one we gave above during the definition of $\omega\left(
\alpha\right)  $.) Clearly,%
\begin{equation}
\left\vert \operatorname*{rev}\gamma\right\vert =\left\vert \gamma\right\vert
\ \ \ \ \ \ \ \ \ \ \text{for any composition }\gamma.
\label{pf.prop.omega.odot.rev0}%
\end{equation}

It is easy to see that%
\begin{align}
\operatorname*{rev}\left(  \left[  \alpha,\beta\right]  \right)   &  =\left[
\operatorname*{rev}\beta,\operatorname*{rev}\alpha\right]
\ \ \ \ \ \ \ \ \ \ \text{and}\label{pf.prop.omega.odot.rev1}\\
\operatorname*{rev}\left(  \alpha\odot\beta\right)   &  =\left(
\operatorname*{rev}\beta\right)  \odot\left(  \operatorname*{rev}%
\alpha\right)  \label{pf.prop.omega.odot.rev2}%
\end{align}
for any two compositions $\alpha$ and $\beta$.

Recall that a composition $\gamma$ of a nonnegative integer $n$ is uniquely
determined by the set $D\left(  \gamma\right)  $ and the number $n$. Thus, if
$\gamma_{1}$ and $\gamma_{2}$ are two compositions of one and the same
nonnegative integer $n$ satisfying $D\left(  \gamma_{1}\right)  =D\left(
\gamma_{2}\right)  $, then%
\begin{equation}
\gamma_{1}=\gamma_{2}. \label{pf.prop.omega.odot.gamma1=2}%
\end{equation}

For every composition $\gamma$, we define a composition $\rho\left(
\gamma\right)  $ as follows: Let $n=\left\vert \gamma\right\vert $. Let
$\rho\left(  \gamma\right)  $ be the unique composition $\beta$ of $n$ which
satisfies $D\left(  \beta\right)  =\left\{  1,2,\ldots,n-1\right\}  \setminus
D\left(  \gamma\right)  $. (This is well-defined, because for every subset $T$
of $\left\{  1,2,\ldots,n-1\right\}  $, there exists a unique composition
$\tau$ of $n$ which satisfies $D\left(  \tau\right)  =T$.) Notice that%
\begin{equation}
\left\vert \rho\left(  \gamma\right)  \right\vert =\left\vert \gamma
\right\vert \ \ \ \ \ \ \ \ \ \ \text{for any composition }\gamma.
\label{pf.prop.omega.odot.rho1}%
\end{equation}
Also, if $n\in\mathbb{N}$, and if $\gamma$ is a composition of $n$, then%
\begin{equation}
D\left(  \rho\left(  \gamma\right)  \right)  =\left\{  1,2,\ldots,n-1\right\}
\setminus D\left(  \gamma\right)  \label{pf.prop.omega.odot.c.Drho}%
\end{equation}
\footnote{\textit{Proof of (\ref{pf.prop.omega.odot.c.Drho}):} Let
$n\in\mathbb{N}$. Let $\gamma$ be a composition of $n$. Then, $\rho\left(
\gamma\right)  $ is the unique composition $\beta$ of $n$ which satisfies
$D\left(  \beta\right)  =\left\{  1,2,\ldots,n-1\right\}  \setminus D\left(
\gamma\right)  $ (because this is how $\rho\left(  \gamma\right)  $ was
defined). Thus, $\rho\left(  \gamma\right)  $ is a composition of $n$ which
satisfies $D\left(  \rho\left(  \gamma\right)  \right)  =\left\{
1,2,\ldots,n-1\right\}  \setminus D\left(  \gamma\right)  $. This proves
(\ref{pf.prop.omega.odot.c.Drho}).}.

Notice also that%
\begin{equation}
\omega\left(  \alpha\right)  =\rho\left(  \operatorname*{rev}\alpha\right)
\ \ \ \ \ \ \ \ \ \ \text{for any composition }\alpha
\label{pf.prop.omega.odot.omega1}%
\end{equation}
\footnote{\textit{Proof of (\ref{pf.prop.omega.odot.omega1}):} Let $\alpha$ be
a composition. Let $n=\left\vert \alpha\right\vert $. Thus, $\alpha$ is a
composition of $n$. Hence, $\omega\left(  \alpha\right)  $ is a composition of
$n$ as well. Also, $\operatorname*{rev}\alpha$ is a composition of $n$. Now,
the definition of $\rho\left(  \operatorname*{rev}\alpha\right)  $ shows that
$\rho\left(  \operatorname*{rev}\alpha\right)  $ is the unique composition
$\beta$ of $n$ which satisfies $D\left(  \beta\right)  =\left\{
1,2,\ldots,n-1\right\}  \setminus D\left(  \operatorname*{rev}\alpha\right)
$. Hence, $\rho\left(  \operatorname*{rev}\alpha\right)  $ is a composition of
$n$ and satisfies $D\left(  \rho\left(  \operatorname*{rev}\alpha\right)
\right)  =\left\{  1,2,\ldots,n-1\right\}  \setminus D\left(
\operatorname*{rev}\alpha\right)  $.
\par
On the other hand, $\omega\left(  \alpha\right)  $ is the unique composition
$\beta$ of $n$ which satisfies $D\left(  \beta\right)  =\left\{
1,2,\ldots,n-1\right\}  \setminus D\left(  \operatorname*{rev}\alpha\right)  $
(by the definition of $\omega\left(  \alpha\right)  $). Thus, $\omega\left(
\alpha\right)  $ is a composition of $n$ and satisfies $D\left(  \omega\left(
\alpha\right)  \right)  =\left\{  1,2,\ldots,n-1\right\}  \setminus D\left(
\operatorname*{rev}\alpha\right)  $.
\par
Hence,%
\[
D\left(  \rho\left(  \operatorname*{rev}\alpha\right)  \right)  =\left\{
1,2,\ldots,n-1\right\}  \setminus D\left(  \operatorname*{rev}\alpha\right)
=D\left(  \omega\left(  \alpha\right)  \right)  .
\]
Applying (\ref{pf.prop.omega.odot.gamma1=2}) to $\gamma_{1}=\rho\left(
\operatorname*{rev}\alpha\right)  $ and $\gamma_{2}=\omega\left(
\alpha\right)  $, we therefore obtain $\rho\left(  \operatorname*{rev}%
\alpha\right)  =\omega\left(  \alpha\right)  $. Qed.}.

Now, we shall prove that%
\begin{equation}
\rho\left(  \left[  \alpha,\beta\right]  \right)  =\rho\left(  \alpha\right)
\odot\rho\left(  \beta\right)  \label{pf.prop.omega.odot.1}%
\end{equation}
for any two compositions $\alpha$ and $\beta$.

\textit{Proof of (\ref{pf.prop.omega.odot.1}):} Let $\alpha$ and $\beta$ be
two compositions. Let $p=\left\vert \alpha\right\vert $ and $q=\left\vert
\beta\right\vert $; thus, $\alpha$ and $\beta$ are compositions of $p$ and
$q$, respectively. We WLOG assume that both compositions $\alpha$ and $\beta$
are nonempty (since otherwise, (\ref{pf.prop.omega.odot.1}) is fairly
obvious). The composition $\alpha$ is a composition of $p$. Thus, $p > 0$
(since $\alpha$ is nonempty). Similarly, $q > 0$.

Hence, $\left[  \alpha,\beta\right]  $ is a composition of $p+q$ satisfying
$D\left(  \left[  \alpha,\beta\right]  \right)  =D\left(  \alpha\right)
\cup\left\{  p\right\}  \cup\left(  D\left(  \beta\right)  +p\right)  $ (by
Lemma \ref{lem.D(a(.)b)} \textbf{(b)}). The definition of $\rho\left(  \left[
\alpha,\beta\right]  \right)  $ thus yields%
\begin{align}
D\left(  \rho\left(  \left[  \alpha,\beta\right]  \right)  \right)   &
=\left\{  1,2,\ldots,p+q-1\right\}  \setminus\underbrace{D\left(  \left[
\alpha,\beta\right]  \right)  }_{=D\left(  \alpha\right)  \cup\left\{
p\right\}  \cup\left(  D\left(  \beta\right)  +p\right)  }\nonumber\\
&  =\left\{  1,2,\ldots,p+q-1\right\}  \setminus\left(  \left\{  p\right\}
\cup D\left(  \alpha\right)  \cup\left(  D\left(  \beta\right)  +p\right)
\right)  . \label{pf.prop.omega.odot.1.pf.1}%
\end{align}

Applying (\ref{pf.prop.omega.odot.rho1}) to $\gamma=\alpha$, we obtain
$\left\vert \rho\left(  \alpha\right)  \right\vert =\left\vert \alpha
\right\vert =p$. Thus, $\rho\left(  \alpha\right)  $ is a composition of $p$.
Similarly, $\rho\left(  \beta\right)  $ is a composition of $q$. Thus, Lemma
\ref{lem.D(a(.)b)} \textbf{(a)} (applied to $\rho\left(  \alpha\right)  $ and
$\rho\left(  \beta\right)  $ instead of $\alpha$ and $\beta$) shows that
$\rho\left(  \alpha\right)  \odot\rho\left(  \beta\right)  $ is a composition
of $p+q$ satisfying $D\left(  \rho\left(  \alpha\right)  \odot\rho\left(
\beta\right)  \right)  =D\left(  \rho\left(  \alpha\right)  \right)
\cup\left(  D\left(  \rho\left(  \beta\right)  \right)  +p\right)  $. Also,
applying (\ref{pf.prop.omega.odot.rho1}) to $\gamma=\left[  \alpha
,\beta\right]  $, we obtain $\left|  \rho\left(  \left[  \alpha,\beta\right]
\right)  \right|  = \left|  \left[  \alpha,\beta\right]  \right|  = p+q$
(since $\left[  \alpha,\beta\right]  $ is a composition of $p+q$). In other
words, $\rho\left(  \left[  \alpha,\beta\right]  \right)  $ is a composition
of $p+q$.

But the definition of $\rho\left(  \alpha\right)  $ shows that $D\left(
\rho\left(  \alpha\right)  \right)  =\left\{  1,2,\ldots,p-1\right\}
\setminus D\left(  \alpha\right)  $. Also, the definition of $\rho\left(
\beta\right)  $ shows that $D\left(  \rho\left(  \beta\right)  \right)
=\left\{  1,2,\ldots,q-1\right\}  \setminus D\left(  \beta\right)  $. Hence,%
\begin{align*}
&  \underbrace{D\left(  \rho\left(  \beta\right)  \right)  }_{=\left\{
1,2,\ldots,q-1\right\}  \setminus D\left(  \beta\right)  }+p\\
&  =\left(  \left\{  1,2,\ldots,q-1\right\}  \setminus D\left(  \beta\right)
\right)  +p\\
&  =\underbrace{\left(  \left\{  1,2,\ldots,q-1\right\}  +p\right)
}_{=\left\{  p+1,p+2,\ldots,p+q-1\right\}  }\setminus\left(  D\left(
\beta\right)  +p\right) \\
&  =\left\{  p+1,p+2,\ldots,p+q-1\right\}  \setminus\left(  D\left(
\beta\right)  +p\right)  .
\end{align*}
Also, $D\left(  \beta\right)  \subseteq\left\{  1,2,\ldots,q-1\right\}  $, so
that $D\left(  \beta\right)  +p\subseteq\left\{  1,2,\ldots,q-1\right\}
+p=\left\{  p+1,p+2,\ldots,p+q-1\right\}  $.

Now, it is well-known that if $X$, $Y$, $X^{\prime}$ and $Y^{\prime}$ are four
sets such that $X^{\prime}\subseteq X$, $Y^{\prime}\subseteq Y$ and $X\cap
Y=\varnothing$, then
\begin{equation}
\left(  X\setminus X^{\prime}\right)  \cup\left(  Y\setminus Y^{\prime
}\right)  =\left(  X\cup Y\right)  \setminus\left(  X^{\prime}\cup Y^{\prime
}\right)  . \label{pf.prop.omega.odot.setth}%
\end{equation}
Now,%
\begin{align*}
&  D\left(  \rho\left(  \alpha\right)  \odot\rho\left(  \beta\right)  \right)
\\
&  =\underbrace{D\left(  \rho\left(  \alpha\right)  \right)  }_{=\left\{
1,2,\ldots,p-1\right\}  \setminus D\left(  \alpha\right)  }\cup
\underbrace{\left(  D\left(  \rho\left(  \beta\right)  \right)  +p\right)
}_{=\left\{  p+1,p+2,\ldots,p+q-1\right\}  \setminus\left(  D\left(
\beta\right)  +p\right)  }\\
&  =\left(  \left\{  1,2,\ldots,p-1\right\}  \setminus D\left(  \alpha\right)
\right)  \cup\left(  \left\{  p+1,p+2,\ldots,p+q-1\right\}  \setminus\left(
D\left(  \beta\right)  +p\right)  \right) \\
&  =\underbrace{\left(  \left\{  1,2,\ldots,p-1\right\}  \cup\left\{
p+1,p+2,\ldots,p+q-1\right\}  \right)  }_{=\left\{  1,2,\ldots,p+q-1\right\}
\setminus\left\{  p\right\}  }\\
&  \ \ \ \ \ \ \ \ \ \ \setminus\left(  D\left(  \alpha\right)  \cup\left(
D\left(  \beta\right)  +p\right)  \right) \\
&  \ \ \ \ \ \ \ \ \ \ \left(
\begin{array}
[c]{c}%
\text{by (\ref{pf.prop.omega.odot.setth}), applied to }X=\left\{
1,2,\ldots,p-1\right\}  \text{,}\\
Y=\left\{  p+1,p+2,\ldots,p+q-1\right\}  \text{, }X^{\prime}=D\left(
\alpha\right)  \text{ and }Y^{\prime}=D\left(  \beta\right)  +p
\end{array}
\right) \\
&  =\left(  \left\{  1,2,\ldots,p+q-1\right\}  \setminus\left\{  p\right\}
\right)  \setminus\left(  D\left(  \alpha\right)  \cup\left(  D\left(
\beta\right)  +p\right)  \right) \\
&  =\left\{  1,2,\ldots,p+q-1\right\}  \setminus\underbrace{\left(  \left\{
p\right\}  \cup D\left(  \alpha\right)  \cup\left(  D\left(  \beta\right)
+p\right)  \right)  }_{=D\left(  \alpha\right)  \cup\left\{  p\right\}
\cup\left(  D\left(  \beta\right)  +p\right)  }\\
&  =\left\{  1,2,\ldots,p+q-1\right\}  \setminus\left(  \left\{  p\right\}
\cup D\left(  \alpha\right)  \cup\left(  D\left(  \beta\right)  +p\right)
\right) \\
&  =D\left(  \rho\left(  \left[  \alpha,\beta\right]  \right)  \right)
\ \ \ \ \ \ \ \ \ \ \left(  \text{by (\ref{pf.prop.omega.odot.1.pf.1}%
)}\right)  .
\end{align*}
Thus, (\ref{pf.prop.omega.odot.gamma1=2}) (applied to $n=p+q$, $\gamma
_{1}=\rho\left(  \alpha\right)  \odot\rho\left(  \beta\right)  $ and
$\gamma_{2}=\rho\left(  \left[  \alpha,\beta\right]  \right)  $) shows that
$\rho\left(  \alpha\right)  \odot\rho\left(  \beta\right)  =\rho\left(
\left[  \alpha,\beta\right]  \right)  $. This proves
(\ref{pf.prop.omega.odot.1}).

\textbf{(a)} Let $\alpha$ and $\beta$ be two compositions. Then,
(\ref{pf.prop.omega.odot.omega1}) yields $\omega\left(  \alpha\right)
=\rho\left(  \operatorname*{rev}\alpha\right)  $. Also,
(\ref{pf.prop.omega.odot.omega1}) (applied to $\beta$ instead of $\alpha$)
yields $\omega\left(  \beta\right)  =\rho\left(  \operatorname*{rev}%
\beta\right)  $.

From (\ref{pf.prop.omega.odot.omega1}) (applied to $\left[  \alpha
,\beta\right]  $ instead of $\alpha$), we obtain%
\begin{align*}
\omega\left(  \left[  \alpha,\beta\right]  \right)   &  =\rho\left(
\underbrace{\operatorname*{rev}\left(  \left[  \alpha,\beta\right]  \right)
}_{\substack{=\left[  \operatorname*{rev}\beta,\operatorname*{rev}%
\alpha\right]  \\\text{(by (\ref{pf.prop.omega.odot.rev1}))}}}\right)
=\rho\left(  \left[  \operatorname*{rev}\beta,\operatorname*{rev}%
\alpha\right]  \right) \\
&  =\underbrace{\rho\left(  \operatorname*{rev}\beta\right)  }_{=\omega\left(
\beta\right)  }\odot\underbrace{\rho\left(  \operatorname*{rev}\alpha\right)
}_{=\omega\left(  \alpha\right)  }\ \ \ \ \ \ \ \ \ \ \left(
\begin{array}
[c]{c}%
\text{by (\ref{pf.prop.omega.odot.1}), applied to }\operatorname*{rev}\beta\\
\text{and }\operatorname*{rev}\alpha\text{ instead of }\alpha\text{ and }\beta
\end{array}
\right) \\
&  =\omega\left(  \beta\right)  \odot\omega\left(  \alpha\right)  .
\end{align*}
This proves Proposition \ref{prop.omega.odot} \textbf{(a)}.

\textbf{(c)} First of all, it is clear that%
\begin{equation}
\operatorname*{rev}\left(  \operatorname*{rev}\gamma\right)  =\gamma
\ \ \ \ \ \ \ \ \ \ \text{for every composition }\gamma.
\label{pf.prop.omega.odot.c.rev}%
\end{equation}

Furthermore,%
\begin{equation}
\rho\left(  \rho\left(  \gamma\right)  \right)  =\gamma
\ \ \ \ \ \ \ \ \ \ \text{for every composition }\gamma
\label{pf.prop.omega.odot.c.rho}%
\end{equation}
\footnote{\textit{Proof of (\ref{pf.prop.omega.odot.c.rho}):} Let $\gamma$ be
a composition. Let $n=\left\vert \gamma\right\vert $. Thus, $\gamma$ is a
composition of $n$. The definition of $\rho\left(  \gamma\right)  $ shows that
$\rho\left(  \gamma\right)  $ is the unique composition $\beta$ of $n$ which
satisfies $D\left(  \beta\right)  =\left\{  1,2,\ldots,n-1\right\}  \setminus
D\left(  \gamma\right)  $. Thus, $\rho\left(  \gamma\right)  $ is a
composition of $n$ and satisfies $D\left(  \rho\left(  \gamma\right)  \right)
=\left\{  1,2,\ldots,n-1\right\}  \setminus D\left(  \gamma\right)  $.
\par
Therefore, the definition of $\rho\left(  \rho\left(  \gamma\right)  \right)
$ shows that $\rho\left(  \rho\left(  \gamma\right)  \right)  $ is the unique
composition $\beta$ of $n$ which satisfies $D\left(  \beta\right)  =\left\{
1,2,\ldots,n-1\right\}  \setminus D\left(  \rho\left(  \gamma\right)  \right)
$. Thus, $\rho\left(  \rho\left(  \gamma\right)  \right)  $ is a composition
of $n$ and satisfies $D\left(  \rho\left(  \rho\left(  \gamma\right)  \right)
\right)  =\left\{  1,2,\ldots,n-1\right\}  \setminus D\left(  \rho\left(
\gamma\right)  \right)  $. Hence,%
\begin{align*}
D\left(  \rho\left(  \rho\left(  \gamma\right)  \right)  \right)   &
=\left\{  1,2,\ldots,n-1\right\}  \setminus\underbrace{D\left(  \rho\left(
\gamma\right)  \right)  }_{=\left\{  1,2,\ldots,n-1\right\}  \setminus
D\left(  \gamma\right)  }\\
&  =\left\{  1,2,\ldots,n-1\right\}  \setminus\left(  \left\{  1,2,\ldots
,n-1\right\}  \setminus D\left(  \gamma\right)  \right) \\
&  =D\left(  \gamma\right)  \ \ \ \ \ \ \ \ \ \ \left(  \text{since }D\left(
\gamma\right)  \subseteq\left\{  1,2,\ldots,n-1\right\}  \right)  .
\end{align*}
Hence, (\ref{pf.prop.omega.odot.gamma1=2}) (applied to $\gamma_{1}=\rho\left(
\rho\left(  \gamma\right)  \right)  $ and $\gamma_{2}=\gamma$) shows that
$\rho\left(  \rho\left(  \gamma\right)  \right)  =\gamma$. This proves
(\ref{pf.prop.omega.odot.c.rho}).}.

On the other hand, if $G$ is a set of integers and $r$ is an integer, then we
let $r-G$ denote the set $\left\{  r-g\ \mid\ g\in G\right\}  $ of integers.
Then, for any $n\in\mathbb{N}$ and any composition $\gamma$ of $n$, we have%
\begin{equation}
D\left(  \operatorname*{rev}\gamma\right)  =n-D\left(  \gamma\right)
\label{pf.prop.omega.odot.c.Drev}%
\end{equation}
\footnote{\textit{Proof of (\ref{pf.prop.omega.odot.c.Drev}):} Let
$n\in\mathbb{N}$. Let $\gamma$ be a composition of $n$. Thus, $\gamma$ is a
composition satisfying $\left\vert \gamma\right\vert =n$.
\par
Write $\gamma$ in the form $\gamma=\left(  \gamma_{1},\gamma_{2},\ldots
,\gamma_{\ell}\right)  $. Then, $\operatorname*{rev}\gamma=\left(
\gamma_{\ell},\gamma_{\ell-1},\ldots,\gamma_{1}\right)  $ (by the definition
of $\operatorname*{rev}\gamma$). Also, from $\gamma=\left(  \gamma_{1}%
,\gamma_{2},\ldots,\gamma_{\ell}\right)  $, we obtain $\left\vert
\gamma\right\vert =\gamma_{1}+\gamma_{2}+\cdots+\gamma_{\ell}$, whence
$\gamma_{1}+\gamma_{2}+\cdots+\gamma_{\ell}=\left\vert \gamma\right\vert =n$.
Hence, every $i\in\left\{  1,2,\ldots,\ell-1\right\}  $ satisfies%
\begin{align}
n  &  =\gamma_{1}+\gamma_{2}+\cdots+\gamma_{\ell}=\left(  \gamma_{1}%
+\gamma_{2}+\cdots+\gamma_{i}\right)  +\underbrace{\left(  \gamma_{i+1}%
+\gamma_{i+2}+\cdots+\gamma_{\ell}\right)  }_{=\gamma_{\ell}+\gamma_{\ell
-1}+\cdots+\gamma_{i+1}}\nonumber\\
&  =\left(  \gamma_{1}+\gamma_{2}+\cdots+\gamma_{i}\right)  +\left(
\gamma_{\ell}+\gamma_{\ell-1}+\cdots+\gamma_{i+1}\right)  .
\label{pf.prop.omega.odot.c.Drev.pf.1}%
\end{align}
Also, $\gamma=\left(  \gamma_{1},\gamma_{2},\ldots,\gamma_{\ell}\right)  $, so
that the definition of $D\left(  \gamma\right)  $ yields%
\begin{align}
D\left(  \gamma\right)   &  =\left\{  \gamma_{1},\gamma_{1}+\gamma_{2}%
,\gamma_{1}+\gamma_{2}+\gamma_{3},\ldots,\gamma_{1}+\gamma_{2}+\cdots
+\gamma_{\ell-1}\right\} \nonumber\\
&  =\left\{  \gamma_{1}+\gamma_{2}+\cdots+\gamma_{i}\ \mid\ i\in\left\{
1,2,\ldots,\ell-1\right\}  \right\}  . \label{pf.prop.omega.odot.c.Drev.pf.2}%
\end{align}
\par
But $\operatorname*{rev}\gamma=\left(  \gamma_{\ell},\gamma_{\ell-1}%
,\ldots,\gamma_{1}\right)  $. Hence, the definition of $D\left(
\operatorname*{rev}\gamma\right)  $ yields%
\begin{align*}
D\left(  \operatorname*{rev}\gamma\right)   &  =\left\{  \gamma_{\ell}%
,\gamma_{\ell}+\gamma_{\ell-1},\gamma_{\ell}+\gamma_{\ell-1}+\gamma_{\ell
-2},\ldots,\gamma_{\ell}+\gamma_{\ell-1}+\gamma_{\ell-2}+\cdots+\gamma
_{2}\right\} \\
&  =\left\{  \underbrace{\gamma_{\ell}+\gamma_{\ell-1}+\cdots+\gamma_{i+1}%
}_{\substack{=n-\left(  \gamma_{1}+\gamma_{2}+\cdots+\gamma_{i}\right)
\\\text{(by (\ref{pf.prop.omega.odot.c.Drev.pf.1}))}}}\ \mid\ i\in\left\{
1,2,\ldots,\ell-1\right\}  \right\} \\
&  =\left\{  n-\left(  \gamma_{1}+\gamma_{2}+\cdots+\gamma_{i}\right)
\ \mid\ i\in\left\{  1,2,\ldots,\ell-1\right\}  \right\} \\
&  =n-\underbrace{\left\{  \gamma_{1}+\gamma_{2}+\cdots+\gamma_{i}\ \mid
\ i\in\left\{  1,2,\ldots,\ell-1\right\}  \right\}  }_{\substack{=D\left(
\gamma\right)  \\\text{(by (\ref{pf.prop.omega.odot.c.Drev.pf.2}))}}}\\
&  =n-D\left(  \gamma\right)  .
\end{align*}
This proves (\ref{pf.prop.omega.odot.c.Drev}).}.

Now,%
\begin{equation}
\rho\left(  \operatorname*{rev}\gamma\right)  =\operatorname*{rev}\left(
\rho\left(  \gamma\right)  \right)  \ \ \ \ \ \ \ \ \ \ \text{for every
composition }\gamma\label{pf.prop.omega.odot.c.rhorev}%
\end{equation}
\footnote{\textit{Proof of (\ref{pf.prop.omega.odot.c.rhorev}):} Let $\gamma$
be a composition. Let $n=\left\vert \gamma\right\vert $. Thus, $\gamma$ is a
composition of $n$.
\par
Now, (\ref{pf.prop.omega.odot.rho1}) (applied to $\operatorname*{rev}\gamma$
instead of $\gamma$) yields $\left\vert \rho\left(  \operatorname*{rev}%
\gamma\right)  \right\vert =\left\vert \operatorname*{rev}\gamma\right\vert
=\left\vert \gamma\right\vert $ (by (\ref{pf.prop.omega.odot.rev0})). Also,
(\ref{pf.prop.omega.odot.rev0}) (applied to $\rho\left(  \gamma\right)  $
instead of $\gamma$) yields $\left\vert \operatorname*{rev}\left(  \rho\left(
\gamma\right)  \right)  \right\vert =\left\vert \rho\left(  \gamma\right)
\right\vert =\left\vert \gamma\right\vert $ (by (\ref{pf.prop.omega.odot.rho1}%
)). Now, $\left\vert \rho\left(  \operatorname*{rev}\gamma\right)  \right\vert
=\left\vert \gamma\right\vert =n$, $\left\vert \operatorname*{rev}%
\gamma\right\vert =\left\vert \gamma\right\vert =n$, $\left\vert \rho\left(
\gamma\right)  \right\vert =\left\vert \gamma\right\vert =n$ and $\left\vert
\operatorname*{rev}\left(  \rho\left(  \gamma\right)  \right)  \right\vert
=\left\vert \gamma\right\vert =n$. Hence, all of $\rho\left(
\operatorname*{rev}\gamma\right)  $, $\operatorname*{rev}\gamma$, $\rho\left(
\gamma\right)  $ and $\operatorname*{rev}\left(  \rho\left(  \gamma\right)
\right)  $ are compositions of $n$.
\par
Applying (\ref{pf.prop.omega.odot.c.Drho}) to $\operatorname*{rev}\gamma$
instead of $\gamma$, we obtain%
\begin{align*}
D\left(  \rho\left(  \operatorname*{rev}\gamma\right)  \right)   &
=\underbrace{\left\{  1,2,\ldots,n-1\right\}  }_{=n-\left\{  1,2,\ldots
,n-1\right\}  }\setminus\underbrace{D\left(  \operatorname*{rev}\gamma\right)
}_{\substack{=n-D\left(  \gamma\right)  \\\text{(by
(\ref{pf.prop.omega.odot.c.Drev}))}}}\\
&  =\left(  n-\left\{  1,2,\ldots,n-1\right\}  \right)  \setminus\left(
n-D\left(  \gamma\right)  \right) \\
&  =n-\underbrace{\left(  \left\{  1,2,\ldots,n-1\right\}  \setminus D\left(
\gamma\right)  \right)  }_{\substack{=D\left(  \rho\left(  \gamma\right)
\right)  \\\text{(by (\ref{pf.prop.omega.odot.c.Drho}))}}}\\
&  =n-D\left(  \rho\left(  \gamma\right)  \right)  .
\end{align*}
Comparing this with%
\[
D\left(  \operatorname*{rev}\left(  \rho\left(  \gamma\right)  \right)
\right)  =n-D\left(  \rho\left(  \gamma\right)  \right)
\ \ \ \ \ \ \ \ \ \ \left(  \text{by (\ref{pf.prop.omega.odot.c.Drev}),
applied to }\rho\left(  \gamma\right)  \text{ instead of }\gamma\right)  ,
\]
we obtain $D\left(  \rho\left(  \operatorname*{rev}\gamma\right)  \right)
=D\left(  \operatorname*{rev}\left(  \rho\left(  \gamma\right)  \right)
\right)  $. Hence, (\ref{pf.prop.omega.odot.gamma1=2}) (applied to $\gamma
_{1}=\rho\left(  \operatorname*{rev}\gamma\right)  $ and $\gamma
_{2}=\operatorname*{rev}\left(  \rho\left(  \gamma\right)  \right)  $) yields
$\rho\left(  \operatorname*{rev}\gamma\right)  =\operatorname*{rev}\left(
\rho\left(  \gamma\right)  \right)  $. This proves
(\ref{pf.prop.omega.odot.c.rhorev}).}.

Now, let $\gamma$ be a composition. Then, (\ref{pf.prop.omega.odot.omega1})
(applied to $\alpha=\gamma$) yields $\omega\left(  \gamma\right)  =\rho\left(
\operatorname*{rev}\gamma\right)  =\operatorname*{rev}\left(  \rho\left(
\gamma\right)  \right)  $ (by (\ref{pf.prop.omega.odot.c.rhorev})). But
(\ref{pf.prop.omega.odot.omega1}) (applied to $\alpha=\omega\left(
\gamma\right)  $) yields%
\begin{align*}
\omega\left(  \omega\left(  \gamma\right)  \right)   &  =\rho\left(
\operatorname*{rev}\left(  \underbrace{\omega\left(  \gamma\right)
}_{=\operatorname*{rev}\left(  \rho\left(  \gamma\right)  \right)  }\right)
\right)  =\rho\left(  \underbrace{\operatorname*{rev}\left(
\operatorname*{rev}\left(  \rho\left(  \gamma\right)  \right)  \right)
}_{\substack{=\rho\left(  \gamma\right)  \\\text{(by
(\ref{pf.prop.omega.odot.c.rev}), applied to}\\\rho\left(  \gamma\right)
\text{ instead of }\gamma\text{)}}}\right) \\
&  =\rho\left(  \rho\left(  \gamma\right)  \right)  =\gamma
\ \ \ \ \ \ \ \ \ \ \left(  \text{by (\ref{pf.prop.omega.odot.c.rho})}\right)
.
\end{align*}
This proves Proposition \ref{prop.omega.odot} \textbf{(c)}.

\textbf{(b)} Let $\alpha$ and $\beta$ be two compositions. Then, Proposition
\ref{prop.omega.odot} \textbf{(a)} (applied to $\omega\left(  \beta\right)  $
and $\omega\left(  \alpha\right)  $ instead of $\alpha$ and $\beta$) yields%
\begin{align*}
\omega\left(  \left[  \omega\left(  \beta\right)  ,\omega\left(
\alpha\right)  \right]  \right)   &  =\underbrace{\omega\left(  \omega\left(
\alpha\right)  \right)  }_{\substack{=\alpha\\\text{(by Proposition
\ref{prop.omega.odot} \textbf{(c)},}\\\text{applied to }\gamma=\alpha\text{)}%
}}\odot\underbrace{\omega\left(  \omega\left(  \beta\right)  \right)
}_{\substack{=\beta\\\text{(by Proposition \ref{prop.omega.odot}
\textbf{(c)},}\\\text{applied to }\gamma=\beta\text{)}}}\\
&  =\alpha\odot\beta.
\end{align*}
Hence, $\alpha\odot\beta=\omega\left(  \left[  \omega\left(  \beta\right)
,\omega\left(  \alpha\right)  \right]  \right)  $. Applying the map $\omega$
to both sides of this equality, we conclude that%
\[
\omega\left(  \alpha\odot\beta\right)  =\omega\left(  \omega\left(  \left[
\omega\left(  \beta\right)  ,\omega\left(  \alpha\right)  \right]  \right)
\right)  =\left[  \omega\left(  \beta\right)  ,\omega\left(  \alpha\right)
\right]
\]
(by Proposition \ref{prop.omega.odot} \textbf{(c)}, applied to $\gamma=\left[
\omega\left(  \beta\right)  ,\omega\left(  \alpha\right)  \right]  $). This
proves Proposition \ref{prop.omega.odot} \textbf{(b)}.
\end{proof}
\end{verlong}

The notion of $\omega\left(  \alpha\right)  $ gives rise to a simple formula
for the antipode $S$ of the Hopf algebra $\operatorname*{QSym}$ in terms of
its fundamental basis:

\begin{proposition}
\label{prop.S.F}Let $\alpha$ be a composition. Then, $S\left(  F_{\alpha
}\right)  =\left(  -1\right)  ^{\left\vert \alpha\right\vert }F_{\omega\left(
\alpha\right)  }$.
\end{proposition}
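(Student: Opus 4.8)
The plan is to reduce everything to the monomial basis and finish with a Möbius-inversion argument on Boolean lattices. The case $\alpha=\varnothing$ is immediate (both sides equal $1$, since $S\left(1\right)=1$ and $\omega\left(\varnothing\right)=\varnothing$), so I would set $n=\left\vert \alpha\right\vert $ and assume $n\geq1$ from now on. I will write $\left[n-1\right]=\left\{1,2,\ldots,n-1\right\}$, and for a set of integers $G$ I will write $n-G=\left\{n-g\ \mid\ g\in G\right\}$. Two ingredients are needed. First, the expansion of the fundamental basis in the monomial basis: directly from $\left(\ref{eq.F.def}\right)$, every composition $\beta$ of $n$ satisfies $F_{\beta}=\sum_{\gamma\models n,\ D\left(\gamma\right)\supseteq D\left(\beta\right)}M_{\gamma}$. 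Second, the classical formula for the antipode on the monomial basis, $S\left(M_{\beta}\right)=\left(-1\right)^{\ell\left(\beta\right)}\sum_{\gamma\models n,\ D\left(\gamma\right)\subseteq D\left(\operatorname*{rev}\beta\right)}M_{\gamma}$, where $\ell\left(\beta\right)$ is the number of parts of $\beta$ and $\operatorname*{rev}\beta$ is $\beta$ read backwards (this is standard; it can be cited from the references already used in the paper, or proved by a short induction). Finally I will use the elementary facts $D\left(\operatorname*{rev}\beta\right)=n-D\left(\beta\right)$ and $\ell\left(\beta\right)=\left\vert D\left(\beta\right)\right\vert+1$.

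With these in hand, I would compute
\[
S\left(F_{\alpha}\right)=\sum_{D\left(\beta\right)\supseteq D\left(\alpha\right)}S\left(M_{\beta}\right)=\sum_{D\left(\beta\right)\supseteq D\left(\alpha\right)}\left(-1\right)^{\left\vert D\left(\beta\right)\right\vert+1}\sum_{D\left(\gamma\right)\subseteq n-D\left(\beta\right)}M_{\gamma}
\]
and then interchange the two summations. Since the condition $D\left(\gamma\right)\subseteq n-D\left(\beta\right)$ is equivalent to $n-D\left(\gamma\right)\subseteq D\left(\beta\right)$, and since every subset of $\left[n-1\right]$ equals $D\left(\beta\right)$ for a unique $\beta\models n$, the coefficient of a fixed $M_{\gamma}$ on the right becomes
\[
-\sum_{A\subseteq S\subseteq\left[n-1\right]}\left(-1\right)^{\left\vert S\right\vert},\qquad\text{where }A=D\left(\alpha\right)\cup\left(n-D\left(\gamma\right)\right).
\]
By the Boolean-lattice identity stating that $\sum_{A\subseteq S\subseteq B}\left(-1\right)^{\left\vert S\right\vert}$ vanishes when $A\neq B$ and equals $\left(-1\right)^{\left\vert A\right\vert}$ when $A=B$, this coefficient is $0$ unless $A=\left[n-1\right]$, in which case it equals $-\left(-1\right)^{n-1}=\left(-1\right)^{n}$. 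Finally, $A=\left[n-1\right]$ rewrites as $\left[n-1\right]\setminus D\left(\alpha\right)\subseteq n-D\left(\gamma\right)$, i.e. $D\left(\gamma\right)\supseteq n-\left(\left[n-1\right]\setminus D\left(\alpha\right)\right)=\left[n-1\right]\setminus D\left(\operatorname*{rev}\alpha\right)=D\left(\omega\left(\alpha\right)\right)$. Hence $S\left(F_{\alpha}\right)=\left(-1\right)^{n}\sum_{D\left(\gamma\right)\supseteq D\left(\omega\left(\alpha\right)\right)}M_{\gamma}=\left(-1\right)^{n}F_{\omega\left(\alpha\right)}$ by the first ingredient, which is the claim.

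The one genuinely delicate step is the last displayed rewriting: keeping the "refines versus coarsens" and "reverse" conditions straight under the interacting complement-and-reversal operations. Concretely one must check that $n-\cdot$ restricts to an involution of $\left[n-1\right]$, hence commutes with complementation inside $\left[n-1\right]$ (so that $\left[n-1\right]\setminus\left(n-D\left(\alpha\right)\right)=n-\left(\left[n-1\right]\setminus D\left(\alpha\right)\right)$), and that $n-D\left(\alpha\right)=D\left(\operatorname*{rev}\alpha\right)$, so that $\left[n-1\right]\setminus D\left(\operatorname*{rev}\alpha\right)=D\left(\omega\left(\alpha\right)\right)$ is exactly the defining property of $\omega\left(\alpha\right)$; all of this is routine bookkeeping once written out. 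A secondary point, only relevant if a fully self-contained argument is desired, is establishing the monomial-basis antipode formula used above; this is a short induction on $n$ using $\Delta\left(M_{\beta}\right)$ together with the antipode recursion $m\circ\left(S\otimes\operatorname{id}\right)\circ\Delta=u\circ\varepsilon$, and in any case it already appears in the cited sources.
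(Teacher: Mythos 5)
Your proposal is correct. Note, however, that the paper does not actually prove Proposition \ref{prop.S.F} internally: it simply cites \cite[Proposition 5.23]{Reiner}, so there is no in-paper argument to compare against. Your derivation is a legitimate self-contained alternative: you expand $F_{\alpha}=\sum_{D\left(  \gamma\right)  \supseteq D\left(  \alpha\right)  }M_{\gamma}$, apply the Malvenuto--Reutenauer antipode formula $S\left(  M_{\beta}\right)  =\left(  -1\right)  ^{\ell\left(  \beta\right)  }\sum_{D\left(  \gamma\right)  \subseteq D\left(  \operatorname*{rev}\beta\right)  }M_{\gamma}$, and collapse the double sum with the alternating-sum identity on the Boolean interval $\left[  A,\left[  n-1\right]  \right]  $; all the set-theoretic bookkeeping ($n-\cdot$ being an inclusion-preserving involution of $\left\{  1,\ldots,n-1\right\}  $ that commutes with complementation, $D\left(  \operatorname*{rev}\beta\right)  =n-D\left(  \beta\right)  $, $\ell\left(  \beta\right)  =\left\vert D\left(  \beta\right)  \right\vert +1$) checks out, and the degenerate case $\alpha=\varnothing$ is handled separately as it must be. The one honest caveat is the one you already flag: the argument is only as self-contained as its second ingredient, the antipode formula on the monomial basis, which itself requires either a citation (it is \cite[Theorem 5.11]{Reiner}) or the short induction you sketch; if one is willing to cite at that level anyway, citing Proposition 5.23 of the same source directly (as the paper does) is no less economical. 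What your route buys is a transparent reduction of the $F$-basis statement to the $M$-basis one by pure inclusion--exclusion, which is a useful thing to have written out.
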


This is proven in \cite[Proposition 5.2.15]{Reiner}.

We now state the main result of this note:

\begin{theorem}
\label{thm.hmDless}Let $f\in\operatorname*{QSym}$ and let $m$ be a positive
integer. For any two compositions $\alpha$ and $\beta$, define a composition
$\alpha\odot\beta$ as in Proposition \ref{prop.bel.F}. Then,%
\[
h_{m}\left.  \prec\right.  f=\sum_{\alpha\in\operatorname*{Comp}}\left(
-1\right)  ^{\left\vert \alpha\right\vert }F_{\alpha\odot\left(  m\right)
}R_{\omega\left(  \alpha\right)  }^{\perp}f.
\]
(Here, the sum on the right hand side converges, because all but finitely many
compositions $\alpha$ satisfy $R_{\omega\left(  \alpha\right)  }^{\perp}f=0$
for degree reasons.)
\end{theorem}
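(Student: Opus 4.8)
The plan is to combine Theorem \ref{thm.beldend} (with the choice $a = h_m$) with the adjunction formula of Lemma \ref{lem.adjoint-g} and the antipode formula of Proposition \ref{prop.S.F}. First I would note that $h_m \in \operatorname*{QSym}$, so Theorem \ref{thm.beldend} applies with $a = h_m$ and gives
\[
h_m \left. \prec\right. f = \sum_{\left(f\right)} \left( S\left(f_{\left(1\right)}\right) \left.

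\textarm{\belgthor}

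\right. h_m\right) f_{\left(2\right)}
\]
for any $f \in \operatorname*{QSym}$. The idea is then to expand $f$ in the fundamental basis: it suffices to prove the claimed identity when $f = F_\gamma$ for a single composition $\gamma$, since both sides are $\mathbf{k}$-linear in $f$ and the sum on the right converges (as noted, for degree reasons only finitely many $\alpha$ contribute). So fix $\gamma$ and work with $f = F_\gamma$.

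Next I would unpack the right-hand side of Theorem \ref{thm.beldend} for $f = F_\gamma$. Using Sweedler notation for $\Delta\left(F_\gamma\right)$, I apply $S$ to the first tensor factor; by Proposition \ref{prop.S.F}, $S$ sends each fundamental basis element $F_\delta$ appearing in $\Delta\left(F_\gamma\right)$ to $\left(-1\right)^{\left|\delta\right|} F_{\omega\left(\delta\right)}$. Then I apply Corollary \ref{cor.bel.F.hm}, which says $F_\delta \left.

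\textarm{\belgthor}

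\right. h_m = F_{\delta \odot \left(m\right)}$; applied to $\delta = \omega\left(\alpha\right)$ this gives the factor $F_{\omega\left(\alpha\right) \odot \left(m\right)}$, but I want $F_{\alpha \odot \left(m\right)}$ in the statement, so I need to reconcile the two — this is where the indexing is chosen to work out. The cleanest route: parametrize the coproduct terms so that the first factor is $F_{\omega\left(\alpha\right)}$ for $\alpha$ ranging over compositions; then $S\left(F_{\omega\left(\alpha\right)}\right) = \left(-1\right)^{\left|\omega\left(\alpha\right)\right|} F_{\omega\left(\omega\left(\alpha\right)\right)} = \left(-1\right)^{\left|\alpha\right|} F_\alpha$ by Proposition \ref{prop.S.F} and $\omega \circ \omega = \operatorname{id}$, and $F_\alpha \left.

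\textarm{\belgthor}

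\right. h_m = F_{\alpha \odot \left(m\right)}$ by Corollary \ref{cor.bel.F.hm} — wait, that gives $F_{\alpha\odot(m)}$ directly, which is exactly what is wanted.

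The remaining ingredient is to identify $\sum f_{\left(2\right)}$, weighted appropriately, with $R_{\omega\left(\alpha\right)}^{\perp} f$. Here I would invoke Lemma \ref{lem.adjoint-g}: $R_{\omega\left(\alpha\right)}^{\perp} f = \sum_{\left(f\right)} \left(R_{\omega\left(\alpha\right)}, f_{\left(1\right)}\right) f_{\left(2\right)}$. Since $\left(R_\delta\right)_{\delta}$ is dual to $\left(F_\epsilon\right)_\epsilon$, the pairing $\left(R_{\omega\left(\alpha\right)}, f_{\left(1\right)}\right)$ extracts precisely the coefficient of $F_{\omega\left(\alpha\right)}$ in $f_{\left(1\right)}$, i.e. it picks out exactly those coproduct terms whose left factor is $F_{\omega\left(\alpha\right)}$. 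Matching this against the Sweedler sum from Theorem \ref{thm.beldend} — after summing over $\alpha \in \operatorname*{Comp}$ and using that $\Delta\left(f\right) = \sum_\alpha F_{\omega\left(\alpha\right)} \otimes \left(\text{coeff}\right)$ — yields the asserted formula. The main obstacle is bookkeeping: one must carefully verify that grouping the coproduct terms of $f$ by the fundamental-basis content of the first tensorand, combining the sign from the antipode with the $\left(-1\right)^{\left|\alpha\right|}$, and using $\omega\left(\omega\left(\alpha\right)\right) = \alpha$, really does reassemble $h_m \left.\prec\right. f$ term by term; in particular one must make sure the convergence/finiteness claim is respected so that the interchange of the $\sum_{\left(f\right)}$ and $\sum_{\alpha}$ summations is legitimate. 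I expect that once $f$ is reduced to a single $F_\gamma$, all sums are finite and the identification is a short direct computation from Theorem \ref{thm.beldend}, Corollary \ref{cor.bel.F.hm}, Proposition \ref{prop.S.F}, and Lemma \ref{lem.adjoint-g}.
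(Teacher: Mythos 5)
Your proposal is correct and follows essentially the same route as the paper: the paper proves the general identity $\sum_{\alpha}(-1)^{\left\vert\alpha\right\vert}\left(F_{\alpha}\left.\textarm{\belgthor}\right. a\right)R_{\omega\left(\alpha\right)}^{\perp}f=a\left.\prec\right. f$ (Lemma \ref{lem.hmDless.lem}) by reindexing via the involution $\omega$, invoking Proposition \ref{prop.S.F}, the dual-bases identity, Lemma \ref{lem.adjoint-g} and Theorem \ref{thm.beldend}, and then specializes $a=h_{m}$ with Corollary \ref{cor.bel.F.hm} --- which is exactly your computation read in the opposite direction.
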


The proof is based on the following simple lemma:

\begin{lemma}
\label{lem.hmDless.lem}Let $a\in\operatorname*{QSym}$ and $f\in
\operatorname*{QSym}$. Then,%
\[
\sum_{\alpha\in\operatorname*{Comp}}\left(  -1\right)  ^{\left\vert
\alpha\right\vert }\left(  F_{\alpha} \bel a\right)  R_{\omega\left(
\alpha\right)  }^{\perp}f=a\left.  \prec\right.  f.
\]

\end{lemma}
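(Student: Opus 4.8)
The plan is to obtain Lemma~\ref{lem.hmDless.lem} as a purely formal consequence of Theorem~\ref{thm.beldend}, the duality relating the ribbon and fundamental bases, Proposition~\ref{prop.S.F}, and Lemma~\ref{lem.adjoint-g}. Since $f\in\operatorname*{QSym}$ is fixed, every sum that will appear is finite (the coproduct $\Delta\left(f\right)$ is a finite sum of pure tensors, and each left tensor factor expands into finitely many $F_{\gamma}$), so no convergence subtleties arise and all interchanges of summation are legitimate.

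First I would apply Theorem~\ref{thm.beldend} with $b=f$; since $a\in\operatorname*{QSym}\subseteq\mathbf{k}\left[\left[x_{1},x_{2},x_{3},\ldots\right]\right]$, this is allowed and gives
\[
a\left.  \prec\right.  f=\sum_{\left(f\right)}\left(S\!\left(f_{\left(1\right)}\right)\left. \textarm{\belgthor} \right. a\right)f_{\left(2\right)}.
\]
Next, because the ribbon basis $\left(R_{\gamma}\right)_{\gamma\in\operatorname*{Comp}}$ is dual to the fundamental basis $\left(F_{\gamma}\right)_{\gamma\in\operatorname*{Comp}}$ with respect to the pairing $\left(\cdot,\cdot\right)$, every $g\in\operatorname*{QSym}$ satisfies $g=\sum_{\gamma\in\operatorname*{Comp}}\left(R_{\gamma},g\right)F_{\gamma}$. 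Applying this to $g=f_{\left(1\right)}$, then invoking Proposition~\ref{prop.S.F} (so $S\!\left(F_{\gamma}\right)=\left(-1\right)^{\left|\gamma\right|}F_{\omega\left(\gamma\right)}$) together with the $\mathbf{k}$-bilinearity of $\left. \textarm{\belgthor} \right.$, I would rewrite the right-hand side and then interchange the (finite) sums and pull the factor $F_{\omega\left(\gamma\right)}\left. \textarm{\belgthor} \right. a$ (which is independent of the Sweedler index) outside, obtaining
\[
a\left.  \prec\right.  f=\sum_{\gamma\in\operatorname*{Comp}}\left(-1\right)^{\left|\gamma\right|}\left(F_{\omega\left(\gamma\right)}\left. \textarm{\belgthor} \right. a\right)\sum_{\left(f\right)}\left(R_{\gamma},f_{\left(1\right)}\right)f_{\left(2\right)}.
\]

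Then I would invoke Lemma~\ref{lem.adjoint-g} with $g=R_{\gamma}$, which identifies the inner sum $\sum_{\left(f\right)}\left(R_{\gamma},f_{\left(1\right)}\right)f_{\left(2\right)}$ with $R_{\gamma}^{\perp}f$, giving
\[
a\left.  \prec\right.  f=\sum_{\gamma\in\operatorname*{Comp}}\left(-1\right)^{\left|\gamma\right|}\left(F_{\omega\left(\gamma\right)}\left. \textarm{\belgthor} \right. a\right)R_{\gamma}^{\perp}f.
\]
Finally, I would substitute $\gamma=\omega\left(\alpha\right)$ as the new summation index: this is a bijection of $\operatorname*{Comp}$ onto itself since $\omega\left(\omega\left(\alpha\right)\right)=\alpha$, and it preserves sizes because $\left|\omega\left(\alpha\right)\right|=\left|\alpha\right|$; the sum then becomes $\sum_{\alpha\in\operatorname*{Comp}}\left(-1\right)^{\left|\alpha\right|}\left(F_{\alpha}\left. \textarm{\belgthor} \right. a\right)R_{\omega\left(\alpha\right)}^{\perp}f$, which is exactly the claimed identity.

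I do not anticipate a genuine obstacle: the argument is a chain of substitutions. The only point deserving care is the bookkeeping — confirming that every sum is finite (so that reordering sums and extracting scalars and $\gamma$-independent factors is justified) — which is immediate from $f$ being a bounded-degree quasisymmetric function with $\Delta\left(f\right)$ a finite sum of pure tensors, and from the $\mathbf{k}$-bilinearity of the operations involved.
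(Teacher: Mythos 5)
Your proposal is correct and uses exactly the same ingredients as the paper's proof (Theorem \ref{thm.beldend} with $b=f$, the dual-basis expansion $g=\sum_{\gamma}\left(R_{\gamma},g\right)F_{\gamma}$, Proposition \ref{prop.S.F}, Lemma \ref{lem.adjoint-g}, and the reindexing via the involution $\omega$); the only difference is that you traverse the chain of equalities starting from $a\prec f$, whereas the paper starts from the sum and collapses it. This is essentially the same argument.
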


\begin{proof}
[Proof of Lemma \ref{lem.hmDless.lem}.]The basis $\left(  F_{\alpha}\right)
_{\alpha\in\operatorname*{Comp}}$ of $\operatorname*{QSym}$ and the basis
$\left(  R_{\alpha}\right)  _{\alpha\in\operatorname*{Comp}}$ of
$\operatorname*{NSym}$ are dual bases. Thus,%
\begin{equation}
\sum_{\alpha\in\operatorname*{Comp}}F_{\alpha}\left(  R_{\alpha},g\right)
=g\ \ \ \ \ \ \ \ \ \ \text{for every }g\in\operatorname*{QSym}.
\label{pf.lem.hmDless.lem.dualbases}%
\end{equation}

Let us use Sweedler's notation. The map $\operatorname*{Comp}\rightarrow
\operatorname*{Comp},\ \alpha\mapsto\omega\left(  \alpha\right)  $ is a
bijection (since $\omega\left(  \omega\left(  \alpha\right)  \right)  =\alpha$
for any composition $\alpha$). Hence, we can substitute $\omega\left(
\alpha\right)  $ for $\alpha$ in the sum $\sum_{\alpha\in\operatorname*{Comp}%
}\left(  -1\right)  ^{\left\vert \alpha\right\vert }\left(  F_{\alpha}
\bel a\right)  R_{\omega\left(  \alpha\right)  }^{\perp}f$. We thus obtain%
\begin{align*}
&  \sum_{\alpha\in\operatorname*{Comp}}\left(  -1\right)  ^{\left\vert
\alpha\right\vert }\left(  F_{\alpha} \bel a\right)  R_{\omega\left(
\alpha\right)  }^{\perp}f\\
&  =\sum_{\alpha\in\operatorname*{Comp}}\underbrace{\left(  -1\right)
^{\left\vert \omega\left(  \alpha\right)  \right\vert }}_{\substack{=\left(
-1\right)  ^{\left\vert \alpha\right\vert }\\\text{(since }\left\vert
\omega\left(  \alpha\right)  \right\vert =\left\vert \alpha\right\vert
\text{)}}}\left(  F_{\omega\left(  \alpha\right)  } \bel a\right)
\underbrace{R_{\omega\left(  \omega\left(  \alpha\right)  \right)  }^{\perp}%
}_{\substack{=R_{\alpha}^{\perp}\\\text{(since }\omega\left(  \omega\left(
\alpha\right)  \right)  =\alpha\text{)}}}f\\
&  =\sum_{\alpha\in\operatorname*{Comp}}\left(  -1\right)  ^{\left\vert
\alpha\right\vert }\left(  F_{\omega\left(  \alpha\right)  } \bel a\right)
\underbrace{R_{\alpha}^{\perp}f}_{\substack{=\sum_{\left(  f\right)  }\left(
R_{\alpha},f_{\left(  1\right)  }\right)  f_{\left(  2\right)  }\\\text{(by
(\ref{pf.hmDless.1}))}}}\\
&  =\sum_{\alpha\in\operatorname*{Comp}}\left(  -1\right)  ^{\left\vert
\alpha\right\vert }\left(  F_{\omega\left(  \alpha\right)  } \bel a\right)
\sum_{\left(  f\right)  }\left(  R_{\alpha},f_{\left(  1\right)  }\right)
f_{\left(  2\right)  }\\
&  =\sum_{\left(  f\right)  }\sum_{\alpha\in\operatorname*{Comp}}\left(
-1\right)  ^{\left\vert \alpha\right\vert }\left(  F_{\omega\left(
\alpha\right)  } \bel a\right)  \left(  R_{\alpha},f_{\left(  1\right)
}\right)  f_{\left(  2\right)  }\\
&  =\sum_{\left(  f\right)  }\left(  \left(  \sum_{\alpha\in
\operatorname*{Comp}}\underbrace{\left(  -1\right)  ^{\left\vert
\alpha\right\vert }F_{\omega\left(  \alpha\right)  }}_{\substack{=S\left(
F_{\alpha}\right)  \\\text{(by Proposition \ref{prop.S.F})}}}\left(
R_{\alpha},f_{\left(  1\right)  }\right)  \right)  \bel a\right)  f_{\left(
2\right)  }\\
&  =\sum_{\left(  f\right)  }\left(  \left(  \sum_{\alpha\in
\operatorname*{Comp}}S\left(  F_{\alpha}\right)  \left(  R_{\alpha},f_{\left(
1\right)  }\right)  \right)  \bel a\right)  f_{\left(  2\right)  }\\
&  =\sum_{\left(  f\right)  }\left(  S\left(  \underbrace{\sum_{\alpha
\in\operatorname*{Comp}}F_{\alpha}\left(  R_{\alpha},f_{\left(  1\right)
}\right)  }_{\substack{_{\substack{=f_{\left(  1\right)  }}}\\\text{(by
(\ref{pf.lem.hmDless.lem.dualbases}), applied to }g=f_{\left(  1\right)
}\text{)}}}\right)  \bel a\right)  f_{\left(  2\right)  }=\sum_{\left(
f\right)  }\left(  S\left(  f_{\left(  1\right)  }\right)  \bel a\right)
f_{\left(  2\right)  }=a\left.  \prec\right.  f
\end{align*}
(by Theorem \ref{thm.beldend}, applied to $b=f$). This proves Lemma
\ref{lem.hmDless.lem}.
\end{proof}

\begin{proof}
[Proof of Theorem \ref{thm.hmDless}.]We have%
\begin{align*}
&  \sum_{\alpha\in\operatorname*{Comp}}\left(  -1\right)  ^{\left\vert
\alpha\right\vert }\underbrace{F_{\alpha\odot\left(  m\right)  }%
}_{\substack{=F_{\alpha} \bel h_{m}\\\text{(by (\ref{pf.hmDless.2}))}%
}}R_{\omega\left(  \alpha\right)  }^{\perp}f\\
&  =\sum_{\alpha\in\operatorname*{Comp}}\left(  -1\right)  ^{\left\vert
\alpha\right\vert }\left(  F_{\alpha} \bel h_{m}\right)  R_{\omega\left(
\alpha\right)  }^{\perp}f=h_{m}\left.  \prec\right.  f
\end{align*}
(by Lemma \ref{lem.hmDless.lem}, applied to $a=h_{m}$). This proves Theorem
\ref{thm.hmDless}.
\end{proof}

As a consequence, we obtain the following result, conjectured by Mike Zabrocki
(private correspondence):

\begin{corollary}
\label{cor.zabrocki}For every positive integer $m$, define a $\mathbf{k}%
$-linear operator $\mathbf{W}_{m}:\operatorname*{QSym}\rightarrow
\operatorname*{QSym}$ by%
\[
\mathbf{W}_{m}=\sum_{\alpha\in\operatorname*{Comp}}\left(  -1\right)
^{\left\vert \alpha\right\vert }F_{\alpha\odot\left(  m\right)  }%
R_{\omega\left(  \alpha\right)  }^{\perp}%
\]
(where $F_{\alpha\odot\left(  m\right)  }$ means left multiplication by
$F_{\alpha\odot\left(  m\right)  }$). Then, every composition $\alpha=\left(
\alpha_{1},\alpha_{2},\ldots,\alpha_{\ell}\right)  $ satisfies%
\[
\mathfrak{S}_{\alpha}^{\ast}=\left(  \mathbf{W}_{\alpha_{1}}\circ
\mathbf{W}_{\alpha_{2}}\circ\cdots\circ\mathbf{W}_{\alpha_{\ell}}\right)
\left(  1\right)  .
\]

\end{corollary}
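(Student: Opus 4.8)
The plan is to unwind the definition of $\mathbf{W}_m$ through Theorem \ref{thm.hmDless} so that the operator $\mathbf{W}_m$ becomes literally the operation $f \mapsto h_m \left. \prec\right. f$, and then to invoke Corollary \ref{cor.dualImm.dend.explicit}. Concretely, I would first observe that for every positive integer $m$ and every $f \in \operatorname*{QSym}$, Theorem \ref{thm.hmDless} gives exactly
\[
\mathbf{W}_m\left(f\right) = \sum_{\alpha\in\operatorname*{Comp}}\left(-1\right)^{\left\vert\alpha\right\vert} F_{\alpha\odot\left(m\right)} R_{\omega\left(\alpha\right)}^{\perp} f = h_m \left.\prec\right. f,
\]
the right-hand side of the displayed equation in Theorem \ref{thm.hmDless} being precisely the definition of $\mathbf{W}_m$ applied to $f$. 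So $\mathbf{W}_m$ is nothing but the operator of $\left.\prec\right.$-multiplying by $h_m$ on the left.

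Next, I would iterate this. Fix a composition $\alpha = \left(\alpha_1, \alpha_2, \ldots, \alpha_\ell\right)$. Using the identification just obtained, I compute
\[
\left(\mathbf{W}_{\alpha_1}\circ\mathbf{W}_{\alpha_2}\circ\cdots\circ\mathbf{W}_{\alpha_\ell}\right)\left(1\right) = h_{\alpha_1}\left.\prec\right.\left(h_{\alpha_2}\left.\prec\right.\left(\cdots\left.\prec\right.\left(h_{\alpha_\ell}\left.\prec\right.1\right)\cdots\right)\right),
\]
by a trivial induction on $\ell$ (peeling off the outermost $\mathbf{W}_{\alpha_1}$ and applying the previous paragraph to $f = \left(\mathbf{W}_{\alpha_2}\circ\cdots\circ\mathbf{W}_{\alpha_\ell}\right)\left(1\right)$, with the base case $\ell = 0$ giving $1$ on both sides). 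Finally, Corollary \ref{cor.dualImm.dend.explicit} states exactly that the nested $\left.\prec\right.$-expression on the right equals $\mathfrak{S}_{\alpha}^{\ast}$, which closes the argument.

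There is essentially no obstacle here: Corollary \ref{cor.zabrocki} is a straightforward corollary in the literal sense, all the content having been front-loaded into Theorem \ref{thm.hmDless} (the one genuinely nontrivial statement, whose proof rests on Theorem \ref{thm.beldend} and the antipode formula of Proposition \ref{prop.S.F}) and into Corollary \ref{cor.dualImm.dend.explicit} (which in turn comes from the tableau bijection in Corollary \ref{cor.dualImm.dend}). If I had to name a point deserving a sentence of care, it is simply checking that the right-hand side of Theorem \ref{thm.hmDless} is well-defined as an operator — but the parenthetical remark in that theorem already records that $R_{\omega\left(\alpha\right)}^{\perp} f = 0$ for all but finitely many $\alpha$ by degree reasons, so the sum defining $\mathbf{W}_m\left(f\right)$ is finite and the composition of the $\mathbf{W}_{\alpha_i}$ is legitimate. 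I would keep the write-up to a few lines, presenting the induction on $\ell$ explicitly and citing Theorem \ref{thm.hmDless} and Corollary \ref{cor.dualImm.dend.explicit} at the two key moments.
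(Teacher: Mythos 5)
Your proposal is correct and follows exactly the paper's own argument: identify $\mathbf{W}_m f = h_m \left.\prec\right. f$ via Theorem \ref{thm.hmDless}, iterate by induction on $\ell$, and conclude with Corollary \ref{cor.dualImm.dend.explicit}. No differences worth noting.
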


\begin{proof}
[Proof of Corollary \ref{cor.zabrocki}.]For every positive integer $m$ and
every $f\in\operatorname*{QSym}$, we have%
\[
\mathbf{W}_{m}f=\sum_{\alpha\in\operatorname*{Comp}}\left(  -1\right)
^{\left\vert \alpha\right\vert }F_{\alpha\odot\left(  m\right)  }%
R_{\omega\left(  \alpha\right)  }^{\perp}f=h_{m}\left.  \prec\right.
f\ \ \ \ \ \ \ \ \ \ \left(  \text{by Theorem \ref{thm.hmDless}}\right)  .
\]
Hence, by induction, for every composition $\alpha=\left(  \alpha_{1}%
,\alpha_{2},\ldots,\alpha_{\ell}\right)  $, we have%
\[
\mathbf{W}_{\alpha_{1}}\left(  \mathbf{W}_{\alpha_{2}}\left(  \cdots\left(
\mathbf{W}_{\alpha_{\ell}}\left(  1\right)  \right)  \cdots\right)  \right)
=h_{\alpha_{1}}\left.  \prec\right.  \left(  h_{\alpha_{2}}\left.
\prec\right.  \left(  \cdots\left.  \prec\right.  \left(  h_{\alpha_{\ell}%
}\left.  \prec\right.  1\right)  \cdots\right)  \right)  =\mathfrak{S}%
_{\alpha}^{\ast}%
\]
(by Corollary \ref{cor.dualImm.dend.explicit}). In other words,%
\[
\mathfrak{S}_{\alpha}^{\ast}=\mathbf{W}_{\alpha_{1}}\left(  \mathbf{W}%
_{\alpha_{2}}\left(  \cdots\left(  \mathbf{W}_{\alpha_{\ell}}\left(  1\right)
\right)  \cdots\right)  \right)  =\left(  \mathbf{W}_{\alpha_{1}}%
\circ\mathbf{W}_{\alpha_{2}}\circ\cdots\circ\mathbf{W}_{\alpha_{\ell}}\right)
\left(  1\right)  .
\]
This proves Corollary \ref{cor.zabrocki}.
\end{proof}

Let us finish this section with two curiosities: two analogues of Theorem
\ref{thm.hmDless}, one of which can be viewed as an \textquotedblleft$m=0$
version\textquotedblright\ and the other as a \textquotedblleft negative $m$
version\textquotedblright. We begin with the \textquotedblleft$m=0$
one\textquotedblright, as it is the easier one to state:

\begin{proposition}
\label{prop.hmDless.analogue0}Let $f\in\operatorname*{QSym}$. Then,%
\[
\varepsilon\left(  f\right)  =\sum_{\alpha\in\operatorname*{Comp}}\left(
-1\right)  ^{\left\vert \alpha\right\vert }F_{\alpha}R_{\omega\left(
\alpha\right)  }^{\perp}f.
\]

\end{proposition}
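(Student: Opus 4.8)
The plan is to run essentially the computation from the proof of Lemma~\ref{lem.hmDless.lem}, but stripped of the extra factor coming from the operation $\left.\textarm{\belgthor}\right.$, so that Theorem~\ref{thm.beldend} gets replaced by the bare antipode axiom $\sum_{\left(f\right)} S\left(f_{\left(1\right)}\right) f_{\left(2\right)} = \varepsilon\left(f\right) \cdot 1$. (As usual, the scalar $\varepsilon\left(f\right) \in \mathbf{k}$ appearing on the left of the asserted equality is to be understood as the element $\varepsilon\left(f\right) \cdot 1 \in \operatorname*{QSym}$.) Before starting the manipulation, I would observe that the sum on the right-hand side is actually finite: the operator $R_{\omega\left(\alpha\right)}^{\perp}$ lowers degree by $\left\vert\omega\left(\alpha\right)\right\vert = \left\vert\alpha\right\vert$, and $f$ is bounded-degree, so $R_{\omega\left(\alpha\right)}^{\perp} f = 0$ for all but finitely many $\alpha$; this legitimizes the interchanges of summation performed below.

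The steps, in order, would be the following. First, since $\alpha \mapsto \omega\left(\alpha\right)$ is a bijection of $\operatorname*{Comp}$ (because $\omega\left(\omega\left(\alpha\right)\right) = \alpha$), and since $\left\vert\omega\left(\alpha\right)\right\vert = \left\vert\alpha\right\vert$, I would substitute $\omega\left(\alpha\right)$ for $\alpha$ to rewrite $\sum_{\alpha} \left(-1\right)^{\left\vert\alpha\right\vert} F_{\alpha} R_{\omega\left(\alpha\right)}^{\perp} f$ as $\sum_{\alpha} \left(-1\right)^{\left\vert\alpha\right\vert} F_{\omega\left(\alpha\right)} R_{\alpha}^{\perp} f$. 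Second, Lemma~\ref{lem.adjoint-g} (applied to $g = R_{\alpha}$) gives $R_{\alpha}^{\perp} f = \sum_{\left(f\right)} \left(R_{\alpha}, f_{\left(1\right)}\right) f_{\left(2\right)}$, so after swapping the two (finite) sums the expression becomes $\sum_{\left(f\right)} \bigl( \sum_{\alpha} \left(-1\right)^{\left\vert\alpha\right\vert} F_{\omega\left(\alpha\right)} \left(R_{\alpha}, f_{\left(1\right)}\right) \bigr) f_{\left(2\right)}$. Third, Proposition~\ref{prop.S.F} rewrites $\left(-1\right)^{\left\vert\alpha\right\vert} F_{\omega\left(\alpha\right)}$ as $S\left(F_{\alpha}\right)$, and then $\mathbf{k}$-linearity of $S$ turns the inner sum into $S\bigl( \sum_{\alpha} F_{\alpha} \left(R_{\alpha}, f_{\left(1\right)}\right) \bigr)$. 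Fourth, since $\left(F_{\alpha}\right)_{\alpha\in\operatorname*{Comp}}$ and $\left(R_{\alpha}\right)_{\alpha\in\operatorname*{Comp}}$ are dual bases, every $g \in \operatorname*{QSym}$ satisfies $\sum_{\alpha} F_{\alpha} \left(R_{\alpha}, g\right) = g$; applying this with $g = f_{\left(1\right)}$ collapses the inner sum to $S\left(f_{\left(1\right)}\right)$. Putting this together, the whole expression equals $\sum_{\left(f\right)} S\left(f_{\left(1\right)}\right) f_{\left(2\right)}$, which is $\varepsilon\left(f\right) \cdot 1$ by the defining property of the antipode, as desired.

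I do not expect a genuine obstacle here: the argument is purely formal, and is really just the special case of the proof of Lemma~\ref{lem.hmDless.lem} in which the second factor is the unity $1$ and the operation $\left.\textarm{\belgthor}\right.$ plays no role (one simply never introduces it, and uses the antipode axiom in place of Theorem~\ref{thm.beldend}). The only points that require a little care are the two bookkeeping issues already flagged: checking finiteness of the sum so that the interchanges of summation are valid, and keeping in mind that the stated identity is an equality in $\operatorname*{QSym}$, i.e.\ that $\varepsilon\left(f\right)$ abbreviates $\varepsilon\left(f\right)\cdot 1$.
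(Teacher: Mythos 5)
Your proof is correct and follows essentially the same route as the paper's own argument: substitute $\omega\left(\alpha\right)$ for $\alpha$, apply Lemma \ref{lem.adjoint-g} and Proposition \ref{prop.S.F}, collapse the inner sum via the dual-basis identity, and finish with the antipode axiom. Your observation that this is the proof of Lemma \ref{lem.hmDless.lem} with the $\left.\textarm{\belgthor}\right.$-factor stripped out is exactly the right way to see it.
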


\begin{vershort}
\begin{proof}
[Proof of Proposition \ref{prop.hmDless.analogue0}.]This proof can be found in
the detailed version of this note; it is similar to the proof of Theorem
\ref{thm.hmDless}.
\end{proof}
\end{vershort}

\begin{verlong}
\begin{proof}
[Proof of Proposition \ref{prop.hmDless.analogue0}.]Let us use Sweedler's
notation. The map
\[
\operatorname*{Comp}\rightarrow\operatorname*{Comp},\ \alpha\mapsto
\omega\left(  \alpha\right)
\]
is a bijection (since $\omega\left(  \omega\left(  \alpha\right)  \right)
=\alpha$ for any composition $\alpha$). Hence, we can substitute
$\omega\left(  \alpha\right)  $ for $\alpha$ in the sum $\sum_{\alpha
\in\operatorname*{Comp}}\left(  -1\right)  ^{\left\vert \alpha\right\vert
}F_{\alpha}R_{\omega\left(  \alpha\right)  }^{\perp}f$. We thus obtain%
\begin{align*}
&  \sum_{\alpha\in\operatorname*{Comp}}\left(  -1\right)  ^{\left\vert
\alpha\right\vert }F_{\alpha}R_{\omega\left(  \alpha\right)  }^{\perp}f\\
&  =\sum_{\alpha\in\operatorname*{Comp}}\underbrace{\left(  -1\right)
^{\left\vert \omega\left(  \alpha\right)  \right\vert }}_{\substack{=\left(
-1\right)  ^{\left\vert \alpha\right\vert }\\\text{(since }\left\vert
\omega\left(  \alpha\right)  \right\vert =\left\vert \alpha\right\vert
\text{)}}}F_{\omega\left(  \alpha\right)  }\underbrace{R_{\omega\left(
\omega\left(  \alpha\right)  \right)  }^{\perp}}_{\substack{=R_{\alpha}%
^{\perp}\\\text{(since }\omega\left(  \omega\left(  \alpha\right)  \right)
=\alpha\text{)}}}f\\
&  =\sum_{\alpha\in\operatorname*{Comp}}\underbrace{\left(  -1\right)
^{\left\vert \alpha\right\vert }F_{\omega\left(  \alpha\right)  }%
}_{\substack{=S\left(  F_{\alpha}\right)  \\\text{(by Proposition
\ref{prop.S.F})}}}\underbrace{R_{\alpha}^{\perp}f}_{\substack{=\sum_{\left(
f\right)  }\left(  R_{\alpha},f_{\left(  1\right)  }\right)  f_{\left(
2\right)  }\\\text{(by (\ref{pf.hmDless.1}))}}}\\
&  =\sum_{\alpha\in\operatorname*{Comp}}S\left(  F_{\alpha}\right)
\sum_{\left(  f\right)  }\left(  R_{\alpha},f_{\left(  1\right)  }\right)
f_{\left(  2\right)  }=\sum_{\alpha\in\operatorname*{Comp}}\sum_{\left(
f\right)  }S\left(  F_{\alpha}\right)  \left(  R_{\alpha},f_{\left(  1\right)
}\right)  f_{\left(  2\right)  }\\
&  =\sum_{\left(  f\right)  }\left(  \sum_{\alpha\in\operatorname*{Comp}%
}S\left(  F_{\alpha}\right)  \left(  R_{\alpha},f_{\left(  1\right)  }\right)
\right)  f_{\left(  2\right)  }=\sum_{\left(  f\right)  }S\left(
\underbrace{\sum_{\alpha\in\operatorname*{Comp}}F_{\alpha}\left(  R_{\alpha
},f_{\left(  1\right)  }\right)  }_{\substack{_{\substack{=f_{\left(
1\right)  }}}\\\text{(by (\ref{pf.lem.hmDless.lem.dualbases}), applied to
}g=f_{\left(  1\right)  }\text{)}}}\right)  f_{\left(  2\right)  }\\
&  =\sum_{\left(  f\right)  }S\left(  f_{\left(  1\right)  }\right)
f_{\left(  2\right)  }=\varepsilon\left(  f\right)
\end{align*}
(by one of the defining properties of the antipode). This proves Proposition
\ref{prop.hmDless.analogue0}.
\end{proof}
\end{verlong}

The \textquotedblleft negative $m$\textquotedblright\ analogue is less
obvious:\footnote{Proposition \ref{prop.hmDless.analogue-} does not literally
involve a negative $m$, but it involves an element $F_{\alpha}^{\setminus m}$
which can be viewed as \textquotedblleft something like $F_{\left(
\alpha\right)  \odot\left(  -m\right)  }$\textquotedblright.}

\begin{proposition}
\label{prop.hmDless.analogue-}Let $f\in\operatorname*{QSym}$ and let $m$ be a
positive integer. For any composition $\alpha=\left(  \alpha_{1},\alpha
_{2},\ldots,\alpha_{\ell}\right)  $, we define an element $F_{\alpha
}^{\setminus m}$ of $\operatorname*{QSym}$ as follows:

\begin{itemize}
\item If $\ell=0$ or $\alpha_{\ell}<m$, then $F_{\alpha}^{\setminus m}=0$.

\item If $\alpha_{\ell}=m$, then $F_{\alpha}^{\setminus m}=F_{\left(
\alpha_{1},\alpha_{2},\ldots,\alpha_{\ell-1}\right)}$.

\item If $\alpha_{\ell}>m$, then $F_{\alpha}^{\setminus m}=F_{\left(
\alpha_{1},\alpha_{2},\ldots,\alpha_{\ell-1},\alpha_{\ell}-m\right)  }$.
\end{itemize}

(Here, any equality or inequality in which $\alpha_{\ell}$ is mentioned is
understood to include the statement that $\ell>0$.)

Then,%
\[
\left(  -1\right)  ^{m}\sum_{\alpha\in\operatorname*{Comp}}\left(  -1\right)
^{\left\vert \alpha\right\vert }F_{\alpha}^{\setminus m}R_{\omega\left(
\alpha\right)  }^{\perp}f=\varepsilon\left(  R_{\left(  1^{m}\right)  }%
^{\perp}f\right)  .
\]
Here, $\left(  1^{m}\right)  $ denotes the composition $\left(
\underbrace{1,1,\ldots,1}_{m\text{ times}}\right)  $.
\end{proposition}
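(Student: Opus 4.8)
The plan is to mimic the proof of Theorem~\ref{thm.hmDless} as closely as possible, replacing the ``creation'' operation $\left. \textarm{\belgthor} \right.$ (or rather the relevant consequence of Theorem~\ref{thm.beldend}) by a ``partial annihilation'' statement. First I would introduce, for each monomial $a=\mathfrak{n}$ with $k=\min\left(\operatorname*{Supp}\mathfrak{n}\right)$, an analogue of the evaluation map $\mathfrak{B}_k$, but this time I want the operation adjoint to \emph{removing} a block of size $m$ at the \emph{end} of a composition rather than prepending/appending. Concretely, I expect the key identity to be of the shape
\[
\varepsilon\!\left(R_{\left(1^m\right)}^{\perp} f\right) = \left(-1\right)^m \sum_{\alpha\in\operatorname*{Comp}} \left(-1\right)^{\left\vert\alpha\right\vert} \left(S\left(F_\alpha\right) \diamond_m 1\right) R_{\omega\left(\alpha\right)}^{\perp} f,
\]
where $\diamond_m$ denotes some bilinear pairing extracting the $F_{\alpha}^{\setminus m}$ coefficient; unwinding $S\left(F_\alpha\right) = \left(-1\right)^{\left\vert\alpha\right\vert} F_{\omega\left(\alpha\right)}$ (Proposition~\ref{prop.S.F}) and reindexing $\alpha\mapsto\omega\left(\alpha\right)$ (as in the proof of Lemma~\ref{lem.hmDless.lem}) should convert the right-hand side into $\left(-1\right)^m\sum_\alpha \left(-1\right)^{\left\vert\alpha\right\vert} F_\alpha^{\setminus m} R_{\omega\left(\alpha\right)}^{\perp} f$. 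So the real content is a ``dual'' of equation~\eqref{pf.thm.beldend.mainlem}/Theorem~\ref{thm.beldend}.

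The cleanest route, I believe, is to first establish an interpretation of $F_\alpha^{\setminus m}$ in the style of Corollary~\ref{cor.bel.F.hm}: namely, that $F_\alpha^{\setminus m}$ is (up to sign) the coefficient extracted when one writes $F_\alpha = \sum_{j} F_{\beta^{(j)}} \left. \textarm{\belgthor} \right. h_j$ or, better, that $\sum_\alpha \left(-1\right)^{\left\vert\alpha\right\vert} F_\alpha^{\setminus m} \otimes R_{\omega\left(\alpha\right)}$ behaves like ``$S$ composed with stripping a horizontal strip of size $m$ from the right.'' Then I would prove, for every $g\in\operatorname*{QSym}$,
\[
\sum_{\alpha\in\operatorname*{Comp}} \left(-1\right)^{\left\vert\alpha\right\vert} F_\alpha^{\setminus m} R_{\omega\left(\alpha\right)}^{\perp} g = \left(-1\right)^m \sum_{\left(g\right)} \left(h_m, g_{\left(1\right)}\right) \varepsilon\!\left(g_{\left(2\right)}\right) \cdot \mathbf{1}
\]
or the appropriate variant — the point being that pairing with $h_m = F_{\left(m\right)}$ on one tensor factor corresponds (via the duality $\left(R_{\left(1^m\right)}, F_{\left(m\right)}\right)$-type identities; recall $R_{\left(1^m\right)}$ is the ribbon dual to $F_{\left(1^m\right)}$, and the relevant pairing values are standard) to applying $R_{\left(1^m\right)}^{\perp}$ and then $\varepsilon$. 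Using Lemma~\ref{lem.adjoint-g} to rewrite $R_\alpha^\perp f = \sum_{\left(f\right)} \left(R_\alpha, f_{\left(1\right)}\right) f_{\left(2\right)}$, the sum collapses via $\sum_\alpha F_\alpha^{\setminus m}\left(R_\alpha, f_{\left(1\right)}\right)$-type telescoping exactly as in the proof of Lemma~\ref{lem.hmDless.lem}.

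As with the other propositions, it suffices by $\mathbf{k}$-linearity to check everything on the basis $\left(M_\gamma\right)_{\gamma\in\operatorname*{Comp}}$ or $\left(F_\gamma\right)_{\gamma\in\operatorname*{Comp}}$ of $\operatorname*{QSym}$; the $F$-basis is likely the more convenient one here since $F_\alpha^{\setminus m}$, the antipode formula, and $R_{\left(1^m\right)}^{\perp}$ all interact well with it. Concretely I would reduce to showing that for each composition $\gamma$,
\[
\left(-1\right)^m\sum_{\alpha\in\operatorname*{Comp}} \left(-1\right)^{\left\vert\alpha\right\vert} F_\alpha^{\setminus m} R_{\omega\left(\alpha\right)}^{\perp} F_\gamma = \varepsilon\!\left(R_{\left(1^m\right)}^{\perp} F_\gamma\right),
\]
where the right-hand side is just the coefficient of $F_{\left(1^m\right)}$ in $F_\gamma$ read off via the pairing (which is $\left(R_{\left(1^m\right)}, F_\gamma\right)$ if $\left\vert\gamma\right\vert = m$, and $0$ otherwise since $\varepsilon$ kills positive-degree elements), and the left-hand side can be expanded using $\Delta\left(F_\gamma\right) = \sum_{\gamma = \delta\cdot\delta'} F_\delta \otimes F_{\delta'} + (\text{a correction from the }\odot\text{-decomposition})$ — actually $\Delta F_\gamma = \sum F_{\gamma_{\leq i}} \otimes F_{\gamma_{>i}} + \sum F_{\gamma'} \otimes F_{\gamma''}$ along both ``cut'' and ``split a part'' positions; I would use this together with $S\left(F_\alpha\right) = \left(-1\right)^{\left\vert\alpha\right\vert}F_{\omega\left(\alpha\right)}$. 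The main obstacle I anticipate is getting the combinatorics of $F_\alpha^{\setminus m}$ and the sign $\left(-1\right)^m$ to line up precisely — in particular verifying that the ``strip $m$ from the right'' operation is genuinely adjoint (up to the antipode twist) to ``multiply by $h_m$ on the right in the $\left. \textarm{\belgthor} \right.$ sense,'' which is the analogue of equation~\eqref{pf.hmDless.2} but now run backwards and requires careful bookkeeping of the last part of $\alpha$ versus the parts being cut by $\Delta$. Once that adjointness/telescoping lemma is in hand, the rest is a formal computation identical in structure to the proof of Lemma~\ref{lem.hmDless.lem}.
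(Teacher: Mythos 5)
Your reduction of the right-hand side to $\left(R_{\left(1^{m}\right)},f\right)$ is correct, and your instinct that the proposition should follow from a telescoping argument in the style of Lemma \ref{lem.hmDless.lem}, with $F_{\alpha}^{\setminus m}$ acting as a ``strip $\left(m\right)$ from the right'' operation dual to something in $\operatorname*{NSym}$, is sound. But the step you yourself flag as ``the main obstacle'' is exactly where the proof lives, and your sketch does not supply the mechanism. The missing ingredient is concrete: the compositions $\beta$ with $F_{\beta}^{\setminus m}\neq0$ are precisely those of the form $\left[\alpha,\left(m\right)\right]$ (last part $=m$) or $\alpha\odot\left(m\right)$ with $\alpha$ nonempty (last part $>m$), and in both cases $F_{\beta}^{\setminus m}=F_{\alpha}$ and $\left(-1\right)^{\left\vert\beta\right\vert}=\left(-1\right)^{\left\vert\alpha\right\vert+m}$. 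Reindexing the sum over these two families and using $\omega\left(\left[\alpha,\left(m\right)\right]\right)=\left(1^{m}\right)\odot\omega\left(\alpha\right)$ and $\omega\left(\alpha\odot\left(m\right)\right)=\left[\left(1^{m}\right),\omega\left(\alpha\right)\right]$ together with the ribbon product rule $R_{\left[\gamma,\delta\right]}+R_{\gamma\odot\delta}=R_{\gamma}R_{\delta}$ collapses the two families into $R_{\left(1^{m}\right)}R_{\omega\left(\alpha\right)}$; then $\left(R_{\left(1^{m}\right)}R_{\omega\left(\alpha\right)}\right)^{\perp}=R_{\omega\left(\alpha\right)}^{\perp}\circ R_{\left(1^{m}\right)}^{\perp}$ turns the whole left-hand side into $\left(-1\right)^{m}\sum_{\alpha}\left(-1\right)^{\left\vert\alpha\right\vert}F_{\alpha}R_{\omega\left(\alpha\right)}^{\perp}\left(R_{\left(1^{m}\right)}^{\perp}f\right)$, and Proposition \ref{prop.hmDless.analogue0} applied to $R_{\left(1^{m}\right)}^{\perp}f$ finishes the argument. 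None of this needs a new evaluation map $\mathfrak{B}_{k}$, a power-series computation, or an expansion of $\Delta F_{\gamma}$ on the $F$-basis; it is a purely formal manipulation at the level of the $\operatorname*{NSym}$--$\operatorname*{QSym}$ duality.

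Moreover, your candidate key identity misidentifies the dual object. Stripping $\left(m\right)$ from the right of $\alpha$ is adjoint to right multiplication by the \emph{ribbon} $R_{\left(m\right)}$ on $\operatorname*{NSym}$ (since $\sum_{\beta}F_{\beta}^{\setminus m}\otimes R_{\beta}=\sum_{\alpha}F_{\alpha}\otimes R_{\alpha}R_{\left(m\right)}$), and it is only after the twist coming from $S\left(F_{\alpha}\right)=\left(-1\right)^{\left\vert\alpha\right\vert}F_{\omega\left(\alpha\right)}$ that $R_{\left(m\right)}$ becomes $R_{\left(1^{m}\right)}$. Pairing against $h_{m}$ (i.e., against $H_{m}\in\operatorname*{NSym}$) instead extracts the coefficient of $M_{\left(m\right)}$, which is not the same functional as $\left(R_{\left(1^{m}\right)},\cdot\right)$: for example, taking $f=F_{\left(m\right)}$ with $m\geq2$ gives $\left(R_{\left(1^{m}\right)},F_{\left(m\right)}\right)=0$ but a nonzero $M_{\left(m\right)}$-coefficient. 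So the display you propose as the ``real content'' would, as written, prove a false statement; the correction is exactly the ribbon-product bookkeeping described above.
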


\begin{vershort}
\begin{proof}
[Proof of Proposition \ref{prop.hmDless.analogue-}.]See the detailed version
of this note.
\end{proof}
\end{vershort}

\begin{verlong}
\begin{proof}
[Proof of Proposition \ref{prop.hmDless.analogue-}.]Let us first make some
auxiliary observations.

Any two elements $a$ and $b$ of $\operatorname*{NSym}$ satisfy
\begin{equation}
\left(  ab\right)  ^{\perp}=b^{\perp}\circ a^{\perp}
\label{pf.prop.hmDless.analogue-.1}%
\end{equation}
\footnote{\textit{Proof of (\ref{pf.prop.hmDless.analogue-.1}):} Let $a$ and
$b$ be two elements of $\operatorname*{NSym}$. Let $c\in\operatorname*{QSym}$.
Then,%
\begin{align*}
\left(  ab\right)  ^{\perp}c  &  =\sum_{\left(  c\right)  }\underbrace{\left(
ab,c_{\left(  1\right)  }\right)  }_{\substack{=\sum_{\left(  c_{\left(
1\right)  }\right)  }\left(  a,\left(  c_{\left(  1\right)  }\right)
_{\left(  1\right)  }\right)  \left(  b,\left(  c_{\left(  1\right)  }\right)
_{\left(  2\right)  }\right)  \\\text{(by (\ref{eq.(ab,c)}), applied to
}c_{\left(  1\right)  }\text{ instead of }c\text{)}}}c_{\left(  2\right)
}\ \ \ \ \ \ \ \ \ \ \left(  \text{by (\ref{pf.hmDless.1}), applied to
}g=ab\text{ and }f=c\right) \\
&  =\sum_{\left(  c\right)  }\sum_{\left(  c_{\left(  1\right)  }\right)
}\left(  a,\left(  c_{\left(  1\right)  }\right)  _{\left(  1\right)
}\right)  \left(  b,\left(  c_{\left(  1\right)  }\right)  _{\left(  2\right)
}\right)  c_{\left(  2\right)  }=\sum_{\left(  c\right)  }\sum_{\left(
c_{\left(  2\right)  }\right)  }\left(  a,c_{\left(  1\right)  }\right)
\left(  b,\left(  c_{\left(  2\right)  }\right)  _{\left(  1\right)  }\right)
\left(  c_{\left(  2\right)  }\right)  _{\left(  2\right)  }\\
&  \ \ \ \ \ \ \ \ \ \ \left(
\begin{array}
[c]{c}%
\text{since the coassociativity of }\Delta\text{ yields}\\
\sum_{\left(  c\right)  }\sum_{\left(  c_{\left(  1\right)  }\right)  }\left(
c_{\left(  1\right)  }\right)  _{\left(  1\right)  }\otimes\left(  c_{\left(
1\right)  }\right)  _{\left(  2\right)  }\otimes c_{\left(  2\right)  }%
=\sum_{\left(  c\right)  }\sum_{\left(  c_{\left(  2\right)  }\right)
}c_{\left(  1\right)  }\otimes\left(  c_{\left(  2\right)  }\right)  _{\left(
1\right)  }\otimes\left(  c_{\left(  2\right)  }\right)  _{\left(  2\right)  }%
\end{array}
\right) \\
&  =\sum_{\left(  c\right)  }\left(  a,c_{\left(  1\right)  }\right)
\sum_{\left(  c_{\left(  2\right)  }\right)  }\left(  b,\left(  c_{\left(
2\right)  }\right)  _{\left(  1\right)  }\right)  \left(  c_{\left(  2\right)
}\right)  _{\left(  2\right)  }.
\end{align*}
Compared with%
\begin{align*}
\left(  b^{\perp}\circ a^{\perp}\right)  \left(  c\right)   &  =b^{\perp
}\left(  \underbrace{a^{\perp}c}_{\substack{=\sum_{\left(  c\right)  }\left(
a,c_{\left(  1\right)  }\right)  c_{\left(  2\right)  }\\\text{(by
(\ref{pf.hmDless.1}), applied to }g=a\text{ and }f=c\text{)}}}\right)
=b^{\perp}\left(  \sum_{\left(  c\right)  }\left(  a,c_{\left(  1\right)
}\right)  c_{\left(  2\right)  }\right) \\
&  =\sum_{\left(  c\right)  }\left(  a,c_{\left(  1\right)  }\right)
\underbrace{b^{\perp}\left(  c_{\left(  2\right)  }\right)  }_{\substack{=\sum
_{\left(  c_{\left(  2\right)  }\right)  }\left(  b,\left(  c_{\left(
2\right)  }\right)  _{\left(  1\right)  }\right)  \left(  c_{\left(  2\right)
}\right)  _{\left(  2\right)  }\\\text{(by (\ref{pf.hmDless.1}), applied to
}g=b\text{ and }f=c_{\left(  2\right)  }\text{)}}}\ \ \ \ \ \ \ \ \ \ \left(
\text{since the map }b^{\perp}\text{ is }\mathbf{k}\text{-linear}\right) \\
&  =\sum_{\left(  c\right)  }\left(  a,c_{\left(  1\right)  }\right)
\sum_{\left(  c_{\left(  2\right)  }\right)  }\left(  b,\left(  c_{\left(
2\right)  }\right)  _{\left(  1\right)  }\right)  \left(  c_{\left(  2\right)
}\right)  _{\left(  2\right)  },
\end{align*}
this yields $\left(  ab\right)  ^{\perp}c=\left(  b^{\perp}\circ a^{\perp
}\right)  \left(  c\right)  $.
\par
Now, let us forget that we fixed $c$. We thus have shown that $\left(
ab\right)  ^{\perp}c=\left(  b^{\perp}\circ a^{\perp}\right)  \left(
c\right)  $ for every $c\in\operatorname*{QSym}$. In other words, $\left(
ab\right)  ^{\perp}=b^{\perp}\circ a^{\perp}$. This proves
(\ref{pf.prop.hmDless.analogue-.1}).}.

For every two compositions $\alpha$ and $\beta$, we define a composition
$\left[  \alpha,\beta\right]  $ by $\left[  \alpha,\beta\right]  =\left(
\alpha_{1},\alpha_{2},\ldots,\alpha_{\ell},\beta_{1},\beta_{2},\ldots
,\beta_{m}\right)  $, where $\alpha$ and $\beta$ are written as $\alpha
=\left(  \alpha_{1},\alpha_{2},\ldots,\alpha_{\ell}\right)  $ and
$\beta=\left(  \beta_{1},\beta_{2},\ldots,\beta_{m}\right)  $. We further
define a composition $\alpha\odot\beta$ as in Proposition \ref{prop.bel.F}.
Then, every two nonempty compositions $\alpha$ and $\beta$ satisfy%
\begin{equation}
R_{\alpha}R_{\beta}=R_{\left[  \alpha,\beta\right]  }+R_{\alpha\odot\beta}.
\label{pf.prop.hmDless.analogue-.2}%
\end{equation}
(This is part of \cite[Theorem 5.4.10(c)]{Reiner}.) Now it is easy to see that%
\begin{equation}
R_{\omega\left(  \left[  \alpha,\left(  m\right)  \right]  \right)
}+R_{\omega\left(  \alpha\odot\left(  m\right)  \right)  }=R_{\left(
1^{m}\right)  }R_{\omega\left(  \alpha\right)  }
\label{pf.prop.hmDless.analogue-.3}%
\end{equation}
for every nonempty composition $\alpha$\ \ \ \ \footnote{\textit{Proof of
(\ref{pf.prop.hmDless.analogue-.3}):} Let $\alpha$ be a nonempty composition.
Proposition \ref{prop.omega.odot} \textbf{(a)} shows that $\omega\left(
\left[  \alpha,\beta\right]  \right)  =\omega\left(  \beta\right)  \odot
\omega\left(  \alpha\right)  $ for every nonempty composition $\beta$.
Applying this to $\beta=\left(  m\right)  $, we obtain $\omega\left(  \left[
\alpha,\left(  m\right)  \right]  \right)  =\underbrace{\omega\left(  \left(
m\right)  \right)  }_{=\left(  1^{m}\right)  }\odot\omega\left(
\alpha\right)  =\left(  1^{m}\right)  \odot\omega\left(  \alpha\right)  $. But
Proposition \ref{prop.omega.odot}\textbf{(b)} shows that $\omega\left(
\alpha\odot\beta\right)  =\left[  \omega\left(  \beta\right)  ,\omega\left(
\alpha\right)  \right]  $ for every nonempty composition $\beta$. Applying
this to $\beta=\left(  m\right)  $, we obtain $\omega\left(  \alpha
\odot\left(  m\right)  \right)  =\left[  \underbrace{\omega\left(  \left(
m\right)  \right)  }_{=\left(  1^{m}\right)  },\omega\left(  \alpha\right)
\right]  =\left[  \left(  1^{m}\right)  ,\omega\left(  \alpha\right)  \right]
$. Now,%
\begin{align*}
R_{\omega\left(  \left[  \alpha,\left(  m\right)  \right]  \right)
}+R_{\omega\left(  \alpha\odot\left(  m\right)  \right)  }  &  =R_{\omega
\left(  \alpha\odot\left(  m\right)  \right)  }+R_{\omega\left(  \left[
\alpha,\left(  m\right)  \right]  \right)  }=R_{\left[  \left(  1^{m}\right)
,\omega\left(  \alpha\right)  \right]  }+R_{\left(  1^{m}\right)  \odot
\omega\left(  \alpha\right)  }\\
&  \ \ \ \ \ \ \ \ \ \ \left(  \text{since }\omega\left(  \alpha\odot\left(
m\right)  \right)  =\left[  \left(  1^{m}\right)  ,\omega\left(
\alpha\right)  \right]  \text{ and }\omega\left(  \left[  \alpha,\left(
m\right)  \right]  \right)  =\left(  1^{m}\right)  \odot\omega\left(
\alpha\right)  \right) \\
&  =R_{\left(  1^{m}\right)  }R_{\omega\left(  \alpha\right)  }%
\end{align*}
(since (\ref{pf.prop.hmDless.analogue-.2}) (applied to $\left(  1^{m}\right)
$ and $\omega\left(  \alpha\right)  $ instead of $\alpha$ and $\beta$) shows
that $R_{\left(  1^{m}\right)  }R_{\omega\left(  \alpha\right)  }=R_{\left[
\left(  1^{m}\right)  ,\omega\left(  \alpha\right)  \right]  }+R_{\left(
1^{m}\right)  \odot\omega\left(  \alpha\right)  }$). This proves
(\ref{pf.prop.hmDless.analogue-.3}).}. Hence, for every nonempty composition
$\alpha$, we have%
\begin{equation}
\left(  \underbrace{R_{\omega\left(  \left[  \alpha,\left(  m\right)  \right]
\right)  }+R_{\omega\left(  \alpha\odot\left(  m\right)  \right)  }%
}_{=R_{\left(  1^{m}\right)  }R_{\omega\left(  \alpha\right)  }}\right)
^{\perp}=\left(  R_{\left(  1^{m}\right)  }R_{\omega\left(  \alpha\right)
}\right)  ^{\perp}=R_{\omega\left(  \alpha\right)  }^{\perp}\circ R_{\left(
1^{m}\right)  }^{\perp} \label{pf.prop.hmDless.analogue-.4}%
\end{equation}
(by (\ref{pf.prop.hmDless.analogue-.1}), applied to $a=R_{\left(
1^{m}\right)  }$ and $b=R_{\omega\left(  \alpha\right)  }$).

We furthermore notice that $\omega\left(  \varnothing\right)  =\varnothing$
and thus $R_{\omega\left(  \varnothing\right)  }^{\perp}=R_{\varnothing
}^{\perp}=\operatorname*{id}$ (since $R_{\varnothing}=1$).

Now,%
\begin{align}
&  \sum_{\substack{\left(  \alpha_{1},\alpha_{2},\ldots,\alpha_{\ell}\right)
\in\operatorname*{Comp};\\\alpha_{\ell}=m}}\underbrace{\left(  -1\right)
^{\left\vert \left(  \alpha_{1},\alpha_{2},\ldots,\alpha_{\ell}\right)
\right\vert }}_{\substack{=\left(  -1\right)  ^{\left\vert \left(  \alpha
_{1},\alpha_{2},\ldots,\alpha_{\ell-1},m\right)  \right\vert }\\\text{(since
}\alpha_{\ell}=m\text{)}}}\underbrace{F_{\left(  \alpha_{1},\alpha_{2}%
,\ldots,\alpha_{\ell}\right)  }^{\setminus m}}_{\substack{=F_{\left(
\alpha_{1},\alpha_{2},\ldots,\alpha_{\ell-1}\right)  }\\\text{(since }%
\alpha_{\ell}=m\text{)}}}\underbrace{R_{\omega\left(  \left(  \alpha
_{1},\alpha_{2},\ldots,\alpha_{\ell}\right)  \right)  }^{\perp}}%
_{\substack{=R_{\omega\left(  \left(  \alpha_{1},\alpha_{2},\ldots
,\alpha_{\ell-1},m\right)  \right)  }^{\perp}\\\text{(since }\alpha_{\ell
}=m\text{)}}}f\nonumber\\
&  =\underbrace{\sum_{\substack{\left(  \alpha_{1},\alpha_{2},\ldots
,\alpha_{\ell}\right)  \in\operatorname*{Comp};\\\alpha_{\ell}=m}}}%
_{=\sum_{\left(  \alpha_{1},\alpha_{2},\ldots,\alpha_{\ell-1}\right)
\in\operatorname*{Comp}}}\underbrace{\left(  -1\right)  ^{\left\vert \left(
\alpha_{1},\alpha_{2},\ldots,\alpha_{\ell-1},m\right)  \right\vert }%
}_{=\left(  -1\right)  ^{\left\vert \left(  \alpha_{1},\alpha_{2}%
,\ldots,\alpha_{\ell-1}\right)  \right\vert +m}}F_{\left(  \alpha_{1}%
,\alpha_{2},\ldots,\alpha_{\ell-1}\right)  }\underbrace{R_{\omega\left(
\left(  \alpha_{1},\alpha_{2},\ldots,\alpha_{\ell-1},m\right)  \right)
}^{\perp}}_{\substack{=R_{\omega\left(  \left[  \left(  \alpha_{1},\alpha
_{2},\ldots,\alpha_{\ell-1}\right)  ,\left(  m\right)  \right]  \right)
}^{\perp}\\\text{(since }\left(  \alpha_{1},\alpha_{2},\ldots,\alpha_{\ell
-1},m\right)  \\=\left[  \left(  \alpha_{1},\alpha_{2},\ldots,\alpha_{\ell
-1}\right)  ,\left(  m\right)  \right]  \text{)}}}f\nonumber\\
&  =\sum_{\left(  \alpha_{1},\alpha_{2},\ldots,\alpha_{\ell-1}\right)
\in\operatorname*{Comp}}\left(  -1\right)  ^{\left\vert \left(  \alpha
_{1},\alpha_{2},\ldots,\alpha_{\ell-1}\right)  \right\vert +m}F_{\left(
\alpha_{1},\alpha_{2},\ldots,\alpha_{\ell-1}\right)  }R_{\omega\left(  \left[
\left(  \alpha_{1},\alpha_{2},\ldots,\alpha_{\ell-1}\right)  ,\left(
m\right)  \right]  \right)  }^{\perp}f\nonumber\\
&  =\sum_{\alpha\in\operatorname*{Comp}}\left(  -1\right)  ^{\left\vert
\alpha\right\vert +m}F_{\alpha}R_{\omega\left(  \left[  \alpha,\left(
m\right)  \right]  \right)  }^{\perp}f\nonumber\\
&  \ \ \ \ \ \ \ \ \ \ \left(  \text{here, we have substituted }\alpha\text{
for }\left(  \alpha_{1},\alpha_{2},\ldots,\alpha_{\ell-1}\right)  \text{ in
the sum}\right) \nonumber\\
&  =\left(  -1\right)  ^{\left\vert \varnothing\right\vert +m}F_{\varnothing
}R_{\omega\left(  \left[  \varnothing,\left(  m\right)  \right]  \right)
}^{\perp}f+\sum_{\substack{\alpha\in\operatorname*{Comp};\\\alpha\text{ is
nonempty}}}\left(  -1\right)  ^{\left\vert \alpha\right\vert +m}F_{\alpha
}R_{\omega\left(  \left[  \alpha,\left(  m\right)  \right]  \right)  }^{\perp
}f \label{pf.prop.hmDless.analogue-.11}%
\end{align}
(here, we have split off the addend for $\alpha=\varnothing$ from the sum). On
the other hand,%
\begin{align}
&  \sum_{\substack{\left(  \alpha_{1},\alpha_{2},\ldots,\alpha_{\ell}\right)
\in\operatorname*{Comp};\\\alpha_{\ell}>m}}\left(  -1\right)  ^{\left\vert
\left(  \alpha_{1},\alpha_{2},\ldots,\alpha_{\ell}\right)  \right\vert
}\underbrace{F_{\left(  \alpha_{1},\alpha_{2},\ldots,\alpha_{\ell}\right)
}^{\setminus m}}_{\substack{=F_{\left(  \alpha_{1},\alpha_{2},\ldots
,\alpha_{\ell-1},\alpha_{\ell}-m\right)  }\\\text{(since }\alpha_{\ell
}>m\text{)}}}R_{\omega\left(  \left(  \alpha_{1},\alpha_{2},\ldots
,\alpha_{\ell}\right)  \right)  }^{\perp}f\nonumber\\
&  =\sum_{\substack{\left(  \alpha_{1},\alpha_{2},\ldots,\alpha_{\ell}\right)
\in\operatorname*{Comp};\\\alpha_{\ell}>m}}\left(  -1\right)  ^{\left\vert
\left(  \alpha_{1},\alpha_{2},\ldots,\alpha_{\ell}\right)  \right\vert
}F_{\left(  \alpha_{1},\alpha_{2},\ldots,\alpha_{\ell-1},\alpha_{\ell
}-m\right)  }R_{\omega\left(  \left(  \alpha_{1},\alpha_{2},\ldots
,\alpha_{\ell}\right)  \right)  }^{\perp}f\nonumber\\
&  =\sum_{\substack{\left(  \alpha_{1},\alpha_{2},\ldots,\alpha_{\ell}\right)
\in\operatorname*{Comp};\\\ell>0}}\underbrace{\left(  -1\right)  ^{\left\vert
\left(  \alpha_{1},\alpha_{2},\ldots,\alpha_{\ell-1},\alpha_{\ell}+m\right)
\right\vert }}_{=\left(  -1\right)  ^{\left\vert \left(  \alpha_{1},\alpha
_{2},\ldots,\alpha_{\ell}\right)  \right\vert +m}}F_{\left(  \alpha_{1}%
,\alpha_{2},\ldots,\alpha_{\ell}\right)  }\nonumber\\
&  \ \ \ \ \ \ \ \ \ \ \underbrace{R_{\omega\left(  \left(  \alpha_{1}%
,\alpha_{2},\ldots,\alpha_{\ell-1},\alpha_{\ell}+m\right)  \right)  }^{\perp}%
}_{\substack{=R_{\omega\left(  \left(  \alpha_{1},\alpha_{2},\ldots
,\alpha_{\ell}\right)  \odot\left(  m\right)  \right)  }^{\perp}\\\text{(since
}\left(  \alpha_{1},\alpha_{2},\ldots,\alpha_{\ell-1},\alpha_{\ell}+m\right)
=\left(  \alpha_{1},\alpha_{2},\ldots,\alpha_{\ell}\right)  \odot\left(
m\right)  \text{)}}}f\nonumber\\
&  \ \ \ \ \ \ \ \ \ \ \left(
\begin{array}
[c]{c}%
\text{here, we have substituted }\left(  \alpha_{1},\alpha_{2},\ldots
,\alpha_{\ell}\right) \\
\text{for }\left(  \alpha_{1},\alpha_{2},\ldots,\alpha_{\ell-1},\alpha_{\ell
}-m\right)  \text{ in the sum}%
\end{array}
\right) \nonumber\\
&  =\sum_{\substack{\left(  \alpha_{1},\alpha_{2},\ldots,\alpha_{\ell}\right)
\in\operatorname*{Comp};\\\ell>0}}\left(  -1\right)  ^{\left\vert \left(
\alpha_{1},\alpha_{2},\ldots,\alpha_{\ell}\right)  \right\vert +m}F_{\left(
\alpha_{1},\alpha_{2},\ldots,\alpha_{\ell}\right)  }R_{\omega\left(  \left(
\alpha_{1},\alpha_{2},\ldots,\alpha_{\ell}\right)  \odot\left(  m\right)
\right)  }^{\perp}f\nonumber\\
&  =\sum_{\substack{\alpha\in\operatorname*{Comp};\\\alpha\text{ is nonempty}%
}}\left(  -1\right)  ^{\left\vert \alpha\right\vert +m}F_{\alpha}%
R_{\omega\left(  \alpha\odot\left(  m\right)  \right)  }^{\perp}f
\label{pf.prop.hmDless.analogue-.16}%
\end{align}
(here, we have substituted $\alpha$ for $\left(  \alpha_{1},\alpha_{2}%
,\ldots,\alpha_{\ell}\right)  $ in the sum).

But%
\begin{align*}
&  \sum_{\alpha\in\operatorname*{Comp}}\left(  -1\right)  ^{\left\vert
\alpha\right\vert }F_{\alpha}^{\setminus m}R_{\omega\left(  \alpha\right)
}^{\perp}f\\
&  =\sum_{\left(  \alpha_{1},\alpha_{2},\ldots,\alpha_{\ell}\right)
\in\operatorname*{Comp}}\left(  -1\right)  ^{\left\vert \left(  \alpha
_{1},\alpha_{2},\ldots,\alpha_{\ell}\right)  \right\vert }F_{\left(
\alpha_{1},\alpha_{2},\ldots,\alpha_{\ell}\right)  }^{\setminus m}%
R_{\omega\left(  \left(  \alpha_{1},\alpha_{2},\ldots,\alpha_{\ell}\right)
\right)  }^{\perp}f\\
&  \ \ \ \ \ \ \ \ \ \ \left(  \text{here, we have renamed the summation index
}\alpha\text{ as }\left(  \alpha_{1},\alpha_{2},\ldots,\alpha_{\ell}\right)
\right) \\
&  =\sum_{\substack{\left(  \alpha_{1},\alpha_{2},\ldots,\alpha_{\ell}\right)
\in\operatorname*{Comp};\\\ell=0\text{ or }\alpha_{\ell}<m}}\left(  -1\right)
^{\left\vert \left(  \alpha_{1},\alpha_{2},\ldots,\alpha_{\ell}\right)
\right\vert }\underbrace{F_{\left(  \alpha_{1},\alpha_{2},\ldots,\alpha_{\ell
}\right)  }^{\setminus m}}_{\substack{=0\\\text{(since }\ell=0\text{ or
}\alpha_{\ell}<m\text{)}}}R_{\omega\left(  \left(  \alpha_{1},\alpha
_{2},\ldots,\alpha_{\ell}\right)  \right)  }^{\perp}f\\
&  \ \ \ \ \ \ \ \ \ \ +\underbrace{\sum_{\substack{\left(  \alpha_{1}%
,\alpha_{2},\ldots,\alpha_{\ell}\right)  \in\operatorname*{Comp}%
;\\\alpha_{\ell}=m}}\left(  -1\right)  ^{\left\vert \left(  \alpha_{1}%
,\alpha_{2},\ldots,\alpha_{\ell}\right)  \right\vert }F_{\left(  \alpha
_{1},\alpha_{2},\ldots,\alpha_{\ell}\right)  }^{\setminus m}R_{\omega\left(
\left(  \alpha_{1},\alpha_{2},\ldots,\alpha_{\ell}\right)  \right)  }^{\perp
}f}_{\substack{=\left(  -1\right)  ^{\left\vert \varnothing\right\vert
+m}F_{\varnothing}R_{\omega\left(  \left[  \varnothing,\left(  m\right)
\right]  \right)  }^{\perp}f+\sum_{\substack{\alpha\in\operatorname*{Comp}%
;\\\alpha\text{ is nonempty}}}\left(  -1\right)  ^{\left\vert \alpha
\right\vert +m}F_{\alpha}R_{\omega\left(  \left[  \alpha,\left(  m\right)
\right]  \right)  }^{\perp}f\\\text{(by (\ref{pf.prop.hmDless.analogue-.11}%
))}}}\\
&  \ \ \ \ \ \ \ \ \ \ +\underbrace{\sum_{\substack{\left(  \alpha_{1}%
,\alpha_{2},\ldots,\alpha_{\ell}\right)  \in\operatorname*{Comp}%
;\\\alpha_{\ell}>m}}\left(  -1\right)  ^{\left\vert \left(  \alpha_{1}%
,\alpha_{2},\ldots,\alpha_{\ell}\right)  \right\vert }F_{\left(  \alpha
_{1},\alpha_{2},\ldots,\alpha_{\ell}\right)  }^{\setminus m}R_{\omega\left(
\left(  \alpha_{1},\alpha_{2},\ldots,\alpha_{\ell}\right)  \right)  }^{\perp
}f}_{\substack{=\sum_{\substack{\alpha\in\operatorname*{Comp};\\\alpha\text{
is nonempty}}}\left(  -1\right)  ^{\left\vert \alpha\right\vert +m}F_{\alpha
}R_{\omega\left(  \alpha\odot\left(  m\right)  \right)  }^{\perp}f\\\text{(by
(\ref{pf.prop.hmDless.analogue-.16}))}}}\\
&  =\underbrace{\sum_{\substack{\left(  \alpha_{1},\alpha_{2},\ldots
,\alpha_{\ell}\right)  \in\operatorname*{Comp};\\\ell=0\text{ or }\alpha
_{\ell}<m}}\left(  -1\right)  ^{\left\vert \left(  \alpha_{1},\alpha
_{2},\ldots,\alpha_{\ell}\right)  \right\vert }0R_{\omega\left(  \left(
\alpha_{1},\alpha_{2},\ldots,\alpha_{\ell}\right)  \right)  }^{\perp}f}_{=0}\\
&  \ \ \ \ \ \ \ \ \ \ +\left(  -1\right)  ^{\left\vert \varnothing\right\vert
+m}F_{\varnothing}R_{\omega\left(  \left[  \varnothing,\left(  m\right)
\right]  \right)  }^{\perp}f+\sum_{\substack{\alpha\in\operatorname*{Comp}%
;\\\alpha\text{ is nonempty}}}\left(  -1\right)  ^{\left\vert \alpha
\right\vert +m}F_{\alpha}R_{\omega\left(  \left[  \alpha,\left(  m\right)
\right]  \right)  }^{\perp}f\\
&  \ \ \ \ \ \ \ \ \ \ +\sum_{\substack{\alpha\in\operatorname*{Comp}%
;\\\alpha\text{ is nonempty}}}\left(  -1\right)  ^{\left\vert \alpha
\right\vert +m}F_{\alpha}R_{\omega\left(  \alpha\odot\left(  m\right)
\right)  }^{\perp}f
\end{align*}%
\begin{align*}
&  =\left(  -1\right)  ^{\left\vert \varnothing\right\vert +m}F_{\varnothing
}R_{\omega\left(  \left[  \varnothing,\left(  m\right)  \right]  \right)
}^{\perp}f+\sum_{\substack{\alpha\in\operatorname*{Comp};\\\alpha\text{ is
nonempty}}}\left(  -1\right)  ^{\left\vert \alpha\right\vert +m}F_{\alpha
}R_{\omega\left(  \left[  \alpha,\left(  m\right)  \right]  \right)  }^{\perp
}f\\
&  \ \ \ \ \ \ \ \ \ \ +\sum_{\substack{\alpha\in\operatorname*{Comp}%
;\\\alpha\text{ is nonempty}}}\left(  -1\right)  ^{\left\vert \alpha
\right\vert +m}F_{\alpha}R_{\omega\left(  \alpha\odot\left(  m\right)
\right)  }^{\perp}f\\
&  =\left(  -1\right)  ^{\left\vert \varnothing\right\vert +m}F_{\varnothing
}\underbrace{R_{\omega\left(  \left[  \varnothing,\left(  m\right)  \right]
\right)  }^{\perp}}_{\substack{=R_{\left(  1^{m}\right)  }^{\perp
}\\\text{(since }\omega\left(  \left[  \varnothing,\left(  m\right)  \right]
\right)  =\omega\left(  \left(  m\right)  \right)  =\left(  1^{m}\right)
\text{)}}}f\\
&  \ \ \ \ \ \ \ \ \ \ +\sum_{\substack{\alpha\in\operatorname*{Comp}%
;\\\alpha\text{ is nonempty}}}\left(  -1\right)  ^{\left\vert \alpha
\right\vert +m}F_{\alpha}\underbrace{\left(  R_{\omega\left(  \left[
\alpha,\left(  m\right)  \right]  \right)  }+R_{\omega\left(  \alpha
\odot\left(  m\right)  \right)  }\right)  ^{\perp}}_{\substack{=R_{\omega
\left(  \alpha\right)  }^{\perp}\circ R_{\left(  1^{m}\right)  }^{\perp
}\\\text{(by (\ref{pf.prop.hmDless.analogue-.4}))}}}f\\
&  =\left(  -1\right)  ^{\left\vert \varnothing\right\vert +m}F_{\varnothing
}\underbrace{R_{\left(  1^{m}\right)  }^{\perp}f}_{\substack{=R_{\omega\left(
\varnothing\right)  }^{\perp}\left(  R_{\left(  1^{m}\right)  }^{\perp
}f\right)  \\\text{(since }R_{\omega\left(  \varnothing\right)  }^{\perp
}=\operatorname*{id}\text{ and thus}\\R_{\omega\left(  \varnothing\right)
}^{\perp}\left(  R_{\left(  1^{m}\right)  }^{\perp}f\right)  =R_{\left(
1^{m}\right)  }^{\perp}f\text{)}}}+\sum_{\substack{\alpha\in
\operatorname*{Comp};\\\alpha\text{ is nonempty}}}\left(  -1\right)
^{\left\vert \alpha\right\vert +m}F_{\alpha}\underbrace{\left(  R_{\omega
\left(  \alpha\right)  }^{\perp}\circ R_{\left(  1^{m}\right)  }^{\perp
}\right)  f}_{=R_{\omega\left(  \alpha\right)  }^{\perp}\left(  R_{\left(
1^{m}\right)  }^{\perp}f\right)  }\\
&  =\left(  -1\right)  ^{\left\vert \varnothing\right\vert +m}F_{\varnothing
}R_{\omega\left(  \varnothing\right)  }^{\perp}\left(  R_{\left(
1^{m}\right)  }^{\perp}f\right)  +\sum_{\substack{\alpha\in
\operatorname*{Comp};\\\alpha\text{ is nonempty}}}\left(  -1\right)
^{\left\vert \alpha\right\vert +m}F_{\alpha}R_{\omega\left(  \alpha\right)
}^{\perp}\left(  R_{\left(  1^{m}\right)  }^{\perp}f\right) \\
&  =\sum_{\alpha\in\operatorname*{Comp}}\underbrace{\left(  -1\right)
^{\left\vert \alpha\right\vert +m}}_{=\left(  -1\right)  ^{m}\left(
-1\right)  ^{\left\vert \alpha\right\vert }}F_{\alpha}R_{\omega\left(
\alpha\right)  }^{\perp}\left(  R_{\left(  1^{m}\right)  }^{\perp}f\right)
\ \ \ \ \ \ \ \ \ \ \left(
\begin{array}
[c]{c}%
\text{here, we have incorporated the}\\
\alpha=\varnothing\text{ addend into the sum}%
\end{array}
\right) \\
&  =\left(  -1\right)  ^{m}\sum_{\alpha\in\operatorname*{Comp}}\left(
-1\right)  ^{\left\vert \alpha\right\vert }F_{\alpha}R_{\omega\left(
\alpha\right)  }^{\perp}\left(  R_{\left(  1^{m}\right)  }^{\perp}f\right)  .
\end{align*}
Multiplying both sides of this equality with $\left(  -1\right)  ^{m}$, we
obtain%
\[
\left(  -1\right)  ^{m}\sum_{\alpha\in\operatorname*{Comp}}\left(  -1\right)
^{\left\vert \alpha\right\vert }F_{\alpha}^{\setminus m}R_{\omega\left(
\alpha\right)  }^{\perp}f=\sum_{\alpha\in\operatorname*{Comp}}\left(
-1\right)  ^{\left\vert \alpha\right\vert }F_{\alpha}R_{\omega\left(
\alpha\right)  }^{\perp}\left(  R_{\left(  1^{m}\right)  }^{\perp}f\right)  .
\]
Comparing this with%
\[
\varepsilon\left(  R_{\left(  1^{m}\right)  }^{\perp}f\right)  =\sum
_{\alpha\in\operatorname*{Comp}}\left(  -1\right)  ^{\left\vert \alpha
\right\vert }F_{\alpha}R_{\omega\left(  \alpha\right)  }^{\perp}\left(
R_{\left(  1^{m}\right)  }^{\perp}f\right)
\]
(by Proposition \ref{prop.hmDless.analogue0}, applied to $R_{\left(
1^{m}\right)  }^{\perp}f$ instead of $f$), we obtain%
\[
\left(  -1\right)  ^{m}\sum_{\alpha\in\operatorname*{Comp}}\left(  -1\right)
^{\left\vert \alpha\right\vert }F_{\alpha}^{\setminus m}R_{\omega\left(
\alpha\right)  }^{\perp}f=\varepsilon\left(  R_{\left(  1^{m}\right)  }%
^{\perp}f\right)  .
\]
This proves Proposition \ref{prop.hmDless.analogue-}.
\end{proof}
\end{verlong}

\section{\label{sect.WQSym}Lifts to $\operatorname*{WQSym}$ and
$\operatorname*{FQSym}$}

We have so far been studying the Hopf algebras $\operatorname*{Sym}$,
$\operatorname*{QSym}$ and $\operatorname*{NSym}$. These are merely the tip of
an iceberg; dozens of combinatorial Hopf algebras are currently known, many of
which are extensions of these. In this final section, we shall discuss how
(and whether) our operations $\left.  \prec\right.  $ and $\bel $ as well as
some similar operations can be lifted to the bigger Hopf algebras
$\operatorname*{WQSym}$ and $\operatorname*{FQSym}$. We shall give no proofs,
as these are not difficult and the whole discussion is tangential to this note.

Let us first define these two Hopf algebras (which are discussed, for example,
in \cite{FM}).

We start with $\operatorname*{WQSym}$. (Our definition of
$\operatorname*{WQSym}$ follows the papers of the Marne-la-Vall\'{e}e school,
such as \cite[Section 5.1]{AFNT}\footnote{where $\operatorname*{WQSym}$ is
denoted by $\mathbf{WQSym}$}; it will differ from that in \cite{FM}, but we
will explain why it is equivalent.)

Let $X_{1},X_{2},X_{3},\ldots$ be countably many distinct symbols. These
symbols will be called \textit{letters}. We define a \textit{word} to be an
$\ell$-tuple of elements of $\left\{  X_{1},X_{2},X_{3},\ldots\right\}  $ for
some $\ell\in\mathbb{N}$. Thus, for example, $\left(  X_{3},X_{5}%
,X_{2}\right)  $ and $\left(  X_{6}\right)  $ are words. We denote the empty
word $\left(  {}\right)  $ by $1$, and we often identify the one-letter word
$\left(  X_{i}\right)  $ with the symbol $X_{i}$ for every $i>0$. For any two
words $u=\left(  X_{i_{1}},X_{i_{2}},\ldots,X_{i_{n}}\right)  $ and $v=\left(
X_{j_{1}},X_{j_{2}},\ldots,X_{j_{m}}\right)  $, we define the concatenation
$uv$ as the word \newline$\left(  X_{i_{1}},X_{i_{2}},\ldots,X_{i_{n}%
},X_{j_{1}},X_{j_{2}},\ldots,X_{j_{m}}\right)  $. Concatenation is an
associative operation and the empty word $1$ is a neutral element for it;
thus, the words form a monoid. We let $\operatorname*{Wrd}$ denote this
monoid. This monoid is the free monoid on the set $\left\{  X_{1},X_{2}%
,X_{3},\ldots\right\}  $. Concatenation allows us to rewrite any word $\left(
X_{i_{1}},X_{i_{2}},\ldots,X_{i_{n}}\right)  $ in the shorter form $X_{i_{1}%
}X_{i_{2}}\cdots X_{i_{n}}$.

Notice that $\operatorname*{Mon}$ (the set of all monomials) is also a monoid
under multiplication. We can thus define a monoid homomorphism $\pi
:\operatorname*{Wrd}\rightarrow\operatorname*{Mon}$ by $\pi\left(
X_{i}\right)  =x_{i}$ for all $i\in\left\{  1,2,3,\ldots\right\}  $. This
homomorphism $\pi$ is surjective.

We define $\mathbf{k}\left\langle \left\langle \mathbf{X}\right\rangle
\right\rangle $ to be the $\mathbf{k}$-module $\mathbf{k}^{\operatorname*{Wrd}%
}$; its elements are all families $\left(  \lambda_{w}\right)  _{w\in
\operatorname*{Wrd}}\in\mathbf{k}^{\operatorname*{Wrd}}$. We define a
multiplication on\textbf{ }$\mathbf{k}\left\langle \left\langle \mathbf{X}%
\right\rangle \right\rangle $ by
\begin{equation}
\left(  \lambda_{w}\right)  _{w\in\operatorname*{Wrd}}\cdot\left(  \mu
_{w}\right)  _{w\in\operatorname*{Wrd}}=\left(  \sum_{\left(  u,v\right)
\in\operatorname*{Wrd}\nolimits^{2};\ uv=w}\lambda_{u}\mu_{v}\right)
_{w\in\operatorname*{Wrd}}. \label{eq.WQSym.powerseries-mul}%
\end{equation}
This makes $\mathbf{k}\left\langle \left\langle \mathbf{X}\right\rangle
\right\rangle $ into a $\mathbf{k}$-algebra, with unity $\left(  \delta
_{w,1}\right)  _{w\in\operatorname*{Wrd}}$. This $\mathbf{k}$-algebra is
called the $\mathbf{k}$\textit{-algebra of noncommutative power series in
}$X_{1},X_{2},X_{3},\ldots$. For every $u\in\operatorname*{Wrd}$, we identify
the word $u$ with the element $\left(  \delta_{w,u}\right)  _{w\in
\operatorname*{Wrd}}$ of $\mathbf{k}\left\langle \left\langle \mathbf{X}%
\right\rangle \right\rangle $\ \ \ \ \footnote{This identification is
harmless, since the map $\operatorname*{Wrd}\rightarrow\mathbf{k}\left\langle
\left\langle \mathbf{X}\right\rangle \right\rangle ,\ u\mapsto\left(
\delta_{w,u}\right)  _{w\in\operatorname*{Wrd}}$ is a monoid homomorphism from
$\operatorname*{Wrd}$ to $\left(  \mathbf{k}\left\langle \left\langle
\mathbf{X}\right\rangle \right\rangle ,\cdot\right)  $. (However, it fails to
be injective if $\mathbf{k}=0$.)}. The $\mathbf{k}$-algebra $\mathbf{k}%
\left\langle \left\langle \mathbf{X}\right\rangle \right\rangle $ becomes a
topological $\mathbf{k}$-algebra via the product topology (recalling that
$\mathbf{k}\left\langle \left\langle \mathbf{X}\right\rangle \right\rangle
=\mathbf{k}^{\operatorname*{Wrd}}$ as sets). Thus, every element $\left(
\lambda_{w}\right)  _{w\in\operatorname*{Wrd}}$ of $\mathbf{k}\left\langle
\left\langle \mathbf{X}\right\rangle \right\rangle $ can be rewritten in the
form $\sum_{w\in\operatorname*{Wrd}}\lambda_{w}w$. This turns the equality
(\ref{eq.WQSym.powerseries-mul}) into a distributive law (for infinite sums),
and explains why we refer to elements of $\mathbf{k}\left\langle \left\langle
\mathbf{X}\right\rangle \right\rangle $ as \textquotedblleft noncommutative
power series\textquotedblright. We think of words as noncommutative analogues
of monomials.

The \textit{degree} of a word $w$ will mean its length (i.e., the integer $n$
for which $w$ is an $n$-tuple). Let $\mathbf{k}\left\langle \left\langle
\mathbf{X}\right\rangle \right\rangle _{\operatorname*{bdd}}$ denote the
$\mathbf{k}$-subalgebra of $\mathbf{k}\left\langle \left\langle \mathbf{X}%
\right\rangle \right\rangle $ formed by the \textit{bounded-degree
noncommutative power series}\footnote{A noncommutative power series $\left(
\lambda_{w}\right)  _{w\in\operatorname*{Wrd}}\in\mathbf{k}\left\langle
\left\langle \mathbf{X}\right\rangle \right\rangle $ is said to be
\textit{bounded-degree} if there is an $N\in\mathbb{N}$ such that every word
$w$ of length $>N$ satisfies $\lambda_{w}=0$.} in $\mathbf{k}\left\langle
\left\langle \mathbf{X}\right\rangle \right\rangle $. The surjective monoid
homomorphism $\pi:\operatorname*{Wrd}\rightarrow\operatorname*{Mon}$
canonically gives rise to surjective $\mathbf{k}$-algebra homomorphisms
$\mathbf{k}\left\langle \left\langle \mathbf{X}\right\rangle \right\rangle
\rightarrow\mathbf{k}\left[  \left[  x_{1},x_{2},x_{3},\ldots\right]  \right]
$ and $\mathbf{k}\left\langle \left\langle \mathbf{X}\right\rangle
\right\rangle _{\operatorname*{bdd}}\rightarrow\mathbf{k}\left[  \left[
x_{1},x_{2},x_{3},\ldots\right]  \right]  _{\operatorname*{bdd}}$, which we
also denote by $\pi$. Notice that the $\mathbf{k}$-algebra $\mathbf{k}%
\left\langle \left\langle \mathbf{X}\right\rangle \right\rangle
_{\operatorname*{bdd}}$ is denoted $R\left\langle \mathbf{X}\right\rangle $ in
\cite[Section 8.1]{Reiner}.

If $w$ is a word, then we denote by $\operatorname*{Supp}w$ the subset%
\[
\left\{  i\in\left\{  1,2,3,\ldots\right\}  \ \mid\ \text{the symbol }%
X_{i}\text{ is an entry of }w\right\}
\]
of $\left\{  1,2,3,\ldots\right\}  $. Notice that $\operatorname*{Supp}%
w=\operatorname*{Supp}\left(  \pi\left(  w\right)  \right)  $ is a finite set.

A word $w$ is said to be \textit{packed} if there exists an $\ell\in
\mathbb{N}$ such that $\operatorname*{Supp}w=\left\{  1,2,\ldots,\ell\right\}
$.

For each word $w$, we define a packed word $\operatorname*{pack}w$ as follows:
Replace the smallest letter\footnote{We use the total ordering on the set
$\left\{  X_{1},X_{2},X_{3},\ldots\right\}  $ given by $X_{1}<X_{2}%
<X_{3}<\cdots$.} that appears in $w$ by $X_{1}$, the second-smallest letter by
$X_{2}$, etc..\footnote{Here is a more pedantic way to restate this
definition: Write $w$ as $\left(  X_{i_{1}},X_{i_{2}},\ldots,X_{i_{\ell}%
}\right)  $, and let $I=\operatorname*{Supp}w$ (so that $I=\left\{
i_{1},i_{2},\ldots,i_{\ell}\right\}  $). Let $r_{I}$ be the unique increasing
bijection $\left\{  1,2,\ldots,\left\vert I\right\vert \right\}  \rightarrow
I$. Then, $\operatorname*{pack}w$ denotes the word $\left(  X_{r_{I}%
^{-1}\left(  i_{1}\right)  },X_{r_{I}^{-1}\left(  i_{2}\right)  }%
,\ldots,X_{r_{I}^{-1}\left(  i_{\ell}\right)  }\right)  $.} This word
$\operatorname*{pack}w$ is called the \textit{packing} of $w$. For example,
$\operatorname*{pack}\left(  X_{3}X_{1}X_{6}X_{1}\right)  =X_{2}X_{1}%
X_{3}X_{1}$.

\begin{noncompile}
Two words $u$ and $v$ are said to be \textit{pack-equivalent} if and only if
$\operatorname*{pack}u=\operatorname*{pack}v$.
\end{noncompile}

For every packed word $u$, we define an element $\mathbf{M}_{u}$ of
$\mathbf{k}\left\langle \left\langle \mathbf{X}\right\rangle \right\rangle
_{\operatorname*{bdd}}$ by%
\[
\mathbf{M}_{u}=\sum_{\substack{w\in\operatorname*{Wrd};\\\operatorname*{pack}%
w=u}}w.
\]
(This element $\mathbf{M}_{u}$ is denoted $P_{u}$ in \cite[Section 5.1]%
{AFNT}.) We denote by $\operatorname*{WQSym}$ the $\mathbf{k}$-submodule of
$\mathbf{k}\left\langle \left\langle \mathbf{X}\right\rangle \right\rangle
_{\operatorname*{bdd}}$ spanned by the $\mathbf{M}_{u}$ for all packed words
$u$. It is known that $\operatorname*{WQSym}$ is a $\mathbf{k}$-subalgebra of
$\mathbf{k}\left\langle \left\langle \mathbf{X}\right\rangle \right\rangle
_{\operatorname*{bdd}}$ which can furthermore be endowed with a Hopf algebra
structure (the so-called \textit{Hopf algebra of word quasisymmetric
functions}) such that $\pi$ restricts to a Hopf algebra surjection
$\operatorname*{WQSym}\rightarrow\operatorname*{QSym}$. Notice that
$\pi\left(  \mathbf{M}_{u}\right)  =M_{\operatorname*{Parikh}\left(
\pi\left(  u\right)  \right)  }$ for every packed word $u$, where the Parikh
composition $\operatorname*{Parikh}\mathfrak{m}$ of any monomial
$\mathfrak{m}$ is defined as in the proof of Proposition
\ref{prop.QSym.closed}.

The elements $\mathbf{M}_{u}$ with $u$ ranging over all packed words form a
basis of the $\mathbf{k}$-module $\operatorname*{WQSym}$, which is usually
called the \textit{monomial basis}\footnote{Sometimes it is parametrized not
by packed words but instead by set compositions (i.e., ordered set partitions)
of sets of the form $\left\{  1,2,\ldots,n\right\}  $ with $n\in\mathbb{N}$.
But the packed words of length $n$ are in a 1-to-1 correspondence with set
compositions of $\left\{  1,2,\ldots,n\right\}  $, so this is merely a matter
of relabelling.}. Furthermore, the product of two such elements can be
computed by the well-known formula\footnote{This formula appears in
\cite[Proposition 4.1]{MNT}.}%
\begin{equation}
\mathbf{M}_{u}\mathbf{M}_{v}=\sum_{\substack{w\text{ is a packed
word;}\\\operatorname*{pack}\left(  w\left[  :\ell\right]  \right)
=u;\ \operatorname*{pack}\left(  w\left[  \ell:\right]  \right)
=v}}\mathbf{M}_{w}, \label{eq.WQSym.prod}%
\end{equation}
where $\ell$ is the length of $u$, and where we use the notation $w\left[
:\ell\right]  $ for the word formed by the first $\ell$ letters of $w$ and we
use the notation $w\left[  \ell:\right]  $ for the word formed by the
remaining letters of $w$. This equality (which should be considered a
noncommutative analogue of (\ref{eq.rmk.smaps.2}), and can be proven
similarly) makes it possible to give an alternative definition of
$\operatorname*{WQSym}$, by defining $\operatorname*{WQSym}$ as the free
$\mathbf{k}$-module with basis $\left(  \mathbf{M}_{u}\right)  _{u\text{ is a
packed word}}$ and defining multiplication using (\ref{eq.WQSym.prod}). This
is precisely the approach taken in \cite[Section 1.1]{FM}.

The Hopf algebra $\operatorname*{WQSym}$ has also appeared under the name
$\operatorname*{NCQSym}$ (\textquotedblleft quasisymmetric functions in
noncommuting variables\textquotedblright) in \cite[Section 5.2]{BerZab} and
other sources.

We now define five binary operations $\left.  \prec\right.  $, $\circ$,
$\left.  \succ\right.  $, $\bel $, and $\tvi $ on $\mathbf{k}\left\langle
\left\langle \mathbf{X}\right\rangle \right\rangle $.

\begin{definition}
\label{def.five-ops}\textbf{(a)} We define a binary operation $\left.
\prec\right.  :\mathbf{k}\left\langle \left\langle \mathbf{X}\right\rangle
\right\rangle \times\mathbf{k}\left\langle \left\langle \mathbf{X}%
\right\rangle \right\rangle \rightarrow\mathbf{k}\left\langle \left\langle
\mathbf{X}\right\rangle \right\rangle $ (written in infix notation) by the
requirements that it be $\mathbf{k}$-bilinear and continuous with respect to
the topology on $\mathbf{k}\left\langle \left\langle \mathbf{X}\right\rangle
\right\rangle $ and that it satisfy%
\[
u\left.  \prec\right.  v=
\begin{cases}
uv, & \text{if }\min\left(  \operatorname*{Supp}u\right)  <\min\left(
\operatorname*{Supp}v\right)  ;\\
0, & \text{if }\min\left(  \operatorname*{Supp}u\right)  \geq\min\left(
\operatorname*{Supp}v\right)
\end{cases}
\]
for any two words $u$ and $v$.

\textbf{(b)} We define a binary operation $\circ:\mathbf{k}\left\langle
\left\langle \mathbf{X}\right\rangle \right\rangle \times\mathbf{k}%
\left\langle \left\langle \mathbf{X}\right\rangle \right\rangle \rightarrow
\mathbf{k}\left\langle \left\langle \mathbf{X}\right\rangle \right\rangle $
(written in infix notation) by the requirements that it be $\mathbf{k}%
$-bilinear and continuous with respect to the topology on $\mathbf{k}%
\left\langle \left\langle \mathbf{X}\right\rangle \right\rangle $ and that it
satisfy%
\[
u\circ v=
\begin{cases}
uv, & \text{if }\min\left(  \operatorname*{Supp}u\right)  =\min\left(
\operatorname*{Supp}v\right)  ;\\
0, & \text{if }\min\left(  \operatorname*{Supp}u\right)  \neq\min\left(
\operatorname*{Supp}v\right)
\end{cases}
\]
for any two words $u$ and $v$.

\textbf{(c)} We define a binary operation $\left.  \succ\right.
:\mathbf{k}\left\langle \left\langle \mathbf{X}\right\rangle \right\rangle
\times\mathbf{k}\left\langle \left\langle \mathbf{X}\right\rangle
\right\rangle \rightarrow\mathbf{k}\left\langle \left\langle \mathbf{X}%
\right\rangle \right\rangle $ (written in infix notation) by the requirements
that it be $\mathbf{k}$-bilinear and continuous with respect to the topology
on $\mathbf{k}\left\langle \left\langle \mathbf{X}\right\rangle \right\rangle
$ and that it satisfy%
\[
u\left.  \succ\right.  v=
\begin{cases}
uv, & \text{if }\min\left(  \operatorname*{Supp}u\right)  >\min\left(
\operatorname*{Supp}v\right)  ;\\
0, & \text{if }\min\left(  \operatorname*{Supp}u\right)  \leq\min\left(
\operatorname*{Supp}v\right)
\end{cases}
\]
for any two words $u$ and $v$.

\textbf{(d)} We define a binary operation $\bel :\mathbf{k}\left\langle
\left\langle \mathbf{X}\right\rangle \right\rangle \times\mathbf{k}%
\left\langle \left\langle \mathbf{X}\right\rangle \right\rangle \rightarrow
\mathbf{k}\left\langle \left\langle \mathbf{X}\right\rangle \right\rangle $
(written in infix notation) by the requirements that it be $\mathbf{k}%
$-bilinear and continuous with respect to the topology on $\mathbf{k}%
\left\langle \left\langle \mathbf{X}\right\rangle \right\rangle $ and that it
satisfy%
\[
u \bel v=
\begin{cases}
uv, & \text{if }\max\left(  \operatorname*{Supp}u\right)  \leq\min\left(
\operatorname*{Supp}v\right)  ;\\
0, & \text{if }\max\left(  \operatorname*{Supp}u\right)  >\min\left(
\operatorname*{Supp}v\right)
\end{cases}
\]
for any two words $u$ and $v$.

\textbf{(e)} We define a binary operation $\tvi :\mathbf{k}\left\langle
\left\langle \mathbf{X}\right\rangle \right\rangle \times\mathbf{k}%
\left\langle \left\langle \mathbf{X}\right\rangle \right\rangle \rightarrow
\mathbf{k}\left\langle \left\langle \mathbf{X}\right\rangle \right\rangle $
(written in infix notation) by the requirements that it be $\mathbf{k}%
$-bilinear and continuous with respect to the topology on $\mathbf{k}%
\left\langle \left\langle \mathbf{X}\right\rangle \right\rangle $ and that it
satisfy%
\[
u \tvi v=
\begin{cases}
uv, & \text{if }\max\left(  \operatorname*{Supp}u\right)  <\min\left(
\operatorname*{Supp}v\right)  ;\\
0, & \text{if }\max\left(  \operatorname*{Supp}u\right)  \geq\min\left(
\operatorname*{Supp}v\right)
\end{cases}
\]
for any two words $u$ and $v$.
\end{definition}

The first three of these five operations are closely related to those defined
by Novelli and Thibon in \cite{NoThi05}; the main difference is the use of
minima instead of maxima in our definitions.

The operations $\left.  \prec\right.  $, $\bel $ and $\tvi $ on
$\operatorname*{WQSym}$ lift the operations $\left.  \prec\right.  $, $\bel $
and $\tvi $ on $\operatorname*{QSym}$. More precisely, any $a\in
\mathbf{k}\left\langle \left\langle \mathbf{X}\right\rangle \right\rangle $
and $b\in\mathbf{k}\left\langle \left\langle \mathbf{X}\right\rangle
\right\rangle $ satisfy%
\begin{align*}
\pi\left(  a\right)  \left.  \prec\right.  \pi\left(  b\right)   &
=\pi\left(  a\left.  \prec\right.  b\right)  =\pi\left(  b\left.
\succ\right.  a\right)  ;\\
\pi\left(  a\right)  \bel \pi\left(  b\right)   &  =\pi\left(  a
\bel b\right)  ;\\
\pi\left(  a\right)  \tvi \pi\left(  b\right)   &  =\pi\left(  a
\tvi b\right)
\end{align*}
(and similar formulas would hold for $\circ$ and $\left.  \succ\right.  $ had
we bothered to define such operations on $\operatorname*{QSym}$). Also, using
the operation $\left.  \succeq\right.  $ defined in Remark \ref{rmk.dendri},
we have%
\[
\pi\left(  a\right)  \left.  \succeq\right.  \pi\left(  b\right)  =\pi\left(
a\left.  \succ\right.  b+a\circ b\right)  \ \ \ \ \ \ \ \ \ \ \text{for any
}a\in\mathbf{k}\left\langle \left\langle \mathbf{X}\right\rangle \right\rangle
\text{ and }b\in\mathbf{k}\left\langle \left\langle \mathbf{X}\right\rangle
\right\rangle .
\]

We now have the following analogue of Proposition \ref{prop.QSym.closed}:

\begin{proposition}
\label{prop.WQSym.closed}Every $a\in\operatorname*{WQSym}$ and $b\in
\operatorname*{WQSym}$ satisfy $a\left.  \prec\right.  b\in
\operatorname*{WQSym}$, $a\circ b\in\operatorname*{WQSym}$, $a\left.
\succ\right.  b\in\operatorname*{WQSym}$, $a \bel b\in\operatorname*{WQSym}$
and $a \tvi b\in\operatorname*{WQSym}$.
\end{proposition}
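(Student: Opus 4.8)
The plan is to mimic the proof of Proposition~\ref{prop.QSym.closed}, working with the monomial basis $\left(\mathbf{M}_u\right)_{u\text{ packed}}$ of $\operatorname*{WQSym}$ in place of $\left(M_\alpha\right)_{\alpha\in\operatorname*{Comp}}$. Write $\star$ for any one of the five operations $\left.\prec\right.$, $\circ$, $\left.\succ\right.$, $\left.\textarm{\belgthor}\right.$, $\left.\textarm{\tvimadur}\right.$. Since $\star$ is $\mathbf{k}$-bilinear and $\left(\mathbf{M}_u\right)_{u\text{ packed}}$ is a $\mathbf{k}$-module basis of $\operatorname*{WQSym}$, it suffices to prove that $\mathbf{M}_u\star\mathbf{M}_v\in\operatorname*{WQSym}$ for all packed words $u$ and $v$. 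First I would fix such $u$ and $v$, let $\ell$ and $m$ be their lengths, and expand, using the bilinearity and continuity of $\star$,
\[
\mathbf{M}_u\star\mathbf{M}_v
=\sum_{\substack{w_1\in\operatorname*{Wrd};\\\operatorname*{pack}w_1=u}}\ \sum_{\substack{w_2\in\operatorname*{Wrd};\\\operatorname*{pack}w_2=v}} w_1\star w_2 .
\]
Each $\star$ has the form $w_1\star w_2=w_1 w_2$ if a certain condition $C_\star\!\left(\operatorname*{Supp}(w_1),\operatorname*{Supp}(w_2)\right)$ holds (a comparison between the minima of the two supports, or between the maximum of the first and the minimum of the second), and $w_1\star w_2=0$ otherwise. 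Since $w_1$ has length $\ell$ and $w_2$ has length $m$, a word $w$ of length $\ell+m$ equals $w_1 w_2$ for exactly one pair $\left(w_1,w_2\right)$, namely $w_1=w[:\ell]$ and $w_2=w[\ell:]$. Hence the coefficient of $w$ in $\mathbf{M}_u\star\mathbf{M}_v$ is $1$ if $\operatorname*{pack}(w[:\ell])=u$, $\operatorname*{pack}(w[\ell:])=v$ and $C_\star\!\left(\operatorname*{Supp}(w[:\ell]),\operatorname*{Supp}(w[\ell:])\right)$ holds, and $0$ otherwise (while words of length $\neq\ell+m$ get coefficient $0$).

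The crux is the claim that this coefficient depends only on $\operatorname*{pack}w$. This holds because $\operatorname*{pack}$ acts on a word by applying the unique increasing relabelling of the letters that occur in it; consequently, for every word $w$, one has $\operatorname*{pack}(w[:\ell])=\operatorname*{pack}\!\left(\left(\operatorname*{pack}w\right)[:\ell]\right)$ and $\operatorname*{pack}(w[\ell:])=\operatorname*{pack}\!\left(\left(\operatorname*{pack}w\right)[\ell:]\right)$ (since the packing of a subword sees only the relative order of its letters, which is unchanged by an increasing relabelling of a larger word), and likewise the truth value of $C_\star\!\left(\operatorname*{Supp}(w[:\ell]),\operatorname*{Supp}(w[\ell:])\right)$ is unchanged upon replacing $w$ by $\operatorname*{pack}w$ (since $C_\star$ only compares minima and maxima of these supports, and such comparisons — with the conventions $\min\varnothing=\infty$ and $\max\varnothing=0$, which are consistent with relabelling — are invariant under an increasing relabelling). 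Grouping the words of length $\ell+m$ according to their packing, I then obtain
\[
\mathbf{M}_u\star\mathbf{M}_v
=\sum_{\substack{w\text{ a packed word of length }\ell+m;\\\operatorname*{pack}(w[:\ell])=u;\ \operatorname*{pack}(w[\ell:])=v;\\ C_\star\left(\operatorname*{Supp}(w[:\ell]),\ \operatorname*{Supp}(w[\ell:])\right)}}\mathbf{M}_w ,
\]
a finite $\mathbf{k}$-linear combination of monomial-basis elements of $\operatorname*{WQSym}$; in particular $\mathbf{M}_u\star\mathbf{M}_v\in\operatorname*{WQSym}$. This formula is the evident refinement of the product formula (\ref{eq.WQSym.prod}), the extra constraint being $C_\star$, and it maps under $\pi$ to the identities displayed just before the proposition. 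Running this for all five choices of $\star$ yields Proposition~\ref{prop.WQSym.closed}.

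I do not expect a genuine obstacle here: the whole argument is bookkeeping, and it is in fact lighter than the proof of Proposition~\ref{prop.QSym.closed}, because a word has a unique splitting into a prefix and a suffix of prescribed lengths, so nothing like the bijection with $\gamma$-smaps is needed. The only mild subtlety is the treatment of the degenerate cases $\ell=0$ and $\ell=\deg w$, where one of the two halves of $w$ is the empty word; there one invokes the conventions $\min\varnothing=\infty$ and $\max\varnothing=0$, under which the relabelling argument still goes through verbatim.
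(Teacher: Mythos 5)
Your proof is correct, and it is precisely the argument the paper has in mind: the paper omits the proof (noting only that it is easier than that of Proposition \ref{prop.QSym.closed}), and the formula you arrive at is exactly the one recorded in Remark \ref{rmk.WQSym.closed}. The key points — unique splitting of a word of length $\ell+m$ into prefix and suffix, and invariance of both $\operatorname*{pack}$ of the two halves and the support comparisons under the increasing relabelling effected by $\operatorname*{pack}w$ — are all present and handled correctly, including the empty-half conventions.
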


The proof of Proposition \ref{prop.WQSym.closed} is easier than that of
Proposition \ref{prop.QSym.closed}; we omit it here. In analogy to Remark
\ref{rmk.QSym.closed} and to (\ref{eq.WQSym.prod}), let us give explicit
formulas for these five operations on the basis $\left(  \mathbf{M}%
_{u}\right)  _{u\text{ is a packed word}}$ of $\operatorname*{WQSym}$:

\begin{remark}
\label{rmk.WQSym.closed}Let $u$ and $v$ be two packed words. Let $\ell$ be the
length of $u$. Then:

\textbf{(a)} We have%
\[
\mathbf{M}_{u}\left.  \prec\right.  \mathbf{M}_{v}=\sum_{\substack{w\text{ is
a packed word;}\\\operatorname*{pack}\left(  w\left[  :\ell\right]  \right)
=u;\ \operatorname*{pack}\left(  w\left[  \ell:\right]  \right)
=v;\\\min\left(  \operatorname*{Supp}\left(  w\left[  :\ell\right]  \right)
\right)  <\min\left(  \operatorname*{Supp}\left(  w\left[  \ell:\right]
\right)  \right)  }}\mathbf{M}_{w}.
\]

\textbf{(b)} We have%
\[
\mathbf{M}_{u}\circ\mathbf{M}_{v}=\sum_{\substack{w\text{ is a packed
word;}\\\operatorname*{pack}\left(  w\left[  :\ell\right]  \right)
=u;\ \operatorname*{pack}\left(  w\left[  \ell:\right]  \right)
=v;\\\min\left(  \operatorname*{Supp}\left(  w\left[  :\ell\right]  \right)
\right)  =\min\left(  \operatorname*{Supp}\left(  w\left[  \ell:\right]
\right)  \right)  }}\mathbf{M}_{w}.
\]

\textbf{(c)} We have%
\[
\mathbf{M}_{u}\left.  \succ\right.  \mathbf{M}_{v}=\sum_{\substack{w\text{ is
a packed word;}\\\operatorname*{pack}\left(  w\left[  :\ell\right]  \right)
=u;\ \operatorname*{pack}\left(  w\left[  \ell:\right]  \right)
=v;\\\min\left(  \operatorname*{Supp}\left(  w\left[  :\ell\right]  \right)
\right)  >\min\left(  \operatorname*{Supp}\left(  w\left[  \ell:\right]
\right)  \right)  }}\mathbf{M}_{w}.
\]

\textbf{(d)} We have%
\[
\mathbf{M}_{u} \bel \mathbf{M}_{v}=\sum_{\substack{w\text{ is a packed
word;}\\\operatorname*{pack}\left(  w\left[  :\ell\right]  \right)
=u;\ \operatorname*{pack}\left(  w\left[  \ell:\right]  \right)
=v;\\\max\left(  \operatorname*{Supp}\left(  w\left[  :\ell\right]  \right)
\right)  \leq\min\left(  \operatorname*{Supp}\left(  w\left[  \ell:\right]
\right)  \right)  }}\mathbf{M}_{w}.
\]
The sum on the right hand side consists of two addends (unless $u$ or $v$ is
empty), namely $\mathbf{M}_{uv^{+h-1}}$ and $\mathbf{M}_{uv^{+h}}$, where
$h=\max\left(  \operatorname*{Supp}u\right)  $, and where $v^{+j}$ denotes the
word obtained by replacing every letter $X_{k}$ in $v$ by $X_{k+j}$.

\textbf{(e)} We have%
\[
\mathbf{M}_{u} \tvi \mathbf{M}_{v}=\sum_{\substack{w\text{ is a packed
word;}\\\operatorname*{pack}\left(  w\left[  :\ell\right]  \right)
=u;\ \operatorname*{pack}\left(  w\left[  \ell:\right]  \right)
=v;\\\max\left(  \operatorname*{Supp}\left(  w\left[  :\ell\right]  \right)
\right)  <\min\left(  \operatorname*{Supp}\left(  w\left[  \ell:\right]
\right)  \right)  }}\mathbf{M}_{w}.
\]
The sum on the right hand side consists of one addend only, namely
$\mathbf{M}_{uv^{+h}}$.
\end{remark}

Let us now move on to the combinatorial Hopf algebra $\operatorname*{FQSym}$,
which is known as the \textit{Malvenuto-Reutenauer Hopf algebra} or the
\textit{Hopf algebra of free quasi-symmetric functions}. We shall define it as
a Hopf subalgebra of $\operatorname*{WQSym}$. This is not identical to the
definition in \cite[Section 8.1]{Reiner}, but equivalent to it.

For every $n\in\mathbb{N}$, we let $\mathfrak{S}_{n}$ be the symmetric group
on the set $\left\{  1,2,\ldots,n\right\}  $. (This notation is identical with
that in \cite{Reiner}. It has nothing to do with the $\mathfrak{S}_{\alpha}$
from \cite{BBSSZ}.) We let $\mathfrak{S}$ denote the disjoint union
$\bigsqcup_{n\in\mathbb{N}}\mathfrak{S}_{n}$. We identify permutations in
$\mathfrak{S}$ with certain words -- namely, every permutation $\pi
\in\mathfrak{S}$ is identified with the word $\left(  X_{\pi\left(  1\right)
},X_{\pi\left(  2\right)  },\ldots,X_{\pi\left(  n\right)  }\right)  $, where
$n$ is such that $\pi\in\mathfrak{S}_{n}$. The words thus identified with
permutations in $\mathfrak{S}$ are precisely the packed words which do not
have repeated elements.

For every word $w$, we define a word $\operatorname*{std}w\in\mathfrak{S}$ as
follows: Write $w$ in the form $\left(  X_{i_{1}},X_{i_{2}},\ldots,X_{i_{n}%
}\right)  $. Then, $\operatorname*{std}w$ shall be the unique permutation
$\pi\in\mathfrak{S}_{n}$ such that, whenever $u$ and $v$ are two elements of
$\left\{  1,2,\ldots,n\right\}  $ satisfying $u<v$, we have $\left(
\pi\left(  u\right)  <\pi\left(  v\right)  \text{ if and only if }i_{u}\leq
i_{v}\right)  $. Equivalently (and less formally), $\operatorname*{std}w$ is
the word which is obtained by

\begin{itemize}
\item replacing the leftmost smallest letter of $w$ by $X_{1}$, and marking it
as \textquotedblleft processed\textquotedblright;

\item then replacing the leftmost smallest letter of $w$ that is not yet
processed by $X_{2}$, and marking it as \textquotedblleft
processed\textquotedblright;

\item then replacing the leftmost smallest letter of $w$ that is not yet
processed by $X_{3}$, and marking it as \textquotedblleft
processed\textquotedblright;

\item etc., until all letters of $w$ are processed.
\end{itemize}

For instance, $\operatorname*{std}\left(  X_{3}X_{5}X_{2}X_{3}X_{2}%
X_{3}\right)  =X_{3}X_{6}X_{1}X_{4}X_{2}X_{5}$ (which, regarded as
permutation, is the permutation written in one-line notation as $\left(
3,6,1,4,2,5\right)  $).

We call $\operatorname*{std}w$ the \textit{standardization} of $w$.

Now, for every $\sigma\in\mathfrak{S}$, we define an element $\mathbf{G}%
_{\sigma}\in\operatorname*{WQSym}$ by%
\[
\mathbf{G}_{\sigma}=\sum_{\substack{w\text{ is a packed word;}%
\\\operatorname*{std}w=\sigma}}\mathbf{M}_{w}=\sum_{\substack{w\in
\operatorname*{Wrd};\\\operatorname*{std}w=\sigma}}w.
\]
(The second equality sign can easily be checked.) Then, the $\mathbf{k}%
$-submodule of $\operatorname*{WQSym}$ spanned by $\left(  \mathbf{G}_{\sigma
}\right)  _{\sigma\in\mathfrak{S}}$ turns out to be a Hopf subalgebra, with
basis $\left(  \mathbf{G}_{\sigma}\right)  _{\sigma\in\mathfrak{S}}$. This
Hopf subalgebra is denoted by $\operatorname*{FQSym}$. This definition is not
identical with the one given in \cite[Section 8.1]{Reiner}; however, it gives
an isomorphic Hopf algebra, as our $\mathbf{G}_{\sigma}$ correspond to the
images of the $G_{\sigma}$ introduced in \cite[Section 8.1]{Reiner} under the
embedding $\operatorname*{FQSym}\rightarrow R\left\langle \left\{
X_{i}\right\}  _{i\in I}\right\rangle $ also defined therein.

Only two of the five operations $\left.  \prec\right.  $, $\circ$, $\left.
\succ\right.  $, $\bel $, and $\tvi $ defined in Definition \ref{def.five-ops}
can be restricted to binary operations on $\operatorname*{FQSym}$:

\begin{proposition}
\label{prop.FQSym.closed}Every $a\in\operatorname*{FQSym}$ and $b\in
\operatorname*{FQSym}$ satisfy $a\left.  \succ\right.  b\in
\operatorname*{FQSym}$ and $a \bel b\in\operatorname*{FQSym}$.
\end{proposition}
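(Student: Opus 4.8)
The plan is to reduce the statement to a purely combinatorial claim about words and standardizations, mirroring the structure of the (omitted) proof of Proposition~\ref{prop.WQSym.closed}. Since $\left(\mathbf{G}_{\sigma}\right)_{\sigma\in\mathfrak{S}}$ is a basis of $\operatorname*{FQSym}$ and the operations $\left.\succ\right.$ and $\left.\textarm{\belgthor}\right.$ are $\mathbf{k}$-bilinear and continuous, it suffices to prove that $\mathbf{G}_{\sigma}\left.\succ\right.\mathbf{G}_{\tau}\in\operatorname*{FQSym}$ and $\mathbf{G}_{\sigma}\left.\textarm{\belgthor}\right.\mathbf{G}_{\tau}\in\operatorname*{FQSym}$ for any two permutations $\sigma\in\mathfrak{S}_{\ell}$ and $\tau\in\mathfrak{S}_{m}$. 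First I would expand $\mathbf{G}_{\sigma}=\sum_{\operatorname*{std}u=\sigma}u$ and $\mathbf{G}_{\tau}=\sum_{\operatorname*{std}v=\tau}v$ (sums over words $u,v\in\operatorname*{Wrd}$), so that $\mathbf{G}_{\sigma}\left.\succ\right.\mathbf{G}_{\tau}=\sum uv$, the sum ranging over pairs of words $(u,v)$ with $\operatorname*{std}u=\sigma$, $\operatorname*{std}v=\tau$, and $\min(\operatorname*{Supp}u)>\min(\operatorname*{Supp}v)$; similarly for $\left.\textarm{\belgthor}\right.$ with the condition $\max(\operatorname*{Supp}u)\le\min(\operatorname*{Supp}v)$.

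The key step is then to show that the coefficient of a word $w$ in $\mathbf{G}_{\sigma}\left.\succ\right.\mathbf{G}_{\tau}$ depends only on $\operatorname*{std}w$; the same for $\left.\textarm{\belgthor}\right.$. Concretely, a word $w$ of length $\ell+m$ arises as $uv$ with $\operatorname*{std}u=\sigma$, $\operatorname*{std}v=\tau$ precisely when $\operatorname*{std}(w[:\ell])=\sigma$ and $\operatorname*{std}(w[\ell:])=\tau$ (each such $w$ contributing exactly once), and the additional side condition is $\min(\operatorname*{Supp}(w[:\ell]))>\min(\operatorname*{Supp}(w[\ell:]))$ for $\left.\succ\right.$, resp.\ $\max(\operatorname*{Supp}(w[:\ell]))\le\min(\operatorname*{Supp}(w[\ell:]))$ for $\left.\textarm{\belgthor}\right.$. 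So I must argue that these three properties of $w$ --- the two standardization conditions and the support inequality --- are invariant under replacing $w$ by any word $w'$ with $\operatorname*{std}w'=\operatorname*{std}w$. For the standardization conditions this is the standard (and easy) fact that $\operatorname*{std}$ commutes with taking factors up to further standardization: $\operatorname*{std}(w[:\ell])=\operatorname*{std}((\operatorname*{std}w)[:\ell])$ and likewise for $w[\ell:]$, because $\operatorname*{std}$ only records the relative order of letters (with left-to-right tie-breaking), which is preserved by $\operatorname*{std}$. For the support inequalities, the point is that comparing $\min(\operatorname*{Supp}(w[:\ell]))$ with $\min(\operatorname*{Supp}(w[\ell:]))$ --- and comparing $\max$ with $\min$ --- only depends on the relative order of the letters of $w$, i.e.\ on $\operatorname*{std}w$: the leftmost position achieving the minimum letter overall, and whether it lies in the first or second block, is determined by $\operatorname*{std}w$; and $\max(\operatorname*{Supp}(w[:\ell]))\le\min(\operatorname*{Supp}(w[\ell:]))$ says exactly that every letter of the first block is $\le$ every letter of the second block with the relevant strictness, which is again a statement about relative order. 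Once this invariance is established, the coefficient of $w$ in $\mathbf{G}_{\sigma}\left.\succ\right.\mathbf{G}_{\tau}$ equals that of any $w'$ with $\operatorname*{std}w'=\operatorname*{std}w$, so $\mathbf{G}_{\sigma}\left.\succ\right.\mathbf{G}_{\tau}$ is a (bounded-degree, by a trivial degree count) $\mathbf{k}$-linear combination of the $\mathbf{G}_{\rho}$, hence lies in $\operatorname*{FQSym}$; the same argument gives $\mathbf{G}_{\sigma}\left.\textarm{\belgthor}\right.\mathbf{G}_{\tau}\in\operatorname*{FQSym}$.

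The main obstacle --- or rather the only place requiring care --- is handling the strictness of the support inequalities correctly in the presence of repeated letters and the left-to-right tie-breaking convention in $\operatorname*{std}$. For $\left.\succ\right.$ the condition is a strict inequality between two minima, and for $\left.\textarm{\belgthor}\right.$ it is a (non-strict) inequality $\max\le\min$; in both cases one must verify that "$=$" versus "$<$" versus "$>$" among the relevant letter-values of $w$ is reconstructible from $\operatorname*{std}w$, which is true because $\operatorname*{std}$ turns equal letters into distinct values in left-to-right order, so e.g.\ the event "the minimum letter of $w[:\ell]$ equals the minimum letter of $w[\ell:]$" translates under $\operatorname*{std}$ into "the $\operatorname*{std}$-value at the leftmost overall-minimal position lies in the first block while some value in the second block is smaller than every value strictly to its left in the first block" --- this can be spelled out, but it is exactly the kind of routine case-chase the paper elsewhere delegates to the reader, so I would state the invariance as a short lemma and verify it by this relative-order observation rather than grinding through subcases. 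As a sanity check, this also explains why $\left.\prec\right.$, $\circ$, $\left.\textarm{\tvimadur}\right.$ do \emph{not} preserve $\operatorname*{FQSym}$: $uv$ with $u,v$ repetition-free can still have repeated letters between the blocks (e.g.\ $\min(\operatorname*{Supp}u)=\min(\operatorname*{Supp}v)$ forces a repeated letter, which a permutation cannot have), so those operations genuinely leave the span of the $\mathbf{G}_{\sigma}$, whereas $\left.\succ\right.$ and $\left.\textarm{\belgthor}\right.$ impose conditions ($\min(\operatorname*{Supp}u)>\min(\operatorname*{Supp}v)$, resp.\ $\max(\operatorname*{Supp}u)\le\min(\operatorname*{Supp}v)$) that are compatible with $uv$ being repetition-free exactly when needed --- but this remark is not needed for the proof and I would omit it or relegate it to a footnote.
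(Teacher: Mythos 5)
Your proof is correct and follows essentially the route the paper intends: the paper omits the proof but indicates that Proposition~\ref{prop.FQSym.closed} follows from Remark~\ref{rmk.WQSym.closed} together with the three observations listed after Remark~\ref{rmk.FQSym.closed}, and your argument rests on exactly those observations (invariance of $\operatorname*{std}$ of the two factors and of the relevant support comparisons under replacing $w$ by any word with the same standardization, including the correct analysis of why $>$ and $\leq$ survive tie-breaking while $<$ would not). The only difference is that you work directly with the expansion $\mathbf{G}_{\sigma}=\sum_{\operatorname*{std}w=\sigma}w$ over all words rather than passing through the monomial basis $\left(\mathbf{M}_{u}\right)$ of $\operatorname*{WQSym}$, which is a cosmetic rather than substantive change.
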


Moreover, we have the following explicit formulas on the basis $\left(
\mathbf{G}_{\sigma}\right)  _{\sigma\in\mathfrak{S}}$:

\begin{remark}
\label{rmk.FQSym.closed}Let $\sigma\in\mathfrak{S}$ and $\tau\in\mathfrak{S}$.
Let $\ell$ be the length of $\sigma$ (so that $\sigma\in\mathfrak{S}_{\ell}$).

\textbf{(a)} We have%
\[
\mathbf{G}_{\sigma}\left.  \succ\right.  \mathbf{G}_{\tau}=\sum_{\substack{\pi
\in\mathfrak{S};\\\operatorname*{std}\left(  \pi\left[  :\ell\right]  \right)
=\sigma;\ \operatorname*{std}\left(  \pi\left[  \ell:\right]  \right)
=\tau;\\\min\left(  \operatorname*{Supp}\left(  \pi\left[  :\ell\right]
\right)  \right)  >\min\left(  \operatorname*{Supp}\left(  \pi\left[
\ell:\right]  \right)  \right)  }}\mathbf{G}_{\pi}.
\]

\textbf{(b)} We have%
\[
\mathbf{G}_{\sigma} \bel \mathbf{G}_{\tau}=\sum_{\substack{\pi\in
\mathfrak{S};\\\operatorname*{std}\left(  \pi\left[  :\ell\right]  \right)
=\sigma;\ \operatorname*{std}\left(  \pi\left[  \ell:\right]  \right)
=\tau;\\\max\left(  \operatorname*{Supp}\left(  \pi\left[  :\ell\right]
\right)  \right)  \leq\min\left(  \operatorname*{Supp}\left(  \pi\left[
\ell:\right]  \right)  \right)  }}\mathbf{G}_{\pi}.
\]
The sum on the right hand side consists of one addend only, namely
$\mathbf{G}_{\sigma\tau^{+\ell}}$.
\end{remark}

The statements of Remark \ref{rmk.FQSym.closed} can be easily derived from
Remark \ref{rmk.WQSym.closed}. The proof for \textbf{(a)} rests on the
following simple observations:

\begin{itemize}
\item Every word $w$ satisfies $\operatorname*{std}\left(
\operatorname*{pack}w\right)  =\operatorname*{std}w$.

\item Every $n\in\mathbb{N}$, every word $w$ of length $n$ and every $\ell
\in\left\{  0,1,\ldots,n\right\}  $ satisfy
\[
\operatorname*{std}\left(  \left(  \operatorname*{std}w\right)  \left[
:\ell\right]  \right)  =\operatorname*{std}\left(  w\left[  :\ell\right]
\right)  \ \ \ \ \ \ \ \ \ \ \text{and}\ \ \ \ \ \ \ \ \ \ \operatorname*{std}%
\left(  \left(  \operatorname*{std}w\right)  \left[  \ell:\right]  \right)
=\operatorname*{std}\left(  w\left[  \ell:\right]  \right)  .
\]

\item Every $n\in\mathbb{N}$, every word $w$ of length $n$ and every $\ell
\in\left\{  0,1,\ldots,n\right\}  $ satisfy the equivalence%
\begin{align*}
&  \ \left(  \min\left(  \operatorname*{Supp}\left(  w\left[  :\ell\right]
\right)  \right)  >\min\left(  \operatorname*{Supp}\left(  w\left[
\ell:\right]  \right)  \right)  \right) \\
&  \Longleftrightarrow\ \left(  \min\left(  \operatorname*{Supp}\left(
\left(  \operatorname*{std}w\right)  \left[  :\ell\right]  \right)  \right)
>\min\left(  \operatorname*{Supp}\left(  \left(  \operatorname*{std}w\right)
\left[  \ell:\right]  \right)  \right)  \right)  .
\end{align*}

\end{itemize}

The third of these three observations would fail if the greater sign were to
be replaced by a smaller sign; this is essentially why $\operatorname*{FQSym}%
\subseteq\operatorname*{WQSym}$ is not closed under $\left.  \prec\right.  $.

The operation $\left.  \succ\right.  $ on $\operatorname*{FQSym}$ defined
above is closely related to the operation $\left.  \succ\right.  $ on
$\operatorname*{FQSym}$ introduced by Foissy in \cite[Section 4.2]{Foissy07}.
Indeed, the latter differs from the former in the use of $\max$ instead of
$\min$.

\section{\label{sect.epilogue}Epilogue}

We have introduced five binary operations $\left.  \prec\right.  $, $\circ$,
$\left.  \succ\right.  $, $\bel $, and $\tvi $ on $\mathbf{k}\left[  \left[
x_{1},x_{2},x_{3},\ldots\right]  \right]  $ and their restrictions to
$\operatorname*{QSym}$; we have further introduced five analogous operations
on $\mathbf{k}\left\langle \left\langle \mathbf{X}\right\rangle \right\rangle
$ and their restrictions to $\operatorname*{WQSym}$ (as well as the
restrictions of two of them to $\operatorname*{FQSym}$). We have used these
operations (specifically, $\left.  \prec\right.  $ and $\bel $) to prove a
formula (Corollary \ref{cor.zabrocki}) for the dual immaculate functions
$\mathfrak{S}_{\alpha}^{\ast}$. Along the way, we have found that the
$\mathfrak{S}_{\alpha}^{\ast}$ can be obtained by repeated application of the
operation $\left.  \prec\right.  $ (Corollary \ref{cor.dualImm.dend.explicit}%
). A similar (but much more obvious) result can be obtained for the
fundamental quasisymmetric functions: For every $\alpha=\left(  \alpha
_{1},\alpha_{2},\ldots,\alpha_{\ell}\right)  \in\operatorname*{Comp}$, we have%
\[
F_{\alpha}=h_{\alpha_{1}} \tvi h_{\alpha_{2}} \tvi \cdots\tvi h_{\alpha_{\ell
}} \tvi 1
\]
(we do not use parentheses here, since $\tvi $ is associative). This shows
that the $\mathbf{k}$-algebra $\left(  \operatorname*{QSym}, \tvi \right)  $
is free. Moreover,%
\[
F_{\omega\left(  \alpha\right)  }=e_{\alpha_{\ell}} \bel e_{\alpha_{\ell-1}}
\bel \cdots\bel e_{\alpha_{1}} \bel 1,
\]
where $e_{m}$ stands for the $m$-th elementary symmetric function; thus, the
$\mathbf{k}$-algebra $\left(  \operatorname*{QSym}, \bel \right)  $ is also
free.\footnote{We owe these two observations to the referee.} (Incidentally,
this shows that $S\left(  a \tvi b\right)  =S\left(  b\right)  \bel S\left(
a\right)  $ for any $a,b\in\operatorname*{QSym}$. But this does not hold for
$a,b\in\operatorname*{WQSym}$.)

One might wonder what \textquotedblleft functions\textquotedblright\ can be
similarly constructed using the operations $\left.  \prec\right.  $, $\circ$,
$\left.  \succ\right.  $, $\bel $, and $\tvi $ in $\operatorname*{WQSym}$,
using the noncommutative analogues $H_{m}=\sum_{i_{1}\leq i_{2}\leq\cdots\leq
i_{m}}X_{i_{1}}X_{i_{2}}\cdots X_{i_{m}}=\mathbf{G}_{\left(  1,2,\ldots
,m\right)  }$ and $E_{m}=\sum_{i_{1}>i_{2}>\cdots>i_{m}}X_{i_{1}}X_{i_{2}%
}\cdots X_{i_{m}}=\mathbf{G}_{\left(  m,m-1,\ldots,1\right)  }$ of $h_{m}$ and
$e_{m}$. (These analogues actually live in $\operatorname*{NSym}$, where
$\operatorname*{NSym}$ is embedded into $\operatorname*{FQSym}$ as in
\cite[Corollary 8.1.14(b)]{Reiner}; but the operations do not preserve
$\operatorname*{NSym}$, and only two of them preserve $\operatorname*{FQSym}%
$.) However, it seems somewhat tricky to ask the right questions here; for
instance, the $\mathbf{k}$-linear span of the $\left.  \succ\right.  $-closure
of $\left\{  H_{m}\ \mid\ m\geq0\right\}  $ is not a $\mathbf{k}$-subalgebra
of $\operatorname*{FQSym}$ (since $H_{2}H_{1}$ is not a $\mathbf{k}$-linear
combination of $H_{3}$, $H_{1}\left.  \succ\right.  \left(  H_{1}\left.
\succ\right.  H_{1}\right)  $, $\left(  H_{1}\left.  \succ\right.
H_{1}\right)  \left.  \succ\right.  H_{1}$, $H_{1}\left.  \succ\right.  H_{2}$
and $H_{2}\left.  \succ\right.  H_{1}$).

On the other hand, one might also try to write down the set of identities
satisfied by the operations $\cdot$, $\left.  \prec\right.  $, $\circ$,
$\left.  \succeq\right.  $, $\bel $ and $\tvi $ on the various spaces
($\mathbf{k}\left[  \left[  x_{1},x_{2},x_{3},\ldots\right]  \right]  $,
$\operatorname*{QSym}$, $\mathbf{k}\left\langle \left\langle \mathbf{X}%
\right\rangle \right\rangle $, $\operatorname*{WQSym}$ and
$\operatorname*{FQSym}$), or by subsets of these operations; these identities
could then be used to define new operads, i.e., algebraic structures
comprising a $\mathbf{k}$-module and some operations on it that imitate (some
of) the operations $\cdot$, $\left.  \prec\right.  $, $\circ$, $\left.
\succeq\right.  $, $\bel $ and $\tvi $. For instance, apart from being
associative, the operations $\bel $ and $\tvi $ on $\mathbf{k}\left\langle
\left\langle \mathbf{X}\right\rangle \right\rangle $ satisfy the identity%
\begin{equation}
\left(  a \bel b\right)  \tvi c+\left(  a \tvi b\right)  \bel c=a \bel \left(
b \tvi c\right)  +a \tvi \left(  b \bel c\right)  \label{eq.epilogue.bel-tvi}%
\end{equation}
for all $a,b,c\in\mathbf{k}\left\langle \left\langle \mathbf{X}\right\rangle
\right\rangle $. This follows from the (easily verified) identities%
\begin{align}
\left(  a \bel b\right)  \tvi c-a \bel \left(  b \tvi c\right)   &
=\varepsilon\left(  b\right)  \left(  a \tvi c-a \bel c\right)  ;
\label{eq.epilogue.bel-tvi-1a}\\
\left(  a \tvi b\right)  \bel c-a \tvi \left(  b \bel c\right)   &
=\varepsilon\left(  b\right)  \left(  a \bel c-a \tvi c\right)  ,
\label{eq.epilogue.bel-tvi-1b}
\end{align}
where $\varepsilon:\mathbf{k}\left\langle \left\langle \mathbf{X}\right\rangle
\right\rangle \rightarrow\mathbf{k}$ is the map which sends every
noncommutative power series to its constant term. The equality
(\ref{eq.epilogue.bel-tvi}) (along with the associativity of $\bel $ and
$\tvi $) makes $\left(  \mathbf{k}\left\langle \left\langle \mathbf{X}%
\right\rangle \right\rangle , \bel , \tvi \right)  $ into what is called an
$As^{\left\langle 2\right\rangle }$\textit{-algebra} (see \cite[p.
39]{Zinbie10}).
Is $\operatorname*{QSym}$ or $\operatorname*{WQSym}$ a free
$As^{\left\langle 2\right\rangle }$-algebra?%
\footnote{\textbf{Update (2026):} No. None of the
$As^{\left\langle 2\right\rangle }$-algebras
$\left(  \mathbf{k}\left\langle \left\langle \mathbf{X}%
\right\rangle \right\rangle , \bel , \tvi \right)  $,
$\operatorname*{QSym}$ and $\operatorname*{WQSym}$ is free.
\par
The reason for this is that all three of these
$As^{\left\langle 2\right\rangle }$-algebras satisfy the
additional ``quasi-identity'' saying that for any
$a, b, c$ in the algebra, we have
\begin{equation}
\left(  a \bel b\right)  \tvi c-a \bel \left(  b \tvi c\right)
\in \bk  \left(  a \tvi c-a \bel c\right)
\label{eq.epilogue.bel-tvi-2}
\end{equation}
(which follows from \eqref{eq.epilogue.bel-tvi-1a}), but a
free $As^{\left\langle 2\right\rangle }$-algebra (with at least
one generator) does not.
The latter can be shown as follows:
\par
First we introduce a way to construct many
$As^{\left\langle 2\right\rangle }$-algebras:
Let $S$ be a $\bk$-algebra, and let $I$ be an
$\left(S,S\right)$-bimodule with a $\bk$-bilinear associative
multiplication that is furthermore
associative with respect to the left and right actions of $S$
(that is, satisfies $s\left(ij\right) = \left(si\right)j$
and $\left(ij\right)s = i\left(js\right)$ and
$\left(is\right)j = i\left(sj\right)$ for all $s \in S$
and $i, j \in I$). (For example, $I$ can be an isomorphic
copy of $S$, with the multiplication of $S$ being reused
as both $S$-module structures and as multiplication in $I$.)
Then, on the $\bk$-module $R := S \oplus I$, we define two
binary operations $\bel$ and $\tvi$ by
\begin{align*}
\left(s, i\right) \bel \left(t, j\right)
  &= \left(st, sj + it\right) \qquad \text{for all }
       s, t \in S \text{ and } i, j \in I; \\
\left(s, i\right) \tvi \left(t, j\right)
  &= \left(st, sj + it + ij\right) \qquad \text{for all }
       s, t \in S \text{ and } i, j \in I.
\end{align*}
Then, $R$ becomes an $As^{\left\langle 2\right\rangle }$-algebra
with these two operations. It is easy to see that
\eqref{eq.epilogue.bel-tvi-2} does not hold in general for
such an algebra, even if it is just generated by $1$ element.
(For instance, if $S$ is the polynomial ring $\bk\left[x\right]$
and $I$ is a copy of $S$, then the
$As^{\left\langle 2\right\rangle }$-subalgebra of $R = S \oplus I$
generated by the single element $\left(x,1\right)$ can easily be
shown to contain any $\left(x^i, x^j\right)$ with $i>0$,
and then we can see that \eqref{eq.epilogue.bel-tvi-2} is
violated already for $a = b = c = \left(x,1\right)$.)
Hence, \eqref{eq.epilogue.bel-tvi-2} cannot hold for a free
$As^{\left\langle 2\right\rangle }$-algebra unless it is free
on $0$ generators, i.e., trivial.
Hence, if any of the
$As^{\left\langle 2\right\rangle }$-algebras
$\left(  \mathbf{k}\left\langle \left\langle \mathbf{X}%
\right\rangle \right\rangle , \bel , \tvi \right)  $,
$\operatorname*{QSym}$ and $\operatorname*{WQSym}$ were free,
then it would be trivial, which is absurd.}
What if we add the existence of a common neutral element for the
operations $\bel$ and $\tvi$ to the axioms of this operad?%
\footnote{\textbf{Update (2026):} The answer is still ``no'',
for the same reason as in the previous footnote. (Note that the
$As^{\left\langle 2\right\rangle }$-algebra $R = S \oplus I$ is
unital with $\left(1,0\right)$ acting as common neutral element
for both operations $\bel$ and $\tvi$.)}

The equalities \eqref{eq.epilogue.bel-tvi-1a} and
\eqref{eq.epilogue.bel-tvi-1b} also show that the
positive part of $\left(  \mathbf{k}\left\langle \left\langle \mathbf{X}%
\right\rangle \right\rangle , \bel , \tvi \right)  $
(that is, the $\bk$-submodule consisting of the series
with constant term $0$) is an
\emph{$As^{\left( 2\right)}$-algebra} as defined in
\cite[p. 38]{Zinbie10}. Here, again, one can ask about the
freeness:

\begin{question}
Is the positive part of $\left(  \mathbf{k}\left\langle \left\langle \mathbf{X}%
\right\rangle \right\rangle , \bel , \tvi \right)  $ a
free $As^{\left( 2\right)}$-algebra? What about the
positive parts of $\operatorname*{QSym}$ and $\operatorname*{WQSym}$?
\end{question}

It is not hard to see that the answer is positive for
the positive part of $\operatorname*{QSym}$: it is a
free $As^{\left( 2\right)}$-algebra on one generator,
which is the quasisymmetric function $F_{\left(1\right)}$.
(This follows easily from the formulas
$F_{\alpha} \bel F_{\beta}=F_{\alpha\odot\beta}$
and
$F_{\alpha} \tvi F_{\beta}=F_{\left[\alpha,\beta\right]}$,
which hold for any two nonempty compositions
$\alpha$ and $\beta$.)

\end{document}